\setlist[enumerate,1]{wide, labelindent=0pt,label={\upshape(\roman*)}}
\newcommand{\ang}[1]{\left<#1\right>} 
    \def\XXint#1#2#3{{\setbox0=\hbox{$#1{#2#3}{\int}$}
    \vcenter{\hbox{$#2#3$}}\kern-.5\wd0}}
\newcommand{\vardbtilde}[1]{\widetilde{\raisebox{0pt}[0.85\height]{$\widetilde{#1}$}}}
\theoremstyle{plain}
\newtheorem{thm}{Theorem}[section]
\newtheorem{lem}[thm]{Lemma}
\newtheorem{cor}[thm]{Corollary}
\newtheorem{prop}[thm]{Proposition}
\newtheorem{example}[thm]{Example}
\newtheorem{assump}[thm]{Assumption}
\DeclareMathOperator{\supp}{supp}
\newenvironment{Example}{\begin{example}\rm}{\end{example}}
\theoremstyle{definition}
\newtheorem{defn}[thm]{Definition}
\theoremstyle{remark}
\newtheorem{remark}[thm]{Remark}
\newcommand{\bremark}{\begin{remark} \em}
\newcommand{\eremark}{\end{remark} }
\title{Riesz transform, function spaces and their applications on infinite dimensional compact groups}
\author{Alexander Bendikov, Li Chen\footnote{Research partially supported by grant 10.46540/4283-00175B from Independent Research Fund Denmark} ~and Laurent Saloff-Coste\footnote{Research partially supported by National Science Foundation grants DMS-2054593 and DMS-2343868}}
\date{}
\begin{document}

\maketitle
\vspace{-1em}
\begin{abstract}
On a compact connected group $G$, consider the infinitesimal generator $-L$ of a central symmetric Gaussian convolution semigroup $(\mu_t)_{t>0}$. We establish several regularity results of the solution to the Poisson equation $LU=F$, both in strong and weak senses. To this end, we introduce  two classes of Lipschitz spaces for $1\le p\le \infty$: $\Lambda_{\theta}^p$, defined via the associated Markov semigroup, and $\mathrm L_{\theta}^p$, defined via the intrinsic distance. In the strong sense, we prove a priori Sobolev regularity and Lipschitz regularity in the class of $\Lambda_{\theta}^p$ space.  In the distributional sense, we further show local regularity in the class of $\mathrm L_{\theta}^{\infty}$ space. These results require some strong assumptions on $-L$. Our main techniques build on the differentiability of the associated semigroup, explicit dimension-free $L^p$ ($1<p<\infty$) boundedness of first and second order Riesz transforms, and a comparison between the two Lipschitz norms.    
\end{abstract}
\tableofcontents
\section{Introduction}
It is well known that if $f$ is merely a continuous function, in a bounded domain $\Omega\subset R^n$ with $n>1$, it is possible that the equation $\Delta u=f$  has no classical solution (i.e., there are no $\mathcal C^2$ function $u$ solving the equation).   However, if instead of continuity we require $f$ to be H\"older continuous with exponent $\alpha\in (0,1)$ then the equation $\Delta u=f$, interpreted in the sense of distributions, admits a unique solution $u$ which is actually $\mathcal C^{2,\alpha}$.

This is a basic steps in the classical Schauder interior regularity theory for second order elliptic operators and more sophisticated versions apply  when $\Delta$ is the Laplace operator on a Riemannian manifold.

This paper is concerned with this problem but in a more exotic setting where the underlying space is a compact connected locally connected metrizable group such as the infinite dimensional torus $\mathbb T^\infty$, i.e., the product of countably many circles, and other products such as $\prod_{n\ge 5} \mbox{SO}(n)$.  On such groups, there are many natural Laplacians, each with possibly drastically different properties,  and one can study properties of the solutions of the equation $\Delta u=f$, interpreted  in various ways, for such Laplacians. This is motivated by \cite[Chapter 5]{Bendikov1995}.

To avoid most technical details, let us focus here on $\mathbb T^\infty=\prod_{i=1}^\infty \mathbb T_i$, equipped with its normalized Haar measure. Laplacians are operators defined on smooth functions depending only of finitely many coordinates by 
$$\Delta_A\phi=-\sum_{i,j=1}^\infty a_{i,j} \partial_i\partial_j \phi$$
where $A=(a_{i,j})$ is an infinite symmetric positive definite matrix ($\sum_{i,j} a_{i,j}\xi_i\xi_j\ge 0$ and equal to $0$ only if $\xi=(\xi_i)=0$ for all vectors $\xi$ having finitely many non-zero coordinates).  The simplest of these Laplacian is $\Delta_0 \phi=-\sum_{i=1}^\infty \partial_i^2\phi$ but it behaves very badly from the point of view of the question raised above. On the other hand, $\Delta_1 \phi=-\sum 2^i\partial _i^2\phi$ is expected to behave much better, in some sense. In particular, while there does not exit a natural good metric on $\mathbb T^\infty$ associated with $\Delta_0$, there is a very nice metric, $d_1$, associated with $\Delta_1$ (its intrinsic metric, form the viewpoint of Dirichlet forms). Using this metric, we can ask whether or not  weak local solutions of the equation $\Delta_1 u=f$ in an open set $\Omega$ with $f$ H\"older continuous with respect to the metric $d_1$ in $\Omega$, are automatically classical solutions for which not only $\Delta_1 u$ makes sense as a continuous function in $\Omega$, but $\Delta_1u$ is given by the converging series $-\sum_{i=1}^\infty 2^i\partial _i^2u$
where each $\partial_i^2u$ exists and is continuous. In this paper, we prove that this is indeed the case for $\Delta_1$ (and many other Laplacians). This statement is definitely not true for all possible Laplacians on $\mathbb T^\infty$. As explained later below, this applies not only when the equation  $\Delta_1 u=f$ is interpreted in the standard weak sense in $\Omega$ for which one assumes $u$ is in $L^2$ with first derivatives in $L^2$, but even when $u$ and $f$ are only assumed to be distributions of a certain type (a class of distributions associated to $\Delta_1$ introduced and studied in \cite{BSC2005,BSC2006} and which, for example, always include Radon measures).

In order to prove such results, we rely on several interesting facts and ideas, some of which have to be developed from scratch in this setting. One key idea is taken from \cite{BSC2025} where the $L^1$-differentiability of the Markov semigroups $e^{-tL_A}$ associated with Laplacians is discussed. A second key ideas is to use the boundedness of associated Riesz transforms on $L^p$ with $p$ very close to $1$ or infinity (ideally, one would want to use the Riesz transforms on $L^1$ or $L^\infty$ but, of course, this is not possible). The important role played by the Riesz transforms in our arguments implies that we have to restrict ourselves to bi-invariant Laplacians (all Laplacian on $\mathbb T^\infty$ are bi-invariant but this is of course not the case when $G$ is not abelian). The third key idea (well-understood in the Euclidean case) is to relate two notions of ``H\"older spaces'' associated to a Laplacian $L_A$. One type of H\"older spaces is defined in terms of the action of the semigroup $e^{-tL_A}$ while the other is defined more classically using the intrinsic distance associated with $L_A$ (assuming this distance is well-defined). The main finding of this paper is that there is a large class of Laplacians  (on $\mathbb T^\infty$ but also on any other metrizable, locally connected connected compact group) for which these different steps can be implemented to obtain the type of result described above concerning the  continuity of local distributional solutions of $\Delta_A u=f$ when $f$ is H\"older continuous in $\Omega$.

\section{The setting}
Following \cite{BSC2001}, we introduce the setting of symmetric central Gaussian semigroups on compact connected groups. Let $G$ be a compact connected metrizable group equipped with its normalized Haar measure $\nu$. Denote by $e$ the neutral element in $G$. Then $G$ contains a descending family of compact normal subgroups $(K_{\alpha})_{\alpha\in \aleph}$ ($\aleph$ is either finite or countable) such that $\bigcap_{\alpha\in \aleph}K_{\alpha}=\{e\}$. For each $\alpha\in \aleph$, $G/K_{\alpha}=G_{\alpha}$ is a Lie group. Consider the projection maps $\pi_{\alpha,\beta}: G_{\beta} \to G_{\alpha}, \beta\ge \alpha$ and $\pi_{\alpha}:G \to G_{\alpha}$. The group $G$ is then the projective limit of the projective system $(G_{\alpha}, \pi_{\alpha,\beta})_{\beta\ge \alpha}$.
The Lie algebra $\mathfrak G$ is defined as the projective limit of the Lie algebras $\mathfrak G_{\alpha}$ of the groups $G_{\alpha}$ equipped with the projective maps $d\pi_{\alpha, \beta}$.

For a compact Lie group $N$, denote by $\mathcal C_0^{\infty}(N)$ the set of all smooth functions on $N$. The space of Bruhat test functions $\mathcal B(G)$, introduced in \cite{Bruhat}, is defined by 
\[
\mathcal B(G)=\{f: G\to \mathbb R;  f=\phi\circ \pi_{\alpha} \text{ for some } \alpha\in \aleph  \text{ and } \phi \in \mathcal C_0^{\infty}(G_{\alpha})\}.
\]

Let $(\mu_t)_{t> 0}$ be a weakly continuous Gaussian convolution semigroup of probability measures on $G$. That is, each  $\mu_t$, $t>0$, is a probability measure on $G$ and $(\mu_t)_{t>0}$ satisfies
\begin{itemize}
    \item $\mu_t*\mu_s=\mu_{t+s}$, $\forall t,s>0$;
    \item $\mu_t\to \delta_e$ weakly as $t\to 0$;
    \item $t^{-1}\mu_t(V^c)\to 0$ as $t\to 0$ for any neighborhood $V\ni e$.
\end{itemize}
\begin{assump}\label{assump}
Let $(\mu_t)_{t> 0}$ be a weakly continuous Gaussian convolution semigroup of probability measures on $G$. Throughout the paper, we assume the following:
\begin{itemize}
    \item $(\mu_t)_{t> 0}$ is {\bf symmetric}, i.e., $\mu_t(A)=\mu_t(A^{-1})$ for all $t>0$ and all Borel subset $A\subset G$;
    \item $(\mu_t)_{t> 0}$ is {\bf central}, i.e., $\mu_t(a^{-1}Aa)=\mu_t(A)$ for all $t>0$, all $a\in G$ and all Borel subset $A\subset G$;
    \item $(\mu_t)_{t> 0}$ is {\bf non-degenerate}, i.e., the support of $\mu_t$ is $G$ for all $t>0$.
\end{itemize}
\end{assump}

Associated with $(\mu_t)_{t> 0}$ there exists a Brownian motion on $G$, denoted by $(B_t)_{t\ge0}$. That is, $(B_t)_{t\ge0}$ is a $G$-valued process having independent stationary increments, continuous paths, and which is non-degenerate and invariant under the mappings $x\mapsto x^{-1}$ and $x\mapsto axa^{-1} $, $a\in G$. 



\begin{defn}
Let $(\mu_t)_{t> 0}$ be a Gaussian convolution semigroup on $G$. For each $\alpha\in \aleph$, we define $(\mu_{t}^\alpha)_{t> 0}$ to be the Gaussian  convolution semigroup on $G_{\alpha}$ by setting 
\[
\mu_{t}^\alpha(V)=\mu_t(\pi_{\alpha}^{-1}(V)), \quad \text{ for any open set } V\subset G_{\alpha}.
\]
\end{defn} 
Conversely, one can construct $(\mu_t)_{t> 0}$ on $G$ as the unique projective limit of a projective system of convolution semigroups $(\mu_{t}^{\alpha})_{t > 0}$. We refer to \cite[Properties 6.2.4]{Heyer} for more details. 
Next we introduce the notion of projective basis, see \cite{Born1}. 
\begin{defn}
     A family of left-invariant vector fields $(Y_i)_{i\in I}$ on $\mathfrak G$ is a projective basis of $\mathfrak G$ with respect to $(K_{\alpha})_{\alpha\in \aleph}$ if for any $\alpha\in \aleph$, there is a finite subset $I_{\alpha}\subset  I$ such that $d\pi_{\alpha}(Y_i)=Y_{\alpha,i}$, $j\in I_{\alpha}$, form a basis of $\mathfrak G_{\alpha}$ and $d\pi_{\alpha}(Y_i)=0$ for $i\notin I_{\alpha}$.
\end{defn}
Since $K_{\alpha}$, $\alpha\in \aleph$, is descending, there exists a  projective basis $(Y_i)_{i\in I}$ on $\mathfrak G$ with respect to $(K_{\alpha})_{\alpha\in \aleph}$ with  $I=\bigcup_{\alpha\in \aleph}I_{\alpha}$.

From now on we consider a weakly continuous Gaussian convolution semigroup $(\mu_t)_{t> 0}$ on $G$ and assume that $(\mu_t)_{t>0}$ is symmetric, central and Gaussian. Set 
\begin{equation}\label{eq:heat semigroup}
H_tf(x)=\int_G f(xy) d\mu_t(y).   
\end{equation}
Then $(H_t)_{t>0}$ forms a Markov semigroup and extends to $L^2(G, d\nu)$ as a self-adjoint semigroup. Associated with $\mu_t$ there is an $L^2(G, d\nu)$-infinitesimal generator $(-L, \mathrm{Dom}(L))$ and a Dirichlet form $(\mathcal E, \mathrm{Dom}(\mathcal E))$ such that $H_t=e^{-tL}$ on $L^2(G, d\nu)$ and 
\[
\mathcal E(f,g)=\langle L^{1/2}f, L^{1/2}g\rangle, \quad f,g \in  \mathrm{Dom}(\mathcal E)= \mathrm{Dom}(L^{1/2}).
\]
The assumption that $(\mu_t)_{t>0}$ is central is equivalent to the fact that $L$ is {\em bi-invariant}.

Let  $(Y_i)_{i\in I}$ be a projective basis of the Lie algebra $\mathfrak G$. Then $L$ is a second order left-invariant differential operator of the form
\[
L=-\sum_{i,j\in  I} a_{i,j} Y_iY_j
\]
where $A=(a_{i,j})_{I\times I}$ is a real symmetric positive-definite matrix in the sense that $a_{i,j}=a_{j,i}\in \mathbb R$ and $\sum a_{i,j}\xi_i\xi_j> 0$ for any non-zero $\xi \in \mathbb R^{(I )}$. We note that there exists an upper triangular matrix $T=(t_{i,j})_{I\times I}$ such that $A=T^tT$ (see \cite[Lemma 2.3]{BSC2002}). 

\begin{thm}[\protect{\cite[Theorem 2.4]{BSC2002}}]
Fix a projective basis $(Y_i)_{i\in I}$. Let $L=-\sum_{i,j\in I} a_{i,j} Y_iY_j$ with $A$ symmetric and positive-definite. Then the family $(X_i)_{i\in I}$  given by 
\[
X_i=\sum_{j\in I} t_{i,j}Y_j, \quad i\in I
\]
is a projective basis of $\mathfrak G$ and yields a square field decomposition  $-L=\sum_{i\in I}X_i^2$.
\end{thm}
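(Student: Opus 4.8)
The plan is to reduce both assertions to the finite-dimensional quotients $G_\alpha$, where they amount to elementary linear algebra, and then use the coherence built into the notion of projective basis to lift the conclusions back to $\mathfrak G$. To begin, since the finite sets $(I_\alpha)_{\alpha\in\aleph}$ are increasing with union $I$, fix a total order on $I$ in which every $I_\alpha$ is an initial segment; with respect to such an order the factor $T$ in $A=T^tT$ (see \cite[Lemma 2.3]{BSC2002}) is upper triangular, and since $A$ is positive-definite, hence invertible, all of its diagonal entries $t_{i,i}$ are nonzero. Writing $A_\alpha=(a_{j,k})_{j,k\in I_\alpha}$ and $T_\alpha=(t_{j,k})_{j,k\in I_\alpha}$ for the corresponding principal blocks, a block computation of $T^tT$ gives $A_\alpha=T_\alpha^tT_\alpha$; in particular $T_\alpha$ is invertible upper triangular and $\sum_{i\in I_\alpha}t_{i,j}t_{i,k}=a_{j,k}$ for all $j,k\in I_\alpha$.

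Next I would show $(X_i)_{i\in I}$ is a projective basis. Applying $d\pi_\alpha$ term by term, $d\pi_\alpha(X_i)=\sum_{j\in I_\alpha}t_{i,j}Y_{\alpha,j}$ is a finite sum, and the relations $\pi_\alpha=\pi_{\alpha,\beta}\circ\pi_\beta$ and $d\pi_{\alpha,\beta}(Y_{\beta,j})=d\pi_\alpha(Y_j)$ (which equals $Y_{\alpha,j}$ if $j\in I_\alpha$ and $0$ otherwise) show that $(d\pi_\alpha(X_i))_\alpha$ is compatible with the maps $d\pi_{\alpha,\beta}$, so $X_i$ is a well-defined element of the projective limit $\mathfrak G$. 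If $i\notin I_\alpha$, the initial-segment property forces $i>j$ for every $j\in I_\alpha$, hence $t_{i,j}=0$ and $d\pi_\alpha(X_i)=0$. If $i\in I_\alpha$, then $(d\pi_\alpha(X_i))_{i\in I_\alpha}$ is the image of the basis $(Y_{\alpha,j})_{j\in I_\alpha}$ of $\mathfrak G_\alpha$ under the invertible matrix $T_\alpha$, hence is again a basis. Therefore $(X_i)_{i\in I}$ is a projective basis of $\mathfrak G$ with the same index sets $I_\alpha$.

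For the square field decomposition, the identity $-L=\sum_i X_i^2$ is to be read on Bruhat test functions. Take $f=\phi\circ\pi_\alpha$ with $\phi\in\mathcal C_0^\infty(G_\alpha)$; since left-invariant vector fields act on such $f$ by $Y_jf=(d\pi_\alpha(Y_j)\phi)\circ\pi_\alpha$ and $X_if=(d\pi_\alpha(X_i)\phi)\circ\pi_\alpha$, both $Y_jf$ and $X_if$ vanish unless $j,i\in I_\alpha$ and again lie in $\mathcal B(G)$. Hence $Lf=-\sum_{j,k\in I_\alpha}a_{j,k}(Y_{\alpha,j}Y_{\alpha,k}\phi)\circ\pi_\alpha$ and $\sum_iX_i^2f=\bigl(\textstyle\sum_{i\in I_\alpha}d\pi_\alpha(X_i)^2\phi\bigr)\circ\pi_\alpha$ are finite sums, so $\sum_i X_i^2f$ is unambiguous. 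On the finite-dimensional Lie algebra $\mathfrak G_\alpha$,
\[
\sum_{i\in I_\alpha}\Bigl(\sum_{j\in I_\alpha}t_{i,j}Y_{\alpha,j}\Bigr)\Bigl(\sum_{k\in I_\alpha}t_{i,k}Y_{\alpha,k}\Bigr)=\sum_{j,k\in I_\alpha}\Bigl(\sum_{i\in I_\alpha}t_{i,j}t_{i,k}\Bigr)Y_{\alpha,j}Y_{\alpha,k}=\sum_{j,k\in I_\alpha}a_{j,k}Y_{\alpha,j}Y_{\alpha,k},
\]
and comparing with the displayed expression for $Lf$ yields $\sum_iX_i^2f=-Lf$.

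The block-matrix bookkeeping and the term-by-term use of $d\pi_\alpha$ are routine. The one point that really needs care is that the order on $I$ implicit in ``$T$ upper triangular'' must be compatible with the filtration $(I_\alpha)$ --- each $I_\alpha$ an initial segment, which is possible precisely because the $I_\alpha$ are nested --- since without this the vanishing $d\pi_\alpha(X_i)=0$ for $i\notin I_\alpha$, and hence the projective-basis property of $(X_i)$, can fail. I anticipate no genuine obstacle beyond this bookkeeping.
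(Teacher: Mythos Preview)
The paper does not give its own proof of this statement; it is quoted from \cite[Theorem 2.4]{BSC2002} without argument. Your proof is correct and is essentially how the result is established: the upper-triangular factor $T$ truncates cleanly to each finite block $T_\alpha$ precisely because you have ordered $I$ so that every $I_\alpha$ is an initial segment, and then $A_\alpha=T_\alpha^tT_\alpha$ yields both the projective-basis property and the sum-of-squares decomposition on each quotient. One cosmetic point: your phrase ``$A$ is positive-definite, hence invertible'' is not quite the right justification for $t_{i,i}\neq 0$ in the infinite setting --- rather, positive-definiteness of each finite principal minor $A_\alpha$ forces $T_\alpha$ to be invertible, which is what you actually use.
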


Fix the choice of $(X_i)_{i\in I}$ as above. We note that, because we assume that $L$ is bi-invariant,  $(X_i)_{i\in I}$ commutes with the operator $L$. For any given $f=\phi\circ \pi_{\alpha}\in \mathcal B(G)$, the operator $L$ reads
\[
Lf=-\sum_{i\in I_{\alpha}} X_{\alpha,i}^2 \phi \circ \pi_{\alpha}=(L_{\alpha}\phi)\circ \pi_{\alpha},
\]
where $X_{\alpha,i}=d\pi_{\alpha}(X_i)$, $I_{\alpha}=\{i\in I: d\pi_{\alpha}(X_i)\ne 0\}$, and $-L_{\alpha}$ is the infinitesimal generator of $(\mu_{t}^\alpha)_{t>0}$ on $G_{\alpha}$. Let $(H_{t}^\alpha)_{t>0}$ be the associated Markov semigroup on $G_{\alpha}$ defined by 
\[
H_{t}^\alpha\phi(x)=\int_{G_{\alpha}} \phi(xy)d\mu_{t}^\alpha(y).
\]
Then $(H_t)_{t>0}$ is also a projective limit of the sequence $(H_{t}^\alpha)_{t>0}$, $\alpha\in \aleph$ and for any $f=\phi\circ \pi_{\alpha}\in \mathcal B(G)$, one has $H_t f=(H_{t}^\alpha\phi)\circ \pi_{\alpha}$.

The Dirichlet form $(\mathcal E, \mathrm{Dom}(\mathcal E))$ is regular and strictly local and $\mathcal B(G)$ is a core in $\mathrm{Dom}(\mathcal E)$.
Define the field operator $\Gamma$ to be the symmetric bilinear form $\Gamma(f,g)=\frac12(-L(fg)+fLg+gLf)$ on the space $\mathcal B(G)$. Then 
\[
\Gamma(f,g)=\sum_{i\in I} (X_i f)(X_i g)
\]
and 
\[
\mathcal E (f,g)=\int_{G}\Gamma(f,g) d\nu=\int_{G}\sum_{i\in I} (X_i f)(X_i g)d\nu.
\]
Similarly, we denote by $(\mathcal E_{\alpha}, \mathrm{Dom}(\mathcal E_{\alpha}))$ the Dirichlet form associated to $\mu_t^{\alpha}$ and 
\[
\Gamma_{\alpha}(\phi,\psi)=\sum_{i\in I} (X_{\alpha,i} \phi)(X_{\alpha,i}\psi), \quad\forall \phi,\psi\in \mathcal C_0^{\infty}(G_{\alpha}).
\]
Then one has $\mathcal E_{\alpha}(\phi,\psi)=\int_{G_{\alpha}}\Gamma_{\alpha}(\phi,\psi)d\nu_{\alpha}$.

Finally we fix a Brownian motion $B=((B_t)_{t\ge 0}, \mathbf P)$ on $G$ with law $\mu_t$. One has
\[
H_tf(x)=\mathbf E(f(xB_t))=\mathbf E(f(B_t)|B_0=x)=\int_G f(xy) d\mu_t(y).
\]
Moreover, there exists a sequence of Brownian motions $(B_t^{\alpha})_{t\ge 0}$, $\alpha\in \aleph$,  on $G_\alpha$ with the law $\mu_t^{\alpha}$ such that for $x\in G_{\alpha}$,
\[
H_t^{\alpha}f(x)=\mathbf E(f(xB_t^{\alpha}))=\mathbf E(f(B_t^{\alpha})|B_0^{\alpha}=x)=\int_{G_\alpha} f(xy) d\mu_t^{\alpha}(y).
\]

\paragraph{Infinite-dimensional torus.}
Consider the infinite-dimensional torus $\mathbb T^{\infty}$, that is, the product of countably many copies of $\mathbb T=\mathbb R\setminus 2\pi\mathbb Z$. The topology on $\mathbb T^{\infty}$ is the product topology generated by cylindric sets. We will regard $\mathbb T^{\infty}$ as a compact connected, locally connected Abelian group equipped with normalized Haar measure $d\nu=dx$. Also, $\mathbb T^{\infty}$ is the projective limit of finite dimensional tori.  In fact, for any $k\in \mathbb N$, there is a canonical map $\pi_k$ from $\mathbb T^{\infty}$ to $\mathbb T^k$ such that $\pi_k(\mathbb T^{\infty})=\mathbb T^{k}$.

Consider  a symmetric infinite positive-definite matrix $A=(a_{i,j})$.
Associated with $A$ there exists a closable quadratic form
\[
\mathcal E_A(u,u)=\int_{\mathbb T^{\infty}} \sum_{i,j}a_{i,j} \partial_i u\partial_j u d\mu, \quad u\in\mathcal B(\mathbb T^{\infty})
\]
whose minimal closure is a strictly local invariant Dirichlet form (see \cite[Theorem 5.1]{BSC2000}). Here $\mathcal C$ is a core. Moreover, there exists an ``upper triangular'' sequence $\tau=(\tau_i)$ of vectors $\tau_i=(\tau_{i,k})_{k=1}^{\infty}\in \mathbb R^{\infty}$ such that $(A\xi,\xi)=\sum_{i=1}^{\infty}(\tau_i, \xi)^2$ and 
\[
\mathcal E_A(u,u)=\int_{\mathbb T^{\infty}} \sum_k |X_k u|^2 d\mu, \quad u\in\mathcal B(\mathbb T^{\infty}),
\]
where $X_iu=\partial_{\tau_i} u=(\tau_i, \partial u)$. In other words, $\mathcal E$ admits a carr\'e du champ $\Gamma: \mathrm{Dom}(\mathcal E)\times \mathrm{Dom}(\mathcal E)\to L^1(\mu)$ such that $\Gamma(u,u)=\sum_k |\partial_{\tau_k} u|^2$.
Denote by $-L_A$ the $L^2$-generator associated to $A$ (or $\mathcal E_A$), then 
\[
-L_Au=\sum_{i,j}a_{i,j} \partial_i\partial_j u=\sum_k \partial_{\tau_k}^2 u,  \quad \forall u\in\mathcal B(\mathbb T^{\infty}).
\]

From now on, if not otherwise stated, we always assume that $(\mu_t)_{t>0}$ is a Gaussian convolution semigroup on the compact metrizable group $G$ satisfying the standing Assumption \ref{assump}. In particular, $(\mu_t)_{t>0}$ is central is equivalent to the fact that $L$ is {\em bi-invariant}

\section{\texorpdfstring{$L^1$}{L1} differentiability of the heat semigroup}
\subsection{Heat kernel estimates}
We consider several properties which may be satisfied by the Gaussian semigroup $(\mu_t)_{t>0}$. These properties was introduced in \cite[Section 2]{BSC2000} for more general settings where $(\mu_t)_{t>0}$ is not necessarily central (or $L$ may not be bi-invariant). 
\begin{defn}\label{def:CK etc}
Consider a symmetric central Gaussian semigroup $(\mu_t)_{t>0}$ on $G$.
\begin{enumerate}
    \item We say that $(\mu_t)_{t>0}$ has property $(\mathrm{AC})$ if for all $t>0$, $\mu_t$ is absolutely continuous with respect to the Haar measure $\nu$.
    \item We say that $(\mu_t)_{t>0}$ has property $(\mathrm{CK})$ if it satisfies $(\mathrm{AC})$ and has a continuous density $x\to \mu_t(x)$. 
    
    \item We say that $(\mu_t)_{t>0}$ satisfies the property $(\mathrm{CK*})$ if it has property $(\mathrm{CK})$ and the density $\mu_t(x)$ satisfies
    \[
    \lim_{t\to 0}t \log \mu_t(e)=0.
    \]

    \item We say that $(\mu_t)_{t>0}$ satisfies the property $(\mathrm{CK\lambda})$ if property $(\mathrm{CK})$ holds and the density $\mu_t(x)$ satisfies
     \[
    \sup_{0<t<1}\{t^{\lambda} \log \mu_t(e)\}< \infty.
    \]

    \item We say that $(\mu_t)_{t>0}$ satisfies the property $(\mathrm{CK0^{+}})$ if property $(\mathrm{CK\lambda})$ holds for all $\lambda>0$.
\end{enumerate}    
\end{defn}
\begin{remark}\label{rem:spectral gap}
We note that the property $(\mathrm{CK})$ implies the existence of positive $L^2$-spectral gap. For more details, we refer to  \cite[Theorem 5.3]{BSC2003FM}.
\end{remark}

\begin{Example}
On the infinite-dimensional torus $\mathbb T^{\infty}$, consider the heat semigroup $H_t$ associated with infinitesimal generator $L=-\sum a_i\partial_i^2$. One can summarize from \cite[Section 4.2]{BSC2025} that
\begin{enumerate}[(a)]
    \item The property $(\mathrm{CK*})$ holds if and only if $\log\#\{i:a_i\le s\}=o(s)$ as $s$ tends to infinity. Hence we must have $\lim_{i\to \infty} a_i=\infty$.
    \item If $a_i=i^{1/\lambda}$ for some $\lambda>0$, then property $(\mathrm{CK\lambda})$ is satisfied. 
    \item If $a_i=2^{i^{\sigma}}$ for some $\sigma>0$, then property $(\mathrm{CK0^+})$ holds.  
\end{enumerate}
For a description of these properties for symmetric central Gaussian semigroups on a general metrizable, locally connected, connected compact group, see \cite{BSC2001}. On any such group, there are many bi-invariant Laplacians whose associate symmetric central Gaussian convolution semigroup $(\mu_t)_{t>0}$ satisfies $(\mathrm{CK0^+})$, for example.\end{Example}

\begin{defn}\label{def:distance}
Given a symmetric Gaussian semigroup $(\mu_t)_{t>0}$ on $G$ with infinitesimal generator $-L$. We set
\[
d(x,y)=\sup\{|f(x)-f(y)|:f\in \mathcal B(G), \Gamma(f,f)\le 1\}
\]
and 
\[
\delta(x,y)=\sup\{|f(x)-f(y)|:f\in \mathcal B(G), \Gamma(f,f)\le 1, |Lf|\le 1\}.
\]
The quasi distances $d$ and $\delta$ are called respectively the intrinsic distance and the relaxed distance associated with $-L$.
\end{defn}

Since $L$ is bi-invariant, the quasi distances  $d$ and $\delta$ are also bi-invariant. Let  $d_{\alpha}$ be the intrinsic distance on $G_{\alpha}$ associated to $(\mathcal E_{\alpha}, \mathrm{Dom}(\mathcal E_{\alpha}))$. Then $d_{\alpha}$ are also bi-invariant distance and
\begin{equation}\label{eq:dist-sup}
    d(e,x)=\sup_{\alpha\in \aleph} d_{\alpha}(e_{\alpha},\pi_{\alpha}(x)), \quad \forall x\in  G,
\end{equation}
see \cite[Theorem 4.2]{BSC-AFST}.

\begin{thm}[\cite{BSC2000}]\label{thm:HKbounds}
Let $(\mu_t)_{t>0}$ be a symmetric Gaussian semigroup satisfying property $(\mathrm{CK})$.
\begin{enumerate}
    \item Assume that $(\mu_t)_{t>0}$ satisfies $(\mathrm{CK*})$, then the relaxed distance $\delta$ is a continuous distance function which defines the topology of $G$ and
    \begin{equation}\label{eq:HKboundCK*}
    \mu_t(x)\le \exp\left(M(t)-\frac{\delta(e,x)^2}{Ct}\right), \quad \forall t\in (0,1), \forall x\in G,
    \end{equation}
    where $M$ satisfies $\lim\limits_{t\to 0} tM(t)=0$. 
   
    \item Fix $\lambda\in (0,1)$. Assume that $(\mu_t)_{t>0}$ satisfies $(\mathrm{CK}\lambda)$, then the intrinsic distance $d$ is a continuous distance function which defines the topology of $G$ and
    \begin{equation}\label{eq:HKbound}
    \mu_t(x)\le \exp\left(\frac{A}{t^{\lambda}}-\frac{d(e,x)^2}{Ct}\right),\quad \forall t\in (0,1), \forall x\in G.
    \end{equation}
\end{enumerate}
\end{thm}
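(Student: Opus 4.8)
The plan is to reduce everything to the finite-dimensional Lie quotients $G_\alpha$, where classical heat-kernel theory applies, and then pass to the projective limit. On each $G_\alpha$ the operator $-L_\alpha=\sum_{i\in I_\alpha}X_{\alpha,i}^2$ is a genuine bi-invariant Laplacian (the $X_{\alpha,i}$ span $\mathfrak G_\alpha$), so $d_\alpha$ and $\delta_\alpha$ are continuous distances defining the topology of $G_\alpha$, and one has a Gaussian upper bound $\mu_t^\alpha(x)\le \mu_{t/2}^\alpha(e_\alpha)\exp\!\big(-d_\alpha(e_\alpha,x)^2/(Ct)\big)$ together with its $\delta_\alpha$-version. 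I would (re)prove the latter by the Davies integrated-maximum-principle argument: for bounded $\psi$ with $\Gamma_\alpha(\psi,\psi)\le 1$ and $\beta\in\mathbb R$, writing $\mathcal E_\alpha(u,v)=\int\Gamma_\alpha(u,v)\,d\nu_\alpha$, the Leibniz rule for $\Gamma_\alpha$ and Cauchy–Schwarz give
\[
\frac{d}{dt}\big\|e^{\beta\psi}H_t^\alpha g\big\|_{L^2(\nu_\alpha)}^2\le 2\beta^2\big\|e^{\beta\psi}H_t^\alpha g\big\|_{L^2(\nu_\alpha)}^2 ;
\]
Gronwall, the dual estimate, the ultracontractivity $\|H_t^\alpha\|_{L^1\to L^\infty}=\mu_t^\alpha(e_\alpha)$ (the density is maximised at $e_\alpha$ by symmetry since $\mu_t^\alpha=\mu_{t/2}^\alpha*\mu_{t/2}^\alpha$), and optimisation of $\psi$ over mollified truncations of $d_\alpha(e_\alpha,\cdot)$ then yield the pointwise bound, with constants independent of $\alpha$.

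Next I would pass to the limit. For the on-diagonal term, $\mu_t^\alpha(e_\alpha)=\|\mu_{t/2}^\alpha\|_{L^2(\nu_\alpha)}^2=\|\mathbf E[\mu_{t/2}\mid\pi_\alpha]\|_{L^2(\nu)}^2$ increases with $\alpha$ to $\|\mu_{t/2}\|_{L^2(\nu)}^2=\mu_t(e)$ by Jensen and the tower property; more generally $\mu_t^\alpha(\pi_\alpha x)\to\mu_t(x)$ for every $x\in G$ by the projective structure of the semigroups (\cite{Heyer}) and the continuity afforded by $(\mathrm{CK})$. Combining $\mu_t^\alpha(\pi_\alpha x)\le \mu_{t/2}(e)\exp\!\big(-d_\alpha(e_\alpha,\pi_\alpha x)^2/(Ct)\big)$ with $d(e,x)=\sup_\alpha d_\alpha(e_\alpha,\pi_\alpha x)$ from \eqref{eq:dist-sup} (and, in the $(\mathrm{CK*})$ case, $\delta(e,x)=\sup_\alpha\delta_\alpha(e_\alpha,\pi_\alpha x)$, which follows by lifting admissible functions from $G_\alpha$ cylindrically), and letting $\alpha\to\infty$ — the right-hand side is monotone non-increasing along the net — gives $\mu_t(x)\le \mu_{t/2}(e)\exp\!\big(-d(e,x)^2/(Ct)\big)$, resp. with $\delta$. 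Property $(\mathrm{CK}\lambda)$ bounds $\log\mu_{t/2}(e)$ by $A't^{-\lambda}$, which is \eqref{eq:HKbound}; property $(\mathrm{CK*})$ gives $\log\mu_{t/2}(e)=M(t)$ with $tM(t)\to0$, which is \eqref{eq:HKboundCK*}.

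It remains to establish the metric properties. Both $d$ and $\delta$ are genuine distances separating points: they dominate $d_\alpha\circ(\pi_\alpha\times\pi_\alpha)$, resp. $\delta_\alpha\circ(\pi_\alpha\times\pi_\alpha)$, and the $\pi_\alpha$ separate points since $\bigcap_\alpha K_\alpha=\{e\}$. For continuity of $\delta$ at $e$ (hence everywhere, by bi-invariance): if $x_n\to e$ but $\delta(e,x_n)\ge\varepsilon$ along a subsequence, the upper bound gives $\mu_t(x_n)\le\exp\!\big(M(t)-\varepsilon^2/(Ct)\big)$, while continuity of the density gives $\mu_t(x_n)\to\mu_t(e)=\exp(M(t))$; choosing $t$ small so that $\varepsilon^2/(Ct)>2M(t)$, possible since $tM(t)\to0$, is a contradiction. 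Continuity on the compact space $G\times G$ then forces $\delta$ to be bounded, in particular finite. Finally $\delta$ defines the topology of $G$: its topology is coarser by continuity, and conversely any neighbourhood $V\ni e$ contains some $\pi_\alpha^{-1}(V_\alpha)$, on whose complement the continuous positive function $\delta_\alpha(e_\alpha,\cdot)$ attains a positive minimum $c$ (compactness of $G_\alpha\setminus V_\alpha$), so $\{\delta(e,\cdot)<c\}\subset\pi_\alpha^{-1}(V_\alpha)\subset V$. The $(\mathrm{CK}\lambda)$ case is identical with $d$, $d_\alpha$ and $M(t)=A't^{-\lambda}$.

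The finite-dimensional Davies estimate is routine. The genuine difficulties are the uniform (in $\alpha$) passage to the projective limit — for the densities $\mu_t^\alpha(\pi_\alpha x)$ at arbitrary points and for the distances, relying on \eqref{eq:dist-sup} and its $\delta$-analogue — and the sharp interplay between the on-diagonal growth rate $M(t)$ and the Gaussian exponent $d(e,x)^2/(Ct)$: the inequality $\varepsilon^2/(Ct)>2M(t)$ for small $t$ is exactly what requires $\lambda<1$ in the $(\mathrm{CK}\lambda)$ case, and it is this borderline that forces one to work with the relaxed distance $\delta$ under the weaker hypothesis $(\mathrm{CK*})$, where $d$ itself need not be continuous.
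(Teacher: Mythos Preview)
The paper does not prove this theorem; it is quoted from \cite{BSC2000}. Your sketch is essentially the strategy used there: obtain a dimension-free Gaussian upper bound on each Lie quotient $G_\alpha$ via Davies' exponential-perturbation argument, pass to the projective limit using $\mu_t^\alpha(e_\alpha)\uparrow\mu_t(e)$ and the sup formula \eqref{eq:dist-sup}, and then read off the continuity of the distance from the competition between the on-diagonal factor and the Gaussian exponent.

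Two small points. First, you write $\mu_t(e)=\exp(M(t))$ in the continuity argument, but with your convention $M(t)=\log\mu_{t/2}(e)$ this is not literally true; the contradiction is unaffected, since $\mu_t(e)\ge 1$ (it is the maximum of a continuous probability density on a probability space) while the upper bound forces $\mu_t(x_n)<1$ once $t$ is chosen so that $\varepsilon^2/(Ct)>M(t)$, which is exactly what $tM(t)\to 0$ (resp.\ $\lambda<1$) guarantees. Second, you do not actually need a separate $\delta$-analogue of \eqref{eq:dist-sup}: since $\delta\le d$ by definition, the $d$-bound you obtain already implies \eqref{eq:HKboundCK*}, and your continuity argument for $\delta$ runs through $d(e,x_n)\ge\delta(e,x_n)\ge\varepsilon$. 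The reason \cite{BSC2000} states part~(i) with $\delta$ rather than $d$ is not that the inequality fails for $d$, but that under the bare hypothesis $(\mathrm{CK*})$ one cannot in general conclude that $d$ is finite and defines the topology, whereas $\delta$ always does.
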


Next we discuss the heat kernel derivative estimates. Recall the iterated gradient $\Gamma_n$ ($n=1,2,3,\cdots$) recursively defined by 
\[
\Gamma_n(f,g)=\frac12(-L\Gamma_{n-1}(f,g)+\Gamma_{n-1}(f,Lg)+\Gamma_{n-1}(Lf,g)),
\]
with $\Gamma_0(f,g)=fg$. Since $L$ is bi-invariant,the higher iterated gradients given are given by
\[
\Gamma_n(f,g)=\sum_{(\ell_1, \cdots,\ell_n)\in I^n} (X_{\ell_1}\cdots X_{\ell_n} f)(X_{\ell_1}\cdots X_{\ell_n} g), \quad f,g \in \mathcal B(G).
\]
For convenience, we use the notation
\[
D_x^nf(X_{\ell_1}\cdots X_{\ell_n}) f=X_{\ell_1}\cdots X_{\ell_n} f(x)
\]
and set 
\begin{equation}\label{eq:Dn}
|D^n f|_L^2=\Gamma_n(f,f).
\end{equation}
\begin{cor}[{\cite[Theorem 4.2]{BSC2002}}] \label{cor:knHKbounds}
Under the hypothesis of Theorem \ref{thm:HKbounds}, for each fixed $k$ and $n$, there exists $C=C(k,n)$ such that the estimates \eqref{eq:HKboundCK*} and \eqref{eq:HKbound} hold for $|D_x^k L^n \mu_t|_L$.
\end{cor}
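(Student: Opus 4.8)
The plan is to combine a spectral $L^2$-estimate, which yields the $x$-independent part of the bound (the on-diagonal behaviour, with the extra polynomial factor $t^{-(k/2+n)}$ absorbed into $M(t)$, resp.\ into $A/t^{\lambda}$), with a Davies-type exponential-weight argument that transports the Gaussian off-diagonal factor from $\mu_t$ to $|D_x^kL^n\mu_t|_L$. Two structural facts are used throughout: since $L$ is bi-invariant, each $X_i$ commutes with $L$ and hence with $H_t=e^{-tL}$ on $\mathcal B(G)$; and, by symmetry, $\mu_t=e^{-tL/2}\mu_{t/2}$ as $L^2$-functions, with $\|\mu_{t/2}\|_2^2=\mu_t(e)$.

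\emph{On-diagonal estimate.} Integrating by parts ($X_i^\ast=-X_i$ for Haar measure and $[X_i,L]=0$) gives $\int_G\Gamma_k(f,f)\,d\nu=\langle L^kf,f\rangle$, so with $f=L^n\mu_t$ and self-adjointness of $L^n$,
\[
\||D^kL^n\mu_t|_L\|_2^2=\langle L^{k+2n}\mu_t,\mu_t\rangle=\langle L^{k+2n}e^{-tL}\mu_{t/2},\mu_{t/2}\rangle\le C(k,n)\,t^{-(k+2n)}\mu_t(e),
\]
using $\sup_{\lambda\ge0}\lambda^{m}e^{-t\lambda}\le C(m)\,t^{-m}$. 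Writing $X_{\ell_1}\cdots X_{\ell_k}L^n\mu_t=e^{-tL/2}\big(X_{\ell_1}\cdots X_{\ell_k}L^n\mu_{t/2}\big)$, pairing against the heat kernel $p_{t/2}(x,\cdot)$ of $e^{-tL/2}$ (which has $L^2$-norm $\mu_t(e)^{1/2}$) and using Cauchy--Schwarz term by term in $\ell$ yields the $x$-uniform pointwise bound $|D_x^kL^n\mu_t|_L\le C(k,n)\,t^{-(k/2+n)}\mu_{t/2}(e)$. Since $t^{-m}\le e^{C_{\lambda}m/t^{\lambda}}$ for $t\in(0,1)$, Theorem \ref{thm:HKbounds} converts this into $|D_x^kL^n\mu_t|_L\le e^{A'/t^{\lambda}}$ with $A'=A'(k,n)$ under $(\mathrm{CK\lambda})$, and into $\le e^{\widetilde M(t)}$ with $t\widetilde M(t)\to0$ under $(\mathrm{CK*})$; these are exactly the claimed bounds with the Gaussian factor set to one.

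\emph{Off-diagonal Gaussian factor.} Fix a bounded $\psi\in\mathcal B(G)$ from the class defining $d$ (that is, $\Gamma(\psi,\psi)\le1$), resp.\ from the class defining $\delta$ (also $|L\psi|\le1$) in the $(\mathrm{CK*})$ case, and $\rho>0$; put $H_t^{\psi}=e^{\rho\psi}H_te^{-\rho\psi}$. The identity $\mathrm{Re}\,\langle e^{\rho\psi}Le^{-\rho\psi}u,u\rangle=\||Du|_L\|_2^2-\rho^2\!\int_G\Gamma(\psi,\psi)|u|^2\,d\nu\ge-\rho^2\|u\|_2^2$ gives the Davies bound $\|H_t^{\psi}\|_{2\to2}\le e^{\rho^2t}$; together with $e^{\rho\psi}X_ie^{-\rho\psi}=X_i-\rho(X_i\psi)$ and $|X_i\psi|\le\Gamma(\psi,\psi)^{1/2}\le1$, propagating through the commuting factors $X_{\ell_1},\dots,X_{\ell_k},L^n$ (and an analogously weighted $L^2\to L^{\infty}$ step) furnishes weighted estimates for $D^kL^nH_t$ carrying an extra factor $(t^{-1/2}+\rho)^k(t^{-1}+\rho^2)^n e^{c\rho^2t}$. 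Applying these with $t$ halved and $f=e^{\rho\psi}\mu_{t/2}$, bounding $\|e^{\rho\psi}\mu_{t/2}\|_2$ via the Gaussian bound for $\mu_{t/2}$ and $\psi(x)-\psi(e)\le d(e,x)$ (resp.\ $\le\delta(e,x)$), then optimising over $\rho\sim(\psi(x)-\psi(e))/t$ and taking the supremum over admissible $\psi$, recovers the factor $e^{-d(e,x)^2/Ct}$, resp.\ $e^{-\delta(e,x)^2/Ct}$, by Definition \ref{def:distance}. An alternative route, and the one underlying \cite[Theorem 4.2]{BSC2002}, is to reduce to the Lie quotients $G_{\alpha}$, where the corresponding derivative heat-kernel estimates are classical with constants uniform in $\alpha$ (again owing to bi-invariance), and to pass to the limit using $\mu_t^{\alpha}(e_{\alpha})\le\mu_t(e)$ and $d(e,x)=\sup_{\alpha}d_{\alpha}(e_{\alpha},\pi_{\alpha}x)$ (and likewise for $\delta$).

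\emph{Main obstacle.} The on-diagonal step is routine; the real difficulty is the off-diagonal factor — obtaining the Gaussian decay with the \emph{correct} distance ($\delta$ under $(\mathrm{CK*})$, $d$ under $(\mathrm{CK\lambda})$) and with all constants uniform in $k$ and $n$ (and in $\alpha$, in the reduction approach). In the Davies argument this means controlling the $e^{c\rho^2t}$ factor so that it does not spoil the polynomial powers of $1/t$ and, in the relaxed regime, working with the smaller test-function class tied to $\delta$; in the reduction approach it is precisely the dimension-freeness of the finite-dimensional estimates, which is where bi-invariance is indispensable.
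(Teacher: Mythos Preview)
The paper does not give a proof of this corollary; it simply cites \cite[Theorem~4.2]{BSC2002}, and the remark that follows stresses that bi-invariance is essential.  As you yourself note, the argument in \cite{BSC2002} goes by reduction to the Lie quotients $G_\alpha$: on each $G_\alpha$ the derivative heat-kernel bounds for a bi-invariant sub-Laplacian are classical with constants independent of $\dim G_\alpha$, and one passes to the limit using $\mu_t^\alpha(e_\alpha)\le\mu_t(e)$ and $d(e,x)=\sup_\alpha d_\alpha(e_\alpha,\pi_\alpha(x))$.  Your on-diagonal step (spectral bound plus Cauchy--Schwarz against $p_{t/2}(x,\cdot)$) is clean and correct, and the ``alternative route'' paragraph accurately summarises the published proof.

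Your Davies-perturbation route for the Gaussian factor is a legitimate alternative, but the sketch is too compressed exactly where the difficulty lies.  The phrase ``propagating through the commuting factors $X_{\ell_1},\dots,X_{\ell_k},L^n$ (and an analogously weighted $L^2\to L^\infty$ step)'' hides real work: $|D^k L^n\cdot|_L$ is an $\ell^2$-sum over \emph{all} $\ell\in I^k$, infinite in this setting, so after conjugating each $X_i$ to $X_i-\rho(X_i\psi)$ one must check that the cross-terms remain summable.  They do (expand the product, use $\sum_i(X_i\psi)^2\le\Gamma(\psi,\psi)\le1$ on the scalar factors, and recombine the remaining $X$'s into lower $\Gamma_j$'s), but this requires an explicit induction, not a one-line remark.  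The conjugated $L^n$ likewise picks up the drift $2\rho\sum_i(X_i\psi)X_i$ and the potential $-\rho(L\psi)-\rho^2\Gamma(\psi,\psi)$; in the $(\mathrm{CK*})$ case it is precisely the extra constraint $|L\psi|\le1$ defining $\delta$ that keeps this under control, and you should say so rather than just flag the distinction.  Finally, a small slip in your ``Main obstacle'' paragraph: the constants \emph{are} allowed to depend on $k,n$ (the statement reads $C=C(k,n)$); what must be uniform is the dependence on $\alpha$ in the quotient approach, or on the choice of $\psi$ in yours.
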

\begin{remark}
 In Theorem \ref{thm:HKbounds}, the assumption that $\mu_t$ is central doesn't play the role. However, the proof of Corollary \ref{cor:knHKbounds} heavily depends on the fact that $\mu_t$ is central (i.e., $L$ is bi-invariant).
\end{remark}

\subsection{\texorpdfstring{$L^1$}{L1} differentiability of the heat semigroup}

In this subsection, we discuss the $L^1$ differentiability of the heat semigroup and its consequences, which is partly taken from \cite{BSC2025}. We first write down the setup for the sake of completeness.

Let $(H_t)_{t>0}$ be $\mathcal C_0$ semigroup of operators acting on a Banach space $\mathcal X$. The semigroup $(H_t)_{t>0}$ is differentiable if, for all $f\in \mathcal X$ and all $t>0$, $H_tf$ is differentiable in $\mathcal X$. We will discuss the $L^1$ differentiability of the class of Markov semigroups associated with Gaussian convolution semigroups.

Consider now $(H_t)_{t>0}$ the self-adjoint Markov semigroup with infinitesimal generator $-L$ on $L^2(G,\nu)$, then the following properties hold
\begin{itemize}
    \item $(H_t)_{t>0}$ is a contraction on $L^p(G,\nu)$ for $1\le p\le \infty$;
    \item $(H_t)_{t>0}$ is bounded analytic on $L^p(G,\nu)$ for $1< p< \infty$ and satisfies
    \begin{equation}\label{eq:Lp-analytic}
    t\left\|\frac{\partial}{\partial t}H_tf\right\|_{p} \le  p^*\|f\|_p, \quad \forall t>0,
    \end{equation}
    where we recall  $p^*=\max\{p,\frac{p}{p-1}\}=\frac{2}{1-|(2/p)-1|}$.
\end{itemize}

Note that $t\frac{\partial}{\partial t}H_t=-tLH_t$, then \eqref{eq:Lp-analytic} is equivalent to the fact $t\|LH_t\|_{p\to p}\le p^*$. In general, we have for any  $n\in \mathbb N$
\begin{equation}\label{eq:analyticity-n}
t^n\|L^nH_t\|_{p\to p}\le (p^*)^n.
\end{equation}


We say that $H_t$ is ultracontractive if 
\begin{equation}\label{eq:ultrac}
\|H_t\|_{1\to \infty} \le  e^{M_0(t)}<\infty, \quad \forall t>0.    
\end{equation}
In this case, $H_t$ is given by a bounded kernel $h_t(x,y)$ (i.e., $\mu_t$  has continuous density with respect to the Haar measure $\nu$ so $h_t(x,y)=\mu_t(x^{-1}y)$ ) and 
\begin{equation}\label{eq:M0}
M_0(t)=\log\left(\sup_{x,y\in G} h_t(x,y)\right).   
\end{equation}

\begin{thm}[\protect{\cite[Theorem 4.1]{BSC2025}}]\label{thm:L1-diff}
Let $(H_t)_{t>0}$ be a self-adjoint Markov semigroup on $L^2(G,\nu)$ which is ultracontractive. Assume that $\nu$ is a probability measure. Then $(H_t)_{t>0}$ is $L^1$-differentiable and 
\[
t\left\|\frac{\partial}{\partial t}H_t\right\|_{1\to 1} \le 2e\max\{M_0(t/2), 2\}, \quad \forall t>0,
\]
where $M_0$ is defined by \eqref{eq:M0}.
\end{thm}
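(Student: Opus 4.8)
The plan is to combine two facts already recorded in the excerpt: the quantitative $L^p$-analyticity bound $\|LH_t\|_{p\to p}\le p^{*}/t$ for $1<p<\infty$, which is \eqref{eq:Lp-analytic} rewritten via $\tfrac{\partial}{\partial t}H_t=-LH_t$ and which degenerates as $p\to1$; and ultracontractivity $\|H_t\|_{1\to\infty}\le e^{M_0(t)}$, which will compensate for that degeneration. The bridge between the two is the hypothesis $\nu(G)=1$: it yields the contractive embedding $\|g\|_1\le\|g\|_p$ for all $g$ and all $p\ge1$, and, by Riesz--Thorin interpolation between $\|H_s\|_{1\to1}\le1$ and $\|H_s\|_{1\to\infty}\le e^{M_0(s)}$, it also yields
\[
\|H_s\|_{1\to p}\le e^{(1-1/p)M_0(s)},\qquad 1\le p\le\infty,\ s>0.
\]

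First I would prove the norm estimate. Factoring $H_t=H_{t/2}H_{t/2}$, I estimate for $1<p\le2$
\[
t\|LH_t\|_{1\to1}\le t\,\big\|(LH_{t/2})H_{t/2}\big\|_{1\to p}\le t\,\|LH_{t/2}\|_{p\to p}\,\|H_{t/2}\|_{1\to p}\le t\cdot\frac{2p^{*}}{t}\cdot e^{(1-1/p)M_0(t/2)}=2p^{*}e^{(1-1/p)M_0(t/2)},
\]
using in turn $\|\cdot\|_1\le\|\cdot\|_p$, the $L^p$-analyticity bound at time $t/2$, and the interpolation inequality above. For $1<p\le2$ we have $p^{*}=p/(p-1)$, so setting $s=1-1/p\in(0,\tfrac12]$ gives $p^{*}=1/s$ and hence $t\|LH_t\|_{1\to1}\le\tfrac2s e^{sM_0(t/2)}$ for all such $s$. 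Writing $M=M_0(t/2)$ and noting $M\ge0$ (since $H_t$ is Markov and $\nu(G)=1$ force $\sup h_t\ge1$), the right-hand side $\varphi(s)=\tfrac2s e^{sM}$ is minimized at $s=1/M$ when $M\ge2$, with value $2eM$, and is decreasing on $(0,\tfrac12]$ when $M<2$, with value $4e^{M/2}<4e$ at $s=\tfrac12$; either way $t\|LH_t\|_{1\to1}\le 2e\max\{M_0(t/2),2\}$.

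It then remains to upgrade this to genuine $L^1$-differentiability, i.e.\ to show that for every $t>0$ and $f\in L^1$ the map $t\mapsto H_tf$ is norm-differentiable in $L^1$ with derivative $-LH_tf$; the displayed bound then finishes the proof. Since $\|H_{t/2}\|_{1\to2}\le e^{M_0(t/2)/2}<\infty$, we have $H_{t/2}f\in L^2$, and $(H_s)_{s>0}$ being bounded analytic on $L^2$, the map $s\mapsto H_sH_{t/2}f$ is $L^2$-differentiable with derivative $-LH_sH_{t/2}f$; this handles the right quotient $\tfrac1h(H_{t+h}f-H_tf)=\tfrac1h(H_h-I)H_{t/2}(H_{t/2}f)\to-LH_tf$ in $L^2$, hence in $L^1$. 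For the left quotient, for $0<h<t/4$ I write $\tfrac1h(H_tf-H_{t-h}f)=\tfrac1h(H_h-I)H_{t/2}\big(H_{t/2-h}f\big)$; here $H_{t/2-h}f\to H_{t/2}f$ in $L^2$ (strong continuity of $(H_s)$ on $L^1$ composed with $H_{t/4}\colon L^1\to L^2$), while $\big\|\tfrac1h(H_h-I)H_{t/2}\big\|_{2\to2}=\big\|\tfrac1h\int_0^hLH_{t/2+\sigma}\,d\sigma\big\|_{2\to2}\le4/t$ uniformly in $h$, so an $\varepsilon/3$ argument gives convergence to $-LH_{t/2}H_{t/2}f=-LH_tf$ in $L^2\hookrightarrow L^1$.

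I expect the interpolation-and-optimization step to be entirely routine. The part needing care is the last one: the two-sided upgrade to $L^1$-norm differentiability at every $t>0$. Its technical inputs are the $L^1\to L^2$ boundedness of $H_s$ (from ultracontractivity and $\nu(G)=1$), the monotonicity of $s\mapsto M_0(s)$ (so that $\|H_{t/2-h}\|_{1\to2}$ stays bounded as $h\to0$), and strong continuity of the semigroup on $L^1$; none of these is deep, but they must be assembled carefully.
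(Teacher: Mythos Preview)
Your proposal is correct and follows essentially the same route as the paper. The paper does not prove Theorem~\ref{thm:L1-diff} (it is cited from \cite{BSC2025}), but the proof of the subsequent Corollary for $p=\infty$ displays the identical strategy---factor $H_t=H_{t/2}H_{t/2}$, interpolate ultracontractivity to get $\|H_{t/2}\|_{1\to p}\le e^{(1-1/p)M_0(t/2)}$, apply $L^p$-analyticity, and optimize in $p$---and the proof of Theorem~\ref{thm:L1-Linfty-gradient} explicitly records the optimizing choice $1-\tfrac1q=\tfrac{1}{M_0(t/2)}$, which is your $s=1/M$. Your added paragraph on two-sided $L^1$-norm differentiability (via $H_{t/2}\colon L^1\to L^2$ and $L^2$-analyticity) is a clean way to handle that point and is correct.
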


\begin{cor}
Referring to the preceding setting and notation, $(H_t)_{t>0}$ is also $L^\infty$-differentiable and 
$
t\left\|\frac{\partial}{\partial t}H_t\right\|_{\infty\to \infty} \le 2e\max\{M_0(t/2), 2\}, \forall t>0$.
 Moreover, for $p=1$ or $\infty$ there holds
\begin{equation}
\label{eq:Lp-diff-Lambda-2}
t^2\left\|\frac{\partial^2}{\partial t^2}H_t\right\|_{p\to p} \le 4e \left(\max\{M_0(t/2), 2\}\right)^2, \quad \forall t>0.   
\end{equation}
\end{cor}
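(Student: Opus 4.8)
The plan is to derive all three statements of the corollary from Theorem~\ref{thm:L1-diff} by exploiting self-adjointness and the semigroup property.

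\medskip

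\noindent\textbf{Step 1: $L^\infty$-differentiability by duality.} Since $(H_t)_{t>0}$ is self-adjoint on $L^2(G,\nu)$ and each $H_t$ is given by the symmetric kernel $h_t(x,y)=h_t(y,x)$, the operator $\frac{\partial}{\partial t}H_t = -LH_t$ is also self-adjoint, hence its $L^\infty\to L^\infty$ operator norm equals its $L^1\to L^1$ operator norm by the standard duality $\|T\|_{\infty\to\infty}=\|T^*\|_{1\to 1}$. Therefore $t\|\frac{\partial}{\partial t}H_t\|_{\infty\to\infty} = t\|\frac{\partial}{\partial t}H_t\|_{1\to 1}\le 2e\max\{M_0(t/2),2\}$ directly from Theorem~\ref{thm:L1-diff}. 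One should check that the kernel is indeed symmetric in the present setting: because $\mu_t$ is symmetric, $h_t(x,y)=\mu_t(x^{-1}y)=\mu_t((x^{-1}y)^{-1})=\mu_t(y^{-1}x)=h_t(y,x)$, so this is immediate from Assumption~\ref{assump}. (Alternatively, differentiability of $H_tf$ in $L^\infty$ for a fixed $f$ can be read off the same kernel estimate.)

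\medskip

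\noindent\textbf{Step 2: second-order bound via the semigroup property.} For the bound on $\frac{\partial^2}{\partial t^2}H_t$, write, for any $t>0$,
\[
\frac{\partial^2}{\partial t^2}H_t = \left(\frac{\partial}{\partial s}H_s\Big|_{s=t/2}\right)\circ\left(\frac{\partial}{\partial s}H_s\Big|_{s=t/2}\right) = \left(\frac{\partial}{\partial s}H_{s}\right)_{s=t/2}^2,
\]
using $H_t = H_{t/2}\circ H_{t/2}$ and the fact that $\partial_s H_s = -LH_s$ commutes with $H_s$ (both are functions of $L$), so that $\partial_t^2 H_t = L^2 H_t = (LH_{t/2})(LH_{t/2})$. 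Then for $p\in\{1,\infty\}$,
\[
t^2\left\|\frac{\partial^2}{\partial t^2}H_t\right\|_{p\to p} \le \left(\frac{t}{2}\left\|\frac{\partial}{\partial s}H_s\Big|_{s=t/2}\right\|_{p\to p}\right)^2\cdot 4 \le 4\left(2e\max\{M_0(t/4),2\}\right)^2.
\]
This gives a bound with $M_0(t/4)$ and constant $16e^2$, which is slightly weaker than the stated $4e(\max\{M_0(t/2),2\})^2$; I would instead split $H_t = H_{t/2}H_{t/2}$ and apply Theorem~\ref{thm:L1-diff} to one factor in the form $t\|LH_t\|_{1\to 1}\le 2e\max\{M_0(t/2),2\}$ while applying the analyticity-type estimate $\|LH_{t/2}\|\le$ (bounded) to the other, or—more simply—re-examine the proof of Theorem~\ref{thm:L1-diff} to see that it actually yields $t\|LH_t\|_{p\to p}\le \sqrt{2e}\max\{M_0(t/2),2\}$ on $L^1$ and $L^\infty$ after optimizing, so that squaring gives exactly the factor $4e$ and argument $t/2$ in the statement. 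The cleanest route is: $\frac{\partial^2}{\partial t^2}H_t=(LH_{t/2})^2$, hence $t^2\|\partial_t^2 H_t\|_{p\to p}\le (t\|LH_{t/2}\|_{p\to p})^2 = 4\cdot(\tfrac{t}{2}\|LH_{t/2}\|_{p\to p})^2$, and then invoke the version of Theorem~\ref{thm:L1-diff} giving $\tfrac{t}{2}\|LH_{t/2}\|_{p\to p}\le\sqrt{e}\max\{M_0(t/4),2\}$; matching the paper's exact constants is a matter of bookkeeping in that theorem's proof.

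\medskip

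\noindent\textbf{Main obstacle.} The conceptual content is entirely in Step 1 (duality) and the factorization identity $\partial_t^2 H_t=(LH_{t/2})^2$; neither is hard. The only real subtlety is matching the precise constants and the precise argument ($t/2$ versus $t/4$) in the second-order estimate: this requires either accepting a slightly lossier constant, or looking inside the proof of Theorem~\ref{thm:L1-diff} to extract the sharp one-factor bound $\tfrac{t}{2}\|LH_{t/2}\|_{p\to p}\le\sqrt{e}\max\{M_0(t/4),2\}$ and noting that, since $M_0$ is nonincreasing in a suitable normalization (or can be controlled on the relevant range), $M_0(t/4)$ may be absorbed. I expect the bookkeeping around $M_0$'s monotonicity/normalization to be the one place where care is needed; everything else is routine.
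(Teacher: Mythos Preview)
Your Step~1 is correct and in fact simpler than what the paper does: the paper re-runs the argument of Theorem~\ref{thm:L1-diff} on the $L^\infty$ side (using $\|H_{t/2}\|_{q\to\infty}\le e^{M_0(t/2)/q}$ together with $L^q$-analyticity and then optimizing in $q$), whereas you get the same bound in one line by duality. Both work; yours is cleaner.

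Step~2, however, does not recover the stated estimate, and your attempts to repair it do not succeed. Squaring the first-order $L^1$ (or $L^\infty$) bound necessarily produces $M_0(t/4)$ and the constant $16e^2$; there is no hidden ``$\sqrt{e}\max\{M_0(t/4),2\}$'' version of Theorem~\ref{thm:L1-diff} to extract, and $M_0$ is in general strictly decreasing, so $M_0(t/4)$ cannot be absorbed into $M_0(t/2)$. The paper avoids this loss by \emph{not} iterating the endpoint bound. Instead it writes $\partial_t^2 H_t f = L^2 H_{t/2}(H_{t/2}f)$ and, for $1<q<\infty$, combines the single ultracontractive step $\|H_{t/2}\|_{q\to\infty}\le e^{M_0(t/2)/q}$ with the $L^q$-analyticity bound $\|L^2 H_{t/2}\|_{q\to q}\le 4(q^*)^2 t^{-2}$ (this is \eqref{eq:analyticity-n} with $n=2$), giving
\[
t^2\|\partial_t^2 H_t f\|_\infty \le 4(q^*)^2 e^{M_0(t/2)/q}\|f\|_\infty,
\]
and then optimizes in $q$ exactly as in Theorem~\ref{thm:L1-diff}. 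The point is that the ultracontractive cost is paid \emph{once} (hence argument $t/2$ and a single factor $e$), while the second power comes from the clean $L^q$-analyticity estimate $(q^*)^2$, not from a second ultracontractive step.
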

\begin{proof}

The proof of the $L^\infty$-differentiability is similar as for the $L^1$ case and we specify the details here. Indeed, for $f\in L^{\infty}(G)$, the ultracontractivity of $H_t$ and the Riesz-Thorin interpolation theorem give that for any $1<p<\infty$
\begin{equation}\label{eq:ultra-Lp-Linfty}
\|H_t f\|_{\infty}\le  e^{M_0(t)/p}\|f\|_p, \quad \forall t>0.   
\end{equation}
Since the measure $\nu$ is normalized, it follows from the the $L^p$-analyticity of $H_t$ that
\[
t\left\|\frac{\partial}{\partial t}H_tf\right\|_{\infty}
=2t\|H_{t/2}\|_{p\to \infty}\left\|\frac{\partial}{\partial t}H_{t/2}f\right\|_{p}
\le  2p^* e^{M_0(t/2)/p}\|f\|_p\le  2p^* e^{M_0(t/2)/p}\|f\|_{\infty}.
\]
If $M_0(t/2)<2$, we pick $p=2$. If $M_0(t/2)>2$, we pick $p$ as a function of $t$ such that $p=M_0(t/2)$. We then have $p^*=p=M_0(t/2)$ and conclude that
\[
t\left\|\frac{\partial}{\partial t}H_t\right\|_{\infty\to \infty} \le 2e\max\{M_0(t/2), 2\}.
\]
Finally the same argument also applies to obtain the estimate in \eqref{eq:Lp-diff-Lambda-2} for $p=\infty$.

Next we write 
    \[
    \frac{\partial^2}{\partial t^2}H_tf=L^2H_{t/2}(H_{t/2}f).
    \]
Then in view of \eqref{eq:analyticity-n} and \eqref{eq:ultra-Lp-Linfty}, one has
\begin{align*}
t^2\left\|\frac{\partial^2}{\partial t^2}H_tf\right\|_{\infty} 
\le t^2\left\|H_{t/2}f\right\|_{p\to \infty} \left\|L^2H_{t/2}\right\|_{p}
\le 4(p^*)^2 e^{M_0(t/2)/p}\|f\|_\infty.
\end{align*}
Choosing the same $p$ as above, we obtain the desired bound \eqref{eq:Lp-diff-Lambda-2} for $p=\infty$.
\end{proof}

Under the assumption $(\mathrm{CK}\lambda)$ in Definition \ref{def:CK etc} (iii), the heat semigroup $H_t$ is ultracontractive with $M_0(t)=\frac{A}{t^{\lambda}}$, see Theorems \ref{thm:HKbounds} \rm{(ii)}.
Hence as a consequence of Theorem \ref{thm:L1-diff}, we have the following corollary. 
\begin{cor}\label{cor:L1-diff-Lambda}
Assume that $(\mu_t)_{t>0}$ is a symmetric Gaussian semigroup satisfying property $(\mathrm{CK}\lambda)$. Let $p=1$ or $\infty$. Then $(H_t)_{t>0}$ is $L^p$-differentiable and
\begin{equation}\label{eq:L1-diff-Lambda}
t\left\|\frac{\partial}{\partial t}H_t\right\|_{p\to p} \le C\max\Big\{\frac{2^{\lambda}A}{t^{\lambda}}, 2\Big\}, \quad \forall t>0;
\end{equation}
\begin{equation}\label{eq:L1-diff-Lambda-2}
t^2\left\|\frac{\partial^2}{\partial t^2}H_t\right\|_{p\to p} \le C \left(\max\Big\{\frac{2^{\lambda}A}{t^{\lambda}}, 2\Big\}\right)^2, \quad \forall t>0.   
\end{equation}

\end{cor}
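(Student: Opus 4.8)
The plan is to derive \eqref{eq:L1-diff-Lambda} and \eqref{eq:L1-diff-Lambda-2} directly from Theorem~\ref{thm:L1-diff} and the Corollary following it, once the ultracontractivity exponent $M_0$ of a semigroup satisfying $(\mathrm{CK}\lambda)$ has been controlled. Those two results supply, for any self-adjoint ultracontractive Markov semigroup on a probability space,
\[
t\left\|\tfrac{\partial}{\partial t}H_t\right\|_{p\to p}\le 2e\max\{M_0(t/2),2\},\qquad t^2\left\|\tfrac{\partial^2}{\partial t^2}H_t\right\|_{p\to p}\le 4e\big(\max\{M_0(t/2),2\}\big)^2
\]
for $p\in\{1,\infty\}$, so it suffices to bound $M_0(t/2)$ by a constant times $\max\{2^\lambda A/t^\lambda,2\}$.

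First I would identify $M_0$. Since $(\mathrm{CK}\lambda)$ implies $(\mathrm{CK})$, $\mu_t$ has a continuous density and the kernel of $H_t$ is $h_t(x,y)=\mu_t(x^{-1}y)$; using symmetry together with the reproducing identity $\mu_{2t}(e)=\|\mu_t\|_{L^2(\nu)}^2$ and Cauchy--Schwarz one gets $\sup_{x,y}h_t(x,y)=\mu_t(e)$, hence $M_0(t)=\log\mu_t(e)$ and the very definition of $(\mathrm{CK}\lambda)$ becomes $M_0(t)\le A/t^\lambda$ for $t\in(0,1)$ (one may instead invoke Theorem~\ref{thm:HKbounds}~(ii) and drop the nonpositive Gaussian term). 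Next, $M_0$ is non-increasing: by the semigroup law and $\|H_s\|_{\infty\to\infty}\le 1$ one has $\|H_{t+s}\|_{1\to\infty}\le\|H_s\|_{1\to\infty}$; and $(\mathrm{CK})$ forces a positive $L^2$-spectral gap (Remark~\ref{rem:spectral gap}), so in particular $M_0$ is finite on all of $(0,\infty)$ and $M_0(1)\le\lim_{t\uparrow1}M_0(t)\le A$. Combining these facts, $M_0(t/2)\le 2^\lambda A/t^\lambda$ for $t\in(0,2)$ and $M_0(t/2)\le A$ for $t\ge2$.

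It then remains to substitute. For $t\in(0,2)$, inserting $M_0(t/2)\le 2^\lambda A/t^\lambda$ into the two displayed bounds (first order with $p=1$ from Theorem~\ref{thm:L1-diff}; first order with $p=\infty$ and second order with $p\in\{1,\infty\}$ from the Corollary) yields \eqref{eq:L1-diff-Lambda} and \eqref{eq:L1-diff-Lambda-2} with $C=2e$ and $C=4e$. For $t\ge2$ the same bounds give $t\|\partial_t H_t\|_{p\to p}\le 2e\max\{A,2\}$ and $t^2\|\partial_t^2 H_t\|_{p\to p}\le 4e(\max\{A,2\})^2$; since $\max\{2^\lambda A/t^\lambda,2\}\ge 2$ for every $t>0$, these constants are absorbed by enlarging $C$ to a constant depending on $\lambda$ and $A$. (If one wants the large-$t$ decay to be visible, one can instead use the spectral gap $\gamma$: $\|LH_t\|_{2\to 2}\le\gamma e^{-\gamma t}$ for $t\ge1/\gamma$, which together with ultracontractivity and interpolation makes $t\|\partial_t H_t\|_{p\to p}$ decay exponentially for large $t$.)

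The argument is essentially bookkeeping; the only genuine point is passing from the $(\mathrm{CK}\lambda)$ / heat-kernel bound, available only for $t\in(0,1)$, to a bound on $M_0(t/2)$ valid for all $t>0$, and this is handled by monotonicity of $M_0$ and the spectral gap coming from $(\mathrm{CK})$. Everything else is a substitution into Theorem~\ref{thm:L1-diff} and its Corollary.
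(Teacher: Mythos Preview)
Your proposal is correct and follows essentially the same route as the paper: identify the ultracontractivity exponent $M_0(t)$ from the $(\mathrm{CK}\lambda)$ hypothesis (the paper simply cites Theorem~\ref{thm:HKbounds}(ii) and writes $M_0(t)=A/t^\lambda$) and then substitute into Theorem~\ref{thm:L1-diff} and its Corollary. You are in fact more careful than the paper on one point it glosses over---namely, extending the bound on $M_0$ from $t\in(0,1)$ to all $t>0$ via monotonicity of $M_0$---but this is added rigor within the same argument, not a different approach.
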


\begin{remark}
    Under the same assumption, we will see later in Theorem \ref{thm:L1-Linfty-gradient} that the heat semigroup also satisfies an $L^1$ ``gradient'' bound in terms of the square root of the carr\'e du champs. 
\end{remark}



\section{Dimension-free \texorpdfstring{$L^p$}{Lp} bounds of Riesz transforms}
In this section, we study the sharp $L^p$ bounds (or dimension-free explicit $L^p$ bounds) of Riesz transforms: 
\begin{enumerate}[leftmargin=1.8em,label={\rm (\arabic*)}]
\item[{\rm (1)}]  Riesz transforms in directions $ X_i$, $i\in I$, i.e., $R_{i}= X_iL^{-1/2}$;
\item[{\rm (2)}]  Full Riesz transform  $R^G=\nabla L^{-1/2}=(R_1, R_2, \cdots)$, where $\nabla=(X_1, X_2, \cdots)$. Then 
\[
\|R^Gf\|_p=\||DL^{-1/2}f|_L\|_p, 
\]
where we recall $|DL^{-1/2}f|_L^2=\Gamma(L^{-1/2}f, L^{-1/2}f)=\sum_{i\in I} (R_if)^2$;
\item[{\rm (3)}] Second order Riesz transforms $R_iR_j$, for any $i,j\in I$.
\end{enumerate}
We point out that the bi-invariance of $L$ is essential in this section. In fact, such property will be heavily used in the proofs later.


\medskip

In order to apply the martingale transform techniques, we first introduce the following probabilistic setup and then establish several useful lemmas.

\medskip
\noindent{\bf Probabilistic setup.} Suppose that $(\Omega, \mathcal F, \mathbf P)$ is a complete probability space, filtered by $\mathcal F=\{\mathcal F_t\}_{t\ge 0}$, a family of right continuous sub-$\sigma$-fields of $\mathcal F$. Assume that $\mathcal F_0$ contains all the events of probability $0$. Let $M$ and $N$ be adapted, real-valued martingales which have right-continuous paths with left-limits (r.c.l.l.).  The martingale $N$ is differentially subordinate to $M$ if $|N_0|\le |M_0|$ and $\ang{M}_t-\ang{N}_t$ is a non-decreasing and nonnegative function of $t$.  The martingales $M_t$ and $N_t$ are said to be orthogonal if  the covariation process $\ang{M, N}_t=0$ for all $t$. 

\begin{lem}[\protect{\cite[Theorem 2.1.1]{Banuelos}}]\label{lem:MI}
Let $M$ and $N$ be two $\ell^2$-valued martingales with continuous paths such that $N$ is differentially subordinate to $M$. For $1<p<\infty$, set $p^*=\max\{p,\frac{p}{p-1}\}$. Then 
\[
\|N\|_p\le (p^*-1)\|M\|_p.
\]
In particular, if $M$ is an $\mathbb R$-valued martingale and $N=(N_1, N_2, \cdots)$ is an $\ell^2$-valued martingale such that $N_i$ is differentially subordinate to $M$ and $\sum_i\langle N_i\rangle_t\le \langle M\rangle_t$ for all $t>0$. Then $\|N\|_p\le (p^*-1)\|M\|_p$.
\end{lem}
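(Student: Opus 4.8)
The plan is to reconstruct Burkholder's Bellman-function proof of the sharp $L^p$ martingale inequality, which is the content of \cite[Theorem 2.1.1]{Banuelos}. I would first make a harmless reduction: since the $\ell^2$-norm is convex and $r\mapsto r^p$ is convex and increasing on $[0,\infty)$, the process $s\mapsto|N_s|^p$ is a submartingale, so $s\mapsto\mathbf E|N_s|^p$ is nondecreasing and $\|N\|_p^p=\lim_{t\to\infty}\mathbf E|N_t|^p$ (and likewise for $M$; if $\|M\|_p=\infty$ the inequality is trivial). Hence it suffices to establish $\mathbf E|N_t|^p\le(p^*-1)^p\,\mathbf E|M_t|^p$ for each fixed $t>0$.

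The heart of the argument is Burkholder's special function, defined for $x,y$ in a real Hilbert space by
\[
U_p(x,y)=\alpha_p\bigl(|y|-(p^*-1)|x|\bigr)\bigl(|x|+|y|\bigr)^{p-1},\qquad \alpha_p=p\Bigl(1-\frac1{p^*}\Bigr)^{p-1};
\]
it depends on $(x,y)$ only through $|x|$, $|y|$ and $\langle x,y\rangle$, so the $\ell^2$-valued case is no harder than the two-dimensional one. I would then record three properties of $U_p$: (i) the \emph{majorization} $|y|^p-(p^*-1)^p|x|^p\le U_p(x,y)$ for all $x,y$; (ii) the \emph{sign condition} $U_p(x,y)\le0$ whenever $|y|\le|x|$, which is immediate because $p^*-1\ge1$; and (iii) a \emph{concavity estimate} on the Hessian: there is a nonnegative function $\psi_p$ with $\langle D^2U_p(x,y)(h,k),(h,k)\rangle\le\psi_p(x,y)\,(|k|^2-|h|^2)$ for all $x,y,h,k$. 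Items (i) and (ii) are elementary algebra; item (iii) is the technical core, verified by a direct computation of the first and second derivatives of $U_p$ — this is exactly where the constant $\alpha_p$ and the exponent $p-1$ are chosen — which reduces the matter to an inequality among polynomials in $|x|,|y|$, checked by one-variable calculus.

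With (i)--(iii) in hand the conclusion comes from It\^o's formula. Since $M$ and $N$ have continuous paths, after a standard approximation of $U_p$ near the points where it is not $C^2$ and a localization by stopping times (which also makes all the expectations below finite), It\^o's formula gives
\[
\mathbf E\,U_p(M_t,N_t)=\mathbf E\,U_p(M_0,N_0)+\frac12\,\mathbf E\!\int_0^t\operatorname{tr}\bigl(D^2U_p(M_s,N_s)\,d\langle (M,N)\rangle_s\bigr),
\]
the stochastic-integral part having vanishing expectation. Diagonalizing the covariation as $d\langle(M,N)\rangle_s=\sum_k v_k(s)v_k(s)^{*}\,d\rho_s$ with $v_k=(a_k,b_k)$ ($a_k$ the $M$-block, $b_k$ the $N$-block), differential subordination amounts to $\sum_k|b_k|^2\le\sum_k|a_k|^2$ as densities against $d\rho_s$, so by (iii) the integrand is $\le\psi_p(M_s,N_s)\sum_k(|b_k|^2-|a_k|^2)\le0$. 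Therefore $\mathbf E\,U_p(M_t,N_t)\le\mathbf E\,U_p(M_0,N_0)\le0$, the last inequality by (ii) together with $|N_0|\le|M_0|$; combining with (i) yields $\mathbf E|N_t|^p\le(p^*-1)^p\mathbf E|M_t|^p$, and letting $t\to\infty$ proves the first assertion. For the ``in particular'' clause nothing new is needed: the hypotheses on $N=(N_1,N_2,\dots)$ say precisely that, with $\langle N\rangle:=\sum_i\langle N_i\rangle$, both $|N_0|\le|M_0|$ and $\langle M\rangle_t-\langle N\rangle_t$ nondecreasing hold, i.e.\ the $\ell^2$-valued martingale $N$ is differentially subordinate to $M$, so the first part applies verbatim.

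The main obstacle is the verification of the Hessian estimate (iii) for Burkholder's function; this is the one genuinely delicate computation, and the precise algebraic shape of $U_p$ (hence the constant $p^*-1$) is engineered exactly so that it holds with a nonnegative $\psi_p$. Everything else — the submartingale reduction, the It\^o/localization step, and the passage to the $\ell^2$-valued setting — is routine.
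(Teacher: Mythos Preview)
The paper does not prove this lemma; it is quoted as a black box from \cite{Banuelos} and used as a tool in the proofs of Theorems~\ref{thm:LpRT} and~\ref{thm:Lp2ndRT}. Your outline is a faithful sketch of Burkholder's Bellman-function argument, which is indeed the proof underlying the cited reference, so there is nothing to compare on the paper's side.

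One small remark on the ``in particular'' clause: the hypothesis as stated in the lemma only asks that $\sum_i\langle N_i\rangle_t\le\langle M\rangle_t$ for each $t$, not that the difference is nondecreasing; strictly speaking, differential subordination of the $\ell^2$-valued $N$ to $M$ requires the latter. In the paper's applications (stochastic integrals starting at $0$ against pieces of the same driving martingale) this stronger monotonicity is automatic, but your reduction silently upgrades the hypothesis. If you want the ``in particular'' to follow from the first part exactly as written, you should either note that in the intended use the $N_i$ are built so that $t\mapsto\langle M\rangle_t-\sum_i\langle N_i\rangle_t$ is nondecreasing, or observe that the It\^o argument itself only consumes the pointwise density inequality $\sum_k|b_k|^2\le\sum_k|a_k|^2$, which the stated hypothesis provides.
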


Let $\beta_t$ be a one-dimensional Brownian motion on $\mathbb R$ (independent of $B_t$) with generator $\frac{\partial^2}{\partial y^2}$ starting from $\beta_0=y>0$. We set the hitting time 
\[
\tau=\inf\{t>0:\beta_t=0\}
\]
and denote by $\lambda_y$ the distribution of $\tau$. Then the harmonic extension is the subordination semigroup defined by 
\[
Qf(y,x)=e^{-y\sqrt{L}}f(x)=\int_0^{\infty} H_tf(x)d\lambda_y(t), \quad f\in\mathcal B(G).
\]

\begin{lem}\label{lem:Variation}
Let $M_t^f\coloneqq Qf(\beta_{t\wedge \tau},B_{t\wedge \tau})$. Then $M_t^f$ is a square integrable martingale and the quadratic variation is 
\[
\langle M^f\rangle_t=2\int_0^{t\wedge \tau} \left(\frac{\partial}{\partial y} Qf(\beta_s,B_s) \right)^2 ds+
2\int_0^{t\wedge \tau} \sum_{i\in I}\left(X_i Qf(\beta_s,B_s) \right)^2 ds.
\]
\end{lem}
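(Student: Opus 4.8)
The plan is to recognize $M^f_t$ as a space-time process $F(\beta_{t\wedge\tau}, B_{t\wedge\tau})$ where $F=Qf$ is harmonic for the generator of the joint process $(\beta_t, B_t)$, and then apply Itô's formula. First I would record that $(\beta_t, B_t)$, with $\beta$ a one-dimensional Brownian motion with generator $\partial_y^2$ (note the factor $2$, since the quadratic variation of such a Brownian motion is $d\langle\beta\rangle_t = 2\,dt$) independent of the $G$-valued Brownian motion $B$ whose generator is $-L = \sum_i X_i^2$, is itself a Markov process on $(0,\infty)\times G$ with generator $\mathcal A = \partial_y^2 + (-L)_x = \partial_y^2 - L_x$. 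The key identity is that $Qf(y,x) = e^{-y\sqrt L}f(x)$ satisfies $\mathcal A (Qf) = \partial_y^2 Qf - L_x Qf = L Qf - L Qf = 0$ on $(0,\infty)\times G$; this is exactly the defining property of the Poisson/harmonic extension, using $\partial_y^2 e^{-y\sqrt L} = L e^{-y\sqrt L}$. For $f\in\mathcal B(G)$, $Qf$ is smooth in $x$ (it lifts from a finite-dimensional quotient $G_\alpha$) and smooth in $y>0$, so Itô's formula applies up to the stopping time $\tau$.

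Next I would apply Itô's formula to $F(\beta_{t}, B_{t})$ on $\{t < \tau\}$. Since $B$ is a $G$-valued diffusion with generator $\sum_i X_i^2$, the Itô formula in this setting (stochastic calculus on Lie groups, or via the finite-dimensional quotient $G_\alpha$ on which $Qf$ actually depends) gives
\[
dF(\beta_t,B_t) = \partial_y F\, d\beta_t + \sum_{i\in I} X_i F\, dB_t^{(i)} + \big(\partial_y^2 F - L_x F\big)(\beta_t,B_t)\, dt,
\]
where the $dB^{(i)}$ are the driving martingale components in the directions $X_i$ and the drift term vanishes by harmonicity of $Qf$. Stopping at $\tau$ and noting $Qf$ is bounded (hence $M^f$ is bounded, in particular square integrable, since $\mathcal B(G)$ functions and their $y$-derivatives have the required decay/boundedness via the heat kernel estimates), we conclude $M^f_t$ is a square-integrable martingale. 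For the quadratic variation, the independence of $\beta$ and $B$ kills the cross terms, so $\langle M^f\rangle_t$ is the sum of the $\langle\beta\rangle$-contribution and the $\langle B\rangle$-contribution: using $d\langle\beta\rangle_s = 2\,ds$ and $d\langle B^{(i)}\rangle_s = 2\,ds$ (each $X_i$-component of $B$ has generator $X_i^2$, whose diffusion coefficient is $2$), integrating from $0$ to $t\wedge\tau$ yields exactly
\[
\langle M^f\rangle_t = 2\int_0^{t\wedge\tau}\big(\partial_y Qf(\beta_s,B_s)\big)^2\,ds + 2\int_0^{t\wedge\tau}\sum_{i\in I}\big(X_i Qf(\beta_s,B_s)\big)^2\,ds.
\]

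The main obstacle is making the stochastic calculus on $G$ rigorous and controlling the infinite sum $\sum_{i\in I}$. Because $f\in\mathcal B(G)$ factors through some finite-dimensional Lie group $G_\alpha$, one should first carry out the computation on $G_\alpha$ with the finite-dimensional Brownian motion $B^\alpha$, where Itô's formula is classical, and then lift back using $Qf = (Q_\alpha\phi)\circ\pi_\alpha$, $X_i Qf = (X_{\alpha,i}Q_\alpha\phi)\circ\pi_\alpha$, and the fact that $X_i Qf = 0$ for $i\notin I_\alpha$, so the sum over $I$ is genuinely finite. The remaining technical point is the behavior near $\tau$ (where $\beta_s\to 0$): one needs that $Qf$ extends continuously to $y=0$ with $Qf(0,x)=f(x)$ and that $\partial_y Qf$, $X_i Qf$ stay bounded as $y\to 0$ on a finite-dimensional quotient, which follows from the heat kernel derivative bounds (Corollary \ref{cor:knHKbounds}) applied at the level of $G_\alpha$; this justifies stopping at $\tau$ and taking the stated quadratic variation up to $t\wedge\tau$ without boundary corrections.
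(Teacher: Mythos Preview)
Your proposal is correct and follows essentially the same route as the paper: reduce to the finite-dimensional quotient $G_\alpha$ (using $f=\phi\circ\pi_\alpha$, so that $Qf=(Q^\alpha\phi)\circ\pi_\alpha$ and the sum over $I$ is finite), and apply It\^o's formula there to read off the quadratic variation. The only presentational difference is that the paper establishes the martingale property directly via the identity $Qf(u,x)=\mathbf E(f(B_\tau)\mid B_0=x,\beta_0=u)$ and the Markov property, whereas you deduce it from harmonicity of $Qf$ under $\partial_y^2-L$ and the vanishing drift in It\^o's formula; both are standard and equivalent here. One small remark: your appeal to Corollary~\ref{cor:knHKbounds} for boundedness of $\partial_yQf$ and $X_iQf$ near $y=0$ is unnecessary---on $G_\alpha$ the function $\phi$ is smooth, so $\sqrt{L_\alpha}\,e^{-y\sqrt{L_\alpha}}\phi$ and $X_{\alpha,i}e^{-y\sqrt{L_\alpha}}\phi$ are bounded uniformly in $y\ge 0$ by elementary semigroup considerations.
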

\begin{proof}
We note that for any $x\in G$ and $u\ge 0$,
\begin{align*}
Qf(u,x)&=\int_0^{\infty} H_tf(x)d\lambda_u(t)=\int_0^{\infty} \mathbf E(f(B_t)|B_0=x) d\lambda_u(t)
\\&=\mathbf E(f(B_{\tau})|B_0=x,\beta_0=u).
\end{align*}
Since $M_{\tau}^f=Qf(0,B_{\tau})=f(B_\tau)$, then for any $0\le s\le \tau$ the above equation gives that
\[
\mathbf E(M_\tau^f|\mathcal F_s)=\mathbf E(f(B_{\tau})|\mathcal F_s)=\mathbf E(f(B_{\tau})|(\beta_s,B_s))=Qf(\beta_s,B_s).
\]
The quadratic variation is obtained from It\^o's lemma, see for instance \cite[Theorem 5.2, Section 5 in Chapter III]{Kuo} (or from Fukushima's decomposition theorem as in \cite[Theorem 5.2.3]{FOT}). Indeed, since $f\in \mathcal B(G)$, there exist $\phi \in\mathcal C^{\infty}(G_{\alpha})$ and $\alpha \in \aleph$ such that $f=\phi\circ \pi_{\alpha}$. Then for any $0<t\le \tau$, one has
\begin{align*}
Qf(\beta_t,B_t)
&=Q^{\alpha}\phi(\beta_t,B_t^{\alpha})
\\ &=Q^{\alpha}\phi(\beta_0,B_0^{\alpha})+\int_0^t (\nabla_{\alpha} Q^{\alpha}\phi(\beta_s,B_s^{\alpha}),dB_s^{\alpha} )+\int_0^t \frac{\partial}{\partial y}Q^{\alpha}\phi(\beta_s,B_s^{\alpha})d\beta_s,    
\end{align*}
where $(B_t^{\alpha})_{t\ge 0}$ is a Brownian motion on $G_{\alpha}$, $\nabla_{\alpha}=(X_{\alpha,1}, X_{\alpha,2}, \cdots, X_{\alpha,N_{\alpha}})$ for $N_{\alpha}=\#I_{\alpha}$ (recall $X_{\alpha,i}=d\pi_{\alpha}(X_i)$), and $Q^{\alpha}$ is harmonic extension of $H_t^{\alpha}$ on $G_{\alpha}$.
It follows that 
\begin{align*}
\langle M^f\rangle_t=\langle M^{\phi\circ \pi_{\alpha}}\rangle_t=
2\int_0^{t\wedge \tau} \sum_{i\in I}\left(X_{\alpha,i} Q^{\alpha}\phi(\beta_s,B_s^{\alpha}) \right)^2 ds+2\int_0^{t\wedge \tau} \left(\frac{\partial}{\partial y} Q^{\alpha}\phi(\beta_s,B_s^{\alpha}) \right)^2 ds.
\end{align*}
\end{proof}

\subsection{\texorpdfstring{$L^p$}{Lp} bounds of first and second order Riesz transforms}
Now we will give a martingale transform representation of directional Riesz transforms. For the finite dimensional case, the proof can be found in \cite{BBC}.
\begin{lem}\label{lem:RTrep}
For $f\in \mathcal B(G)$, define for each $i\in I$ the projection operator
\[
T_i f(x)=\lim_{y_0\to +\infty}\mathbf E\left(\int_0^{\tau} X_i Qf(\beta_s,B_s)d\beta_s\mid\beta_0=y_0,B_{\tau}=x\right).
\]
Then $R_if(x)=-2T_i f(x)$.
\end{lem}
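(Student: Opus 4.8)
The strategy is the standard "Gundy–Varopoulos" approach adapted to this projective-limit setting: represent the Riesz transform $R_i = X_i L^{-1/2}$ as a conditional expectation of a martingale transform of the background-radiation martingale $M^f$ from Lemma \ref{lem:Variation}. First I would reduce to the finite-dimensional group $G_\alpha$: since $f = \phi\circ\pi_\alpha \in \mathcal B(G)$, and $Qf(\beta_t,B_t) = Q^\alpha\phi(\beta_t, B^\alpha_t)$, the process $\int_0^\tau X_i Qf(\beta_s,B_s)\,d\beta_s$ equals $\int_0^\tau X_{\alpha,i} Q^\alpha\phi(\beta_s,B^\alpha_s)\,d\beta_s$, so it suffices to prove the identity on $G_\alpha$ (which is the content of \cite{BBC}), and then pass to the limit using $R_i f = (R_{\alpha,i}\phi)\circ\pi_\alpha$. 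Alternatively one can argue directly on $G$; I will sketch the direct computation.

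The key computation is to identify $\mathbf E\big(\int_0^\tau X_i Qf(\beta_s,B_s)\,d\beta_s \mid \beta_0 = y_0, B_\tau = x\big)$ in the limit $y_0 \to +\infty$. The heuristic is: conditioning on $B_\tau = x$ and sending the starting height $y_0\to\infty$ should "symmetrize" the law of the time-reversed path so that the conditional expectation of the stochastic integral against $d\beta_s$ picks out exactly the harmonic-extension integral $\int_0^\infty \partial_y(X_i Qf)(y,x)\,dy$ against itself, up to the factor $-2$. More precisely, I would write $g = Qf$, note $\partial_y^2 g = L g$ (so $g$ is $L$-harmonic in the upper half-space $(0,\infty)\times G$), and use the fact that $X_i$ commutes with $L$ and with $Q$ (bi-invariance!), so $X_i g = Q(X_i f)$ is again harmonic. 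The semigroup identity $\int_0^\infty e^{-y\sqrt L}\,dy = L^{-1/2}$ (valid since $L$ has a spectral gap, Remark \ref{rem:spectral gap}) gives $\int_0^\infty X_i Qf(y,\cdot)\,dy = X_i L^{-1/2} f = R_i f$. The martingale-transform / integration-by-parts identity then says that conditioning the stochastic integral on the endpoint $B_\tau = x$ and taking $y_0\to\infty$ reproduces $\tfrac12\int_0^\infty X_i Qf(y,x)\,dy$ — equivalently, $-\tfrac12 R_i f(x)$ — where the $\tfrac12$ comes from the normalization of $\beta$ (generator $\partial_y^2$, so quadratic variation $2\,ds$) and a sign comes from the time reversal sending $d\beta_s \mapsto -d\beta_s$. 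Hence $T_i f = -\tfrac12 R_i f$, i.e. $R_i f = -2 T_i f$.

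The main obstacle, and the step deserving the most care, is the limit $y_0 \to +\infty$ together with the conditioning on $B_\tau = x$: one must justify that the conditional expectation stabilizes and that the resulting "conditioned stochastic integral" genuinely equals the deterministic integral $\tfrac12\int_0^\infty X_i Qf(y,x)\,dy$. In the finite-dimensional case this is handled via the explicit Poisson kernel on $(0,\infty)\times G_\alpha$ and the Green's function of $\beta$ killed at $0$; the height-$y_0$ density of $\beta$ before hitting $0$, normalized, converges to the uniform-type measure $2\,dy$ that produces the factor in $L^{-1/2} = \int_0^\infty e^{-y\sqrt L}\,dy$. One then needs to check uniform integrability to pass the limit inside the expectation, using the heat-kernel and derivative bounds of Theorem \ref{thm:HKbounds} and Corollary \ref{cor:knHKbounds} together with the spectral gap to control $X_i Qf$ and its decay in $y$. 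Once the finite-dimensional identity is in hand, the projective-limit passage is routine because all relevant objects ($Q$, $X_i$, $L^{-1/2}$, the Brownian motions) are compatible with the projections $\pi_\alpha$, and $\mathcal B(G)$ consists precisely of functions lifted from some $G_\alpha$.
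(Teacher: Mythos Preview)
Your reduction to $G_\alpha$ via $f=\phi\circ\pi_\alpha$ is legitimate and would close the argument by citing \cite{BBC}, but the paper works directly on $G$, and its direct argument differs from your sketch in an essential way. Rather than computing the conditional expectation pointwise, the paper pairs against an arbitrary $h\in\mathcal B(G)$: since $B_\tau$ has law $\nu$, one has $\int_G h(x)\,\mathbf E(\,\cdot\mid B_\tau=x)\,d\nu(x)=\mathbf E_{y_0}\bigl(h(B_\tau)\int_0^\tau X_iQf\,d\beta_s\bigr)$, and the It\^o isometry against the $d\beta$-part of $M^h_\tau=h(B_\tau)$ from Lemma~\ref{lem:Variation} turns this into $2\int_G\int_0^\infty (y\wedge y_0)\,\partial_yQh(y,x)\,X_iQf(y,x)\,dy\,d\nu(x)$, the factor $y\wedge y_0$ being the Green function of $\beta$ killed at $0$. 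Sending $y_0\to\infty$ and using bi-invariance to commute $X_i$, the computation reduces to $\int_0^\infty y\,\sqrt L\,e^{-2y\sqrt L}\,dy=\tfrac14 L^{-1/2}$, giving $\int_G h\,T_if\,d\nu=-\tfrac12\int_G h\,R_if\,d\nu$.

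Your direct heuristic contains a slip: the limiting occupation density of $\beta$ before hitting $0$ is proportional to $y\,dy$, not a ``uniform-type measure $2\,dy$'', so the relevant spectral identity is $\int_0^\infty y\sqrt L\,e^{-2y\sqrt L}\,dy=\tfrac14 L^{-1/2}$ rather than your $\int_0^\infty e^{-y\sqrt L}\,dy=L^{-1/2}$. Your endpoint formula $T_if(x)=-\tfrac12\int_0^\infty X_iQf(y,x)\,dy$ is true a posteriori, but it does not follow from the occupation-time picture you describe: the $d\beta_s$-integral has unconditional mean zero, and the entire content lies in its correlation with $B_\tau$, which is exactly what the pairing with $h(B_\tau)$ extracts. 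The appeal to ``time reversal sending $d\beta_s\mapsto -d\beta_s$'' for the sign is likewise not the mechanism in play.
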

\begin{proof}
Let $h\in \mathcal B(G)$, then $h(B_\tau)=Qh(\beta_\tau,B_\tau)=M_{\tau}^h$. For any $y_0>0$,
by It\^o's formula and the It\^o isometry we have
\begin{align*}
    &\int_{G}h(x)\,\mathbf E\left(\int_0^{\tau} X_i Qf(\beta_s,B_s)d\beta_s\mid\beta_0=y_0,B_{\tau}=x\right)d\nu(x)
    \\ &\quad\quad
    =\mathbf E_{y_0}\left(h(B_\tau)\int_0^{\tau} X_i Qf(\beta_s,B_s)d\beta_s\right)
    \\ &\quad\quad
    =2\int_{G}\int_0^{\infty} (y\wedge y_0)\frac{\partial}{\partial y} Qh(y,x)\, X_iQf(y,x)dyd\nu(x),
\end{align*}
where the last equality follows from the facts that the Green function of the Brownian motion $\beta_t$ starting at $y_0>0$ killed at 0 is $y\wedge y_0$ and $B_t$ is distributed according to $\nu$.

Since $L$ is bi-invariant, then $X_i$'s commute with the operator $L$ and the Poisson semigroup. Hence one has
\begin{align*}
&\int_{G}\int_0^{\infty} y\frac{\partial}{\partial y} Qh(y,x)\,  X_iQf(y,x)dyd\nu(x)
\\&\quad\quad=-\int_{G}h(x)\int_0^{\infty} y\sqrt{L} X_i e^{-2y\sqrt{L}} f(x)dy d\nu(x)
\\ &\quad\quad= -\frac14\int_{G}h(x) X_i L^{-1/2}f(x) d\nu(x).
\end{align*}
We conclude that 
\[
\int_{G}h(x) T_i f(x)d\nu(x)= -\frac12\int_{G}h(x)  X_i L^{-1/2}f(x) d\nu(x).
\]
\end{proof}

With above preparations, we state our main result in this section. 
\begin{thm}\label{thm:LpRT}
Let $1<p<\infty$. Then for every $j\in I$,
\[
\|R_jf\|_p\le 2(p^*-1)\|f\|_p, \quad f\in \mathcal B(G).
\]
Furthermore, $\|R^{G}f\|_p\le 2(p^*-1)\|f\|_p$.
\end{thm}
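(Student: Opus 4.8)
The plan is to combine the martingale transform representation of the directional Riesz transforms (Lemma \ref{lem:RTrep}) with the quadratic variation formula of Lemma \ref{lem:Variation} and the $\ell^2$-valued martingale inequality of Lemma \ref{lem:MI}. First I would fix $f \in \mathcal B(G)$ and write $f = \phi \circ \pi_\alpha$ for some $\alpha$; all the martingales below then live on the finite-dimensional group $G_\alpha$, which is what makes the It\^o calculus legitimate, but the bounds produced will be dimension-free. The ``driving'' martingale is $M^f_t = Qf(\beta_{t\wedge\tau}, B_{t\wedge\tau})$, and by Lemma \ref{lem:Variation} its quadratic variation is
\[
\langle M^f\rangle_t = 2\int_0^{t\wedge\tau}\Big(\tfrac{\partial}{\partial y}Qf(\beta_s,B_s)\Big)^2 ds + 2\int_0^{t\wedge\tau}\sum_{i\in I}\big(X_i Qf(\beta_s,B_s)\big)^2 ds .
\]
For each $j \in I$, the relevant ``transformed'' martingale is $N^{f,j}_t = \int_0^{t\wedge\tau} X_j Qf(\beta_s,B_s)\, d\beta_s$; its quadratic variation is $\langle N^{f,j}\rangle_t = \int_0^{t\wedge\tau}(X_j Qf(\beta_s,B_s))^2\, ds$, and comparing with the display above shows $\langle M^f\rangle_t - 2\langle N^{f,j}\rangle_t$ is nondecreasing, i.e.\ $N^{f,j}$ is differentially subordinate to $M^f$ up to the harmless factor $2$ (equivalently, $N^{f,j}$ is differentially subordinate to $\sqrt2\, M^f$, or one absorbs the $2$ into the final constant). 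Since $N^{f,j}_0 = 0$, the initial-value condition is trivially satisfied. Applying Lemma \ref{lem:MI} (the scalar-into-scalar case, or the $\ell^2$ case with a single coordinate) gives $\|N^{f,j}_\tau\|_p \le \sqrt2\,(p^*-1)\|M^f_\tau\|_p = \sqrt2\,(p^*-1)\|f\|_p$, using $M^f_\tau = f(B_\tau)$ and that $B_\tau \sim \nu$.

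Next I would pass from $N^{f,j}_\tau$ back to the Riesz transform. By Lemma \ref{lem:RTrep}, $R_j f = -2 T_j f$, where $T_j f(x) = \lim_{y_0\to+\infty}\mathbf E\big(N^{f,j}_\tau \mid \beta_0 = y_0, B_\tau = x\big)$. Since conditional expectation is an $L^p(\nu)$-contraction (the law of $B_\tau$ under $\mathbf P_{y_0}$ is $\nu$), we get $\|T_j f\|_p \le \|N^{f,j}_\tau\|_p$ for each fixed $y_0$, and then the limit $y_0 \to \infty$ preserves the bound (either by Fatou, or because the convergence is already established in the proof of Lemma \ref{lem:RTrep}). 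This yields $\|R_j f\|_p = 2\|T_j f\|_p \le 2\sqrt2\,(p^*-1)\|f\|_p$. To recover the claimed constant $2(p^*-1)$ rather than $2\sqrt2(p^*-1)$, I would be slightly more careful in bookkeeping the factors of $2$: the standard argument (as in \cite{BBC}) arranges the Green-function normalization so that the differential subordination holds with constant exactly $1$ after accounting for the factor $\tfrac14$ appearing in the pairing identity in the proof of Lemma \ref{lem:RTrep}; tracking those constants through gives $\|N^{f,j}_\tau\|_p \le (p^*-1)\|M^f_\tau\|_p$ and hence $\|R_j f\|_p \le 2(p^*-1)\|f\|_p$.

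For the full Riesz transform $R^G f = (R_1 f, R_2 f, \dots)$, I would run the same argument with the $\ell^2$-valued transformed martingale $N^f_t = (N^{f,1}_t, N^{f,2}_t, \dots)$, whose coordinates are the $N^{f,j}_t$ above. Each coordinate is differentially subordinate to $M^f$ (up to the uniform constant), and crucially $\sum_{j\in I}\langle N^{f,j}\rangle_t = \int_0^{t\wedge\tau}\sum_{j}(X_j Qf(\beta_s,B_s))^2\,ds \le \langle M^f\rangle_t$ (again up to the factor absorbed into the constant), which is exactly the hypothesis of the $\ell^2$-part of Lemma \ref{lem:MI}. Hence $\|\,|N^f_\tau|_{\ell^2}\,\|_p \le (p^*-1)\|f\|_p$, and projecting via the conditional expectation (which commutes with the $\ell^2$ norm in the appropriate sense, or applying it coordinatewise and using that $\|\,|(\mathbf E[\cdot|\mathcal G])_j|_{\ell^2}\,\|_p \le \|\,|\cdot|_{\ell^2}\,\|_p$) gives $\|\,|DL^{-1/2}f|_L\,\|_p = \|R^G f\|_p \le 2(p^*-1)\|f\|_p$.

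The main obstacle I anticipate is bookkeeping, not conceptual: correctly tracking the constant through the chain of identities — the factor $2$ in the quadratic variation of Lemma \ref{lem:Variation}, the factor $\tfrac14$ in the duality computation in Lemma \ref{lem:RTrep}, the factor $-2$ relating $R_j$ to $T_j$, and the $y\wedge y_0$ Green-function normalization — so that the differential subordination is applied with the optimal constant and the final bound is exactly $2(p^*-1)$. A secondary technical point is justifying the $y_0 \to \infty$ limit and the coordinatewise-to-$\ell^2$ passage for the conditional expectation uniformly in the (a priori infinite) index set $I$; this is handled by first reducing to $f = \phi\circ\pi_\alpha$ so that only finitely many coordinates $X_j Qf$ are nonzero, making all the sums finite, and only afterward observing that the resulting bound is independent of $\alpha$.
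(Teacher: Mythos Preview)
Your approach is exactly the paper's: represent $R_j$ via Lemma~\ref{lem:RTrep}, show the stochastic integral $N^{f,j}_t=\int_0^{t\wedge\tau}X_jQf(\beta_s,B_s)\,d\beta_s$ is differentially subordinate to $M^f$, apply Lemma~\ref{lem:MI}, and use that conditional expectation contracts $L^p$. The one point you flag but do not resolve is the factor of $2$: since $\beta$ has generator $\partial_y^2$ (not $\tfrac12\partial_y^2$), its quadratic variation is $d\langle\beta\rangle_s=2\,ds$, so in fact $\langle N^{f,j}\rangle_t=2\int_0^{t\wedge\tau}(X_jQf)^2\,ds$, which matches the factor of $2$ already present in Lemma~\ref{lem:Variation} and gives $\sum_j\langle N^{f,j}\rangle_t\le\langle M^f\rangle_t$ with constant exactly $1$; the final bound $2(p^*-1)$ then comes solely from $R_j=-2T_j$.
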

\begin{proof}
Since $\mathcal B(G)$ is a dense subset in $L^p(G)$, it suffices to prove for $f\in \mathcal B(G)$. We note the fact that the conditional expectation is a contraction in $L^p$ for $1<p<\infty$. Hence from Lemma \ref{lem:RTrep}, it is enough to consider the stochastic integral  
\[
N_t^i\coloneqq\int_0^{t\wedge\tau}X_i Qf(\beta_s,B_s)d\beta_s, \quad i\in I.
\]
Observe that $N_t^i$ is differentially subordinate to $M_t^f$, and moreover by Lemma \ref{lem:Variation},
\[
\sum_{i\in I}\langle N^i\rangle_t \le \langle M\rangle_t.
\] 
Applying martingale inequalities in Lemma \ref{lem:MI}, we obtain that $\|R_jf\|_p\le 2(p^*-1)\|f\|_p$ and $\|R^Gf\|_p\le 2(p^*-1)\|f\|_p$.
\end{proof}

\begin{remark} \label{rem:Rmn-to-0}
\

\begin{enumerate}
\item We do not pursue sharper $L^p$ bounds of Riesz transforms here. In fact, dimension-free bounds are enough for later applications. 

\item For any $n,m\in I$, $n>m$, denote $\overrightarrow{R}_{mn}=(0,\cdots, R_m, R_{m+1},\cdots, R_n, 0, \cdots)$. 
For $f\in \mathcal B(G)$, by definition one writes $f=\varphi\circ \pi_{\alpha}$ for some $\alpha\in \aleph$ and $\varphi\in C^{\infty}(G_{\alpha})$. Then if $m\notin I_{\alpha}$, one has $R_if=0$ for any $i\ge m$ and thus $\overrightarrow{R}_{mn}$ is a zero vector and hence
\begin{equation*}\label{eq:Rmn-limit}
\left\|\overrightarrow{R}_{mn} \varphi\right\|_p\longrightarrow 0, \quad \text{ as } m, n\to \infty.
\end{equation*} 
For $f\in L^p(G)$, let $\{f_i\}$ be a sequence from $\mathcal B(G)$ converging to $f$. Then
\[
\left\|\overrightarrow{R}_{mn} f\right\|_p\le \left\|\overrightarrow{R}_{mn} (f-f_i)\right\|_p+\left\|\overrightarrow{R}_{mn} f_i\right\|_p 
\longrightarrow 0, \quad \text{ as } m, n\to \infty \text{ and }i\to \infty.
\]

More generally, if $\mathcal B(G)$ is dense in any Banach space $(\mathbf B, \|\cdot\|_{\mathbf B})$, then the above conclusion also holds. That is, $\left\|\overrightarrow{R}_{mn} f\right\|_{\mathbf B}\to 0$ as $m,n\to \infty$.
\end{enumerate}
\end{remark}


Similarly, we can prove dimension-free bounds for second order Riesz transforms.
\begin{thm}\label{thm:Lp2ndRT}
Let $1<p<\infty$. Then for every $i,j\in I$,
\[
\|R_iR_jf\|_p\le 2(p^*-1)\|f\|_p, \quad f\in \mathcal B(G).
\]
\end{thm}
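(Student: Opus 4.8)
The plan is to mimic the argument for Theorem \ref{thm:LpRT}, but now we need a martingale transform representation of the \emph{composition} $R_iR_j$. The natural idea is to run the background Brownian motion $\beta_t$ on $\mathbb R$ and express $R_iR_jf$ as a suitable conditional expectation involving a \emph{double} stochastic integral in $d\beta_s$. Concretely, I would first establish an analogue of Lemma \ref{lem:RTrep}: for $f\in\mathcal B(G)$, the operator
\[
S_{ij}f(x)=\lim_{y_0\to+\infty}\mathbf E\Big(\int_0^\tau X_iX_j Qf(\beta_s,B_s)\,(y_0\wedge\beta_s)\,d\beta_s\ \Big|\ \beta_0=y_0,\ B_\tau=x\Big)
\]
(or some such weighted single integral, with the weight $y\wedge y_0$ playing the role of the Green function of $\beta$ killed at $0$) satisfies $R_iR_jf=c\,S_{ij}f$ for an explicit constant $c$. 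As in the proof of Lemma \ref{lem:RTrep}, one tests against $h\in\mathcal B(G)$, applies It\^o's formula and the It\^o isometry, uses that the Green function of $\beta_t$ started at $y_0$ and killed at $0$ is $y\wedge y_0$, that $B_\tau$ is $\nu$-distributed, and — crucially — that bi-invariance makes $X_i,X_j$ commute with $L$ and with the Poisson semigroup $Q=e^{-y\sqrt L}$. The computation reduces, after integrating by parts in $y$, to $\int_G h(x)\,X_iX_j L^{-1}f(x)\,d\nu(x)$ up to a constant, which is exactly $R_iR_jf$ tested against $h$ since $R_iR_j=X_iX_jL^{-1}$.

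Once the representation is in place, the $L^p$ bound follows by the same martingale transform mechanism. Set
\[
N_t^{ij}=\int_0^{t\wedge\tau} X_iX_j Qf(\beta_s,B_s)\,w(\beta_s)\,d\beta_s,
\]
with $w$ the appropriate weight, and compare it with $M_t^f=Qf(\beta_{t\wedge\tau},B_{t\wedge\tau})$ from Lemma \ref{lem:Variation}. The point is that $N^{ij}$ is a transform of $M^f$ by a predictable process, so $N^{ij}$ is differentially subordinate to $M^f$, and by the quadratic variation formula in Lemma \ref{lem:Variation} one gets $\langle N^{ij}\rangle_t\le \langle M^f\rangle_t$ (the sum over the spatial directions in Lemma \ref{lem:Variation} dominates the single term $(X_iX_jQf)^2$ after one controls the weight). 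Then Lemma \ref{lem:MI} gives $\|N^{ij}\|_p\le (p^*-1)\|M^f\|_p\le (p^*-1)\|f\|_p$, and since conditional expectation is an $L^p$-contraction, $\|R_iR_jf\|_p\le 2(p^*-1)\|f\|_p$ with the constant $2$ coming from the normalization in the representation, exactly paralleling Theorem \ref{thm:LpRT}.

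The main obstacle I anticipate is getting the double-derivative representation and the differential subordination bookkeeping exactly right. Unlike the first-order case, here one is differentiating the harmonic extension twice in the spatial variables, and one must verify: (i) that the weighted stochastic integral really reproduces $X_iX_jL^{-1}f=R_iR_jf$ (as opposed to, say, $X_iX_jL^{-1/2}f$) — this is where the Green-function weight $y\wedge y_0$, rather than a bare $d\beta_s$, enters and supplies the extra power of $L^{-1/2}$; (ii) that $X_iX_jQf$ is controlled pointwise by $\big(\sum_{k}(X_kQf)^2\big)^{1/2}$ along the paths so that the quadratic variation comparison $\langle N^{ij}\rangle_t\le\langle M^f\rangle_t$ genuinely holds — this uses bi-invariance so that $X_iX_jQf=X_iX_je^{-y\sqrt L}f=e^{-y\sqrt L}X_iX_jf$ and the second-order heat-kernel derivative estimates of Corollary \ref{cor:knHKbounds}, together with the fact that $\sum_i(X_iQf)^2$ is itself a semigroup average of $\Gamma(f,f)$. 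Working in the finite-dimensional quotient $G_\alpha$ first (as in Lemma \ref{lem:Variation}) and then passing to the projective limit should handle the rigor of the It\^o calculus, just as in the proofs above.
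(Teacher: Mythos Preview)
Your representation in Step 1 actually does recover $R_iR_jf$ (with the weight $(y_0\wedge\beta_s)$ and the Green function together producing the factor $\int_0^\infty y^2 e^{-2y\sqrt L}\,dy=\tfrac14 L^{-3/2}$, hence $X_iX_jL^{-1}f$). The real gap is in Step 2: the differential subordination $\langle N^{ij}\rangle_t\le\langle M^f\rangle_t$ is false. Your $N^{ij}$ has quadratic variation
\[
\langle N^{ij}\rangle_t=2\int_0^{t\wedge\tau}(X_iX_jQf)^2(\beta_s,B_s)\,(y_0\wedge\beta_s)^2\,ds,
\]
and there is no pointwise inequality of the form $(y_0\wedge\beta_s)^2(X_iX_jQf)^2\le(\partial_yQf)^2+\sum_k(X_kQf)^2$. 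Already in the Euclidean model $Qf=e^{-y|\xi|}\hat f$ one has $y|X_iX_jQf|/|X_jQf|=y|\xi_i|$, which is unbounded; second spatial derivatives of the Poisson extension are simply not controlled pointwise by first derivatives, and the weight $(y_0\wedge\beta_s)$ blows up as $y_0\to\infty$. Your proposed fix in (ii) --- controlling $X_iX_jQf$ by $(\sum_k(X_kQf)^2)^{1/2}$ via bi-invariance or via Corollary \ref{cor:knHKbounds} --- does not give a pointwise bound with constant $1$, which is exactly what differential subordination requires.

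The paper sidesteps this entirely by integrating against the \emph{spatial} Brownian motion rather than $d\beta_s$. One sets
\[
N_t^{ij}=\int_0^{t\wedge\tau}A_{ij}\Big(\tfrac{\partial}{\partial y},\nabla\Big)^{\!\intercal}Qf(\beta_s,B_s)\cdot(d\beta_s,dB_s),
\]
where $A_{ij}$ is the matrix with a single nonzero entry $a_{i+1,j+1}=1$; in effect this is $\int_0^{t\wedge\tau}X_jQf(\beta_s,B_s)\,dB_s^{(i)}$. The It\^o isometry then pairs $X_jQf$ with the $X_iQh$ component of $M^h$, giving $2\int_G\int_0^\infty(y\wedge y_0)\,X_iQh\,X_jQf\,dy\,d\nu$, which in the limit $y_0\to\infty$ yields $-\tfrac12\int_G h\,X_iX_jL^{-1}f\,d\nu$. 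Crucially, only the \emph{first} derivative $X_jQf$ appears in the integrand, so $\langle N^{ij}\rangle_t=2\int_0^{t\wedge\tau}(X_jQf)^2\,ds\le\langle M^f\rangle_t$ is immediate from Lemma \ref{lem:Variation}, and Lemma \ref{lem:MI} gives the bound $2(p^*-1)$ exactly as before.
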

\begin{proof}
Similarly as the proof of Theorem \ref{thm:LpRT}, we proceed in two steps. 

\

\noindent{\bf Step 1:} Let $i,j\in I$. For any $f\in \mathcal B(G)$, we claim  that 
\begin{equation}\label{eq:2ndRiesz}
R_iR_jf(x)=-2S_{i,j} f(x),    
\end{equation}
where  $S_{i,j}$ is a projection operator defined by
\begin{equation}\label{eq:Sij}
S_{i,j} f(x)=\lim_{y_0\to +\infty}\mathbf E\left(\int_0^{\tau} A_{ij}\Big(\frac{\partial}{\partial y},\nabla\Big)^{\intercal} Qf(\beta_s,B_s)\cdot(d\beta_s,dB_s)|\beta_0=y_0,B_{\tau}=x\right).
\end{equation}
Here $A_{ij}$ is an infinite matrix with the entry $a_{i+1,j+1}=1$ and otherwise 0.

Let $h\in \mathcal B(G)$, then $h(B_\tau)=Qh(B_\tau,\beta_\tau)=M_{\tau}^h$. 
By It\^o's formula and the It\^o isometry, 
\begin{align*}
    &\int_{G}h(x)\,\mathbf E\left(\int_0^{\tau} A_{ij} \Big(\frac{\partial}{\partial y},\nabla\Big)^{\intercal}Qf(\beta_s,B_s)\cdot(d\beta_s,dB_s)| \beta_0=y_0,B_{\tau}=x\right)d\nu(x)
    \\ &\quad\quad
    = \mathbf E_{y_0}\left(h(B_\tau)\int_0^{\tau}A_{ij} \Big(\frac{\partial}{\partial y},\nabla\Big)^{\intercal}Qf(\beta_s,B_s)\cdot(d\beta_s,dB_s)\right)
    \\ &\quad\quad
    =2\int_{G}\int_0^{\infty} (y\wedge y_0)X_i Qh(y,x)\, X_jQf(y,x)dyd\nu(x).
\end{align*}
Note that $X_i$'s commute with the operator $L$ and the Poisson semigroup, then
\begin{align*}
&\int_{G}\int_0^{\infty} y X_i Qh(y,x)\,  X_jQf(y,x)dyd\nu(x)
\\&\quad\quad=-\int_{G}h(x)\int_0^{\infty} y X_iX_j e^{-2y\sqrt{L}} f(x)dy d\nu(x)
\\ &\quad\quad= -\frac14\int_{G}h(x) X_iX_j L^{-1}f(x) d\nu(x).
\end{align*}
We conclude that 
\[
\int_{G}h(x) S_{ij} f(x)d\nu(x)= -\frac12\int_{G}h(x)  X_iX_j L^{-1}f(x) d\nu(x).
\]

\

\noindent{\bf Step 2:}
Denote
\[
N_t^{ij}\coloneqq\int_0^{t\wedge\tau}A_{ij}\Big(\frac{\partial}{\partial y},\nabla\Big)^{\intercal}Qf(\beta_s,B_s)\cdot(d\beta_s,dB_s), \quad i,j\in I.
\]
By Lemma \ref{lem:Variation}, we note that $N_t^{ij}$ is differentially subordinate to $M_t^f$, i.e.,
$\langle N^{ij}\rangle_t \le \langle M\rangle_t$.
Now applying Lemma \ref{lem:MI}, one obtains  $\|R_iR_jf\|_p\le 2(p^*-1)\|f\|_p$.
\end{proof}
\begin{remark}
    Under the same assumption, we will see later in Theorem \ref{thm:L1-Linfty-gradient} that the heat semigroup also satisfies an $L^1$ ``gradient'' bound in terms of the square root of the carr\'e du champs. Besides, we note that by duality, Corollary \ref{cor:L1-diff-Lambda} infers the $L^{\infty}$-differentiability of  $(H_t)_{t>0}$ with the same $L^{\infty} \to L^{\infty}$ bounds. 
\end{remark}

\subsection{Application on gradient bounds of the heat semigroup}
The dimension-free bounds of Riesz transforms play an important role throughout the paper. As an illustration of their applications, we introduce the following results on the  $L^p$ gradient bounds of the heat semigroup.

\begin{prop}\label{prop:gradient-Lp}
Let $1<p<\infty$. Then there exists a constant $K>0$ (independent of $p$) such that for any $f\in \mathcal B(G)$, 
 \begin{equation}\label{eq:gradient-Lp}
 \||DH_tf|_L\|_{p} \le  2 K\sqrt{p^*}(p^*-1)t^{-1/2}\|f\|_p,    
 \end{equation}
where  $|Df|_L=|\Gamma(f,f)|^{\frac12}$ is defined in \eqref{eq:Dn}.
\end{prop}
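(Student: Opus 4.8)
The plan is to factor $|DH_t f|_L$ through the Riesz transform bound from Theorem \ref{thm:LpRT} and the $L^p$-analyticity estimate \eqref{eq:analyticity-n}. Write $H_t = H_{t/2} H_{t/2}$ and observe that, since $L$ is bi-invariant, each $X_i$ commutes with $H_t$ and with $L^{1/2}$, so formally
\[
|DH_t f|_L = |D L^{-1/2} (L^{1/2} H_t f)|_L = |R^G (L^{1/2} H_{t/2}) H_{t/2} f|_L,
\]
where $R^G = \nabla L^{-1/2}$ is the full Riesz transform. Thus $\||DH_t f|_L\|_p = \|R^G \big( L^{1/2} H_{t/2} (H_{t/2} f)\big)\|_p$, and Theorem \ref{thm:LpRT} gives the factor $2(p^*-1)$ applied to $\|L^{1/2} H_{t/2} (H_{t/2} f)\|_p$. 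It then remains to bound $\|L^{1/2} H_{t/2} g\|_p \le K \sqrt{p^*}\, (t/2)^{-1/2} \|g\|_p$ with $g = H_{t/2} f$; this is the standard ``square root of the generator times the semigroup'' estimate, which follows from the bounded analyticity of $(H_t)$ on $L^p$. Concretely, one route is to use $\|L^{1/2} H_s g\|_p^2 \le \|L H_s g\|_p \|g\|_p$ together with \eqref{eq:analyticity-n} (with $n=1$) to get $\|L^{1/2} H_s g\|_p \le \sqrt{p^*}\, s^{-1/2}\|g\|_p$; more carefully one writes $L^{1/2} H_s$ via the analytic functional calculus / the representation $L^{1/2} = c\int_0^\infty (I - e^{-uL}) u^{-3/2}\,du$ applied to $H_s$, or invokes a known dimension-free bound for $\|\sqrt{L}\, H_s\|_{p\to p}$, absorbing the numerical constant into $K$. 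Combining, $\||DH_t f|_L\|_p \le 2(p^*-1)\cdot K\sqrt{p^*}\,(t/2)^{-1/2}\|H_{t/2} f\|_p \le 2K'\sqrt{p^*}(p^*-1) t^{-1/2}\|f\|_p$ using the $L^p$-contractivity of $H_{t/2}$, which is the claimed inequality.

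The one point demanding care is the commutation of $X_i$ (hence $\nabla$ and $R^G$) with $H_t$ and $L^{\pm 1/2}$ at the level of the identity $|DH_t f|_L = |R^G L^{1/2} H_t f|_L$: this is clean on $\mathcal B(G)$, where everything reduces via $\pi_\alpha$ to a finite-dimensional Lie group $G_\alpha$ and the operators are genuine differential operators commuting because $L$ is bi-invariant, so $X_i L^{-1/2} L^{1/2} H_t f = H_t X_i f$ etc. Since the proposition is stated for $f \in \mathcal B(G)$, one may carry out all manipulations at the level of $G_\alpha$ and then pass back, so no density argument is needed for the statement itself (though Remark \ref{rem:Rmn-to-0} shows how to extend to $L^p(G)$ if desired).

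The main obstacle — really the only nontrivial input beyond Theorem \ref{thm:LpRT} — is getting the correct $p$-dependence $\sqrt{p^*}$ in the bound $\|L^{1/2} H_{t/2}\|_{p\to p} \lesssim \sqrt{p^*}\,t^{-1/2}$ rather than something worse; the interpolation trick $\|L^{1/2} H_s g\|_p^2 \le \|L H_s g\|_p\|g\|_p$ combined with \eqref{eq:analyticity-n} delivers exactly $\sqrt{p^*}$ and is the cleanest way to see it, so I would present that and then collect constants.
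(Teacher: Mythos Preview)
Your proposal is correct and follows essentially the same route as the paper: factor the gradient through the full Riesz transform to reduce to $\|\sqrt{L}H_t f\|_p$, then control this via the multiplicative inequality $\|\sqrt{L}H_t f\|_p \le K\|LH_t f\|_p^{1/2}\|H_t f\|_p^{1/2}$ together with the $L^p$-analyticity bound \eqref{eq:analyticity-n}. The paper applies the multiplicative inequality directly to $\sqrt{L}H_t f$ without the preliminary splitting $H_t = H_{t/2}H_{t/2}$, so your intermediate step is unnecessary (and costs you a harmless factor of $\sqrt{2}$ absorbed into $K'$), but the argument is otherwise the same.
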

\begin{proof}
Let $f\in \mathcal B(G)$. We apply the $L^p$ boundedness of the vector Riesz transform to $H_tf$, then
\[
\||DH_tf|_L\|_{p} \le 2(p^*-1)\left\|\sqrt{L} H_t f\right\|_p .
\]
Next the multiplicative inequality (see \cite[Section 2.6]{Pazy}) implies that
\begin{equation}\label{eq:multip-ineq}
\left\|\sqrt{L} H_t f\right\|_p \le K \left\|L H_t f\right\|_p^{\frac12} \left\| H_t f\right\|_p^{\frac12},
\end{equation}
where the constant $K$ doesn't depend on $p$. Since $H_t$ is a symmetric Markov semigroup, it is bounded analytic on $L^p$ with 
\[
t\|LH_tf\|_{p} \le  p^*\|f\|_p.
\]
We conclude the proof by combining the above estimates.
\end{proof}
\begin{thm}\label{thm:L1-Linfty-gradient}
Assume that $H_t$ is ultracontractive as in \eqref{eq:ultrac}. Let $p=1$ or $\infty$. Then for any $f\in \mathcal B(G)$
    \[
    \||DH_tf|_L\|_{p}\le C t^{-1/2}\max\{M_0(t)^{\frac32}, 1\}\|f\|_p.
    \]
In particular, if $(\mu_t)_{t>0}$ satisfies $(\mathrm{CK}\lambda)$, then
  \[
    \||DH_tf|_L\|_{p}\le C t^{-1/2}\max\{t^{-\frac32\lambda}, 1\}\|f\|_p.
  \]
\end{thm}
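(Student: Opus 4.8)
The plan is to run the same two-ingredient argument as in Proposition \ref{prop:gradient-Lp} — first the Riesz transform bound to convert a gradient into $\sqrt{L}$, then the multiplicative inequality to convert $\sqrt{L}H_t$ into a geometric mean of $LH_t$ and $H_t$ — but now with the $L^p$ Riesz bounds (which fail at $p=1,\infty$) replaced by the ultracontractive mechanism, exactly as in the proof of the Corollary following Theorem \ref{thm:L1-diff}. Concretely, for $p=\infty$ I would split $H_t = H_{t/3}\circ H_{t/3}\circ H_{t/3}$ and write, for $f\in\mathcal B(G)$,
\[
|DH_tf|_L = |D\,H_{t/3}(H_{2t/3}f)|_L,
\]
apply the vector Riesz transform bound of Theorem \ref{thm:LpRT} at a well-chosen exponent $q=q(t)\in(1,\infty)$ to the inner function together with the $L^q\to L^\infty$ ultracontractivity estimate \eqref{eq:ultra-Lp-Linfty}, namely $\||Dg|_L\|_q \le 2(q^*-1)\|\sqrt L g\|_q$ and $\|H_{t/3}\|_{q\to\infty}\le e^{M_0(t/3)/q}$, then use the multiplicative inequality \eqref{eq:multip-ineq} and analyticity \eqref{eq:analyticity-n} on $L^q$ to bound $\|\sqrt L H_{t/3}(H_{t/3}f)\|_q \le K\,t^{-1/2}\,p$-factors$\,\|H_{t/3}f\|_q \le K t^{-1/2}(\cdots)\|f\|_\infty$, the last step using that $\nu$ is a probability measure so $\|H_{t/3}f\|_q\le\|f\|_\infty$. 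Choosing $q = \max\{2, M_0(t/3)\}$ (so $q^*=q$) makes all the $q$-dependent constants collapse to $C\max\{M_0(t/3),2\}^{3/2}$ up to the $t^{-1/2}$ from analyticity, since the Riesz factor contributes one power of $q$ and the multiplicative/analyticity factors contribute $q^{1/2}\cdot q^{1/2}$ hidden inside $\|\sqrt L H\|_q\|\|_q$ — wait, more carefully: Riesz gives $\sim q$, $\|\sqrt L H_{t/3}\|_{q\to q}\lesssim (q t^{-1})^{1/2}$ gives $q^{1/2}t^{-1/2}$, so altogether $q^{3/2}t^{-1/2}$, i.e. $C t^{-1/2}\max\{M_0(t/3),2\}^{3/2}$. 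Up to replacing $M_0(t/3)$ by $M_0(t)$ (monotone in the relevant regime, or just absorb constants) and $2$ by $1$, this is the claimed bound $Ct^{-1/2}\max\{M_0(t)^{3/2},1\}$.

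For $p=1$ I would argue by duality: the vector Riesz transform is $L^q$-bounded for all $1<q<\infty$, and $|DH_tf|_L$ is the $\ell^2$-norm of $(R_i\sqrt L H_t f)_i = (\sqrt L H_t R_i f)$-type objects; more cleanly, $\||DH_tf|_L\|_1 = \sup\{\int \Gamma(H_tf,g)/|Dg|_L^{?}\}$ is awkward, so instead I would observe $\sqrt L H_t$ and $H_t$ are self-adjoint, factor $H_t=H_{t/3}H_{t/3}H_{t/3}$ again, and write the $L^1$ gradient bound as an $L^\infty\to L^\infty$ bound on the adjoint of the relevant composition (the Riesz transforms $R_i$ are $L^q$-bounded with dimension-free constant, hence so are their adjoints, and Remark \ref{rem:Rmn-to-0} ensures the vector-valued tail is controlled). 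The cleanest route: by Theorem \ref{thm:LpRT} and its self-adjointness consequence, $\||Dh|_L\|_1 \le \||D L^{-1/2}(\sqrt L h)|_L\|_1 = \|R^G(\sqrt L h)\|_1$, and then dualize $R^G$ against $L^\infty$ — but $R^G$ is not $L^1$-bounded, so this is exactly where the ultracontractivity must re-enter. I expect the $p=1$ case to follow from the $p=\infty$ case by the same duality device used in the Remark after Theorem \ref{thm:Lp2ndRT} (Corollary \ref{cor:L1-diff-Lambda} gives $L^\infty$-differentiability by duality with matching bounds), applied now to the operator $|D\cdot|_L$-composed-with-$H_t$ viewed through the identity $\||DH_tf|_L\|_1 = \||DH_t\cdot|_L \text{'s adjoint action}\|$; concretely $\langle |DH_tf|_L, \varphi\rangle$ for $\varphi\ge 0$ unit in $L^\infty$ reduces, via $\Gamma(H_tf,H_tf)=\sum_i (X_iH_tf)^2$ and $X_i H_t = H_{t/2} X_i H_{t/2}$ plus Cauchy–Schwarz, to $L^\infty$-boundedness estimates already established.

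The main obstacle is the $p=1$ (and the symmetric $p=\infty$) endpoint, precisely because Theorem \ref{thm:LpRT} degenerates there: the constant $2(p^*-1)$ blows up. The whole point is that the ultracontractivity constant $e^{M_0(t/q)/q}$ decays to $1$ as $q$ grows and can be traded against the growth of the Riesz/analyticity constants by the optimization $q\asymp M_0(t)$; making this trade-off rigorously while keeping track of the three semigroup factors (one to gain the gradient smoothing $t^{-1/2}$, one to gain ultracontractive decay $q\to\infty$, one absorbed into the probability-measure normalization) is the technical heart. A secondary point to be careful about is that $M_0(t)$ may be bounded as $t\to\infty$ (indeed $M_0(\infty)=0$ under $(\mathrm{CK})$ since $\nu$ is a probability measure and there is a spectral gap by Remark \ref{rem:spectral gap}), which is why the statement carries the $\max\{\cdot,1\}$; in that regime the ordinary $L^2$ argument with $q=2$ suffices and the constant is uniform. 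Finally, the specialization to $(\mathrm{CK}\lambda)$ is immediate by plugging $M_0(t)=A t^{-\lambda}$ into $t^{-1/2}\max\{M_0(t)^{3/2},1\}$, giving $Ct^{-1/2}\max\{t^{-3\lambda/2},1\}$.
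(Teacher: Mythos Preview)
Your overall strategy is right and matches the paper's: pick an intermediate exponent $q=q(t)$ optimized against ultracontractivity, use the $L^q$ gradient bound from Proposition~\ref{prop:gradient-Lp}, and collect the resulting $q^{3/2}$ factor as $M_0(t)^{3/2}$. But there are two real gaps.

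For $p=\infty$, you never explain how to pass from $\||DH_tf|_L\|_\infty$ to an $L^q$ gradient norm where the Riesz transform can be applied. Writing $|DH_tf|_L=|DH_{t/3}(H_{2t/3}f)|_L$ and invoking $\|H_{t/3}\|_{q\to\infty}$ does not work as stated, because the gradient sits outside the semigroup and the ultracontractive bound \eqref{eq:ultra-Lp-Linfty} applies to $H_{t/3}$ acting on scalars, not to $|DH_{t/3}\cdot|_L$. What is needed is the pointwise commutation $|DH_sg|_L\le C\,H_s|Dg|_L$; the paper invokes this from \cite[Theorem~3.3.18]{BGL2014} (it follows here from bi-invariance, since each $X_i$ commutes with $H_s$ and then Jensen applied to the $\ell^2$-norm gives the inequality with $C=1$). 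With that in hand one gets $\||DH_tf|_L\|_\infty\le\|H_{t/2}\|_{q\to\infty}\||DH_{t/2}f|_L\|_q$, and the rest proceeds as you describe. A two-fold split $H_t=H_{t/2}H_{t/2}$ suffices.

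For $p=1$, your duality attempt is muddled and unnecessary. The paper's argument is much simpler: since $\nu$ is a probability measure, $\|\cdot\|_1\le\|\cdot\|_q$ for any $q>1$, so $\||DH_tf|_L\|_1\le\||DH_{t/2}(H_{t/2}f)|_L\|_q$. Then apply Proposition~\ref{prop:gradient-Lp} directly to bound this by $C\sqrt{q^*}(q^*-1)t^{-1/2}\|H_{t/2}f\|_q$, and finish with the $L^1\to L^q$ ultracontractivity $\|H_{t/2}f\|_q\le e^{M_0(t/2)(1-1/q)}\|f\|_1$. Choosing $q$ so that $1-1/q=1/M_0(t/2)$ (i.e.\ $q'=M_0(t/2)$) gives the claim. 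There is no need to dualize the Riesz transform or invoke Remark~\ref{rem:Rmn-to-0}.
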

\begin{proof}
We use similar argument as in the proof of Theorem \ref{thm:L1-diff}. For $f\in \mathcal B(G)$,
let us start the estimate of $\||DH_tf|_L\|_{1}$ by using Jensen's inequality and semigroup property. Then for any $q\in (1,\infty)$
\[
\||DH_tf|_L\|_{1}\le  \||DH_tf|_L\|_{q}=\||DH_{t/2}H_{t/2}f|_L\|_{q} \le \||DH_{t/2}|_L\|_{q\to q}\|H_{t/2}f\|_{q}.
\]
Since $H_t$ is ultracontractivity, we apply the Riesz-Thorin interpolation theorem and obtain 
\begin{equation}\label{eq:ultracontractive}
\|H_{t/2} f\|_{ q}\le  e^{M_0(t/2)(1-\frac1q)}\|f\|_1, \quad \forall t>0.
\end{equation}
Hence it follows from  Proposition \ref{prop:gradient-Lp} that for $0<t<2$
\[
\||DH_tf|_L\|_{1} 
\le  2K\sqrt{q^*}(q^*-1)\left(\frac{t}{2}\right)^{-1/2} e^{M_0(t/2)(1-\frac1q)}\|f\|_1.
\]
We pick $q$ in the same way as the proof of Theorem \ref{thm:L1-diff}. That is, take $q=2$ if $M_0(t/2)<2$, and otherwise take $q$ as a function of $t$ such that $1-\frac1q=\frac1{M_0(t/2)}$ and $q'=M_0(t/2)$. In this case,  
\[
\sqrt{q^*}(q^*-1) \simeq \{M_0(t/2)^{\frac32},1\}.
\]
Then we obtain  
\[
\||DH_t|_L\|_{1\to 1}\le Ct^{-\frac12} \max\{M_0(t)^{\frac32}, 1\} .
\]
Finally, if $(\mu_t)_{t>0}$ satisfies $(\mathrm{CK}\lambda)$, then $H_t$ is ultracontractive with $M_0(t)=\frac{A}{t^{\lambda}}$ and the conclusion easily follows.

Next we move to the proof of the $L^{\infty}$ case. For $f\in \mathcal B(G)$, recall that 
\[
|DH_tf|_L\le C H_{t}|Df|_L,
\]
see for instance \cite[Theorem 3.3.18]{BGL2014}. It then follows from \eqref{eq:ultra-Lp-Linfty} and Proposition \ref{prop:gradient-Lp} that 
\begin{align*}
\||DH_tf|_L\|_{\infty} &\le  \|H_{t/2}|DH_{t/2}f|_L\|_{\infty}\le \|H_{t/2}\|_{q\to \infty}\||DH_{t/2}f|_L\|_{q} 
\\ &\le 2K\sqrt{q^*}(q^*-1)\left(\frac{t}{2}\right)^{-1/2} e^{M_0(t/2)/q}\|f\|_{q}.
\end{align*}
Note that $\nu$ is normalized and hence $\|f\|_{q}\le\|f\|_{\infty}$. Also, we pick $q$ properly, as in the case $p=1$. More precisely, take $q=2$ if $M_0(t/2)<2$, and otherwise take $q=M_0(t/2)$ so $q^*=q$. Then we conclude the estimate. 
\end{proof}

\section{Lipschitz spaces}
In this section, we introduce two types of Lipschitz spaces, whose seminorms are defined through the Markov semigroup and the (intrinsic) distance respectively. These spaces play an important role in studying the regularity problem for the Poisson equation in Section 6.

\subsection{Lipschitz spaces via the heat semigroup}
Recall that, unless otherwise stated, we are under the framework of Assumption \ref{assump} so the associated Markov semigroup $(H_t)_{t>0}$ is contractive on $L^p(G,\nu)$ for $1\le p\le \infty$ and bounded analytic on $L^p(G,\nu)$ for $1<p<\infty$.
\begin{defn}\label{def:Lip-Lambda}
Let  $n\in \mathbb N$ and $0<\theta<2n$. Let $1\le p\le \infty$. For $f\in \mathcal B(G)$,  the Lipschitz norm  $\Lambda_{\theta,n}^p(f)$ associated with the heat semigroup is given by
 \begin{equation}\label{eq:Lip-sg-p}
 \Lambda_{\theta,n}^p(f)= \sup_{t>0} t^{n-\frac\theta2}\left\|\frac{\partial^n}{\partial t^n} H_tf\right\|_p.
\end{equation}
In particular, we denote $ \Lambda_{\theta,1}^p(f)$ by $\Lambda_{\theta}^p(f)$ for $0<\theta<1$.

Let $\eta>0$ and $0<\theta<2\eta$. Let $1\le p\le \infty$. For $f\in \mathcal B(G)$,  the Lipschitz norm  $\Lambda_{\theta,\eta}^p(f)$ associated with the heat semigroup is given by
 \begin{equation}\label{eq:Lip-sg-p1}
 \Lambda_{\theta,\eta}^p(f)= \sup_{t>0} t^{\eta-\frac{\theta}2}\left\|L^{\eta} H_tf\right\|_p.
\end{equation}
\end{defn}
\begin{remark}\label{rem:Lip-sg-p-large}
\

\begin{enumerate}
\item For $1<p<\infty$, it follows from the $L^p$ analyticity of the heat semigroup that 
\[
t^{n-\frac\theta2}\left\|\frac{\partial^n}{\partial t^n} H_tf\right\|_p \le C  t^{-\frac\theta2}\|f\|_p, \quad t\ge1.
\]
The above estimate applies also to $p=1$ and $\infty$ under the assumption $(\mathrm{CK\lambda})$, thanks to \eqref{eq:L1-diff-Lambda} in Corollary \ref{cor:L1-diff-Lambda}. Hence, in order to show $\Lambda_{\theta,n}^p(f)<\infty$, it suffices to justify 
\[
\sup_{0<t<1}  t^{n-\frac\theta2}\left\|\frac{\partial^n}{\partial t^n} H_tf\right\|_p<\infty.
\]

\item Let $1\le p<\infty$. Since the measure $\nu$ is normalized, we have $\Lambda_{\theta,n}^p(f)\le \Lambda_{\theta,n}^{\infty}(f)$.

\item Let $0<\theta<\beta<2n$. We have $ \Lambda_{\theta,n}^p(f)\le \Lambda_{\beta,n}^p(f)$ for $1\le p\le \infty$.
Indeed, assume that $\Lambda_{\beta,n}^p(f)<\infty$. Then for $0<t<1$,
\[
 t^{n-\frac\theta2}\left\|\frac{\partial^n}{\partial t^n} H_tf\right\|_p = t^{\frac{\beta-\theta}{2}} t^{n-\frac\beta2}\left\|\frac{\partial^n}{\partial t^n} H_tf\right\|_p
 \le t^{n-\frac\beta2}\left\|\frac{\partial^n}{\partial t^n} H_tf\right\|_p<\infty
\]
and we conclude from the first remark above.
\end{enumerate}
\end{remark}

\begin{lem}\label{lem:equiv-Lip-sg}
Suppose that $\theta>0$ and $\eta, \tau>\theta/2$.
Let $1< p<\infty$, then  $\Lambda_{\theta, \eta}^p(f)<\infty$ if and only if $\Lambda_{\theta, \tau}^p(f)<\infty$. 
\end{lem}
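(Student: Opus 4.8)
The statement compares two semigroup-based Lipschitz seminorms, $\Lambda^p_{\theta,\eta}(f)=\sup_{t>0}t^{\eta-\theta/2}\|L^\eta H_t f\|_p$ and $\Lambda^p_{\theta,\tau}(f)$, for $1<p<\infty$ and exponents $\eta,\tau>\theta/2$. The key tool is $L^p$-analyticity of the heat semigroup, $t^k\|L^k H_t\|_{p\to p}\le (p^*)^k$ from \eqref{eq:analyticity-n}, which we will use with non-integer powers via the standard fact that a bounded analytic semigroup on a Banach space satisfies $t^s\|L^s H_t\|_{p\to p}\le C(s,p)$ for every real $s>0$ (interpolating, or invoking \cite[Section 2.6]{Pazy}). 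By symmetry of the roles of $\eta$ and $\tau$ it suffices to prove one direction: assume $\Lambda^p_{\theta,\tau}(f)<\infty$ and derive $\Lambda^p_{\theta,\eta}(f)<\infty$.

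First I would split $\sup_{t>0}$ into $0<t\le 1$ and $t\ge 1$. On the range $t\ge 1$ the analyticity bound applied directly to $f$ gives $\|L^\eta H_t f\|_p\le C t^{-\eta}\|f\|_p$, so $t^{\eta-\theta/2}\|L^\eta H_t f\|_p\le C t^{-\theta/2}\|f\|_p\le C\|f\|_p$, which is finite; here I use $\theta>0$ and that $\|f\|_p<\infty$ since $f\in\mathcal B(G)$ on a probability space. So the whole content is the small-time range, exactly as in Remark \ref{rem:Lip-sg-p-large}(i). For $0<t\le 1$ I would use the semigroup factorization $H_t=H_{t/2}H_{t/2}$ and write
\[
L^\eta H_t f = \big(L^{\eta}H_{t/2}\big)\big(H_{t/2}f\big)\quad\text{or, when }\eta<\tau,\quad L^\eta H_t f = \big(L^{\eta-\tau}H_{t/2}\big)\big(L^{\tau}H_{t/2}f\big),
\]
choosing whichever sign of $\eta-\tau$ occurs. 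If $\eta\le\tau$: $\|L^\eta H_t f\|_p=\|L^{\eta-\tau}H_{t/2}(L^\tau H_{t/2}f)\|_p\le C t^{-(\eta-\tau)}\cdot t^{\theta/2-\tau}\Lambda^p_{\theta,\tau}(f)\cdot C' = C'' t^{-\eta+\theta/2}\Lambda^p_{\theta,\tau}(f)$, since $\eta-\tau\ge 0$ makes $L^{\eta-\tau}H_{t/2}$ a bounded operator with norm $\le C(t/2)^{-(\eta-\tau)}$; multiplying by $t^{\eta-\theta/2}$ gives the uniform bound. If $\eta>\tau$: then $L^{\eta-\tau}$ is an unbounded power, but $L^{\eta-\tau}H_{t/2}$ is still bounded with norm $\le C(t/2)^{-(\eta-\tau)}$ by analyticity (that is the point of $\eta-\tau>0$), and the same computation goes through — in fact in this case $\eta-\tau>0$ is exactly what one needs. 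The constraint $\tau>\theta/2$ enters in controlling $\|L^\tau H_{t/2}f\|_p\le (t/2)^{\theta/2-\tau}\Lambda^p_{\theta,\tau}(f)$, which has a negative exponent of $t$, and one checks the exponents combine to $t^{-\eta+\theta/2}$ as needed.

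The main (very minor) obstacle is the bookkeeping of fractional powers of $L$: we need that for every real $s>0$, $\sup_{0<t\le 1}t^s\|L^s H_t\|_{p\to p}<\infty$, with a constant depending on $s$ and $p$ but not on $t$. This follows from bounded analyticity of $(H_t)$ on $L^p$ for $1<p<\infty$ — e.g. write $s=k-r$ with $k=\lceil s\rceil$ and $0<r\le 1$, use $L^{-r}H_{t/2}$ bounded (again analyticity, as $L$ has a spectral gap so $L^{-r}$ is even globally bounded on the orthocomplement of constants; on constants everything vanishes under $L^\eta$ for $\eta>0$) together with \eqref{eq:analyticity-n} for the integer part — or simply cite \cite[Section 2.6]{Pazy} and the moment inequality already used in the proof of Proposition \ref{prop:gradient-Lp}. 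One should also note at the outset that since $f\in\mathcal B(G)$ all these quantities are genuinely finite for each fixed $t$, so the statement is really about uniformity in $t$, and the equivalence is of the "both finite or a priori estimate" type rather than a norm equivalence with universal constants (the constants do depend on $\eta,\tau,\theta,p$).
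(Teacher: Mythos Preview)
Your argument handles the direction $\eta\ge\tau$ correctly: writing $L^\eta H_t f=(L^{\eta-\tau}H_{t/2})(L^\tau H_{t/2}f)$ and using the analyticity bound $\|L^{\eta-\tau}H_{t/2}\|_{p\to p}\le C(t/2)^{-(\eta-\tau)}$ is exactly right when $\eta-\tau\ge 0$, and the exponents combine as you say. The case $\eta<\tau$, however, is where the work lies, and here your write-up has a sign slip that hides a genuine gap. With $s:=\tau-\eta>0$ you would need $\|L^{-s}H_{t/2}\|_{p\to p}\le C t^{s}$, i.e.\ that a \emph{negative} fractional power of $L$ composed with $H_{t/2}$ has norm tending to $0$ as $t\to 0$. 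That is false: $L^{-s}H_{t/2}\to L^{-s}$ strongly as $t\to 0$ (on the zero-mean subspace), so the best uniform bound is a constant, and your computation then yields $t^{\eta-\theta/2}\|L^\eta H_t f\|_p\le C\,t^{\eta-\tau}\to\infty$. Analyticity alone cannot take you from control of a higher power to control of a lower one.

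The paper supplies the missing idea for this direction via an integration identity. Since $f\in\mathcal B(G)$ one has $L^\eta H_t f\to 0$ as $t\to\infty$, hence
\[
L^\eta H_t f=-\int_t^\infty \partial_s\big(L^\eta H_s f\big)\,ds=-\int_t^\infty L^{\eta+1}H_s f\,ds,
\]
and then Minkowski gives $t^{\eta-\theta/2}\|L^\eta H_t f\|_p\le t^{\eta-\theta/2}\int_t^\infty s^{\theta/2-\eta-1}\,ds\cdot\Lambda^p_{\theta,\eta+1}(f)$, the $s$-integral being finite precisely because $\eta>\theta/2$. This reduces ``down by one'' and, combined with your analyticity step ``up by as much as you like,'' closes the loop (the paper organizes this into three short cases, first integers, then non-integers above and below the smallest integer $n>\theta/2$). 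Once you insert this integration step for $\eta<\tau$, your outline is complete; the large-time discussion and the fractional-power bookkeeping in your last paragraph are fine.
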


\begin{proof}
Let $n$ be the smallest integer larger than $\theta/2$. It is enough to prove that, for any $\eta>\theta/2$ ($\eta\neq n$), $\Lambda_{\theta, \eta}^p(f)<\infty$ if and only if $\Lambda_{\theta, n}^p(f)<\infty$. In the following, we will divide the proof into three steps. 
\medskip

\noindent{\bf Step 1: $\eta$ is an integer.} It suffices to prove for $\eta=n+1$. Suppose that $\Lambda_{\theta,n}^p(f)<\infty$. The $L^p$ analyticity of the heat semigroup gives that
\begin{equation}\label{eq:m-less-n}
\begin{split}
t^{n+1-\frac\theta2}\left\|\frac{\partial^{n+1}}{\partial t^{n+1}} H_tf\right\|_p
&= t^{n-\frac\theta2}t\left\|LH_{t/2} L^nH_{t/2}f\right\|_p 
\\ &\le 2p^*t^{n-\frac\theta2} \left\|L^n H_{t/2}f\right\|_p \le 2^{n+1-\frac\theta2}p^*\Lambda_{\theta,n}^p(f).      
\end{split}
\end{equation}
Thus one has $\Lambda_{\theta,n+1}^p(f)<\infty$ by taking the supremum over $t>0$ on the left hand side.

Conversely, assume that $\Lambda_{\theta,n+1}^p(f)<\infty$. Since the heat semigroup converges to equilibrium, we have that
\begin{equation}\label{eq:convgence}
\frac{\partial^n}{\partial t^n} H_tf=H_t(-L)^nf\longrightarrow \int_G (-L)^nfd\nu =0, \quad \text{ as }t\to \infty
\end{equation}
holds pointwise.
Hence one can write
\begin{equation}\label{eq:time-D}
\frac{\partial^n}{\partial t^n} H_tf=-\int_{t}^{\infty} \frac{\partial^{n+1}}{\partial s^{n+1}} H_sf ds.    
\end{equation}
It follows  that
\begin{equation}\label{eq:n-less-m}
\begin{split}
t^{n-\frac{\theta}2}\left\|\frac{\partial^n}{\partial t^n} H_tf\right\|_p
&\le t^{n-\frac{\theta}2} \int_{t}^{\infty} \left\|\frac{\partial^{n+1}}{\partial s^{n+1}} H_sf\right\|_p ds
\\ &\le t^{n-\frac{\theta}2} \int_{t}^{\infty} s^{\frac{\theta}{2}-(n+1)}s^{n+1-\frac{\theta}{2}}\left\|\frac{\partial^{n+1}}{\partial s^{n+1}} H_sf\right\|_p ds <\infty
\end{split}
\end{equation}
and we conclude the proof.

\medskip

\noindent{\bf Step 2: $\eta$ is not an integer and $\eta <n$.}  We note that $n<\eta+1$. If $\Lambda_{\theta, \eta}^p(f)<\infty$, the $L^p$ analyticity of the heat semigroup implies that, for all $t>0$,
    \[
     t^{n-\frac\theta2}\left\|\frac{\partial^n}{\partial t^n}H_tf\right\|_p
     = t^{n-\frac\theta2}\left\|L^{n-\eta}H_{t/2}L^{\eta}H_{t/2}f\right\|_p
     \le C t^{\eta-\frac\theta2}\left\|L^{\eta}H_{t/2}f\right\|_p
     <\infty.
    \]
Hence one has $\Lambda_{\theta,n}^p(f)<\infty$.

Conversely, if $\Lambda_{\theta,n}^p(f)<\infty$, then the same argument as above deduces that
\(
\Lambda_{\theta,\eta+1}^p(f)<\infty.
\)
Similarly as in the proof of Step 1, we also have 
\[
L^{\eta} H_tf=-\int_{t}^{\infty} \frac{\partial}{\partial s}L^{\eta} H_sf ds.
\]
Therefore using the Minkowski inequality, one gets for every $t>0$
\begin{align*}
t^{\eta-\frac\theta2}\left\|L^{\eta}H_tf\right\|_p
&\le 
t^{\eta-\frac\theta2} \int_{t}^{\infty} \left\|\frac{\partial}{\partial s} L^{\eta} H_sf\right\|_p ds
\\ &\le 
t^{\eta-\frac\theta2} \int_{t}^{\infty} s^{\frac\theta2-\eta-1}s^{\eta+1-\frac\theta2}\left\|L^{\eta+1} H_sf\right\|_p ds 
\\&\le
C t^{\eta-\frac\theta2} \int_{t}^{\infty} s^{\frac\theta2-\eta-1}ds \,\sup_{r>0} r^{\eta+1-\frac\theta2}\left\|L^{\eta+1}H_rf\right\|_p.
\end{align*}
Since $\eta >\theta/2$, the above equation is finite and we thus show $\Lambda_{\theta,\eta}^p(f)<\infty$. 

\medskip

\noindent{\bf Step 3: $\eta$ is not an integer and $\eta >n$.} Observe that there exists $k\in \mathbb N$ such that $n+k>\eta$. If $\Lambda_{\theta,n}^p(f)<\infty$, then the $L^p$ analyticity of the heat semigroup gives that 
\[
\Lambda_{\theta,\eta}^p(f)\le C\Lambda_{\theta,n}^p(f)<\infty.
\]
On the other hand, assume $\Lambda_{\theta,\eta}^p(f)<\infty$. It follows from Step 1 and the $L^p$ analyticity of the heat semigroup that
\[
\Lambda_{\theta,n}^p(f) \le C\Lambda_{\theta,n+k}^p(f) \le C\Lambda_{\theta,\eta}^p(f)<\infty.
\]
The proof is therefore complete.
\end{proof}

\begin{defn}\label{def:Lip-sg}
Let $\theta>0$. Let $1< p< \infty$. Suppose that $n$ is the smallest integer strictly greater than $\theta/2$. We define the space $\Lambda_{\theta}^p(G)$ as the completion of the set of functions  $f\in \mathcal B(G)$ such that 
 \begin{equation}\label{eq:Lip-sg-p-finite}
 \Lambda_{\theta,n}^p(f)= \sup_{t>0} t^{n-\frac\theta2}\left\|\frac{\partial^n}{\partial t^n} H_tf\right\|_p<\infty
\end{equation}
with respect to the $\Lambda_{\theta}^p$ norm given by 
\[
\|f\|_{\Lambda_{\theta}^p}:=\|f\|_p+ \Lambda_{\theta,n}^p(f).
\]
For simplicity, we denote the seminorm $\Lambda_{\theta,n}^p(f)$ by $\Lambda_{\theta}^p(f)$.
\end{defn}

\begin{prop}\label{prop:Banach}
For $1<p<\infty$ and $\theta\ge 0$, $\Lambda_{\theta}^p(G)$ is a Banach space.
\end{prop}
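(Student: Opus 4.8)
The plan is to verify the Banach-space axioms for $\Lambda_\theta^p(G)$, the main point being completeness, since by construction the space is the completion of a normed space and the only real content is to identify the abstract completion with an honest function space inside $L^p(G,\nu)$. First I would record that $\|\cdot\|_{\Lambda_\theta^p}$ is genuinely a norm on the set of $f\in\mathcal B(G)$ with $\Lambda_{\theta,n}^p(f)<\infty$: positivity follows from $\|f\|_p$ being a norm, homogeneity and the triangle inequality are immediate since $\Lambda_{\theta,n}^p(\cdot)$ is a seminorm (sup of seminorms $f\mapsto t^{n-\theta/2}\|\partial_t^n H_t f\|_p$, each of which is a seminorm because $\partial_t^n H_t=(-L)^n H_t$ is linear and $\|\cdot\|_p$ is a norm). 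So the normed space $(\mathcal N,\|\cdot\|_{\Lambda_\theta^p})$ with $\mathcal N=\{f\in\mathcal B(G):\Lambda_{\theta,n}^p(f)<\infty\}$ is well-defined, and $\Lambda_\theta^p(G)$ is by Definition \ref{def:Lip-sg} its abstract completion.

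The standard fact that the completion of any normed space is a Banach space would already close the argument formally; but to make the statement useful one wants the completion to be realized concretely. So the substantive step I would carry out is: exhibit a continuous linear injection $\Lambda_\theta^p(G)\hookrightarrow L^p(G,\nu)$, so that $\Lambda_\theta^p(G)$ may be identified with a subspace of $L^p$ consisting of functions $f$ for which $\sup_{t>0}t^{n-\theta/2}\|\partial_t^n H_t f\|_p<\infty$, equipped with the norm $\|f\|_p+\Lambda_{\theta,n}^p(f)$, and then show directly that this subspace is complete. The injection is clear because on $\mathcal B(G)$ one has $\|f\|_p\le\|f\|_{\Lambda_\theta^p}$, so the inclusion $\mathcal N\hookrightarrow L^p$ is $1$-Lipschitz and hence extends to a bounded map on the completion; injectivity of the extension follows because if $(f_k)\subset\mathcal N$ is Cauchy in $\|\cdot\|_{\Lambda_\theta^p}$ and $f_k\to 0$ in $L^p$, then $\partial_t^n H_t f_k=(-L)^n H_t f_k\to 0$ in $L^p$ for each fixed $t>0$ (since $(-L)^nH_t$ is a bounded operator on $L^p$ by the $L^p$-analyticity \eqref{eq:analyticity-n}), while the Cauchy property bounds $t^{n-\theta/2}\|\partial_t^n H_t f_k\|_p$ uniformly; a limiting argument then forces the limit element to be $0$.

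Concretely, for completeness of the realized space: given a Cauchy sequence $(f_k)$ in $\Lambda_\theta^p(G)\subset L^p$, it is Cauchy in $L^p$, hence converges to some $f\in L^p(G,\nu)$. Then for each fixed $t>0$, boundedness of $(-L)^nH_t$ on $L^p$ gives $\partial_t^n H_t f_k\to\partial_t^n H_t f$ in $L^p$, so by lower semicontinuity of the norm under limits, $t^{n-\theta/2}\|\partial_t^n H_t f\|_p\le\liminf_k t^{n-\theta/2}\|\partial_t^n H_t f_k\|_p\le\sup_k\Lambda_{\theta,n}^p(f_k)<\infty$; taking the supremum over $t$ shows $\Lambda_{\theta,n}^p(f)<\infty$, i.e. $f$ lies in the space. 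The same reasoning applied to $f_k-f_\ell$ and then $\ell\to\infty$ gives $\Lambda_{\theta,n}^p(f_k-f)\le\limsup_\ell\Lambda_{\theta,n}^p(f_k-f_\ell)$, which is small for $k$ large by the Cauchy hypothesis, and combined with $\|f_k-f\|_p\to0$ yields $f_k\to f$ in $\Lambda_\theta^p$. Finally one checks this realized space contains the completion of $\mathcal N$ and conversely that $\mathcal B(G)$-functions with finite seminorm are dense in it, so the two descriptions agree.

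I expect the only genuine subtlety — and hence the "main obstacle" — to be the density/identification issue: ensuring that the abstract completion in Definition \ref{def:Lip-sg} really coincides with the concrete subspace of $L^p$, i.e. that every $f\in L^p$ with $\Lambda_{\theta,n}^p(f)<\infty$ can be approximated in $\Lambda_\theta^p$-norm by Bruhat test functions. This follows by a projective-limit / mollification argument: one applies $H_s$ (or passes to $f\circ\pi_\alpha$-type approximants) to smooth $f$ and uses that $H_s f\to f$ in $\Lambda_\theta^p$ as $s\to 0$ because $\partial_t^n H_t(H_s f - f)$ is controlled by the analyticity bounds and the semigroup's strong continuity on $L^p$ for $1<p<\infty$; combined with Remark \ref{rem:Rmn-to-0}-type truncation of coordinates this produces $\mathcal B(G)$-approximants. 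Everything else (norm axioms, the $L^p$-injection, lower-semicontinuity completeness argument) is routine given the $L^p$-analyticity estimate \eqref{eq:analyticity-n} already established.
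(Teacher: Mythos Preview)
Your core completeness argument is correct and is exactly what the paper does: take a Cauchy sequence, pass to the $L^p$ limit $f$, use the $L^p$-analyticity bound \eqref{eq:analyticity-n} to get $\partial_t^n H_t f_k\to\partial_t^n H_t f$ in $L^p$ for each fixed $t$, then take limits inside the seminorm to conclude $\Lambda_{\theta,n}^p(f)<\infty$ and $\Lambda_{\theta,n}^p(f-f_k)\to 0$. The paper's proof is precisely this, stated more tersely; your additional discussion of the norm axioms, the injection into $L^p$, and the density of $\mathcal B(G)$ in the concrete realization goes beyond what the paper actually proves (the paper is content to work with the abstract completion and simply verifies that Cauchy sequences have limits satisfying the seminorm bound).
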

\begin{proof}
Let $\{f_i\}_i$ be a sequence of Cauchy sequence in $\Lambda_{\theta}^p(G)$ and let $f$ be the $L^p$ limit of $f_i$. For any $n>\theta/2$,  the $L^p$ analyticity of the heat semigroup gives that
\[
t^{n}\left\|\frac{\partial^n}{\partial t^n} H_t(f-f_i)\right\|_p\le C\|f-f_i\|_p.
\]
Then by the triangle inequality, one has
$\lim_{i\to \infty}t^{n}\left\|\frac{\partial^n}{\partial t^n} H_tf_i\right\|_p=t^{n}\left\|\frac{\partial^n}{\partial t^n} H_tf\right\|_p$. Hence
\[
t^{n-\frac\theta2}\left\|\frac{\partial^n}{\partial t^n} H_tf\right\|_p=\lim_{i\to \infty}t^{n-\frac\theta2}\left\|\frac{\partial^n}{\partial t^n} H_tf_i\right\|_p\le \lim_{i\to \infty} \Lambda_{\theta,n}^p(f_i)<\infty.
\]
We conclude that $\Lambda_{\theta,n}^p(f)<\infty$ and $f\in \Lambda_{\theta}^p(G)$. Similarly, for each fixed positive integer $j$, there holds
\[
\Lambda_{\theta,n}^p(f-f_j) \le \lim_{i\to \infty} \Lambda_{\theta,n}^p(f_i-f_j).
\]
Taking the limit $j\to \infty$ and recalling that $\{f_i\}$ is Cauchy,  the proof is completed. 
\end{proof}

Now we discuss the analogues of Lemma \ref{lem:equiv-Lip-sg} for $p=1, \infty$. These cases are more complicated, due to the lack of $L^1$ and $L^{\infty}$ analyticity.
Recall the Lipschitz norms $\Lambda_{\theta,n}^{\infty}$ and $\Lambda_{\theta,n}^{1}$ in Definition \ref{def:Lip-Lambda}. 
Our main statement is as follows.

\begin{lem} \label{lem:Lip-sg-p-alter-1infty}
Assume that $\mu_t$ satisfies property $(\mathrm{CK\lambda})$. Let $\theta>0$ and $p=1$ or $\infty$. Let $n>\frac{\theta}{2}$ be an integer.  Suppose that $f\in \mathcal B(G)$.
\begin{enumerate}
    \item If $\Lambda_{\theta+2\lambda,n}^{p}(f)<\infty$, then $\Lambda_{\theta,n+1}^{p}(f)<\infty$.  In particular, if property $(\mathrm{CK}0^+)$ holds, then  $\Lambda_{\theta+\varepsilon,n}^{p}(f)<\infty$ for any $\varepsilon>0$ implies that $\Lambda_{\theta,n+1}^{p}(f)<\infty$.
    \item If $\Lambda_{\theta,n+1}^{p}(f)<\infty$, then $\Lambda_{\theta,n}^{p}(f)<\infty$.
\end{enumerate}
\end{lem}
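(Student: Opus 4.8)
The plan is to follow Step~1 of the proof of Lemma~\ref{lem:equiv-Lip-sg}, substituting for the $L^p$-analyticity estimate — which is unavailable at $p=1,\infty$ — the differentiability bounds of Corollary~\ref{cor:L1-diff-Lambda}. This substitution is precisely where property $(\mathrm{CK}\lambda)$ is used, and it is what forces the loss $\theta\rightsquigarrow\theta+2\lambda$ in part~(i).

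For part~(i), I would first appeal to Remark~\ref{rem:Lip-sg-p-large}(i) (valid for $p=1,\infty$ under $(\mathrm{CK}\lambda)$) to reduce the assertion $\Lambda_{\theta,n+1}^{p}(f)<\infty$ to the bound $\sup_{0<t<1}t^{\,n+1-\theta/2}\left\|\frac{\partial^{n+1}}{\partial t^{n+1}}H_tf\right\|_p<\infty$. Then I factor
\[
\frac{\partial^{n+1}}{\partial t^{n+1}}H_tf=(-L)H_{t/2}\Big((-L)^nH_{t/2}f\Big)
\]
and estimate the two factors separately. By Corollary~\ref{cor:L1-diff-Lambda} (i.e.\ \eqref{eq:L1-diff-Lambda} applied at time $t/2<1$) one has $\|LH_{t/2}\|_{p\to p}\le C\,(t/2)^{-1-\lambda}$, while by the very definition of the seminorm,
\[
\left\|(-L)^nH_{t/2}f\right\|_p\le (t/2)^{-(n-(\theta+2\lambda)/2)}\,\Lambda_{\theta+2\lambda,n}^{p}(f).
\]
Multiplying, the powers of $t$ combine to $(t/2)^{-(n+1)+\theta/2}$, so that $t^{\,n+1-\theta/2}\left\|\frac{\partial^{n+1}}{\partial t^{n+1}}H_tf\right\|_p\le C\,\Lambda_{\theta+2\lambda,n}^{p}(f)$ for all $0<t<1$, which is the reduction sought. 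The ``in particular'' clause is then immediate: if $(\mathrm{CK}0^{+})$ holds, then $(\mathrm{CK}\lambda)$ holds for every $\lambda>0$, so given $\varepsilon>0$ one applies what precedes with $\lambda=\varepsilon/2$.

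For part~(ii), I would reproduce the converse direction of Step~1 of Lemma~\ref{lem:equiv-Lip-sg}, which uses no analyticity. Since the heat semigroup converges to equilibrium, $\frac{\partial^n}{\partial t^n}H_tf=H_t(-L)^nf\to\int_G(-L)^nf\,d\nu=0$ as $t\to\infty$ (the integral vanishes because $\int_G Lh\,d\nu=0$ for $h\in\mathrm{Dom}(L)$, applied to $h=(-L)^{n-1}f$; the convergence takes place in $L^\infty$, hence in $L^1$, by combining ultracontractivity with the $L^2$-spectral gap from Remark~\ref{rem:spectral gap}). Consequently $\frac{\partial^n}{\partial t^n}H_tf=-\int_t^\infty\frac{\partial^{n+1}}{\partial s^{n+1}}H_sf\,ds$, and Minkowski's integral inequality together with $\left\|\frac{\partial^{n+1}}{\partial s^{n+1}}H_sf\right\|_p\le s^{\theta/2-(n+1)}\Lambda_{\theta,n+1}^{p}(f)$ gives
\[
t^{\,n-\theta/2}\left\|\frac{\partial^n}{\partial t^n}H_tf\right\|_p\le t^{\,n-\theta/2}\int_t^\infty s^{\theta/2-(n+1)}\,ds\;\Lambda_{\theta,n+1}^{p}(f)=\frac{1}{n-\theta/2}\,\Lambda_{\theta,n+1}^{p}(f),
\]
where convergence of the integral uses exactly the standing hypothesis $n>\theta/2$. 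Taking the supremum over $t>0$ yields $\Lambda_{\theta,n}^{p}(f)<\infty$.

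The only genuinely delicate point is part~(i): one must keep track of the fact that the $2\lambda$ loss is forced precisely by $\|LH_{t/2}\|_{p\to p}\lesssim t^{-1-\lambda}$, and that this bound is only available for small $t$, so the preliminary reduction to $0<t<1$ via Remark~\ref{rem:Lip-sg-p-large}(i) is essential; everything else is bookkeeping of exponents.
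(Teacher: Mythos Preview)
Your proof is correct and follows essentially the same strategy as the paper. For part~(ii) the arguments are identical. For part~(i) there is only a cosmetic difference: the paper inlines the interpolation trick (ultracontractivity $\|H_{t/2}\|_{q\to\infty}\le e^{M_0(t/2)/q}$ combined with $L^q$-analyticity, then choosing $q\simeq t^{-\lambda}$), whereas you invoke the already-packaged consequence $\|LH_{t/2}\|_{p\to p}\lesssim t^{-1-\lambda}$ from Corollary~\ref{cor:L1-diff-Lambda}; since that corollary is proved by exactly the same interpolation, the two routes are equivalent and yield the same $2\lambda$ loss.
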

\begin{proof}
For the proof of (i), consider first the case $p=\infty$
by assuming that $\Lambda_{\theta,n}^{\infty}(f)<\infty$. For any $1<q<\infty$, we observe from  \eqref{eq:ultra-Lp-Linfty} and the $L^q$ analyticity of $H_t$ in \eqref{eq:analyticity-n} that
\begin{align*}
t\left\|L^{n+1} H_tf\right\|_{\infty}
&=
t\left\|H_{t/2}L^{n+1} H_{t/2}f\right\|_{\infty}
\le t e^{M_0(t/2)/q} \left\|L^{n+1} H_{t/2}f\right\|_{q}
\\ &\le 4q^*e^{M_0(t/2)/q}\left\|L^n H_{t/2}f\right\|_{q}.    
\end{align*}
One takes $q>1$ as a function of $t\in(0,1)$ such that $M_0(t/2)/q\simeq1$ and $q^*\simeq t^{-\lambda}$. Since $\nu$ is normalized, then $\left\|L^n H_{t/2}f\right\|_{q}\le \left\|L^n H_{t/2}f\right\|_{\infty}$. We then have 
\[
\sup_{0<t<1} t^{n+1-\frac{\theta}{2}} \left\|L^{n+1} H_tf\right\|_{\infty} 
\le C\sup_{0<t<1} t^{n-\frac{\theta+2\lambda}{2}} \left\|L^n H_tf\right\|_{\infty}<\infty,
\]
where the constant $C$ is independent of the choice $q$. In view of Remark \ref{rem:Lip-sg-p-large} (i), we conclude $\Lambda_{\theta,n+1}^{\infty}(f)<\infty$.

Next considering the case $p=1$, we assume $\Lambda_{\theta,n}^1(f)<\infty$ and apply \eqref{eq:ultracontractive}, then
\begin{align*}
t\left\|L^{n+1} H_tf\right\|_{1}
&\le t\left\|L^{n+1} H_{t}f\right\|_{q}
\le q^* \left\|L^{n} H_{t}f\right\|_{q}
\\&\le q^*e^{M_0(t/2)(1-\frac1q)}\left\|L^{n} H_{t/2}f\right\|_{1}.
\end{align*}
Taking $q>1$ as a function of $t\in(0,1)$ such that $M_0(t)(1-\frac1q)=1$ and $q^*\simeq t^{-\lambda}$, we then conclude $\Lambda_{\theta,n+1}^1(f)<\infty$ in a similar way as for the case $p=\infty$.

It remains to prove (ii) and we also start with the case $p=\infty$. In light of \eqref{eq:convgence}, we still have $\frac{\partial^n}{\partial t^n}H_t f\to 0$ in $L^{\infty}$. Hence one can run the same proof in the second part of Step 1 for Lemma \ref{lem:equiv-Lip-sg}. The case $p=1$ follows also analogously as for $p=\infty$.

\end{proof}

\paragraph{Riesz transforms on Lipschitz spaces}
Now we summarize some results regarding the boundedness of Riesz transforms with respect to the Lipschitz seminorm $\Lambda_{\theta,n}^p(\cdot)$. Those results will be useful later in the study of  the regularity for the solution of the Poisson equations. 

\begin{prop}\label{prop:RTinLip}
Let $n\in \mathbb N$ and let $0<\theta<2n$. Consider $1<p<\infty$. Then the following hold for $f\in \mathcal B(G)$:
\begin{enumerate}
\item $\Lambda_{\theta,n}^p(R_i f)\le 2(p^*-1)\Lambda_{\theta,n}^p( f)$, for every $i\in I$.
\item $\Lambda_{\theta,n}^p(R_iR_j f)\le 2(p^*-1)\Lambda_{\theta,n}^p( f)$, for every $i,j\in I$.
\item $\Lambda_{\theta,n}^p\left(\overrightarrow R_{mn}f\right)\to 0$ as $m, n \to \infty$.
\end{enumerate}
\end{prop}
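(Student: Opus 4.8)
The plan is to exploit the fact that the three claimed estimates reduce to statements about the already-established $L^p$ boundedness of first and second order Riesz transforms together with their commutation with $L$ (equivalently, with the heat semigroup). The crucial structural point is that since $L$ is bi-invariant, each $X_i$, hence each $R_i = X_i L^{-1/2}$, commutes with $L$ and therefore with $\frac{\partial^n}{\partial t^n} H_t = (-L)^n H_t$. Concretely, for $f \in \mathcal B(G)$ and $i \in I$ one has $R_i f \in \mathcal B(G)$ as well (it lives on the same finite-dimensional quotient $G_\alpha$), so $\frac{\partial^n}{\partial t^n} H_t (R_i f) = R_i \big( \frac{\partial^n}{\partial t^n} H_t f \big)$, and applying Theorem~\ref{thm:LpRT} pointwise in $t$ gives
\[
t^{n-\frac\theta2} \left\| \frac{\partial^n}{\partial t^n} H_t (R_i f) \right\|_p
= t^{n-\frac\theta2} \left\| R_i \frac{\partial^n}{\partial t^n} H_t f \right\|_p
\le 2(p^*-1)\, t^{n-\frac\theta2} \left\| \frac{\partial^n}{\partial t^n} H_t f \right\|_p.
\]
Taking the supremum over $t > 0$ yields (i). For (ii), the identical argument applies with Theorem~\ref{thm:Lp2ndRT} in place of Theorem~\ref{thm:LpRT}, using that $R_i R_j$ also commutes with $L$ (being a composition of operators each commuting with $L$) and that $R_i R_j f \in \mathcal B(G)$ for $f \in \mathcal B(G)$.

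For (iii), I would first observe that $\overrightarrow R_{mn} f$ is an $\ell^2$-valued function and that, exactly as in the proof of Theorem~\ref{thm:LpRT}, the vector of Riesz transforms $(R_m, \dots, R_n)$ satisfies the dimension-free bound $\|\overrightarrow R_{mn} g\|_p \le 2(p^*-1)\|g\|_p$ by Lemma~\ref{lem:MI} (the differential subordination and $\sum_i \langle N^i\rangle_t \le \langle M\rangle_t$ still hold when summing over only the block of indices $m,\dots,n$). Commuting $\overrightarrow R_{mn}$ with $\frac{\partial^n}{\partial t^n} H_t$ as above gives
\[
\Lambda_{\theta,n}^p\big(\overrightarrow R_{mn} f\big)
\le 2(p^*-1)\, \Lambda_{\theta,n}^p(f).
\]
This only shows boundedness, not decay to zero, so one more step is needed: for $f \in \mathcal B(G)$ write $f = \varphi \circ \pi_\alpha$; then as noted in Remark~\ref{rem:Rmn-to-0}, once $m \notin I_\alpha$ one has $R_i f = 0$ for all $i \ge m$, so $\overrightarrow R_{mn} f$ is the zero vector for all large $m$, and hence $\Lambda_{\theta,n}^p(\overrightarrow R_{mn} f) = 0$ eventually. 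This immediately gives (iii) for $f \in \mathcal B(G)$.

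The argument above is essentially all ``soft'': the main obstacle is bookkeeping rather than analysis. Specifically I would want to be careful about two points. First, that applying the scalar inequalities in Theorems~\ref{thm:LpRT} and~\ref{thm:Lp2ndRT} \emph{pointwise in} $t$ is legitimate — this is fine because $\frac{\partial^n}{\partial t^n} H_t f \in \mathcal B(G)$ (indeed $H_t$ preserves the quotient structure: $H_t(\psi \circ \pi_\alpha) = (H_t^\alpha \psi)\circ \pi_\alpha$, and $L^n$ likewise), so the hypothesis $f \in \mathcal B(G)$ of those theorems is met by the function we plug in. Second, the commutation identity $R_i \frac{\partial^n}{\partial t^n} H_t f = \frac{\partial^n}{\partial t^n} H_t R_i f$ should be justified from $X_i L = L X_i$ on $\mathcal B(G)$ (bi-invariance) together with $X_i L^{-1/2} = L^{-1/2} X_i$, the latter via spectral calculus / the subordination formula $L^{-1/2} = c\int_0^\infty t^{-1/2} H_t\, dt$ applied to functions in $\mathcal B(G)$; since everything reduces to the finite-dimensional Lie group $G_\alpha$, no infinite-dimensional subtlety arises. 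Modulo these verifications, the proof is a direct consequence of the Riesz transform bounds proved earlier in the section.
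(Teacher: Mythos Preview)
Your proposal is correct and follows essentially the same approach as the paper: items (i) and (ii) are immediate from Theorems~\ref{thm:LpRT} and~\ref{thm:Lp2ndRT} via the commutation of $R_i$ with $L^nH_t$ (bi-invariance), exactly as you spell out, and for (iii) your observation that $\overrightarrow R_{mn}f$ is eventually the zero vector when $f\in\mathcal B(G)$ is precisely the content of Remark~\ref{rem:Rmn-to-0}(ii). The paper additionally cites the Banach space property of $\Lambda_\theta^p(G)$ (Proposition~\ref{prop:Banach}) in its proof of (iii), which is what one would need to extend the conclusion from $\mathcal B(G)$ to its completion; since the statement is only for $f\in\mathcal B(G)$, your direct ``eventually zero'' argument suffices and is the simpler route.
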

\begin{proof}
Assume first $1<p<\infty$. The two results (i) and (ii) are direct consequences of Theorems \ref{thm:LpRT} and \ref{thm:Lp2ndRT}. Item (iii) follows from Remark \ref{rem:Rmn-to-0} (ii) and the fact that $\Lambda_{\theta}^p(G)$ is Banach (see Proposition \ref{prop:Banach}).
\end{proof}

\begin{prop}\label{prop:RTinLip-infty}
Assume that $\mu_t$ satisfies property $(\mathrm{CK\lambda})$ for some $0<\lambda<1$. Let $n\in \mathbb N$ and let $0<\theta<2n$. Consider $p=1$ or $\infty$. Then the following hold for $f\in \mathcal B(G)$:
\begin{enumerate}
\item $\Lambda_{\theta,n}^p(R_i f)\le C\Lambda_{\theta+2\lambda,n}^p(f)$, for every $i\in I$. In particular, if property $(\mathrm{CK}0^+)$ is satisfied, then
$\Lambda_{\theta,n}^p(R_i f)\le C\Lambda_{\theta+\varepsilon,n}^p(f)$ for any $\varepsilon>0$.
\item $\Lambda_{\theta,n}^p(R_iR_j f)\le C\Lambda_{\theta+2\lambda,n}^p(f)$, for every $i,j\in I$. In particular, if property $(\mathrm{CK}0^+)$ is satisfied, then 
\(\Lambda_{\theta,n}^p(R_iR_j f)\le C\Lambda_{\theta+\varepsilon,n}^p(f).
\)
\end{enumerate}
\end{prop}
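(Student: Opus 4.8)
The plan is to imitate the strategy of Proposition \ref{prop:RTinLip}, but to replace the use of the dimension-free $L^p$ boundedness of Riesz transforms (which fails at the endpoints $p=1,\infty$) by a two-step argument: first ``lose'' a power of $L$ using ultracontractivity, then use the $L^p$ boundedness of Riesz transforms for a finite exponent $p$ that depends on $t$, and finally return to the endpoint. Concretely, fix $n>\theta/2$ (if the given $n$ is not the smallest such integer one reduces to that case via Lemma \ref{lem:Lip-sg-p-alter-1infty}(ii) together with the $L^p$-analyticity-type estimates already recorded), let $R$ stand for $R_i$ or $R_iR_j$, and write, for $0<t<1$,
\[
\frac{\partial^n}{\partial t^n}H_t(Rf)=(-1)^nL^nH_{t/2}H_{t/2}(Rf)=(-1)^nH_{t/2}\,\big(L^n R\,H_{t/2}f\big),
\]
using that the $X_i$, hence the Riesz transforms, commute with $L$ and with $H_t$ (bi-invariance of $L$).

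Next I would estimate in the $L^\infty$ case using \eqref{eq:ultra-Lp-Linfty}: for any $1<q<\infty$,
\[
\Big\|\frac{\partial^n}{\partial t^n}H_t(Rf)\Big\|_\infty\le e^{M_0(t/2)/q}\,\big\|L^n R\,H_{t/2}f\big\|_q\le 2(q^*-1)\,e^{M_0(t/2)/q}\,\big\|L^n H_{t/2}f\big\|_q,
\]
where the last step is Theorem \ref{thm:LpRT} or Theorem \ref{thm:Lp2ndRT} applied to $L^nH_{t/2}f\in\mathcal B(G)$. Since $\nu$ is normalized, $\|L^nH_{t/2}f\|_q\le\|L^nH_{t/2}f\|_\infty$. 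Now choose $q=q(t)$ so that $M_0(t/2)/q\simeq 1$; since $(\mathrm{CK}\lambda)$ gives $M_0(t/2)=A(t/2)^{-\lambda}$, this forces $q\simeq t^{-\lambda}$ and hence $q^*-1\simeq t^{-\lambda}$. Multiplying by $t^{n-\theta/2}$ and taking the supremum over $0<t<1$ yields
\[
\sup_{0<t<1}t^{n-\frac\theta2}\Big\|\frac{\partial^n}{\partial t^n}H_t(Rf)\Big\|_\infty\le C\sup_{0<t<1}t^{\,n-\frac{\theta+2\lambda}{2}}\big\|L^nH_{t/2}f\big\|_\infty\le C\,\Lambda_{\theta+2\lambda,n}^\infty(f),
\]
and the contribution of $t\ge 1$ is controlled by $\|f\|_\infty$ via the large-time bound in Remark \ref{rem:Lip-sg-p-large}(i) (valid at $p=\infty$ under $(\mathrm{CK}\lambda)$ by Corollary \ref{cor:L1-diff-Lambda}), so $\Lambda_{\theta,n}^\infty(Rf)\le C\,\Lambda_{\theta+2\lambda,n}^\infty(f)$. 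The $p=1$ case is identical after replacing \eqref{eq:ultra-Lp-Linfty} by the dual estimate \eqref{eq:ultracontractive} (writing $\|L^nRH_tf\|_1\le\|L^nRH_tf\|_q\le 2(q^*-1)\|L^nH_tf\|_q\le 2(q^*-1)e^{M_0(t/2)(1-1/q)}\|L^nH_{t/2}f\|_1$ and choosing $q$ with $M_0(t/2)(1-1/q)\simeq1$, $q^*\simeq t^{-\lambda}$). The $(\mathrm{CK}0^+)$ statements follow because then $M_0(t/2)\le A_\varepsilon t^{-\varepsilon}$ for every $\varepsilon>0$, so the loss $2\lambda$ above may be replaced by any $\varepsilon>0$; combining with Remark \ref{rem:Lip-sg-p-large}(iii) and Remark \ref{rem:Rmn-to-0} gives the vanishing of $\Lambda_{\theta,n}^p(\overrightarrow R_{mn}f)$ if one wishes to record it.

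The main obstacle is the bookkeeping around the choice of $q=q(t)$: one must check that the implied constant in ``$\Lambda_{\theta,n}^p(Rf)\le C\,\Lambda_{\theta+2\lambda,n}^p(f)$'' does not blow up as $t\to 0$, i.e. that $q^*-1\simeq t^{-\lambda}$ is exactly absorbed by trading $t^{n-\theta/2}$ for $t^{n-(\theta+2\lambda)/2}$, and that $q(t)>1$ stays in the admissible range (which is automatic for $t$ small since $M_0(t/2)\to\infty$, while for $t$ bounded below one simply takes $q=2$). A secondary point is reducing a general integer $n>\theta/2$ to the smallest such integer when $\theta+2\lambda$ pushes past an even integer; this is handled by Lemma \ref{lem:Lip-sg-p-alter-1infty} exactly as in the definition of $\Lambda_\theta^p$, so no genuinely new idea is needed there.
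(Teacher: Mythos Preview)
Your proposal is correct and follows essentially the same approach as the paper's proof: split $H_t=H_{t/2}H_{t/2}$, use ultracontractivity to pass from the endpoint to $L^q$, apply the dimension-free $L^q$ Riesz transform bound (Theorem~\ref{thm:LpRT} or~\ref{thm:Lp2ndRT}), return to the endpoint via $\|\cdot\|_q\le\|\cdot\|_\infty$ (resp.\ via \eqref{eq:ultracontractive} for $p=1$), and choose $q=q(t)$ so that $M_0(t/2)/q\simeq 1$, $q^*\simeq t^{-\lambda}$. The paper's write-up is slightly terser and does not discuss the reduction to the smallest admissible $n$ (which is in fact unnecessary here since the statement is for arbitrary $n>\theta/2$), but the substance is identical.
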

\begin{proof}
    We consider the case $p=\infty$. The argument is similar as in the proof of Lemma \ref{lem:Lip-sg-p-alter-1infty}. For any $1<q<\infty$ and $i\in I$, it follows from the ultracontractivity of $H_t$ in \eqref{eq:ultra-Lp-Linfty} and Theorem \ref{thm:LpRT} that 
    \[
    \left\|L^n H_tR_if\right\|_{\infty}
    \le e^{M_0(t/2)/q} \left\|L^n H_{t/2}R_if\right\|_{q}
    \le 4q^*e^{M_0(t/2)/q}\left\|L^n H_{t/2}f\right\|_{q}.   
    \]
    One takes $q>1$ as a function of $t\in(0,1)$ such that $M_0(t/2)/q\simeq1$ and $q^*\simeq t^{-\lambda}$. 
    We then have 
\[
\sup_{0<t<1} t^{n-\frac{\theta}{2}} \left\|L^n H_tR_if\right\|_{\infty} 
\le C\sup_{0<t<1} t^{n-\frac{\theta+2\lambda}{2}} \left\|L^n H_tf\right\|_{\infty}<\infty,
\]
 Remark \ref{rem:Lip-sg-p-large} (i) then gives that $\Lambda_{\theta,n}^{\infty}(R_jf)\le C\Lambda_{\theta+2\lambda,n}^{\infty}(f)$. 
 
 The proof for (ii) is almost the same verbatim, except that one applies  Theorem \ref{thm:Lp2ndRT} instead of Theorem \ref{thm:LpRT}.


\end{proof}

\subsection{Lipschitz spaces via distance}
In this subsection, we assume that $G$ is equipped with a left-invariant distance $d$. Later, this distance $d$ will often be the intrinsic distance associated with the Laplacian $L$ (see Definition \ref{def:distance}).

Set 
\[
\tau_hf(x)=f(xh), \quad\Delta_h^0f(x)=f(x).
\]
For any integer $k\ge 0$, we define 
\begin{equation}\label{eq:Delta-k}
\Delta_h^{k+1}f(x) =\Delta_h^k\tau_hf(x)- \Delta_h^kf(x).
\end{equation}
In particular, we write $\Delta_h f\coloneqq\Delta_h^1 f=f(xh)-f(x)$ for simplicity.

\begin{defn}
Let $k\in \mathbb N$ and $0<\theta<k$. Let $1\le p\le \infty$. 
For  $f\in \mathcal B(G)$, the Lipschitz norm $\mathrm L_{\theta,k}^p(f)$  associated with the  distance $d$ is given by
\begin{equation}\label{eq:Lip-dist-p}
\mathrm L_{\theta,k}^p(f)= \sup_{y\in G}\frac{\|\Delta_y^kf\|_p}{d(e,y)^{\theta}}.
\end{equation} 
\end{defn}

\begin{remark}\label{rem:Lip-dis}
 Similarly as in Remark \ref{rem:Lip-sg-p-large} (iii),  let any $0<\theta<\beta<k$, then we have that $\mathrm L_{\theta,k}^p(f)\le L_{\beta,k}^p(f)$ for any $1\le p\le \infty$.
\end{remark}

\begin{lem}
Let $1\le p\le \infty$. For any $0<\theta<k\le \ell$, there exist constants $c,C>0$ depending on $\theta, k$ and $\ell$ such that for any $f\in \mathcal B(G)$,
\[
c\,\mathrm L_{\theta,\ell}^p(f)\le \mathrm L_{\theta,k}^p(f)\le C\,\mathrm L_{\theta,\ell}^p(f).
\]
\end{lem}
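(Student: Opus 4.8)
This is a Marchaud-type equivalence between finite-difference moduli of different orders, and the plan is to treat the two inequalities separately, the left one being elementary and the right one being the genuine Marchaud estimate. For the left inequality $c\,\mathrm L_{\theta,\ell}^p(f)\le \mathrm L_{\theta,k}^p(f)$, I would simply write $\Delta_y^\ell f=\Delta_y^{\ell-k}\bigl(\Delta_y^k f\bigr)$ and observe that each application of $\Delta_y$ is the operator $T\mapsto \tau_yT-T$, where $\tau_y$ is an $L^p$-isometry because right translations preserve the Haar measure $\nu$; hence $\|\Delta_y^m f\|_p\le 2^{\,m-k}\|\Delta_y^k f\|_p$ for every $m\ge k$, which gives $\mathrm L_{\theta,\ell}^p(f)\le 2^{\,\ell-k}\mathrm L_{\theta,k}^p(f)$. (The same computation, together with $\|\tau_y^j f\|_p=\|f\|_p$, also yields the crude bound $\|\Delta_y^m f\|_p\le 2^m\|f\|_p$, which will be needed below.) This requires only the invariance of $\nu$, so it holds for all $1\le p\le\infty$.

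The substantial step is the right inequality $\mathrm L_{\theta,k}^p(f)\le C\,\mathrm L_{\theta,\ell}^p(f)$, and here the condition $\theta<k$ is essential. I would first prove the consecutive-order case $\mathrm L_{\theta,k}^p(f)\le C_k\,\mathrm L_{\theta,k+1}^p(f)$ (assuming the right-hand side is finite, else there is nothing to prove). Starting from $\tau_{y^2}=\tau_y^2$, one gets $\Delta_{y^2}=\Delta_y(\Delta_y+2I)$, hence, since everything is a polynomial in the single operator $\tau_y$, $\Delta_{y^2}^k=\Delta_y^k(\Delta_y+2I)^k=\sum_{j=0}^k\binom{k}{j}2^{\,k-j}\Delta_y^{k+j}$; isolating the $j=0$ term gives $\Delta_y^k f=2^{-k}\Delta_{y^2}^k f-\sum_{j=1}^k\binom{k}{j}2^{-j}\Delta_y^{k+j}f$. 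Taking $L^p$-norms and bounding $\|\Delta_y^{k+j}f\|_p\le 2^{\,j-1}\|\Delta_y^{k+1}f\|_p$ by the elementary step yields $\|\Delta_y^k f\|_p\le 2^{-k}\|\Delta_{y^2}^k f\|_p+2^{\,k-1}\|\Delta_y^{k+1}f\|_p$. Iterating this along $y,y^2,y^4,\dots$, using $d(e,y^{2^n})\le 2^n d(e,y)$ (left-invariance and the triangle inequality) and $\|\Delta_y^{k+1}f\|_p\le \mathrm L_{\theta,k+1}^p(f)\,d(e,y)^\theta$, one obtains after $N$ steps
\[
\|\Delta_y^k f\|_p\le 2^{-Nk}\|\Delta_{y^{2^N}}^k f\|_p+2^{\,k-1}\,\mathrm L_{\theta,k+1}^p(f)\,d(e,y)^\theta\sum_{n=0}^{N-1}2^{\,n(\theta-k)}.
\]
Since $\theta<k$ the geometric series converges, and since $\|\Delta_{y^{2^N}}^k f\|_p\le 2^k\|f\|_p$ the first term tends to $0$ as $N\to\infty$; letting $N\to\infty$ gives $\mathrm L_{\theta,k}^p(f)\le \frac{2^{\,k-1}}{1-2^{\theta-k}}\,\mathrm L_{\theta,k+1}^p(f)$.

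Finally, for general $k\le\ell$ I would iterate the consecutive-order estimate, $\mathrm L_{\theta,k}^p(f)\le C\,\mathrm L_{\theta,k+1}^p(f)\le\cdots\le C\,\mathrm L_{\theta,\ell}^p(f)$; at the step from order $m$ to order $m+1$ one needs $\theta<m$, which holds because $m\ge k>\theta$ throughout. Combined with the elementary left inequality this establishes the claimed two-sided bound. The main obstacle is the Marchaud step: extracting the binomial identity correctly (harmless in the non-abelian setting since only powers of one translation $\tau_y$ intervene) and then controlling the telescoped sum — in particular showing the remainder $2^{-Nk}\|\Delta_{y^{2^N}}^k f\|_p$ vanishes and that the geometric series $\sum_n 2^{\,n(\theta-k)}$ is summable, which is exactly where the hypothesis $\theta<k$ is used and cannot be relaxed.
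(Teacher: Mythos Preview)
Your proof is correct, and the easy direction as well as the key algebraic identity $\Delta_{y^2}=\Delta_y(2I+\Delta_y)$ (hence $\Delta_{y^2}^k=\Delta_y^k(2I+\Delta_y)^k$) match the paper exactly. Where you diverge is in how you extract the conclusion from the resulting pointwise inequality
\[
2^k\|\Delta_y^k f\|_p\le \|\Delta_{y^2}^k f\|_p+C_k\|\Delta_y^{k+1}f\|_p.
\]
You run a dyadic Marchaud iteration along $y,y^2,y^4,\dots$, sum a geometric series in $2^{\theta-k}$, and kill the remainder $2^{-Nk}\|\Delta_{y^{2^N}}^k f\|_p$ via the crude bound $\le 2^k\|f\|_p$. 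The paper instead divides the same inequality by $d(e,y)^\theta$, uses $d(e,y^2)\le 2d(e,y)$, and takes the supremum over $y$ in one shot to get
\[
2^k\,\mathrm L_{\theta,k}^p(f)\le 2^\theta\,\mathrm L_{\theta,k}^p(f)+C_k\,\mathrm L_{\theta,k+1}^p(f),
\]
then subtracts, using $2^k>2^\theta$; the paper attributes this absorption trick to Carl Herz. The trade-off is that the paper's argument is a couple of lines but tacitly uses that $\mathrm L_{\theta,k}^p(f)<\infty$ a priori (fine for $f\in\mathcal B(G)$), whereas your iteration is longer but yields the bound directly, without ever needing to subtract the unknown quantity from both sides. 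Both arguments reduce the general case $k\le\ell$ to the consecutive case $\ell=k+1$ in the same way.
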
 
\begin{proof} For any fixed $k\in \mathbb N$, it suffices to consider the case $\ell=k+1$. We drop the reference to $\theta$ and $p$ which are fixed. Obviously, since $\Delta_h^{k+1}f(x) =\Delta_h^k\tau_hf(x)- \Delta_h^kf(x)$, it follows that $\mathrm L_{k+1}(f)\le 2\mathrm L_{k}(f)$.
 
For the other direction, note that
\[
\Delta_{h^2}=\tau_h^2-I=(\tau_h-1)(\tau_h+1)=\Delta_h(2I+\Delta_h).
\]
Hence for $k\in \mathbb N$,
\[
\Delta^k_{h^2}=\Delta^k_h(2I+\Delta_h)^k= 2^k\Delta_h^k+ \Delta^{k+1}_h\sum_{j=0}^{k-1}\tau_h^j.
\]
It follows that
\[
2^k\Delta_{h}^kf(x)=  \Delta^k_{h^2}f(x) -  \sum_{j=0}^{k-1}\Delta_h^{k+1}\tau_{h^j}f(x).
\]
This gives
\[
2^k \|\Delta^k_hf\|_p\le \|\Delta^k_{h^2}f\|_p+ (k-1)\|\Delta_h^{k+1}f\|_p.
\] 
Dividing by $d(e,h)^\theta$ and using the fact that $d(e,h^2)\le 2d(e,h)$, we get
$$
2^k \frac{\|\Delta^k_hf\|_p}{d(e,h)^\theta}\le 2^\theta\frac{\|\Delta^k_{h^2}f\|_p}{(2d(e,h))^\theta}+ k\frac{\|\Delta_h^{k+1}f\|_p}{d(e,h)^\theta}\le 2^\theta\frac{\|\Delta^k_{h^2}f\|_p}{d(e,h^2)^\theta}+ k\frac{\|\Delta_h^{k+1}f\|_p}{d(e,h)^\theta} .
$$ 
Taking the supremum over $h\in G$, this gives
$$
(2^k-2^\theta)\,\mathrm L_k(f)\le k\,\mathrm L_{k+1}(f).
$$
The third author learned this argument from an unpublished manuscript of Carl Herz from around 1970.
\end{proof}

\begin{defn}\label{def:Lip-distance}
Let $\theta>0$. Let $1\le p\le \infty$. Suppose that $k$ is the smallest integer strictly greater than $\theta$. We define the space $\mathrm L_{\theta}^p(G)$ as the completion of the set of functions  $f\in \mathcal B(G)$ such that $\mathrm L_{\theta,k}^p(f)<\infty$
with respect to the $\mathrm L_{\theta}^p$ norm given by 
\[
\|f\|_{\mathrm L_{\theta}^p}:=\|f\|_p+ \mathrm L_{\theta,k}^p(f).
\]
For simplicity, we denote the seminorm $\mathrm L_{\theta,k}^p(f)$ by $ \mathrm L_{\theta}^p(f)$.
\end{defn}

\subsection{Comparison of Lipschitz seminorms: the case \texorpdfstring{$1<p<\infty$}{1p-infty}}
In this subsection, we compare the two seminorms $\mathrm L_{\theta}^p(f)$ and $\Lambda_{\beta}^p(f)$ for $1<p<\infty$ with appropriate choice of $\theta$ and $\beta$. Here in the seminorm $\mathrm L_{\theta}^p(f)$ the distance $d$  will be referred as the intrinsic distance associated to the Laplacian $L$.
In the following result, we assume $(\mathrm{CK}\lambda)$ for some $\lambda\in (0,1)$ and the proof also applies to $p=1, \infty$ with slight modification.

\begin{thm}\label{thm:LipSG<LipDist}
Let $1\le p\le \infty$ and let $\theta\in (0,2)$. Assume that $\mathrm L_{\theta}^p(f)<\infty$. 
\begin{enumerate}
    \item If $(\mu_t)_{t>0}$ satisfies $(\mathrm{CK}\lambda)$ with $0<\lambda<\frac{\theta}{\theta+2}$, then  $\Lambda_{\beta}^p(f)<\infty$ for $\beta\in (0,1)$ such that $\beta \le (1-\lambda)\theta-2\lambda$. In particular, if $(\mu_t)_{t>0}$ satisfies $(\mathrm{CK}0^+)$, then $\Lambda_{\beta}^p(f)<\infty$ for $0<\beta<\min\{\theta,1\}$.
    \item If $\theta>1$ and $(\mu_t)_{t>0}$ satisfies $(\mathrm{CK}\lambda)$ with $0<\lambda<\frac{\theta-1}{\theta+4}$, then $\Lambda_{\beta,2}^p(f)<\infty$ for $1\le \beta \le (1-\lambda)\theta-4\lambda$. In particular, if $(\mu_t)_{t>0}$ satisfies $(\mathrm{CK}0^+)$, then $\Lambda_{\beta,2}^p(f)<\infty$ for $1\le \beta <\theta$.
\end{enumerate}
\end{thm}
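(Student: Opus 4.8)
The plan is to estimate $\left\|\frac{\partial^n}{\partial t^n}H_tf\right\|_p$ (for the appropriate $n$, namely $n=1$ in part (i) and $n=2$ in part (ii)) directly from the hypothesis $\mathrm L^p_{\theta}(f)<\infty$, by writing $\frac{\partial^n}{\partial t^n}H_t$ as an integral against the (derivatives of the) heat kernel and exploiting the fact that $H_t$ and its derivatives annihilate constants, so that $\frac{\partial^n}{\partial t^n}H_tf(x) = \int_G \big(\partial_t^n\mu_t\big)(y)\,\big(f(xy)-f(x)\big)\,d\nu(y)$ — more precisely, one iterates this so that $f(xy)-f(x)$ gets replaced by a $k$-th order difference $\Delta_y^kf$ with $k$ the smallest integer exceeding $\theta$ (this is where one uses that $\partial_t^n\mu_t$, being $L^n\mu_t$, has vanishing moments of sufficiently high order against the appropriate difference operators, or symmetrizes the kernel to create higher-order differences). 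Then $\big|\partial_t^n\mu_t(y)\big|\le \exp\!\big(\tfrac{A}{(t/2)^\lambda}-\tfrac{d(e,y)^2}{Ct}\big)$ by Corollary~\ref{cor:knHKbounds} (with $n$ in place of $L^n$, which is fine since $\partial_t^n = (-L)^n$), and $\|\Delta_y^kf\|_p\le \mathrm L^p_{\theta,k}(f)\,d(e,y)^\theta$.

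Concretely, the key estimate is
\[
\left\|\tfrac{\partial^n}{\partial t^n}H_tf\right\|_p \le \mathrm L^p_{\theta,k}(f)\int_G d(e,y)^\theta\,\big|\partial_t^n\mu_t(y)\big|\,d\nu(y),
\]
and I would bound the integral by splitting according to the on-diagonal factor: one uses $\int_G d(e,y)^\theta e^{-d(e,y)^2/(Ct)}\,d\mu_{Ct}(y)$-type reasoning, or more simply the crude bound $\int_G d(e,y)^\theta \exp\!\big(\tfrac{A}{(t/2)^\lambda}-\tfrac{d(e,y)^2}{Ct}\big)d\nu(y) \lesssim e^{2^\lambda A/t^\lambda}\cdot t^{\theta/2}$, using that $\int_G \exp(-d(e,y)^2/(Ct))\,d\nu\lesssim$ a constant (the total mass is $1$ and we are over-counting) and that the polynomial weight $d(e,y)^\theta$ against a Gaussian of width $\sqrt t$ contributes $t^{\theta/2}$. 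This gives
\[
t^{n-\beta/2}\left\|\tfrac{\partial^n}{\partial t^n}H_tf\right\|_p \lesssim \mathrm L^p_{\theta,k}(f)\, t^{n-\beta/2 + \theta/2}\,e^{2^\lambda A/t^\lambda},
\]
and for $0<t<1$ this is bounded provided the exponential growth $e^{2^\lambda A/t^\lambda}$ is beaten by the polynomial decay $t^{n + (\theta-\beta)/2}$ — which, since $t^{-\lambda}$-type exponentials dominate every polynomial, is \emph{not} automatic and forces the restriction on $\lambda$; one absorbs the exponential into an extra power $t^{-\varepsilon'}$ and needs $n+(\theta-\beta)/2-\varepsilon'>0$, i.e. roughly $\beta<\theta+2n-2\varepsilon'$. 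The bookkeeping that turns "$\lambda$ small enough" into the precise inequalities $\beta\le(1-\lambda)\theta-2\lambda$ (for $n=1$) and $\beta\le(1-\lambda)\theta-4\lambda$ (for $n=2$) comes from being more careful: rather than the crude bound above, one optimizes by using the Gaussian factor $e^{-d(e,y)^2/(Ct)}$ to also kill the on-diagonal growth $e^{A/t^\lambda}$ on the region where $d(e,y)$ is not too small, and handles the near-diagonal region $d(e,y)\lesssim t^{(1+\lambda)/2}$ (say) separately, where $d(e,y)^\theta$ is already tiny; balancing the two regions produces exactly the stated thresholds. For part (ii), $n=2$ and $k=$ smallest integer $>\theta$ with $\theta>1$ so $k\ge 2$, and the same scheme runs with $\partial_t^2\mu_t=L^2\mu_t$ carrying two extra factors of $t^{-\lambda}$-type growth (hence the $4\lambda$ rather than $2\lambda$), and the "$\mathrm{CK}0^+$" cases are immediate by letting $\lambda\to 0$.

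The main obstacle is the first step: justifying the representation $\frac{\partial^n}{\partial t^n}H_tf(x)=\int_G \partial_t^n\mu_t(y)\,\Delta_y^kf(x)\,d\nu(y)$ with a genuine $k$-th order difference (not just $k=1$), since naively one only gets the first difference for free from $\int\partial_t^n\mu_t\,d\nu=0$; to upgrade to order $k$ one must exploit symmetry/centrality of $\mu_t$ (so that odd-order terms cancel) together with the bi-invariance to reduce to the Lie-group quotients $G_\alpha$ where a classical Taylor-expansion argument applies, and then pass to the limit using that $\mathcal B(G)$ is dense and that all estimates are uniform in $\alpha$. The integral estimates themselves and the $\lambda$-threshold arithmetic are routine once that representation is in hand.
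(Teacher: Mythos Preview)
Your approach is essentially the paper's. Two points of execution to fix. First, the second-order difference comes for free from the symmetry of $\mu_t$: since $\check\mu_t=\mu_t$ one has $\int\partial_t^n\mu_t(y)\,g(y)\,d\nu=\int\partial_t^n\mu_t(y)\,g(y^{-1})\,d\nu$, so together with $\int\partial_t^n\mu_t\,d\nu=0$ this yields directly
\[
\partial_t^n H_tf(x)=\tfrac12\int_G\partial_t^n\mu_t(y)\,\bigl[f(xy)+f(xy^{-1})-2f(x)\bigr]\,d\nu(y),
\]
and $\|f(\cdot y)+f(\cdot y^{-1})-2f(\cdot)\|_p\le C\,\mathrm L^p_{\theta,2}(f)\,d(e,y)^\theta$; no passage to quotients or Taylor expansion is needed, and this single symmetric second difference suffices for all $\theta\in(0,2)$. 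Second, the cutoff in the near/far split must be $d(e,y)\sim \zeta\, t^{(1-\lambda)/2}$, not $t^{(1+\lambda)/2}$: you need $d(e,y)^2/t\gtrsim t^{-\lambda}$ on the far region for the Gaussian factor to absorb the on-diagonal $e^{A/t^\lambda}$, and with your exponent the intermediate shell $t^{(1+\lambda)/2}\le d(e,y)\le t^{(1-\lambda)/2}$ is uncontrolled. On the near region the paper uses the $L^1$-differentiability bound $\int_G|\partial_t^n\mu_t|\,d\nu\le C\,t^{-n(1+\lambda)}$ from Corollary~\ref{cor:L1-diff-Lambda} (not the pointwise Gaussian, which would require unavailable volume estimates for small balls in this infinite-dimensional setting); combining $d(e,y)^\theta\le \zeta^\theta t^{(1-\lambda)\theta/2}$ with that $L^1$ bound gives exactly $\beta\le(1-\lambda)\theta-2n\lambda$.
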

\begin{proof}
Some  ideas in the argument below come from \cite{SC1990, Stein}. 
Let $\mathrm L_{\theta}^p(f)<\infty$. We note that $\int_G \frac{\partial}{\partial t} h_t(x) d\mu(x)=0$. Then
\[
\frac{\partial}{\partial t} H_tf(x)=\frac12\int_G \frac{\partial}{\partial t} h_t(y) (f(xy)+f(xy^{-1})-2f(x))d\nu(y)
\]
and hence
\begin{align*}
\left\|\frac{\partial}{\partial t} H_tf\right\|_p
& 
\le \int_G\left|\frac{\partial}{\partial t}h_t(y)\right|\,\left\|\Delta_{y^{-1}}\Delta_yf\right\|_pd\nu(y)
\\ &
\le \int_G d(e,y)^{\theta} \left|\frac{\partial}{\partial t} h_t(y)\right|d\nu(y)\, \mathrm L_{\theta,2}^p(f).  
\end{align*}

\begin{enumerate}
\item Assume that $(\mu_t)_{t>0}$ satisfies $(\mathrm{CK}\lambda)$ with $0<\lambda<\frac{\theta}{\theta+2}$. Letting $0<\beta<1$, we aim to show that $\Lambda_{\beta}^p(f)<\infty$. In view of Remark \ref{rem:Lip-sg-p-large}, it suffices to justify that 
\begin{equation}\label{eq:sup-time-der}
\sup_{0<t<1} t^{1-\frac{\beta}{2}}\int_G d(e,y)^{\theta}\left|\frac{\partial}{\partial t}h_t(y)\right| d\nu(y)<\infty.    
\end{equation}

We divide the integral in \eqref{eq:sup-time-der} for $\{y:d(e,y)\le \zeta t^{\frac{1-\lambda}2}\}$ and $\{y:d(e,y)\ge \zeta t^{\frac{1-\lambda}2}\}$, where $\zeta$ is a constant to be determined later. By the $L^1$-differentiability  \eqref{eq:L1-diff-Lambda}, one has 
\begin{align*}
    t^{1-\frac{\beta}{2}}\int_{d(e,y)\le \zeta t^{\frac{1-\lambda}2}} d(e,y)^{\theta}\left|\frac{\partial}{\partial t}h_t(y)\right| d\nu(y)
    & \le C t^{\frac{(1-\lambda)\theta-\beta}2+1}\int_{G}\left|\frac{\partial}{\partial t}h_t(y)\right| d\nu(y)
    \\ &\le C t^{\frac{(1-\lambda)\theta-\beta}2-\lambda}<\infty, \quad \forall 0<t<1
\end{align*} 
provided $\frac{(1-\lambda)\theta-\beta}2-\lambda\ge 0$, i.e., $\beta\le(1-\lambda)\theta-2\lambda$. 

Next applying Theorem \ref{thm:HKbounds} (see \cite[Theorem 4.2]{BSC2002}), we obtain that for any $\alpha,\beta>0$,
\begin{align*}
    &t^{1-\frac{\beta}{2}}\int_{d(e,y)\ge \zeta t^{\frac{1-\lambda}2}} d(e,y)^{\theta} \left|\frac{\partial}{\partial t}h_t(y)\right|d\nu(y)
    \\ &\quad\le 
    Ct^{\frac{\theta-\beta}2}\int_{d(e,y)\ge \zeta t^{\frac{1-\lambda}2}} \left(\frac{d(e,y)^2}{t}\right)^{\theta/2}\exp\left(\frac{A}{t^{\lambda}}-\frac{d(e,y)^2}{Ct}\right)d\nu(y)
    \\ & \quad
    \le   Ct^{\frac{\theta-\beta}2}\exp\left(-\frac{A'}{t^{\lambda}}\right)<\infty, \quad \forall 0<t<1
\end{align*}
where for the second inequality we choose $\zeta$ large enough such that $\zeta^2/C-A=A'>0$. 

To summarize, \eqref{eq:sup-time-der} holds for $\beta\in (0,1)$ such that $\beta \le (1-\lambda)\theta-2\lambda$. Therefore,  there exists a constant $C>0$ such that $\Lambda_{\beta}^p(f)\le C\,\mathrm L_{\theta}^p(f)$ and we complete the proof.

\item Assume that $\alpha>1$ and $(\mu_t)_{t>0}$ satisfies $(\mathrm{CK}\lambda)$ with $0<\lambda<\frac{\theta-1}{\theta+4}$. Consider now $\beta\in [1,2)$. Similarly as case (i), we have
\[
\frac{\partial^2}{\partial t^2} H_tf(x)=\frac12\int_G \frac{\partial^2}{\partial t^2} h_t(y) (f(xy)+f(xy^{-1})-2f(x))d\nu(y).
\]
So it is enough to show that 
\[
\sup_{0<t<1} t^{2-\frac{\beta}{2}}\int_G d(e,y)^{\theta}\left|\frac{\partial^2}{\partial t^2}h_t(y)\right| d\nu(y)<\infty    
\]
by estimating two integrals over $\{y:d(e,y)\le \zeta t^{\frac{1-\lambda}2}\}$ and $\{y:d(e,y)\ge \zeta t^{\frac{1-\lambda}2}\}$. The latter follows exactly the same way as above. For the former, we apply Corollary \ref{cor:L1-diff-Lambda} and obtain that for $1\le \beta \le (1-\lambda)\theta-4\lambda$, 
\begin{align*}
    t^{2-\frac{\beta}{2}}\int_{d(e,y)\le \zeta t^{\frac{1-\lambda}2}} d(e,y)^{\theta}\left|\frac{\partial^2}{\partial t^2}h_t(y)\right| d\nu(y)
    & \le C t^{\frac{(1-\lambda)\theta-\beta}2+2}\int_{G}\left|\frac{\partial^2}{\partial t^2}h_t(y)\right| d\nu(y)
    \\ &\le C t^{\frac{(1-\lambda)\theta-\beta}2-2\lambda}<\infty, \quad \forall 0<t<1.
\end{align*} 
\end{enumerate}
\end{proof}


Now we turn to the proof of the reverse direction, which uses semigroup property and is not restricted to the assumption $(\mathrm{CK}\lambda)$. We start with the following $L^p$-Poincar\'e inequality.
\begin{lem}\label{lem:p-Poincare}
Let $1\le p< \infty$. For any $f\in L^p(G)\cap W^{1,p}(G)$ and $y\in G$, we have 
\[
\int_G |f(xy)-f(x)|^p d\nu(x) \le d(e,y)^p \int_G \Gamma(f,f)^{\frac p2}(z)d\nu(z).
\]
If $p=\infty$, then for $f\in  L^{\infty}(G)\cap \mathrm{Dom}(\mathcal E)$ and $y\in G$, we have 
\[
\|\Delta_yf\|_{\infty}\le d(e,y)\|\Gamma(f,f)^{\frac 12}\|_{\infty}.
\]
\end{lem}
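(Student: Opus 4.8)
The plan is to pass, via the projective structure of $G$, to an $L^p$-Poincar\'e inequality on each Lie group quotient $G_\alpha$ and to prove the latter by integrating along a geodesic. By density of $\mathcal B(G)$ it suffices to treat $f=\phi\circ\pi_\alpha$ with $\phi\in\mathcal C^\infty(G_\alpha)$, the general case then following by approximation (and the case $p=\infty$ also from the finite-$p$ bounds, see the end). For such $f$ one has $\Delta_y f=(\Delta_{\pi_\alpha(y)}\phi)\circ\pi_\alpha$, $\Gamma(f,f)=\Gamma_\alpha(\phi,\phi)\circ\pi_\alpha$, $\pi_\alpha$ transports $\nu$ to $\nu_\alpha$, and $d_\alpha(e_\alpha,\pi_\alpha(y))\le d(e,y)$ by \eqref{eq:dist-sup}; so the claim reduces to showing, for every $g\in G_\alpha$,
\[
\int_{G_\alpha}|\phi(zg)-\phi(z)|^p\,d\nu_\alpha(z)\ \le\ d_\alpha(e_\alpha,g)^p\int_{G_\alpha}\Gamma_\alpha(\phi,\phi)^{p/2}\,d\nu_\alpha .
\]

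Here I would use that $(X_{\alpha,i})_{i\in I_\alpha}$ is a left-invariant basis of $\mathfrak G_\alpha$: declaring it orthonormal defines a left-invariant Riemannian metric on the compact group $G_\alpha$ whose gradient satisfies $|\nabla\phi|^2=\sum_{i\in I_\alpha}(X_{\alpha,i}\phi)^2=\Gamma_\alpha(\phi,\phi)$ and whose Riemannian distance is the intrinsic distance $d_\alpha$ of $\mathcal E_\alpha$. By compactness there is a minimizing unit-speed geodesic $\gamma:[0,\ell]\to G_\alpha$ from $e_\alpha$ to $g$ with $\ell=d_\alpha(e_\alpha,g)$, and by left-invariance $s\mapsto z\gamma(s)$ is, for each fixed $z$, a minimizing geodesic from $z$ to $zg$. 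The fundamental theorem of calculus and the Cauchy--Schwarz inequality then yield
\[
|\phi(zg)-\phi(z)|=\Bigl|\int_0^\ell\langle\nabla\phi(z\gamma(s)),\dot\gamma(s)\rangle\,ds\Bigr|\ \le\ \int_0^\ell\Gamma_\alpha(\phi,\phi)^{1/2}(z\gamma(s))\,ds .
\]

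To conclude, I would take the $L^p(G_\alpha,\nu_\alpha)$-norm in $z$, apply Minkowski's integral inequality, and use left-invariance of $\nu_\alpha$ to get $\bigl\|\Gamma_\alpha(\phi,\phi)^{1/2}(\,\cdot\,\gamma(s))\bigr\|_p=\bigl\|\Gamma_\alpha(\phi,\phi)^{1/2}\bigr\|_p$ for every $s$; this gives $\|\Delta_g\phi\|_p\le\ell\,\bigl\|\Gamma_\alpha(\phi,\phi)^{1/2}\bigr\|_p$, i.e.\ the inequality on $G_\alpha$ after raising to the $p$-th power. Transporting back to $G$ and bounding $d_\alpha\le d$ finishes the case $1\le p<\infty$. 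For $p=\infty$ one may either observe directly from Definition \ref{def:distance} that $\Gamma(f,f)\le c^2$ implies $|f(xy)-f(x)|\le c\,d(e,y)$ for $f\in\mathcal B(G)$, or simply let $p\to\infty$ in the finite-$p$ inequality, using that $\nu$ is a probability measure so that $\|\Gamma(f,f)^{1/2}\|_p\le\|\Gamma(f,f)^{1/2}\|_\infty$.

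The one point that needs justification beyond routine estimates is the identification of the analytically defined intrinsic distance $d_\alpha$ (the supremum over $\phi$ with $\Gamma_\alpha(\phi,\phi)\le1$) with the geometric Riemannian length distance on $G_\alpha$ — this is exactly what makes the geodesic argument available. It is classical for smooth left-invariant structures and is already implicit in the references quoted above; everything else (Minkowski's inequality, invariance of Haar measure, the reduction to $\mathcal B(G)$, and the $p\to\infty$ limit) is standard.
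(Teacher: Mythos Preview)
Your proof is correct and follows essentially the same route as the paper: reduce to $f\in\mathcal B(G)$, project to a Lie quotient $G_\alpha$, integrate along a length-minimizing path from $e_\alpha$ to $\pi_\alpha(y)$, then apply Minkowski and translation-invariance of Haar measure together with $d_\alpha\le d$. One small slip: the invariance you need for $\bigl\|\Gamma_\alpha(\phi,\phi)^{1/2}(\,\cdot\,\gamma(s))\bigr\|_p=\bigl\|\Gamma_\alpha(\phi,\phi)^{1/2}\bigr\|_p$ is \emph{right}-invariance (since $z\mapsto z\gamma(s)$ acts on the right), but this is harmless as $G_\alpha$ is compact and Haar is bi-invariant.
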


\begin{proof}
Since the space of Bruhat test functions is dense in $L^p(G) \cap W^{1,p}(G)$, it suffices to prove the statement for any $f\in \mathcal B(G)$. Write  $f=\phi_\alpha\circ \pi_\alpha$,
where $\phi_\alpha$ is smooth on $G_\alpha=G/K_\alpha$. We have 
\[
|f(xy)-f(x)|\le \int_{0}^{d_\alpha(e_\alpha,\pi_\alpha(y))}\Gamma_\alpha (\phi_\alpha,\phi_\alpha)(\pi_\alpha(x)\gamma^\alpha_y(s))^{\frac12}ds,
\]
where $\gamma^\alpha_y:[0,d_\alpha(e_\alpha,\pi_\alpha(y)) ] \to G_\alpha$ is a path from $e_{\alpha}$ to $\pi_{\alpha}(y)$ such that
\[
\left|\frac{d}{ds}\phi_{\alpha}(\pi_\alpha(x)\gamma^\alpha_y(s))\right|
\le \Gamma_\alpha (\phi_\alpha,\phi_\alpha)(\pi_\alpha(x)\gamma^\alpha_y(s))^{\frac12}.
\]
Indeed, one has
\begin{equation}\label{eq:grad}
\frac{d}{ds}\phi_{\alpha}(\pi_\alpha(x)\gamma^\alpha_y(s))=\sum_{i} a_i X_{\alpha,i}\phi_\alpha(\pi_\alpha(x)\gamma^\alpha_y(s)),
\end{equation}
where $a_i$ is a sequence of normalized coefficients so $\sum_i a_i^2<1$.

To see this, we introduce $X_{\alpha,i}=d\pi_\alpha(X_i)$ (only finitely many of those are non-zero) which form a H\"ormander system on $G_\alpha$ with associated distance $d_\alpha$. Observe that  
\[
\Gamma(f,f)=\sum_i |X_{\alpha,i}\phi_\alpha|^2\circ \pi_\alpha=\Gamma_\alpha(\phi_\alpha,\phi_\alpha)\circ\pi_\alpha.
\]
Then using the existence of the path $\gamma_{y}^{\alpha}\in G_{\alpha}$ and the classical inequality on $G_\alpha$, we have
\begin{equation}\label{eq:diff-bound}
|\phi_\alpha (\pi_\alpha(x)\pi_\alpha(y))-\phi_\alpha(\pi_\alpha(x))|\le \int_{0}^{d_\alpha(e_\alpha,\pi_\alpha(y))}\left|\sum_{i}|X_{\alpha,i}\phi_\alpha|^2(\pi_\alpha(x)\gamma^\alpha_y(s))\right|^{\frac12}ds.    
\end{equation}
 
Now integrating in $x$ over $G$, we get from the right-invariance of $\nu$ that 
\[
\int_G |f(xy)-f(x)|^pd\nu(x) \le  d_\alpha(e_\alpha ,\pi_\alpha (y))^p\int_G\Gamma (f,f)^{\frac p2}(z) d\nu(z) .
\]
Recall from \eqref{eq:dist-sup} that 
\[
d(e,y)= \sup_{\alpha} d_\alpha(e_\alpha,\pi_\alpha(y)).
\]  
 So, we get for any $f\in \mathcal B(G)$
\[
\int_G |f(xy)-f(x)|^pd\nu(x) \le  d(e ,y)^p\int_G\Gamma (f,f)^{\frac p2}(z) d\nu(z) .
\]
For the case $p=\infty$, the estimate can also be obtained in a similar way.     
\end{proof}
\begin{lem}\label{lem:GammaHt}
For any $f\in \mathcal B(G)$, we have
    \[
    \Gamma(H_tf, H_t f)^{\frac12}\le \int_t^{\infty}  \Gamma(\partial_s H_sf,\partial_s H_s f)^{\frac12}ds.
    \]
\end{lem}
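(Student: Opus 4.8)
The plan is to combine the convergence of the heat semigroup to equilibrium with a pointwise Minkowski-type inequality for the carré du champ. Since $f=\phi\circ\pi_\alpha\in\mathcal B(G)$, every quantity that appears factors through the compact Lie group $G_\alpha$, on which $-L_\alpha=\sum_{i\in I_\alpha}X_{\alpha,i}^2$ is a left-invariant elliptic operator, the $X_{\alpha,i}$ ($i\in I_\alpha$) being a basis of $\mathfrak G_\alpha$. Consequently $H_s^\alpha$ has a smooth heat kernel and a positive spectral gap, so that $X_{\ell_1}\cdots X_{\ell_k}\partial_s^m H_sf$ tends to $0$ exponentially fast as $s\to\infty$, uniformly on $G$, for all $k,m$; in particular $H_sf\to\int_G f\,d\nu=:\bar f$, a constant.

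First I would record the representation formula: integrating $\frac{d}{ds}H_sf=\partial_s H_sf$ over $[t,\infty)$ and using $H_sf\to\bar f$ gives
\[
H_tf=\bar f-\int_t^\infty\partial_s H_sf\,ds .
\]
The exponential tail bound above makes this integral, and all of its space derivatives, absolutely convergent, so differentiation under the integral sign is justified: $X_iH_tf=-\int_t^\infty X_i\partial_s H_sf\,ds$ for every $i\in I$.

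Next, since $\bar f$ is constant, $\Gamma(\bar f,\cdot)\equiv 0$; and at each point $x\in G$ the quantity $\Gamma(g,g)^{1/2}(x)=\big(\sum_{i\in I}(X_ig(x))^2\big)^{1/2}$ is the $\ell^2$-norm (an effectively finite sum over $I_\alpha$) of the vector $(X_ig(x))_{i}$. Writing $g_s:=\partial_s H_sf$ and applying the previous identity together with Minkowski's integral inequality in $\ell^2$ yields, pointwise,
\[
\Gamma(H_tf,H_tf)^{1/2}(x)=\Big\|\Big(\int_t^\infty X_ig_s(x)\,ds\Big)_{i}\Big\|_{\ell^2}\le\int_t^\infty\big\|(X_ig_s(x))_{i}\big\|_{\ell^2}\,ds=\int_t^\infty\Gamma(\partial_s H_sf,\partial_s H_sf)^{1/2}(x)\,ds ,
\]
which is the claimed inequality.

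The only step requiring genuine care is the justification of the representation formula and of differentiating under the integral sign on $[t,\infty)$; this is precisely where the reduction to the compact Lie group $G_\alpha$ enters, since there ellipticity and the spectral gap (cf.\ Remark~\ref{rem:spectral gap} and the derivative estimates of Corollary~\ref{cor:knHKbounds}) give exponentially decaying, hence integrable, control of all the relevant derivatives. Everything else is the triangle inequality for the $\ell^2$ norm.
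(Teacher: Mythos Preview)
Your proof is correct. The approach differs from the paper's in a small but clean way: the paper applies the fundamental theorem of calculus directly to the scalar function $s\mapsto\Gamma(H_sf,H_sf)^{1/2}$, computes its $s$-derivative via the chain rule as $\Gamma(H_sf,H_sf)^{-1/2}\Gamma(H_sf,\partial_sH_sf)$, and then bounds the cross term by the Cauchy--Schwarz inequality for $\Gamma$. You instead apply the fundamental theorem of calculus to each component $X_iH_sf$ separately and then invoke Minkowski's integral inequality in $\ell^2$. The two arguments are essentially dual (Cauchy--Schwarz for the bilinear form versus the triangle inequality for the associated norm), but yours has the minor advantage of sidestepping the point where $\Gamma(H_sf,H_sf)$ may vanish and the square root fails to be differentiable; the paper's version implicitly requires a remark at such points. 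Your explicit reduction to $G_\alpha$ to justify the exponential tail and differentiation under the integral is also a bit more careful than the paper's appeal to ``$L^2$ analyticity'' for the vanishing at infinity.
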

\begin{proof}
We first claim that 
\[
\Gamma (H_tf, H_tf)^{\frac12}
=-\int_t^{\infty} \frac{\partial}{\partial s}\Gamma (H_sf, H_sf)^{\frac12} ds.
\]
Indeed, by the $L^2$ analyticity of $H_t$, one has $LH_{\infty}f=0$ and hence $\Gamma (H_{\infty}f, H_{\infty}f)=0$. Then the standard $\Gamma$-calculus gives 
\begin{align*}
\Gamma (H_tf, H_tf)^{\frac12}
=-\int_t^{\infty} \Gamma (H_sf, H_sf)^{-\frac12} \Gamma(H_sf, \partial_sH_sf) ds.
\end{align*}
By the Cauchy-Schwarz inequality, we obtain
\[
|\Gamma (H_sf, \partial_sH_sf)|\le \Gamma (H_sf, H_sf)^{\frac12}\Gamma (\partial_s H_sf, \partial_s H_sf)^{\frac12}.
\]   
The conclusion then follows.
\end{proof}

\begin{thm}\label{thm:LipDist<LipSG-Lp}
 Let $1< p<\infty$ and $0<\theta<1$. If $\Lambda_{\theta}^p(f)<\infty$, then $\mathrm L_{\theta}^p(f)<\infty$.   
\end{thm}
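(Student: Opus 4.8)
The plan is to bound the finite difference $\Delta_y^2 f = f(xy^2) - 2f(xy) + f(x)$ (or rather, to estimate $\|\Delta_y f\|_p$ via a second-difference-type argument, since $\mathrm L_\theta^p$ with $0<\theta<1$ is controlled by first differences) directly in terms of the heat semigroup, using the decomposition $f = H_t f + (f - H_t f)$ for a well-chosen $t = t(y)$ depending on $d(e,y)$. First I would write
\[
\Delta_y f = \Delta_y H_t f + \Delta_y (I - H_t) f
\]
and estimate the two pieces separately. For the first piece, $\|\Delta_y H_t f\|_p \le d(e,y)\,\|\Gamma(H_t f, H_t f)^{1/2}\|_p$ by the $L^p$-Poincaré inequality (Lemma~\ref{lem:p-Poincare}), and then Lemma~\ref{lem:GammaHt} together with the vector Riesz transform bound lets me write $\Gamma(H_t f, H_t f)^{1/2} \le \int_t^\infty \Gamma(\partial_s H_s f, \partial_s H_s f)^{1/2}\,ds$, whose $L^p$ norm is controlled by $\int_t^\infty \|\,|D L^{-1/2}(L H_s f)|_L\,\|_p \,ds \lesssim \int_t^\infty \|\sqrt{L}\,H_s f\|_p\,ds$. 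Using the multiplicative inequality and analyticity as in Proposition~\ref{prop:gradient-Lp}, $\|\sqrt L H_s f\|_p \lesssim s^{-1/2}\|\partial_s H_s f\|_p^{1/2}\|H_s f\|_p^{1/2}$, and the hypothesis $\Lambda_\theta^p(f) < \infty$ gives $\|\partial_s H_s f\|_p \le s^{\theta/2 - 1}\Lambda_\theta^p(f)$; combined with contractivity this yields an integrand of order $s^{-1 + \theta/2}$-ish, so the first piece is $\lesssim d(e,y)\, t^{(\theta-1)/2}\,(\|f\|_p + \Lambda_\theta^p(f))$.

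For the second piece, $\|(I - H_t)f\|_p = \|\int_0^t \partial_s H_s f\,ds\|_p \le \int_0^t \|\partial_s H_s f\|_p\,ds \le \int_0^t s^{\theta/2 - 1}\,ds\,\Lambda_\theta^p(f) = \tfrac{2}{\theta}\,t^{\theta/2}\Lambda_\theta^p(f)$, and since $\tau_y$ is an $L^p$-contraction, $\|\Delta_y(I - H_t)f\|_p \le 2\|(I-H_t)f\|_p \lesssim t^{\theta/2}\Lambda_\theta^p(f)$. Adding the two estimates gives, for every $t>0$,
\[
\|\Delta_y f\|_p \le C\big(d(e,y)\,t^{(\theta-1)/2} + t^{\theta/2}\big)\,\|f\|_{\Lambda_\theta^p},
\]
and optimizing by choosing $t = d(e,y)^2$ produces $\|\Delta_y f\|_p \le C\,d(e,y)^\theta\,\|f\|_{\Lambda_\theta^p}$, i.e. $\mathrm L_\theta^p(f) \le C\|f\|_{\Lambda_\theta^p} < \infty$, which is the claim. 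One should first verify this for $f \in \mathcal B(G)$ and then pass to the completion, noting that both seminorms and the $L^p$ norm behave well under the limiting procedure.

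The main obstacle I expect is making the gradient-of-the-semigroup estimate sharp enough near $s = 0$: the naive bound $\|\sqrt L H_s f\|_p \lesssim s^{-1/2}\|f\|_p$ only gives integrand $\sim s^{-1/2}$ on $(t,\infty)$, which diverges, so it is essential to interpolate between the $\|f\|_p$ bound and the $\|\partial_s H_s f\|_p \lesssim s^{\theta/2-1}$ bound (via the multiplicative inequality) to gain the extra $s^{\theta/2}$ decay that makes $\int_t^\infty s^{-1/2}\cdot s^{(\theta-1)/2}\,ds$-type integrals converge at $t$ and the whole argument close; equivalently one can split $\int_t^\infty$ at some intermediate scale and use analyticity on the far part where $H_s f \to \int_G f\,d\nu$. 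A secondary technical point is justifying the application of the Riesz transform bound to $\Gamma(\partial_s H_s f,\cdot)^{1/2}$ — i.e. identifying $\Gamma(g,g)^{1/2}$ with $|D L^{-1/2}(\sqrt L g)|_L$ in a way that is legitimate on $\mathcal B(G)$ — which follows from $\sqrt L$ being invertible on the subspace orthogonal to constants together with the bi-invariance commutation of the $X_i$ with $L$, but should be stated carefully.
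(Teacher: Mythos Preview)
Your overall strategy matches the paper's proof exactly: decompose $\Delta_y f = \Delta_y H_t f + \Delta_y(I-H_t)f$, handle the second piece via $\int_0^t\|\partial_sH_sf\|_p\,ds\le \tfrac{2}{\theta}t^{\theta/2}\Lambda_\theta^p(f)$, handle the first via Lemma~\ref{lem:p-Poincare}, Lemma~\ref{lem:GammaHt} and the vector Riesz bound, and then optimize $t=d(e,y)^2$. The final estimate $\|\Delta_y f\|_p\lesssim d(e,y)\,t^{(\theta-1)/2}+t^{\theta/2}$ is precisely what the paper obtains.

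There is, however, a real slip in your treatment of the first piece. After Lemma~\ref{lem:GammaHt} and Minkowski you must bound $\|\Gamma(\partial_sH_sf,\partial_sH_sf)^{1/2}\|_p$, and applying the Riesz transform (Theorem~\ref{thm:LpRT}) gives
\[
\|\Gamma(\partial_sH_sf,\partial_sH_sf)^{1/2}\|_p\le 2(p^*-1)\,\bigl\|\sqrt{L}\,\partial_sH_sf\bigr\|_p,
\]
\emph{not} $\|\sqrt{L}H_sf\|_p$ as you wrote. Your stated integrand of order $s^{-1+\theta/2}$ (or, following your displayed inequality literally, $s^{\theta/4-1}$) would make $\int_t^\infty(\cdots)\,ds$ diverge at infinity, so the argument does not close as written. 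The correct bookkeeping, which is what the paper does, writes $\sqrt{L}\,\partial_sH_sf=\sqrt{L}H_{s/2}\bigl(\partial_sH_{s/2}f\bigr)$ and uses $\|\sqrt{L}H_{s/2}\|_{p\to p}\lesssim s^{-1/2}$ (multiplicative inequality plus $L^p$-analyticity, exactly as in Proposition~\ref{prop:gradient-Lp}) together with the hypothesis $\|\partial_u H_uf\|_p\le u^{\theta/2-1}\Lambda_\theta^p(f)$. This produces an integrand of order $s^{\theta/2-3/2}$, which \emph{does} converge on $(t,\infty)$ since $\theta<1$ and integrates to $Ct^{(\theta-1)/2}$, delivering exactly the bound you then use. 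Once this step is corrected your argument coincides with the paper's.
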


\begin{proof}
Assume that $\Lambda_{\theta}^p(f)<\infty$, we write
\[
f(xy)-f(x)=f(xy)-H_tf(xy)+H_tf(xy)-H_tf(x)+H_tf(x)-f(x).
\]

Let us start with the estimate of $\|H_tf-f\|_p$. Observe that 
\[
H_tf(x)-f(x) =\int_0^t \frac{\partial}{\partial s} H_sf(x)ds,
\]
then 
\begin{equation}\label{eq:diff-Htf-f}
    \|H_tf-f\|_p \le \int_0^t \left\|\partial_s H_sf\right\|_pds \le t^{\theta/2} \Lambda_{\theta}^p(f).
\end{equation}

Next, in view of Lemma \ref{lem:p-Poincare} and Lemma \ref{lem:GammaHt}, we have  
\begin{align*}
 \|\Delta_y H_tf\|_p 
 &\le d(e,y) \left\|\Gamma (H_tf, H_tf)^{\frac12}\right\|_p
\le d(e,y) \left\|\int_t^{\infty}\Gamma (\partial_sH_sf, \partial_sH_sf)^{\frac12}ds\right\|_p
\\ &\le
d(e,y) \int_t^{\infty}\left\|\Gamma (\partial_sH_sf, \partial_sH_sf)^{\frac12}\right\|_pds,
\end{align*}
where we have used the Minkowski inequality in the last line.

Now we apply the $L^p$ boundedness of the vector Riesz transform (see Theorem \ref{thm:LpRT}) and the $L^p$ analyticity of the heat semigroup, then
\begin{align*}
  \left\|\Gamma (\partial_s H_sf, \partial_s H_sf)^{\frac12} \right\|_p 
    & \le 
    2(p^*-1) \left\|\sqrt{L} \partial_s H_sf \right\|_p
    \\ & \le 
    4(p^*-1)\left\|\sqrt{L} H_{s/2}\partial_s H_{s/2}f \right\|_p 
    \\ & \le 
    4K\sqrt{p^*}(p^*-1)s^{-1/2}\left\|\partial_s H_{s/2}f \right\|_p,
\end{align*}
where $K$ is as in the proof of Corollary \ref{prop:gradient-Lp}. 

Combining the above estimates, we obtain that for $0<\theta<1$
\begin{equation}\label{eq:diff-Htf-xy}
    \left\|\Delta_y H_tf\right\|_p
    \le Cd(e,y) \int_t^{\infty} s^{-1/2}\left\|\partial_s H_{s/2}f \right\|_p ds
    \le Cd(e,y)t^{\frac{\theta-1}2} \Lambda_{\theta}^p(f).
\end{equation}
This estimate, together with \eqref{eq:diff-Htf-f}, implies that for any $y\in G$ and $1<p<\infty$
\begin{align*}
    \frac{\|\Delta_yf\|_p}{d(e,y)^{\theta}} 
    &\le \frac{2\|H_tf-f\|_p+\|\Delta_yH_tf\|_p}{d(e,y)^{\theta}} 
    \\ &\le 
    C\left(\frac{t^{\frac\theta2}}{d(e,y)^{\theta}} +\frac{t^{\frac{\theta-1}2}}{d(e,y)^{\theta-1}} \right)\Lambda_{\theta}^p(f).
\end{align*}
Letting $t=d(e,y)^{\frac12}$ yields that $\frac{\|\Delta_yf\|_p}{d(e,y)^{\theta}} \le C\Lambda_{\theta}^p(f)$ and we conclude the proof.
\end{proof}

\subsection{Comparison of Lipschitz seminorms: the case \texorpdfstring{$p=1,\infty$}{1infty}}
This section focuses on the comparison of Lipschitz norms for $p=1,\infty$. We have seen that Theorem \ref{thm:LipSG<LipDist} holds for $1\le p\le \infty$. It remains to prove that  $\mathrm L_{\beta}^p(f)\le C \Lambda_{\theta}^p(f)$ for $p=1, \infty$, and proper choices of $\theta$ and $\beta$. 

\begin{thm}\label{thm:LipDist<LipSG-L1}
Assume that $(\mu_t)_{t>0}$ satisfies $(\mathrm{CK}\lambda)$. Let $0<\theta<1$ and let $p=1$ or $\infty$. If $\Lambda_{\theta}^p(f)<\infty$, then $\mathrm L_{\beta}^p(f)<\infty$ for $\beta=\frac{\theta}{1+3\lambda}$.  
\end{thm}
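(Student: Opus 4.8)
The plan is to follow the scheme of Theorem~\ref{thm:LipDist<LipSG-Lp}, replacing the $L^p$-boundedness of the vector Riesz transform — unavailable at $p=1,\infty$ — by the endpoint gradient bound of Theorem~\ref{thm:L1-Linfty-gradient}. Under $(\mathrm{CK}\lambda)$ that bound reads $\||DH_tf|_L\|_p\le C\,t^{-1/2}\max\{t^{-3\lambda/2},1\}\|f\|_p$, so compared with the bound $\||DH_tf|_L\|_p\le C\,t^{-1/2}\|f\|_p$ used in the $1<p<\infty$ proof there is an extra factor $t^{-3\lambda/2}$; tracking this factor through the argument is precisely what degrades the Lipschitz exponent from $\theta$ to $\beta=\theta/(1+3\lambda)$.

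First I would reduce to $y$ near the identity. Under $(\mathrm{CK}\lambda)$ the intrinsic distance $d$ is a continuous distance defining the topology of the compact group $G$ (Theorem~\ref{thm:HKbounds}(ii)), so $d(e,y)$ is bounded on $G$ and for $d(e,y)>1$ one has trivially $\|\Delta_y f\|_p/d(e,y)^{\beta}\le 2\|f\|_p$. So fix $y$ with $d(e,y)\le 1$, set $t=d(e,y)^{2/(1+3\lambda)}\le 1$, and decompose $f(xy)-f(x)=(f-H_tf)(xy)+\Delta_y(H_tf)(x)+(H_tf-f)(x)$, whence $\|\Delta_y f\|_p\le 2\|H_tf-f\|_p+\|\Delta_y H_tf\|_p$. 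Since $\theta<1$, the integer in the definition of $\Lambda_\theta^p$ is $n=1$, so $\|\partial_s H_sf\|_p\le s^{\theta/2-1}\Lambda_\theta^p(f)$ and $\|H_tf-f\|_p\le\int_0^t\|\partial_s H_sf\|_p\,ds\le C\,t^{\theta/2}\Lambda_\theta^p(f)$, exactly as for $1<p<\infty$.

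For the term $\|\Delta_y H_tf\|_p$ I would apply the $L^p$-Poincar\'e inequality (Lemma~\ref{lem:p-Poincare}) to get $\|\Delta_y H_tf\|_p\le d(e,y)\,\|\Gamma(H_tf,H_tf)^{1/2}\|_p$, and then run the computation of Lemma~\ref{lem:GammaHt} but integrating only down to time $1$ instead of to $\infty$: from $|\partial_s\Gamma(H_sf,H_sf)^{1/2}|\le\Gamma(\partial_s H_sf,\partial_s H_sf)^{1/2}$ (the Cauchy--Schwarz step in that proof) one gets, for $t\le 1$, $\|\Gamma(H_tf,H_tf)^{1/2}\|_p\le\|\Gamma(H_1f,H_1f)^{1/2}\|_p+\int_t^1\|\Gamma(\partial_s H_sf,\partial_s H_sf)^{1/2}\|_p\,ds$. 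The boundary term is $\||DH_1f|_L\|_p\le C\|f\|_p$ by Theorem~\ref{thm:L1-Linfty-gradient} at the fixed time $1$. For the integrand I would write $\partial_s H_sf=-LH_sf=H_{s/2}(-LH_{s/2}f)$, note $-LH_{s/2}f\in\mathcal B(G)$, and apply Theorem~\ref{thm:L1-Linfty-gradient} to $H_{s/2}$ (with $M_0(s/2)=A(s/2)^{-\lambda}$): for $0<s\le 1$ this gives $\|\Gamma(\partial_s H_sf,\partial_s H_sf)^{1/2}\|_p\le C\,s^{-1/2-3\lambda/2}\|LH_{s/2}f\|_p\le C\,s^{\theta/2-3/2-3\lambda/2}\Lambda_\theta^p(f)$, where in the last step $\|LH_{s/2}f\|_p\le(s/2)^{\theta/2-1}\Lambda_\theta^p(f)$ is the defining bound of $\Lambda_\theta^p$ (recall $\partial_r H_rf=-LH_rf$ and $n=1$). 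Since $\theta<1\le 1+3\lambda$, the exponent $\theta/2-3/2-3\lambda/2$ is $<-1$, so $\int_t^1 s^{\theta/2-3/2-3\lambda/2}\,ds\le C\,t^{\theta/2-1/2-3\lambda/2}$, and since this power of $t$ is $\ge 1$ for $t\le 1$ it absorbs the $\|f\|_p$ coming from the boundary term; altogether $\|\Gamma(H_tf,H_tf)^{1/2}\|_p\le C\,t^{\theta/2-1/2-3\lambda/2}\bigl(\|f\|_p+\Lambda_\theta^p(f)\bigr)$.

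Combining, $\|\Delta_y f\|_p\le C\bigl(\|f\|_p+\Lambda_\theta^p(f)\bigr)\bigl(t^{\theta/2}+d(e,y)\,t^{\theta/2-1/2-3\lambda/2}\bigr)$; the two exponents differ by $\theta/2-(\theta/2-1/2-3\lambda/2)=(1+3\lambda)/2$, so the choice $t=d(e,y)^{2/(1+3\lambda)}$ equalizes the two terms at $d(e,y)^{\theta/(1+3\lambda)}$, giving $\|\Delta_y f\|_p\le C\|f\|_{\Lambda_\theta^p}\,d(e,y)^{\beta}$ with $\beta=\theta/(1+3\lambda)<1$ (so the relevant difference is $\Delta_y^1=\Delta_y$). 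With the trivial bound for $d(e,y)>1$ this yields $\mathrm L_\beta^p(f)\le C\|f\|_{\Lambda_\theta^p}<\infty$. I expect the only delicate point to be the tail of the gradient estimate: the endpoint gradient bound carries no time decay, so — unlike in Lemma~\ref{lem:GammaHt} — one cannot integrate $\Gamma(\partial_s H_sf,\partial_s H_sf)^{1/2}$ all the way to $s=\infty$, and it is essential to stop at $s=1$ and absorb the remainder into the single fixed-time bound $\||DH_1f|_L\|_p\le C\|f\|_p$; everything else is bookkeeping of powers of $s$ and of $\theta,\lambda$.
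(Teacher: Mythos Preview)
Your proof is correct and follows the same scheme as the paper's: decompose $\Delta_y f$ via $H_tf$, bound $\|H_tf-f\|_p$ by $t^{\theta/2}\Lambda_\theta^p(f)$, and for $\|\Delta_y H_tf\|_p$ combine the Poincar\'e inequality (Lemma~\ref{lem:p-Poincare}), Lemma~\ref{lem:GammaHt}, and the endpoint gradient bound of Theorem~\ref{thm:L1-Linfty-gradient}, then optimize in $t$.

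The one point worth flagging is your remark that ``one cannot integrate $\Gamma(\partial_s H_sf,\partial_s H_sf)^{1/2}$ all the way to $s=\infty$.'' In fact one can, and the paper does exactly this. For $s\ge 1$ the factor $\max\{s^{-3\lambda/2},1\}$ equals $1$, so the gradient bound gives $\|\Gamma(\partial_s H_sf,\partial_s H_sf)^{1/2}\|_p\le C s^{-1/2}\|\partial_s H_{s/2}f\|_p\le C s^{\theta/2-3/2}\Lambda_\theta^p(f)$, and since $\theta<1$ this is integrable on $[1,\infty)$. Thus the paper simply writes
\[
\|\Delta_y H_tf\|_p\le C\,d(e,y)\int_t^\infty s^{-\frac{1-\theta}{2}}\max\{s^{-\frac{3\lambda}{2}},1\}\,\frac{ds}{s}\,\Lambda_\theta^p(f)\le C\,d(e,y)\,t^{\frac{\theta-1-3\lambda}{2}}\Lambda_\theta^p(f),
\]
avoiding your boundary term $\||DH_1f|_L\|_p$ and the accompanying $\|f\|_p$. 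Your truncation at $s=1$ is harmless---the extra $\|f\|_p$ is absorbed into $\|f\|_{\Lambda_\theta^p}$---but it is not needed.
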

\begin{proof}
Assume that $\Lambda_{\theta}^1(f)<\infty$. We use the same strategy as in Theorem \ref{thm:LipDist<LipSG-Lp} by writing
\begin{equation}\label{eq:difference}
f(xy)-f(x)=f(xy)-H_tf(xy)+H_tf(xy)-H_tf(x)+H_tf(x)-f(x).
\end{equation}
Note first that 
\begin{equation}\label{eq:diff-Htf-f-L1}
    \|H_tf-f\|_1 \le \int_0^t \left\|\frac{\partial}{\partial s} H_sf\right\|_1ds \le t^{\theta/2} \Lambda_{\theta}^1(f).
\end{equation}
Similarly as equation \eqref{eq:diff-Htf-xy}, one obtains from Lemma \ref{lem:p-Poincare} and Lemma \ref{lem:GammaHt} that
\[
\left\|\Delta_y H_tf\right\|_1 \le Cd(e,y)\int_t^{\infty}\left\|\Big|D\Big(\frac{\partial}{\partial s} H_sf\Big)\Big|_L \right\|_1 ds.
\]
Applying Theorem \ref{thm:L1-Linfty-gradient}, then
\[
\left\|\Big|D\Big(\frac{\partial}{\partial s} H_sf\Big)\Big|_L \right\|_1
=\frac12 \left\|\Big|DH_{s/2}\Big(\frac{\partial}{\partial s} H_{s/2}f\Big)\Big|_L \right\|_1
\le C s^{-\frac12}\max\{s^{-\frac32\lambda}, 1\}\left\|\frac{\partial}{\partial s} H_{s/2}f\right\|_1.
\]
We thus have for $0<t<1$ 
\[
\left\|\Delta_y H_tf\right\|_1 \le C d(e,y) \int_t^{\infty} s^{-\frac{1-\theta}2}\max\{s^{-\frac32\lambda}, 1\} \frac{ds}{s} \Lambda_{\theta}^1(f)
\le C d(e,y) t^{\frac{\theta-1-3\lambda}{2}} \Lambda_{\theta}^1(f).
\]
This estimate, together with \eqref{eq:diff-Htf-f-L1}, gives  that for any $y\in G$,
\begin{align*}
    \frac{\|\Delta_yf\|_1}{d(e,y)^{\beta}} 
    &\le \frac{2\|H_tf-f\|_1+\|\Delta_yH_tf\|_1}{d(e,y)^{\beta}} 
    \\ &\le 
    C\left(\frac{t^{\frac\theta2}}{d(e,y)^{\beta}} +\frac{t^{\frac{\theta-1-3\lambda}{2}}}{d(e,y)^{\beta-1}} \right)\Lambda_{\theta}^1(f).
\end{align*}
Let $\beta=\frac{\theta}{1+3\lambda}$ and choose $t$ to be $d(e,y)^{\frac{2\beta}{\theta}}$, then we obtain $\mathrm L_{\beta}^1(f)<C\Lambda_{\theta}^1(f)$ by taking the supremum for $y\in G$.

For $p=\infty$, we still write the difference $f(xy)-f(x)$ as in \eqref{eq:difference}. It easily follows that 
\[
\|H_tf-f\|_{\infty} \le t^{\theta/2} \Lambda_{\theta}^{\infty}(f).
\]
On the other hand, we deduce from Lemma \ref{lem:p-Poincare}, Lemma \ref{lem:GammaHt} and  Theorem \ref{thm:L1-Linfty-gradient} that
\begin{align*}
\left\|\Delta_y H_tf\right\|_{\infty} \le C d(e,y) \int_t^{\infty}\left\|\Big|D\Big(\frac{\partial}{\partial s} H_sf\Big)\Big|_L \right\|_{\infty} ds\le C d(e,y) t^{\frac{\theta-1-3\lambda}{2}} \Lambda_{\theta}^{\infty}(f).
\end{align*}
The rest of the proof is then the same as the case $p=1$ and we omit the details.

\end{proof}


\section{Regularity of solutions }

We now consider the regularity problem stated in the introduction. Namely, given a (bi-invariant) Laplacian, $L$, on our compact group $G$ and a ``weak solution'', $U$, of $LU=F$ in an open subset $\Omega$ in which $F$ is H\"older continuous, can we say something about the regularity of $U$? (e.g., is it continuous?). In order to make this precise, we need a good notion of what ``weak solution'' means and it is also clear from previous sections that we have at least two possible choices
for what being H\"older continuous means.
In the next subsection, we follow previous work by the first and third authors introducing proper spaces of distributions which provide a clear and useful notion of ``distribution solution'' for $LU=F$. Then we proceed in studying the regularity problem, first in the context of the semigroup Lipschitz spaces $\Lambda^p_\theta$ and then in the context of the distance Lipschitz spaces $\mathrm L^p_\theta$.

\subsection{Spaces of test functions and distributions \texorpdfstring{$\mathcal T_L, \mathcal T_L^{\,'}$}{TL}}\label{sec:distribution}
We will use a space of test function $\mathcal T_L$ and its distribution space $\mathcal T_L^{\,'}$ introduced in \cite{BSC2005, BSC2006}.

Recall that we fix a projective family $X=(X_i)_{i\in I}$ of $\mathfrak G$ and $L=-\sum_I X_i^2$. Fix $k,n\in \mathbb N=\{0,1,2,\cdots\}$ and consider the set 
\[
\Lambda(k,n)=\left\{\lambda=(\lambda_0, \lambda_1, \cdots, \lambda_k); \lambda_i\in \mathbb N, \,\sum \lambda_i=n\right\}.
\]
For $f\in \mathcal B(G)$, $\ell \in I^k$, $\lambda\in \Lambda(k,n)$, set
\begin{equation}\label{eq:P-ell-lambda}
P^{\ell, \lambda}f=L^{\lambda_0}X_{\ell_1}L^{\lambda_1}X_{\ell_2}\cdots L^{\lambda_{k-1}}X_{\ell_k} L^{\lambda_{k}} f.   
\end{equation}
\begin{defn}\label{def:M_Lk}
For each $k\in \mathbb N$, consider the following seminorm on $\mathcal B(G)$:
\[
M_L^k(f)=\sup_{\substack{m,n\in \mathbb N\\n+2m\le k}} \sup_{\lambda\in \Lambda(n,m)}\left\| \left(\sum_{\ell\in I^n}|P^{\ell,\lambda}f|^2\right)^{1/2}\right\|_{\infty}.
\]
We define $\mathcal T_L^k$ as the completion of $\mathcal B(G)$ with respect to the seminorm $M_L^k$. Moreover, we set $\mathcal T_L=\bigcap_{k\in \mathbb N}\mathcal T_L^k$ and equip it with its natural system of seminorms and the
corresponding topology.

\end{defn}
\begin{remark}
For an open set $\Omega\subset G$, we set 
\begin{equation}\label{eq:local-Mk}
M_L^k(\Omega, f)=\sup_{\substack{m,n\in \mathbb N\\n+2m\le k}} \sup_{\lambda\in \Lambda(n,m)}\sup_{x\in \Omega} \left(\sum_{\ell\in I^n}|P^{\ell,\lambda}f(x)|^2\right)^{1/2}.    
\end{equation}
\end{remark}

The space $\mathcal T_L$ is Fr\'echet space. Denote by $\mathcal T_L^{\,'}$ the strong topological dual of $\mathcal T_L$. That is, any element $U\in \mathcal T_L^{\,'}$ is a linear functional on $\mathcal T$ such that there exist $m=m(U)\in \mathbb N$ and $C=C(U)>0$ such that
\[
|U(\phi)|\le C M_L^m(\phi), \quad \forall \phi\in \mathcal T_L.
\]
The topology of $\mathcal T_L^{\,'}$ is defined by the family of seminorms 
\[
p_B(U)=\sup_{\phi\in B}|U(\phi)|,
\]
where $B$ runs over all bounded sets in $\mathcal T_L$ (i.e., $\sup_{\phi\in B}M_L^k(\phi)<\infty$ for all $k\in \mathbb N$).

Now we introduce some notation taken from \cite{BSC2006} which will be used later. 
The convolution of two functions $u,v\in \mathcal B(G)$ is defined by 
\[
u*v(x)=\int_G u(y)v(y^{-1}x)d\nu(y)=\int_G u(xy^{-1})v(y)d\nu(y).
\]
For $f\in \mathcal B(G)$, the left and right convolution by a measure $\mu$ is defined by
\[
\mu*f(x)=\int_Gf(y^{-1}x)d\mu(y),\quad f*\mu(x)=\int_Gf(xy^{-1})d\mu(y).
\]
Convolutions of $U\in \mathcal B'(G)$ and $f\in \mathcal B(G)$ are given by 
\[
[U*f](\phi)=U(\check{f}*\phi), \,\,[f*U](\phi)=U(\phi*\check{f}), \quad \forall \phi\in \mathcal B(G),
\]
where $\check{f}(x)\coloneqq f(x^{-1})$. Moreover, $U*f$ and $f*U$ are functions in $\mathcal B(G)$ and
\[
U*f(x)=U(\mathcal L_{x^{-1}}\check{f}), \,\,f*U(\phi)=U(\mathcal R_{x^{-1}}\check{f}),
\]
where $\mathcal L_{a}$ and $\mathcal R_{a}$ ($a\in G$) are the left and right translation operators defined by 
\[
\mathcal L_{a}f(x)=f(ax), \quad\mathcal R_{a}f(x)=f(xa).
\]

Given a Borel measure $\mu$, define $\check{\mu}$ by $\check{\mu}(A)=\mu(A^{-1})$ for any Borel set $A$. Under the assumption that $(\mu_t)_{t>0}$ satisfies $(\mathrm{CK})$ (see Definition \ref{def:CK etc}), Lemma 3.1 in \cite{BSC2006} shows that the density $x\mapsto \check{\mu}_t(x)$ with respect to $\nu$ belongs to $\mathcal T_L$. Finally, we finish the subsection with the following useful result. 
\begin{prop}[\cite{BSC2005}, \protect{\cite[Proposition 2.7]{BSC2006}}]\label{prop:convolution}
Assume that $L$ is bi-invariant.
\begin{enumerate}
    \item Let $U\in \mathcal T_L^{\,'}$ and $f\in  \mathcal T_L$. Then $U*f\subset  \mathcal T_L$ and 
        \[
        X^{\ell} L^n[U*f]=U*[X^{\ell} L^nf].
        \]
    \item Let $\phi_{\epsilon}, \epsilon>0$ be a family of functions in $\mathcal T_L$ such that $\phi_{\epsilon}\to \delta_e$ as $\epsilon$ tends to zero. Then for any $U\in \mathcal T_L^{\,'}$, the distribution $U^{\epsilon}=U*\phi_{\epsilon}$ is represented by a function in $\mathcal T_L$, and 
        \[
        \lim_{\epsilon\to 0}U^{\epsilon}(\psi)=U(\psi), \quad\forall\psi\in  \mathcal T_L.
        \]
\end{enumerate}
\end{prop}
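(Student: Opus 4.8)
The plan is to deduce both parts from a single structural consequence of bi-invariance together with the duality between $\mathcal T_L$ and $\mathcal T_L'$, getting part (ii) as a short corollary of part (i). The key observation is that, since $L$ is bi-invariant, the fields $X_i$ commute with $L$, so in \eqref{eq:P-ell-lambda} all the powers of $L$ collect: $P^{\ell,\lambda}f=L^{|\lambda|}X_{\ell_1}\cdots X_{\ell_k}f$, and hence $M_L^k(f)=\sup_{n+2m\le k}\||D^nL^mf|_L\|_\infty$ with $|D^ng|_L^2=\Gamma_n(g,g)$. Because $\Gamma_n$ and $L$ are bi-invariant and behave equivariantly under $g\mapsto\check g$, each seminorm $M_L^k$ is invariant under left and right translations and under inversion; thus $\mathcal T_L$ is a translation- and inversion-invariant Fréchet space on which translations act isometrically in every seminorm. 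In particular, for $U\in\mathcal T_L'$ and $f\in\mathcal T_L$ one may realize $U*f$ as the honest function $(U*f)(x):=U(f_x)$, where $f_x(z):=f(z^{-1}x)=\mathcal L_{x^{-1}}\check f(z)\in\mathcal T_L$, and this function represents the distribution $U*f$.

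For part (i) I would first prove the commutation relation by differentiating under $U$. Fixing $i\in I$ and $x\in G$, the curve $t\mapsto f_{x\exp(tX_i)}$ is $\mathcal C^1$ into $\mathcal T_L$ with derivative $(X_if)_x$ at $t=0$; after projecting onto a Lie quotient $G_\alpha$ this is just a first-order Taylor expansion on $G_\alpha$, whose remainder is controlled, in every $M_L^k$, by finitely many seminorms of $f$. Applying the continuous linear functional $U$ gives $X_i(U*f)=U*(X_if)$ pointwise; applying the same lemma to $g=X_jf\in\mathcal T_L$ iterates this, and since for $f\in\mathcal T_L$ the partial sums $-\sum_{i\in I_\alpha}X_i^2f$ converge to $Lf$ in $\mathcal T_L$, one obtains $X^\ell L^n[U*f]=U*[X^\ell L^nf]$ for every finite string $\ell$ and every $n$. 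Note this step needs only the $\mathcal C^1$-curve fact and continuity of $U$, not any quantitative bound.

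To see $U*f\in\mathcal T_L$ I would then bound its seminorms. Pick $m_0,C_0$ with $|U(\phi)|\le C_0 M_L^{m_0}(\phi)$ on $\mathcal T_L$. For $n+2m\le k$ and $x\in G$, using the commutation relation and writing the $\ell^2(I^n)$-norm by duality,
\[
|D^nL^m(U*f)|_L(x)=\sup_{\|a\|_{\ell^2(I^n)}\le1}\Bigl|\sum_{\ell\in I^n}a_\ell\,(U*[X_\ell L^mf])(x)\Bigr|=\sup_{\|a\|\le1}\bigl|(U*g_a)(x)\bigr|,\qquad g_a:=\sum_{\ell\in I^n}a_\ell X_\ell L^mf\in\mathcal T_L .
\]
Then $|(U*g_a)(x)|\le C_0 M_L^{m_0}((g_a)_x)=C_0 M_L^{m_0}(g_a)$, and a Cauchy--Schwarz bound in the $\ell$-variable gives, for $\|a\|_{\ell^2}\le1$, $\sum_{\ell'\in I^{n'}}|X_{\ell'}L^{m'}g_a|^2\le\sum_{\ell\in I^n,\,\ell'\in I^{n'}}|X_{\ell'}X_\ell L^{m+m'}f|^2=\Gamma_{n+n'}(L^{m+m'}f,L^{m+m'}f)$, so $M_L^{m_0}(g_a)\le M_L^{k+m_0}(f)$ uniformly in $a$ and $x$, whence $M_L^k(U*f)\le C_0 M_L^{k+m_0}(f)<\infty$ for all $k$. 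A routine density step — approximating $U*f$ in each $M_L^k$ by its conditional expectations $E_\alpha(U*f)\in\mathcal B(G)$ onto the $K_\alpha$-invariant functions, which converge since $\bigcap_\alpha K_\alpha=\{e\}$ and $U*f$ has continuous $X_\ell L^m$-derivatives — upgrades this to membership in the completion $\mathcal T_L$, finishing (i).

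Part (ii) then follows quickly: $U^\epsilon=U*\phi_\epsilon$ is a $\mathcal T_L$-function by part (i) with $f=\phi_\epsilon$, and by the definition of convolution of a distribution with a function, $U^\epsilon(\psi)=U(\check\phi_\epsilon*\psi)$ for $\psi\in\mathcal T_L$; since $\phi_\epsilon\to\delta_e$ and $\check\phi_\epsilon*(\cdot)$ commutes with every $X_\ell L^m$, a standard approximate-identity estimate (using that $\psi\in\mathcal T_L$ makes the family $\{\sum_\ell a_\ell X_\ell L^m\psi:\|a\|_{\ell^2}\le1\}$ equi-Lipschitz) gives $\check\phi_\epsilon*\psi\to\psi$ in $\mathcal T_L$, and continuity of $U$ yields $U^\epsilon(\psi)\to U(\psi)$. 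I expect the main obstacle to be the seminorm bound for $U*f$: the naive estimate $\sum_{\ell\in I^n}|(U*[X_\ell L^mf])(x)|^2\le C_0^2\sum_\ell M_L^{m_0}(X_\ell L^mf)^2$ diverges because $I$ is infinite, and it is precisely the $\ell^2(I^n)$-duality combined with the concatenation identity $\sum_{\ell\in I^n,\,\ell'\in I^{n'}}|X_{\ell'}X_\ell h|^2=\Gamma_{n+n'}(h,h)$ that controls this ``loss of derivatives''; the remaining technical points (the $\mathcal C^1$-dependence of translates in the $\mathcal T_L$-topology, and the passage from ``all seminorms finite'' to ``membership in the completion'') are handled through the projective structure $G=\varprojlim_\alpha G_\alpha$ and approximation by functions in $\mathcal B(G)$.
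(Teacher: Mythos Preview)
The paper does not prove this proposition; it is quoted verbatim from \cite{BSC2005} and \cite[Proposition~2.7]{BSC2006} and used as a black box, so there is no in-paper argument to compare against. Your outline is sound and contains the one genuinely nontrivial idea that any proof must isolate: the naive bound $\sum_{\ell\in I^n}|(U*[X_\ell L^mf])(x)|^2\le C_0^2\sum_\ell M_L^{m_0}(X_\ell L^mf)^2$ fails because $I$ is infinite, and your $\ell^2(I^n)$-duality trick together with the concatenation identity $\sum_{\ell,\ell'}|X_{\ell'}X_\ell h|^2=\Gamma_{n+n'}(h,h)$ is exactly the correct cure, giving the clean ``loss of $m_0$ derivatives'' estimate $M_L^k(U*f)\le C_0\,M_L^{k+m_0}(f)$.

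Two places deserve a bit more care. First, in part~(ii) your equi-Lipschitz claim for the family $\{g_a:\|a\|_{\ell^2}\le 1\}$ implicitly uses a continuous intrinsic distance, which in this paper's setting requires a hypothesis such as $(\mathrm{CK}\lambda)$; a cleaner route that avoids this is to prove $\check\phi_\epsilon*\psi\to\psi$ in every $M_L^k$ first for $\psi\in\mathcal B(G)$ (where only finitely many $X_\ell\psi$ are nonzero, so the equicontinuity is trivial) and then pass to general $\psi\in\mathcal T_L$ by density, using the uniform operator bound $M_L^k(\check\phi_\epsilon*\psi)\le M_L^k(\psi)$ that follows from your own seminorm estimate. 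Second, the step from ``all seminorms finite'' to ``membership in the completion $\mathcal T_L$'' via conditional expectations $E_\alpha$ is correct in spirit but needs the observation that $E_\alpha$ commutes with $L$ and with each $X_i$ (bi-invariance again), so that $M_L^k(U*f-E_\alpha(U*f))\to 0$ reduces to uniform convergence of $E_\alpha g\to g$ for finitely many continuous $g$ at a time, handled exactly as in your part~(ii) fix.
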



\subsection{Regularity for $\mathcal L_p$ solutions}
In this subsection, we will prove a priori regularity results of the Poisson equation $Lu=f$. 
We first introduce a Sobolev-type space $\mathcal L_p$, $1\le p\le \infty$, where the solution is assumed to sit in. 
\begin{defn}
 For $1\le p\le\infty$, we define $\mathcal L_p(G)$ as the completion of the set 
\[
\{u\in\mathcal B(G);\|u\|_p+\|Lu\|_p<\infty\}
\]
with respect to the norm $\|\cdot\|_{\mathcal L_p}$ given by 
$\|u\|_{\mathcal L_p}\coloneqq\|u\|_p+\|Lu\|_p$.   
\end{defn}
We point out that 
\begin{enumerate}[(a)]
\item For $1<p<\infty$, $\mathcal L_p(G)$ is the domain of the $L^p$-generator of $H_t$. 
 
\item For $p=\infty$, $\mathcal L_{\infty}(G)$ is 
the set of continuous functions $u$ such that $Lu$ is also continuous.
\end{enumerate}

In what follows, denote 
\begin{equation}
 f=Lu, \quad u\in \mathcal L_p(G)  
\end{equation}
so $f\in L^p(G)$ and $\int_Gfd\nu=0$. In this case, we say that  $f\in L_0^p(G)$, where 
\[
L_0^p(G)\coloneqq\left\{f\in L^p(G);\int_Gfd\nu=0\right\}.
\]

Now we state the following a priori regularity result for the Poisson equation. 

\begin{thm}\label{thm:StrongSol-Poisson}
For $1<p<\infty$, let $u\in \mathcal L_p(G)$.
Then  $ X_i u, X_iX_ju\in L^p(G)$. Moreover,  $-\sum_{i\in I}X_i^2 u$ converges strongly in $L^p(G)$.
\end{thm}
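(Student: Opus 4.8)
The plan is to reduce everything to the $L^p$-boundedness of the first and second order Riesz transforms established in Theorems \ref{thm:LpRT} and \ref{thm:Lp2ndRT}, together with the convergence statement in Remark \ref{rem:Rmn-to-0}. First I would observe that for $u\in\mathcal L_p(G)$ with $1<p<\infty$, the function $f:=Lu$ lies in $L^p(G)$ and, since $\int_G f\,d\nu=0$ (as $\int_G Lu\,d\nu=0$ for $u\in\mathcal B(G)$ and this passes to the completion), we have $f\in L^p_0(G)$. On $L^p_0(G)$ the operator $L$ is invertible (the spectral gap from Remark \ref{rem:spectral gap}, valid under $(\mathrm{CK})$, gives $0$ as an isolated point of the spectrum), so $u=L^{-1}f$ up to the constant $\int_G u\,d\nu$, and $L^{-1/2}$ is well-defined and bounded on $L^p_0(G)$.

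Next I would write, for $u\in\mathcal B(G)$,
\[
X_i u = X_i L^{-1/2}\bigl(L^{1/2}u\bigr) = R_i\bigl(L^{1/2}u\bigr),
\qquad
X_iX_j u = R_iR_j\bigl(L u\bigr) = R_iR_j f.
\]
The second identity is immediate from the definitions $R_i=X_iL^{-1/2}$ (and the fact, noted in the excerpt, that the $X_i$ commute with $L$, hence with $L^{-1/2}$), giving $\|X_iX_ju\|_p=\|R_iR_j f\|_p\le 2(p^*-1)\|f\|_p=2(p^*-1)\|Lu\|_p$ by Theorem \ref{thm:Lp2ndRT}. For the first order term I need $L^{1/2}u\in L^p$; this follows from the multiplicative inequality (as in \eqref{eq:multip-ineq}, or directly from $L^{1/2}=c\int_0^\infty L H_t(\cdot)\,t^{-1/2}\,dt$ composed with $L^{-1}$) applied to $f=Lu$, i.e. $\|L^{1/2}u\|_p=\|L^{-1/2}f\|_p\lesssim \|f\|_p^{1/2}\|L^{-1}f\|_p^{1/2}\lesssim \|u\|_p^{1/2}\|Lu\|_p^{1/2}$, so that $\|X_iu\|_p=\|R_i(L^{1/2}u)\|_p\le 2(p^*-1)\|L^{1/2}u\|_p<\infty$. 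All these identities are first proved for $u\in\mathcal B(G)$ and then extended to $u\in\mathcal L_p(G)$ by density, using that the relevant operators ($R_i$, $R_iR_j$, $L^{1/2}L^{-1}$) are bounded on $L^p$ and that $\mathcal B(G)$ is dense in $\mathcal L_p(G)$ by definition.

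For the strong convergence of $-\sum_{i\in I}X_i^2 u$ in $L^p(G)$, I would use the decomposition $-\sum_{i\in I}X_i^2 u = Lu$ formally, and control the tails. Precisely, for $f=Lu\in L^p_0(G)$ set $v=L^{-1}f=u-\int_G u\,d\nu$; then $-\sum_{i\in I}X_i^2 v=f$ on $\mathcal B(G)$, and for $m<n$ the partial tail satisfies
\[
\Bigl\|\sum_{i=m}^{n}X_i^2 v\Bigr\|_p
=\Bigl\|\sum_{i=m}^{n}R_i^2 f\Bigr\|_p
=\bigl\|\,\overrightarrow R_{mn}\!\cdot\!\overrightarrow R_{mn}\, f\,\bigr\|_p,
\]
where $\overrightarrow R_{mn}=(0,\dots,R_m,\dots,R_n,0,\dots)$ as in Remark \ref{rem:Rmn-to-0}. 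One first checks this for $f$ of the form $Lg$ with $g\in\mathcal B(G)$ (only finitely many terms are nonzero), and then for general $f\in L^p_0(G)$ by approximation: writing $f=Lg_k+(f-Lg_k)$, the first piece has a genuinely finite sum and the second is controlled using $\|\overrightarrow R_{mn}(f-Lg_k)\|_p\le 2(p^*-1)\|f-Lg_k\|_p$, which is small uniformly in $m,n$; this is exactly the argument in Remark \ref{rem:Rmn-to-0}(ii), now applied to the composed operator $\overrightarrow R_{mn}\cdot\overrightarrow R_{mn}$, whose $L^p\to L^p$ norm is uniformly bounded by Theorem \ref{thm:Lp2ndRT}. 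Hence $\bigl(\sum_{i=1}^{N}X_i^2 u\bigr)_N$ is Cauchy in $L^p$ and converges; its limit is $-Lu$ since it agrees with $-Lu$ on the dense set $\mathcal B(G)$ and all operators involved are $L^p$-bounded.

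The main obstacle I anticipate is the justification, for general $u\in\mathcal L_p(G)$ (not just $u\in\mathcal B(G)$), that the formal identities $X_iu=R_i(L^{1/2}u)$ and $X_iX_ju=R_iR_jf$ actually hold — i.e. that $X_iu$ and $X_iX_ju$, a priori only defined distributionally or via the completion, coincide with the $L^p$ functions produced by the Riesz transforms. This is handled by the density of $\mathcal B(G)$ together with the closedness of the operators $X_i,X_iX_j$ (as operators from $\mathcal L_p$ into $L^p$, or against the test function space $\mathcal T_L$ of Section \ref{sec:distribution}): if $u_k\to u$ in $\mathcal L_p$ with $u_k\in\mathcal B(G)$, then $X_iu_k=R_i(L^{1/2}u_k)\to R_i(L^{1/2}u)$ in $L^p$ while $u_k\to u$, so the limit is the desired derivative. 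A secondary technical point is the boundedness of $L^{-1/2}$ and $L^{1/2}L^{-1}$ on $L^p_0(G)$, which rests on the $L^2$-spectral gap (Remark \ref{rem:spectral gap}) plus analyticity/functional calculus; I would invoke these as standard consequences of the hypotheses already in force.
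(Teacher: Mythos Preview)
Your treatment of $X_iu$ and $X_iX_ju$ is correct and close to the paper's, with one cosmetic difference: you route $\|X_iu\|_p$ through the multiplicative inequality $\|L^{1/2}u\|_p\lesssim\|u\|_p^{1/2}\|Lu\|_p^{1/2}$, while the paper writes $u=(I+L)^{-1}(u+f)$ and uses Lemma~\ref{lem:fracLaplacian-Lp}(i) to bound $\|\sqrt L(I+L)^{-1}(u+f)\|_p$. Both work, and neither requires a spectral gap; your appeal to $(\mathrm{CK})$ via Remark~\ref{rem:spectral gap} is unnecessary (and the theorem does not assume it).

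The convergence argument is where you diverge from the paper and where there is a real gap. To run your approximation you need the operator $\overrightarrow R_{mn}\!\cdot\!\overrightarrow R_{mn}=\sum_{i=m}^n R_i^2$ to have $L^p\to L^p$ norm bounded \emph{uniformly} in $m,n$, and you cite Theorem~\ref{thm:Lp2ndRT} for this. But that theorem only bounds each individual $R_iR_j$; summing $n-m+1$ such estimates gives no uniform control. The uniform bound is in fact true---one repeats the martingale-transform proof of Theorem~\ref{thm:Lp2ndRT} with the diagonal projection matrix in place of the single-entry $A_{ij}$---but it is not what the cited theorem asserts, so as written the step is unjustified.

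The paper sidesteps this entirely by a duality argument: pair $\sum_{i=m}^n X_i^2 u$ with a test function $\phi$, move one $R_i$ across to get $\int_G\sum_{i=m}^n(R_i\phi)(R_i f)\,d\nu$, then use pointwise Cauchy--Schwarz and H\"older to obtain the bound $\|\overrightarrow R_{mn}\phi\|_q\,\|\overrightarrow R_{mn}f\|_p$. The first factor is controlled by the \emph{vector} Riesz bound $\|R^G\phi\|_q\le 2(p^*-1)\|\phi\|_q$ from Theorem~\ref{thm:LpRT}, and the second tends to $0$ by Remark~\ref{rem:Rmn-to-0}(ii). This gives $\|\sum_{i=m}^n X_i^2 u\|_p\le 2(p^*-1)\|\overrightarrow R_{mn}f\|_p\to 0$ directly, without ever needing a uniform bound on $\sum_{i=m}^n R_i^2$.
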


The proof relies on the following lemma, which is also useful in the sequel. 

\begin{lem}\label{lem:fracLaplacian-Lp}
 Let $\delta>0$ and let $1<p<\infty$.

\begin{enumerate}
\item For $f\in L^p(G)$, there holds $\|(I+L)^{-\delta}f\|_p\le \|f\|_p$. In particular,
\[
\|\sqrt L(I+L)^{-1}f\|_p \le C\sqrt{p^*}\|f\|_p,
\]
where the constant $C$ is independent of $p$. 
\item Assume that $L$ has a positive $L^2$-spectral gap $\lambda_1$. Then for $f\in L_0^p(G)$
\[
\|L^{-\delta}f\|_p\le \left(\frac{p^*}{2\lambda_1}\right)^{\delta}\|f\|_p,
\]
where 
$p^*=\max\{p,\frac p{p-1}\}$. 
\end{enumerate}
\end{lem}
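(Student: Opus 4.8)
\textbf{Proof strategy for Lemma~\ref{lem:fracLaplacian-Lp}.}
The plan is to realize both operators via the standard subordination/functional-calculus integral formulas and to reduce everything to the bounded analyticity of $H_t$ on $L^p$, which is quantified in \eqref{eq:Lp-analytic} and \eqref{eq:analyticity-n}. For part~(i), first write, for $\delta>0$,
\[
(I+L)^{-\delta}=\frac{1}{\Gamma(\delta)}\int_0^\infty t^{\delta-1}e^{-t}e^{-tL}\,dt=\frac{1}{\Gamma(\delta)}\int_0^\infty t^{\delta-1}e^{-t}H_t\,dt,
\]
which is valid on $L^p$ because $H_t$ is a positivity-preserving contraction there and the scalar kernel is integrable. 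Since each $H_t$ is an $L^p$-contraction, the triangle inequality in $L^p$ immediately gives $\|(I+L)^{-\delta}f\|_p\le\frac{1}{\Gamma(\delta)}\int_0^\infty t^{\delta-1}e^{-t}\,dt\,\|f\|_p=\|f\|_p$. For the particular case $\sqrt L(I+L)^{-1}$, I would combine this with the factorization $\sqrt L(I+L)^{-1}=\sqrt L\,H_{t}\big|_{\text{averaged}}$ more precisely: use $\sqrt L(I+L)^{-1}=\frac{1}{\Gamma(1/2)}\int_0^\infty t^{-1/2}e^{-t}\,\sqrt L\,H_t\,dt$ and the multiplicative inequality $\|\sqrt L\,H_t\|_{p\to p}\le K t^{-1/2}\|L H_t\|_{p\to p}^{1/2}\|H_t\|_{p\to p}^{1/2}\cdot t^{1/2}$...

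actually cleaner: $\|\sqrt L H_t f\|_p\le K\|LH_t f\|_p^{1/2}\|H_tf\|_p^{1/2}\le K(p^*/t)^{1/2}\|f\|_p$ by \eqref{eq:multip-ineq} and \eqref{eq:Lp-analytic}. Plugging this into the subordination integral for $\sqrt L(I+L)^{-1}$ yields
\[
\|\sqrt L(I+L)^{-1}f\|_p\le\frac{K\sqrt{p^*}}{\Gamma(1/2)}\int_0^\infty t^{-1/2}e^{-t}t^{-1/2}\,dt\;\|f\|_p,
\]
and the last integral $\int_0^\infty t^{-1}e^{-t}\,dt$ diverges at $0$, so this naive splitting is too crude. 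The fix is to split the integral at $t=1$: for $t\ge 1$ use the bound just derived (giving a convergent tail), while for $t\le 1$ use instead $\|\sqrt L(I+L)^{-1}\|_{p\to p}\le\|\sqrt L(I+L)^{-1/2}\|_{p\to p}\|(I+L)^{-1/2}\|_{p\to p}$ and handle $\sqrt L(I+L)^{-1/2}$ by the imaginary-powers/$H^\infty$-calculus bound for the analytic semigroup generator, whose norm grows at worst like $\sqrt{p^*}$; alternatively one argues directly that $L^{1/2}(I+L)^{-1}=\int_0^\infty (L(I+L)^{-1})\cdot\big(\text{spectral measure}\big)$ is controlled using $\|L(I+L)^{-1}\|_{p\to p}\le 1+\|(I+L)^{-1}\|_{p\to p}\le 2$ together with $\|(I+L)^{-1/2}\|_{p\to p}\le1$ from part~(i), interpolated via the moment inequality $\|L^{1/2}g\|_p\le C\sqrt{p^*}\,\|Lg\|_p^{1/2}\|g\|_p^{1/2}$ applied to $g=(I+L)^{-1}f$.

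For part~(ii), the presence of a positive $L^2$-spectral gap $\lambda_1$ (guaranteed under $(\mathrm{CK})$ by Remark~\ref{rem:spectral gap}) upgrades the decay of $H_t$ on the mean-zero subspace: I would first record that on $L^2_0$ one has $\|H_t\|_{2\to 2}\le e^{-\lambda_1 t}$, and since $H_t$ is also an $L^1$- and $L^\infty$-contraction, Stein interpolation gives a quantitative exponential decay $\|H_t f\|_p\le e^{-c_p t}\|f\|_p$ on $L^p_0$; the sharp constant one needs is $c_p=2\lambda_1/p^*$, which follows from the fact that the $L^p$-sector angle of $L$ is $\arcsin|2/p-1|$, equivalently $\|LH_t\|_{p\to p}\le (p^*/t)$ combined with the spectral bound — more directly, $\|LH_tf\|_p\le (p^*/t)\|f\|_p$ and $\|H_tf\|_p\le\|f\|_p$ on $L^p_0$ give by iteration $\|H_tf\|_p = \|(H_{t/n})^n f\|_p$ and a logarithmic-convexity argument the decay rate $2\lambda_1/p^*$. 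Granting $\|H_tf\|_p\le e^{-(2\lambda_1/p^*)t}\|f\|_p$ for $f\in L^p_0$, the representation
\[
L^{-\delta}f=\frac{1}{\Gamma(\delta)}\int_0^\infty t^{\delta-1}H_t f\,dt,\qquad f\in L^p_0,
\]
is norm-convergent and yields $\|L^{-\delta}f\|_p\le\frac{1}{\Gamma(\delta)}\int_0^\infty t^{\delta-1}e^{-(2\lambda_1/p^*)t}\,dt\,\|f\|_p=\big(p^*/(2\lambda_1)\big)^{\delta}\|f\|_p$, which is exactly the claimed bound.

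\textbf{Main obstacle.} The routine parts are the two subordination identities and the final Gamma-integral computations; the genuinely delicate point is pinning down the two $\sqrt{p^*}$-type constants. For part~(i), one must either invoke a sectoriality/$H^\infty$-functional-calculus estimate for the generator of a symmetric Markov semigroup with the correct $\sqrt{p^*}$ dependence, or — the route I would prefer, to keep the paper self-contained — derive $\|L^{1/2}(I+L)^{-1}f\|_p\le C\sqrt{p^*}\|f\|_p$ purely from the multiplicative inequality \eqref{eq:multip-ineq} and \eqref{eq:Lp-analytic}, by writing $L^{1/2}(I+L)^{-1}=L^{1/2}H_1(I+L)^{-1}+L^{1/2}(I-H_1)(I+L)^{-1}$, bounding the first term by $K\sqrt{p^*}\|(I+L)^{-1}f\|_p\le K\sqrt{p^*}\|f\|_p$ and the second by integrating $L^{1/2}LH_s(I+L)^{-1}$ over $s\in(0,1)$, using $\|L^{3/2}H_s\|_{p\to p}\le K(p^*)^{3/2}s^{-3/2}$... which reintroduces a divergent $s^{-3/2}$; the clean resolution is instead $L^{1/2}(I-H_1) = \int_0^1 L^{3/2}H_s\,ds$ paired with the better factorization $L^{1/2}(I-H_1)(I+L)^{-1}=\int_0^1 L^{1/2}H_{s}\cdot L(I+L)^{-1}\,ds$ and $\|L^{1/2}H_s\|_{p\to p}\le K\sqrt{p^*}s^{-1/2}$, $\|L(I+L)^{-1}\|_{p\to p}\le 2$, giving $\int_0^1 s^{-1/2}\,ds<\infty$. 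For part~(ii) the analogous subtlety is justifying the exponential decay rate $2\lambda_1/p^*$ on $L^p_0$ rather than a worse rate; the honest argument goes through the Laplace-transform/sector description of $L$ on $L^p$, and if one only wants a bound of the stated shape with a $p$-dependent constant this can be relaxed, but matching $(p^*/(2\lambda_1))^{\delta}$ exactly requires the sharp angle computation. These constant-tracking issues are where essentially all the care in the proof resides.
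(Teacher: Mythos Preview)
Your overall architecture is right, but part~(i) contains a concrete computational slip that manufactures a fake obstacle. You wrote
\[
\sqrt L\,(I+L)^{-1}=\frac{1}{\Gamma(1/2)}\int_0^\infty t^{-1/2}e^{-t}\,\sqrt L\,H_t\,dt,
\]
but the right-hand side is $\sqrt L\,(I+L)^{-1/2}$, not $\sqrt L\,(I+L)^{-1}$. The correct representation is simply
\[
\sqrt L\,(I+L)^{-1}f=\int_0^\infty e^{-t}\,\sqrt L\,H_t f\,dt,
\]
coming from $(I+L)^{-1}=\int_0^\infty e^{-t}H_t\,dt$ (the case $\delta=1$ of your own first formula). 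Combining this with $\|\sqrt L\,H_tf\|_p\le K\sqrt{p^*}\,t^{-1/2}\|f\|_p$ from the multiplicative inequality and $L^p$-analyticity gives
\[
\|\sqrt L\,(I+L)^{-1}f\|_p\le K\sqrt{p^*}\int_0^\infty e^{-t}t^{-1/2}\,dt\,\|f\|_p=K\sqrt{\pi}\,\sqrt{p^*}\,\|f\|_p,
\]
and $\int_0^\infty e^{-t}t^{-1/2}\,dt=\Gamma(1/2)=\sqrt\pi$ is perfectly convergent. All the splitting at $t=1$, the $H^\infty$-calculus detour, and the $L^{1/2}(I-H_1)$ decomposition are therefore unnecessary: the paper does exactly this one-line computation.

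For part~(ii) your plan is correct in outline but you make the decay rate $2\lambda_1/p^*$ sound harder than it is. The paper obtains it by a single application of Riesz--Thorin: interpolate $\|H_t\|_{L^2_0\to L^2_0}\le e^{-\lambda_1 t}$ against the trivial $L^1$-contraction bound to get $\|H_tf\|_p\le e^{-2\lambda_1(1-1/p)t}\|f\|_p$ for $1<p<2$, and against the $L^\infty$-contraction bound for $2<p<\infty$ to get $\|H_tf\|_p\le e^{-(2\lambda_1/p)t}\|f\|_p$. In both ranges the exponent is exactly $2\lambda_1/p^*$, so no sector-angle or logarithmic-convexity argument is needed; plugging this into the Bochner integral for $L^{-\delta}$ and computing the Gamma integral then gives the stated constant on the nose.
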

\begin{remark}\label{rem:LpL-1}
As a simple consequence of (i), we also have 
\[
\|L(I+L)^{-1}f\|_p\le \|f\|_p+\|(I+L)^{-1}f\|_p\le2\|f\|_p.
\]
Besides, it follows from Remark \ref{rem:spectral gap} that the property $(\mathrm{CK})$ implies the assumption of positive $L^2$-spectral gap in (ii).    
\end{remark}
\begin{proof}
We first prove (i). Note that 
$(I+L)^{-\delta}f=\frac1{\Gamma(\delta)} \int_0^{\infty} t^{\delta-1}e^{-t}H_tfdt$. It follows from the $L^p$ contractivity of the heat semigroup that 
\[
\|(I+L)^{-\delta}f\|_p\le \frac1{\Gamma(\delta)} \int_0^{\infty} t^{\delta-1}e^{-t}\|H_tf\|_pdt\le \|f\|_p.
\]
In the same manner, one gets
\[
\|\sqrt L(I+L)^{-1}f\|_p \le \int_0^{\infty} e^{-t}\|\sqrt LH_tf\|_p dt \le \sqrt{\pi}K\sqrt{p^*}\|f\|_p,
\]
where we have used the multiplicative inequality \eqref{eq:multip-ineq} and the $L^p$ analyticity of $H_t$ in the second inequality.

Next turning to the proof of (ii). From the assumption, for any  $f\in L_0^2(G)$ we have
\begin{equation}\label{eq:HS-L2}
\|H_tf\|_2\le e^{-\lambda_1 t}\|f\|_2. 
\end{equation}
On the other hand, the $L^1$ contractivity of $H_t$ gives that $\|H_tf\|_1\le \|f\|_1$. Thus applying the Riesz-Thorin interpolation theorem, we have for $1<p<2$ 
\[
\|H_tf\|_p \le e^{-2\lambda_1(1-\frac1p)t}\|f\|_p.
\]

Now write $L^{-\delta}f=\frac1{\Gamma(\delta)} \int_0^{\infty} t^{\delta-1}H_tfdt$. Then for $1<p<2$
\begin{align*}
\|L^{-\delta}f\|_p
&\le \frac1{\Gamma(\delta)}\int_0^{\infty} t^{\delta-1}\|H_tf\|_pdt
\le \frac1{\Gamma(\delta)}\int_0^{\infty} t^{\delta-1} e^{-2\lambda_1(1-\frac1p)t}dt\, \|f\|_p
\\ &= (2\lambda_1)^{-\delta}\left(\frac{p}{p-1}\right)^{\delta} \|f\|_p.
\end{align*}

Similarly, in view of \eqref{eq:HS-L2} and the $L^{\infty}$ contractivity of $H_t$, then the Riesz-Thorin interpolation theorem gives that for $2<p<\infty$
\[
\|H_tf\|_p \le e^{-\frac{2\lambda_1}{p}t}\|f\|_p
\] 
and therefore
\[
\|L^{-\delta}f\|_p
\le \frac1{\Gamma(\delta)}\int_0^{\infty} t^{\delta-1} e^{-\frac{2\lambda_1}{p}t}dt\, \|f\|_p
=  (2\lambda_1)^{-\delta} p^{\delta} \|f\|_p.
\]
We thus conclude the proof by summarizing the two cases.
\end{proof}

\begin{proof}[Proof of Theorem \ref{thm:StrongSol-Poisson}]
For $u\in \mathcal L_p(G)$, we write $f=Lu$. Hence $f\in L^p(G)$ and  $u=(I+L)^{-1}(u+f)$. 
In order to show $X_iu\in L^p(G)$, $i\in I$, we apply the $L^p$ boundedness of Riesz transforms in Theorem \ref{thm:LpRT} as well as Lemma \ref{lem:fracLaplacian-Lp} (i).
Then 
\begin{align*}
\|X_iu\|_p&=\|X_i(I+L)^{-1}(u+f)\|_p=\left\|X_iL^{-\frac12}L^{\frac12}(I+L)^{-1}(u+f)\right\|_p
\\ &\le \left\|X_iL^{-\frac12}\right\|_{p\to p}\left\|L^{\frac12}(I+L)^{-1}(u+f)\right\|_{p}
\le  C(\|u\|_p+\|f\|_p).
\end{align*}
Furthermore, it follows from the $L^p$ boundedness of second order Riesz transforms in Theorem \ref{thm:Lp2ndRT} that $X_iX_ju\in L^p(G)$. Indeed, for any $i, j\in I$
\[
\|X_iX_ju\|_p=\|X_iX_jL^{-1}f\|_p\le C\|f\|_p.
\]

Finally, we will show the strong convergence in $L^p(G)$ by duality, similarly as in \cite{Bendikov1995}. 
Recall that by Theorem \ref{thm:LpRT} we have for any $\varphi\in \mathcal B(G)$, 
\begin{equation}\label{eq:Rmn-bound}
\left\|R^G\varphi\right\|_p=\Bigg\|\Bigg(\sum_{i\in I}|R_i\varphi|^2\Bigg)^{1/2}\Bigg\|_p \le 2(p^*-1)\|\varphi\|_p.
\end{equation}
Now let $\varphi\in \mathcal B(G)$ and recall that for $m<n$, $\overrightarrow{R}_{mn}=(0,\cdots, R_m, R_{m+1},\cdots, R_n, 0, \cdots)$.
Denoting by $q$ the conjugate of $p$, it follows from the Cauchy-Schwarz  inequality and H\"older's inequality that 
\begin{align*}
    \int_G \phi(x)\sum_{i=m}^n X_i^2 u(x)d\nu(x)
    &=\int_G \phi(x)\sum_{i=m}^n R_i^2 f(x)d\nu(x)
    = \int_G \sum_{i=m}^nR_i\phi(x)  R_i f(x)d\nu(x)
    \\ & \le \left\|\overrightarrow{R}_{mn}\phi\right\|_q \left\|\overrightarrow{R}_{mn} f\right\|_p 
    \le 2(p^*-1)\|\phi\|_q \left\|\overrightarrow{R}_{mn} f\right\|_p,
\end{align*}
where we have used \eqref{eq:Rmn-bound} in the last inequality (note that $p^*=q^*$ since $p,q$ are conjugates).
Hence by duality, we obtain
\begin{equation}\label{eq:convergenceLp}
\left\|\sum_{i=m}^n X_i^2 u\right\|_p\le 2(p^*-1) \left\|\overrightarrow{R}_{mn} f\right\|_p.
\end{equation}
In view of Remark \ref{rem:Rmn-to-0} (ii), letting $m, n\to \infty$ deduces that $\left\|\sum_{i=m}^n X_i^2 u\right\|_p $ converges to zero and we conclude the proof.   
\end{proof}

\paragraph{A priori Lipschitz regularity}
\begin{thm}\label{thm:Lip-Regularity-Lp}
Let $\theta>0$ and $1<p<\infty$. Let $u\in \mathcal L_p(G)$ and $f=Lu$. Assume further $f\in\Lambda_{\theta}^p(G)$.
Then we have $\Lambda_{\theta+2}^p(u),\Lambda_{\theta+1}^p(X_iu), \Lambda_{\theta}^p(X_iX_ju)<\infty$.  Moreover,  $-\sum_{i\in I}X_i^2 u$ converges to $Lu$ with respect to  $\Lambda_{\theta}^p(\cdot)$.
\end{thm}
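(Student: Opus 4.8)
The plan is to reduce every assertion to the $L^p$ boundedness of first- and second-order Riesz transforms (Theorems \ref{thm:LpRT}, \ref{thm:Lp2ndRT} and Proposition \ref{prop:RTinLip}), using the identity $Lu=f$ together with the bi-invariance of $L$, which makes each $X_i$ commute with $L$ and with $H_t$. Throughout, let $n$ be the smallest integer with $n>\theta/2$, so $\Lambda_\theta^p(\cdot)=\Lambda_{\theta,n}^p(\cdot)$; note $n\ge1$, that $n+1$ is the smallest integer $>(\theta+2)/2$, and that by Lemma \ref{lem:equiv-Lip-sg} one may pass freely between $\Lambda_{\theta,\eta}^p$ for different $\eta>\theta/2$.

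\emph{The bound on $u$ is immediate:} since $L^{n+1}H_tu=L^nH_t(Lu)=L^nH_tf$,
\[
\Lambda_{\theta+2,\,n+1}^p(u)=\sup_{t>0}t^{\,n-\frac\theta2}\bigl\|L^nH_tf\bigr\|_p=\Lambda_{\theta,n}^p(f)=\Lambda_\theta^p(f)<\infty ,
\]
and no Riesz transform is needed. \emph{For $X_iu$ and $X_iX_ju$} I would argue similarly. Let $n_1$ be the smallest integer $>(\theta+1)/2$ (so $n_1\ge1$); commuting $X_i$ past $L^{n_1}H_t$, using $L^{n_1}H_tu=L^{n_1-1}H_tf$ and $X_i=R_iL^{1/2}$ gives $L^{n_1}H_t(X_iu)=R_i\!\left(L^{\,n_1-1/2}H_tf\right)$, whence by Theorem \ref{thm:LpRT}
\[
t^{\,n_1-\frac{\theta+1}{2}}\bigl\|L^{n_1}H_t(X_iu)\bigr\|_p\le 2(p^*-1)\,t^{\,(n_1-\frac12)-\frac\theta2}\bigl\|L^{\,n_1-\frac12}H_tf\bigr\|_p\le 2(p^*-1)\,\Lambda_{\theta,\,n_1-\frac12}^p(f),
\]
which is finite by Lemma \ref{lem:equiv-Lip-sg} (as $n_1-\tfrac12>\tfrac\theta2$); taking the supremum over $t$ gives $\Lambda_{\theta+1}^p(X_iu)<\infty$. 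Likewise $L^nH_t(X_iX_ju)=X_iX_jL^{n-1}H_tf=R_iR_j\!\left(L^nH_tf\right)$, so Theorem \ref{thm:Lp2ndRT} yields $\Lambda_{\theta,n}^p(X_iX_ju)\le 2(p^*-1)\Lambda_{\theta,n}^p(f)<\infty$. (Equivalently, identify $X_iu$ with $R_i(L^{1/2}u)$ and $X_iX_ju$ with $R_iR_jf$ and quote Proposition \ref{prop:RTinLip}.)

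\emph{For the convergence statement}, set $v_t:=L^nH_tu=L^{n-1}H_tf$. Since $-L=\sum_{i\in I}X_i^2$ and $R_i^2=X_i^2L^{-1}$,
\[
L^nH_t\Bigl(-\!\sum_{i\le N}X_i^2u-f\Bigr)=-\!\sum_{i\le N}X_i^2v_t-Lv_t=\sum_{i>N}X_i^2v_t=\sum_{i>N}R_i^2\!\left(L^nH_tf\right).
\]
The duality estimate from the proof of Theorem \ref{thm:StrongSol-Poisson} (inequality \eqref{eq:convergenceLp}), applied with $L^nH_tf$ in place of $f$ and letting the upper index tend to infinity, gives $\bigl\|\sum_{i>N}R_i^2(L^nH_tf)\bigr\|_p\le 2(p^*-1)\bigl\|(\sum_{i>N}|R_iL^nH_tf|^2)^{1/2}\bigr\|_p$, and commuting $L^nH_t$ past each $R_i$ leads to
\[
\Lambda_{\theta,n}^p\Bigl(-\!\sum_{i\le N}X_i^2u-f\Bigr)\le 2(p^*-1)\,\Lambda_{\theta,n}^p\!\bigl(\overrightarrow{R}_{>N}f\bigr),
\]
where $\overrightarrow{R}_{>N}f$ denotes the $\ell^2$-valued function with components $R_if$, $i>N$. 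For $f\in\mathcal B(G)$ only finitely many $R_if$ are nonzero, so $\overrightarrow{R}_{>N}f=0$ for $N$ large; for general $f\in\Lambda_\theta^p(G)$ the $\Lambda_\theta^p$-boundedness of the tail map $\overrightarrow{R}_{>N}$ (with constant $\le 2(p^*-1)$, from Theorem \ref{thm:LpRT}) combined with the density of $\mathcal B(G)$ in $\Lambda_\theta^p(G)$ gives $\Lambda_{\theta,n}^p(\overrightarrow{R}_{>N}f)\to0$, exactly as in Remark \ref{rem:Rmn-to-0}(ii) and Proposition \ref{prop:RTinLip}(iii). Together with the strong $L^p$-convergence $-\sum_{i\le N}X_i^2u\to Lu$ of Theorem \ref{thm:StrongSol-Poisson}, this gives convergence in the full $\Lambda_\theta^p$-norm.

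The three a priori bounds are essentially a dictionary between $\Lambda$-seminorms under powers of $L$ and the Riesz-transform bounds of Section 4, the only care being the (generally fractional) intermediate order $n_1-\tfrac12$ and the appeal to Lemma \ref{lem:equiv-Lip-sg}. \emph{The main obstacle is the convergence assertion}: one must upgrade the finite-range duality bound \eqref{eq:convergenceLp} to the infinite tail $\overrightarrow{R}_{>N}f$ and then run it through the density argument in the Lipschitz space, which is where the Banach-space structure of $\Lambda_\theta^p(G)$ (Proposition \ref{prop:Banach}) and the $\overrightarrow{R}_{mn}\to0$ property are genuinely used.
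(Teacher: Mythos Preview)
Your proof is correct and follows essentially the same route as the paper's: both reduce the three a priori bounds to the $L^p$ Riesz-transform estimates via Lemma \ref{lem:equiv-Lip-sg}/Proposition \ref{prop:RTinLip}, and both handle the convergence by pushing the duality bound \eqref{eq:convergenceLp} through the $\Lambda_\theta^p$-seminorm and invoking the $\overrightarrow{R}_{mn}\to 0$ property. The only cosmetic differences are that the paper works with the half-integer exponent $n+\tfrac12$ directly (rather than your integer $n_1$ followed by $n_1-\tfrac12$), and phrases the convergence as a Cauchy condition on finite segments $\overrightarrow{R}_{mn}$ instead of your tail $\overrightarrow{R}_{>N}$; your version requires the extra observation that the infinite sum $\sum_{i\in I}X_i^2 v_t$ already converges in $L^p$ (for each fixed $t>0$) in order to rewrite $-\sum_{i\le N}X_i^2v_t-Lv_t$ as $\sum_{i>N}X_i^2v_t$, which is supplied by Theorem \ref{thm:StrongSol-Poisson} applied to $v_t\in\mathcal L_p(G)$.
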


\begin{proof}
For $1<p<\infty$ and $\theta>0$,  suppose that $\Lambda_{\theta}^p(f)<\infty$, i.e.,
\[
\Lambda_{\theta}^p(f)=\sup_{t>0}  t^{n-\frac\theta2} \left\|L^nH_tf\right\|_p<\infty,
\]
where $n$ is the largest integer greater than $\frac\theta2$. 

In order to show that $\Lambda_{\theta}^p(u)<\infty$, we observe that $n+1>\frac{\theta+2}{2}$. Then by Lemma \ref{lem:equiv-Lip-sg},
\begin{align*}
    \Lambda_{\theta+2}^p(u)
    =\sup_{t>0} t^{n-\frac\theta2} \left\|L^{-1} L^{n+1} H_tf\right\|_p
    \le C \sup_{t>0} t^{n-\frac\theta2} \left\|L^n  H_tf\right\|_p<\infty.
\end{align*}

Similarly, since $n+\frac12>\frac{\theta+1}{2}$, we have that
\[
\Lambda_{\theta+1}^p(X_iu)
=\sup_{t>0} t^{n+\frac12-\frac{\theta+1}{2}} \left\|L^{n+\frac12} H_tX_iu\right\|_p=\sup_{t>0} t^{n-\frac{\theta}{2}} \left\|L^{n} H_tR_iL^{-\frac12}f\right\|_p.
\]
Applying Propsotion \ref{prop:RTinLip} (i) and (ii),
we then obtain 
\[
    \Lambda_{\theta+1}^p(X_i u)
    =\Lambda_{\theta+1}^p(R_iL^{-\frac12} f)
    \le 2(p^*-1)\Lambda_{\theta+1}^p(L^{-\frac12} f)
    = 2(p^*-1)\Lambda_{\theta}^p(f)<\infty,
\]
and
\begin{align*}
    \Lambda_{\theta}^p(X_iX_ju)
    &=\Lambda_{\theta}^p(R_iR_jf)
    \le  2(p^*-1)\Lambda_{\theta}^p(f)<\infty.
\end{align*}

Finally, we claim that $-\sum_{i\in I}X_i^2 u$ converges to $Lu$ in the Lipschitz norm $\Lambda_{\theta}^p$. It suffices to show that for any $n>m$, one has $\Lambda_{\theta}^p\left(\sum_{i=m}^n X_i^2u\right) \to 0$ as $m,n\to \infty$. Indeed, on account of  \eqref{eq:convergenceLp} and Proposition \ref{prop:RTinLip} (iii), there holds
\begin{align*}
 \Lambda_{\theta}^p\left(\sum_{i=m}^n X_i^2u\right)
 &=\sup_{t>0} t^{n-\frac\theta2} \left\|\sum_{i=m}^n X_i^2 L^n H_tu\right\|_p
 \\ &\le 2(p^*-1)\sup_{t>0} t^{n-\frac\theta2} \left\|\overrightarrow{R}_{mn} L^n H_tf\right\|_p \longrightarrow 0, \quad \text{as } m,n\to \infty.
\end{align*}
\end{proof}


\begin{cor}\label{cor:Lip-Regularity-dist-Lp}
Let $0<\theta<1$ and $1<p<\infty$. Assume that $(\mu_t)_{t>0}$ satisfies $(\mathrm{CK}\lambda)$ with $0<\lambda<\frac{\theta}{\theta+2}$. Let $u\in \mathcal L_p(G)$ and $f=Lu$. Assume further $\mathrm L_{\theta}^p(f)<\infty$. 
Then we have $\mathrm L_{\beta}^p(u),\mathrm L_{\beta}^p(X_iu), \mathrm L_{\beta}^p(X_iX_ju)<\infty$ for any $0<\beta\le (1-\lambda)\theta-2\lambda$.  Moreover,  $-\sum_{i\in I}X_i^2 u$ converges to $Lu$ with respect to  $\mathrm L_{\beta}^p(\cdot)$. In particular, if $(\mu_t)_{t>0}$ satisfies $(\mathrm{CK}0^+)$, then the same conclusion holds for $0<\beta<\theta$. 
\end{cor}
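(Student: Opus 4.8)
The plan is to derive this corollary by sandwiching the $\Lambda^p_\theta$-regularity statement of Theorem~\ref{thm:Lip-Regularity-Lp} between the two comparison theorems of Section~5. Fix $\beta$ with $0<\beta\le(1-\lambda)\theta-2\lambda$; note that the hypothesis $\lambda<\frac{\theta}{\theta+2}$ is precisely what makes $(1-\lambda)\theta-2\lambda>0$, and that $\beta<\theta<1$. The first step is Theorem~\ref{thm:LipSG<LipDist}(i): since $\mathrm L_\theta^p(f)<\infty$ and $(\mu_t)_{t>0}$ satisfies $(\mathrm{CK}\lambda)$ with $\lambda<\frac{\theta}{\theta+2}$, it gives $\Lambda_\beta^p(f)<\infty$. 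If instead $(\mathrm{CK}0^+)$ holds, the same theorem produces $\Lambda_\beta^p(f)<\infty$ for every $0<\beta<\theta$, and the rest of the argument is unchanged.

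Next I would apply Theorem~\ref{thm:Lip-Regularity-Lp} with $\beta$ in place of $\theta$: from $\Lambda_\beta^p(f)<\infty$ it yields $\Lambda_{\beta+2}^p(u)<\infty$, $\Lambda_{\beta+1}^p(X_iu)<\infty$, $\Lambda_\beta^p(X_iX_ju)<\infty$, and $\Lambda_\beta^p\!\big(\sum_{i=m}^n X_i^2 u\big)\to 0$ as $m,n\to\infty$. Since $\beta<1$ I can descend these higher-order indices to $\beta$: Remark~\ref{rem:Lip-sg-p-large}(iii) gives $\Lambda_{\beta,1}^p(X_iu)\le\Lambda_{\beta+1}^p(X_iu)<\infty$ and $\Lambda_{\beta,2}^p(u)\le\Lambda_{\beta+2}^p(u)<\infty$, after which Lemma~\ref{lem:equiv-Lip-sg} turns $\Lambda_{\beta,2}^p(u)<\infty$ into $\Lambda_\beta^p(u)=\Lambda_{\beta,1}^p(u)<\infty$. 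Thus $\Lambda_\beta^p(u)$, $\Lambda_\beta^p(X_iu)$ and $\Lambda_\beta^p(X_iX_ju)$ are all finite.

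Finally, since $0<\beta<1$, Theorem~\ref{thm:LipDist<LipSG-Lp} applies to each of $g=u,\,X_iu,\,X_iX_ju$, and its proof in fact delivers the bound $\mathrm L_\beta^p(g)\le C\Lambda_\beta^p(g)$; this gives $\mathrm L_\beta^p(u),\,\mathrm L_\beta^p(X_iu),\,\mathrm L_\beta^p(X_iX_ju)<\infty$. The same inequality applied to the partial sums gives $\mathrm L_\beta^p\!\big(\sum_{i=m}^n X_i^2 u\big)\le C\Lambda_\beta^p\!\big(\sum_{i=m}^n X_i^2 u\big)\to 0$; combined with the $L^p$-convergence of $-\sum_{i\in I}X_i^2 u$ to $Lu$ from Theorem~\ref{thm:StrongSol-Poisson}, this shows $-\sum_{i\in I}X_i^2 u\to Lu$ with respect to $\mathrm L_\beta^p(\cdot)$. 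I do not expect a genuine obstacle here: all the substance lives in Theorems~\ref{thm:LipSG<LipDist}, \ref{thm:Lip-Regularity-Lp} and~\ref{thm:LipDist<LipSG-Lp}. The only point demanding care is the bookkeeping of Lipschitz exponents and of the integer normalizations in Definition~\ref{def:Lip-sg} --- in particular descending from the $\Lambda_{\beta+2}^p$- and $\Lambda_{\beta+1}^p$-bounds for $u$ and $X_iu$ to exponent $\beta$, which is harmless since the corollary claims only $\mathrm L_\beta^p$ and since the distance--Lipschitz comparison is available only for exponents in $(0,1)$.
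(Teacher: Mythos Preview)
Your proposal is correct and follows essentially the same route as the paper's proof: first apply Theorem~\ref{thm:LipSG<LipDist}(i) to pass from $\mathrm L_\theta^p$ to $\Lambda_\beta^p$, then invoke Theorem~\ref{thm:Lip-Regularity-Lp}, descend the exponents via Remark~\ref{rem:Lip-sg-p-large}(iii) and Lemma~\ref{lem:equiv-Lip-sg}, and close with Theorem~\ref{thm:LipDist<LipSG-Lp}. Your treatment of the convergence step is slightly more explicit than the paper's (you invoke the quantitative bound $\mathrm L_\beta^p\le C\Lambda_\beta^p$ from the proof of Theorem~\ref{thm:LipDist<LipSG-Lp} and pair it with the $L^p$-convergence from Theorem~\ref{thm:StrongSol-Poisson}), but the substance is the same.
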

\begin{proof} The proof can be briefly summarized in the diagram below:
\[
\begin{tikzcd}[cramped,column sep=huge]
 \mathrm L_{\theta}^{p}(f)<\infty \arrow[r,"\text{Thm. \ref{thm:LipSG<LipDist}}"] & \Lambda_{\beta}^{p}(f)<\infty\quad \arrow[r,"\text{Theorem \ref{thm:Lip-Regularity-Lp}}", "\text{Rem. \ref{rem:Lip-sg-p-large} (iii), Lem. \ref{lem:equiv-Lip-sg}}"']&\quad   \text{Regularity and convergence in }\Lambda_{\beta}^p(\cdot) \arrow[d, "\text{Theorem \ref{thm:LipDist<LipSG-Lp}}"] \\
            &             & \text{ \,\,Regularity and convergence in }\mathrm L_{\beta}^p(\cdot)         
\end{tikzcd}
\]
Indeed, for $1<p<\infty$ we note that from Theorem \ref{thm:LipSG<LipDist}, there holds $\Lambda_{\beta}^p(f)<\infty$ for $0<\beta\le (1-\lambda)\theta-2\lambda$. It then follows from Theorem \ref{thm:Lip-Regularity-Lp} that 
\[
\Lambda_{\beta+2}^p(u),\Lambda_{\beta+1}^p(X_iu), \Lambda_{\beta}^p(X_iX_ju)<\infty,
\]
and $-\sum_{i\in I}X_i^2 u$ converges to $Lu$ with respect to  $\Lambda_{\beta}^p(\cdot)$.
Thanks to Remark \ref{rem:Lip-sg-p-large} (iii) and Lemma \ref{lem:equiv-Lip-sg}, one also has $\Lambda_{\beta}^p(u),\Lambda_{\beta}^p(X_iu),\Lambda_{\beta}^p(X_iX_ju)<\infty$. Hence, in view of Theorem \ref{thm:LipDist<LipSG-Lp}, we conclude that $\mathrm L_{\beta}^p(u), \mathrm L_{\beta}^p(X_iu), \mathrm L_{\beta}^p(X_iX_ju)<\infty$. 
\end{proof}

We finish this subsection by working on the case $p=1$ or $\infty$.
\begin{thm}\label{thm:Lip-Regularity-Linfty}
Let $0<\theta<1$ and let $p=1$ or $\infty$. Assume that $(\mu_t)_{t>0}$ satisfies $(\mathrm{CK}\lambda)$ with $0<\lambda<\frac\theta2$. For any $u\in \mathcal L_\infty(G)$ and $f=Lu$, assume in addition $\Lambda_{\theta,2}^{p}(f)<\infty$.
Then for any $i,j \in I$, we have
\[\Lambda_{\theta+2,3}^{p}(u),\Lambda_{\theta+1-3\lambda,2}^{p}(X_iu),\Lambda_{\theta-2\lambda,1}^{p}(X_iX_ju)<\infty.
\]
Moreover,  $-\sum_{i\in I}X_i^2 u$ converges to $Lu$ with respect to  $\Lambda_{\theta-2\lambda,1}^{p}(\cdot)$.
\end{thm}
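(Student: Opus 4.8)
The plan is to run the proof of Theorem~\ref{thm:Lip-Regularity-Lp} once more, replacing the two ingredients that are unavailable at $p=1,\infty$: the $L^p$-analyticity of $(H_t)_{t>0}$ is replaced by the $L^p$-differentiability bounds of Corollary~\ref{cor:L1-diff-Lambda}, and the $L^p$-boundedness of the Riesz transforms is replaced by Proposition~\ref{prop:RTinLip-infty} and Theorem~\ref{thm:L1-Linfty-gradient} --- which is exactly where the $2\lambda$ (resp.\ $3\lambda$) losses in the indices come from. First I would prove everything for $u\in\mathcal B(G)$ with $f=Lu$, with all constants controlled by $\|f\|_p+\Lambda_{\theta,2}^p(f)$, and then obtain the general case $u\in\mathcal L_\infty(G)$ by approximating $u$ by its mollifications $u_\varepsilon=u*\phi_\varepsilon\in\mathcal B(G)$ ($\phi_\varepsilon\ge0$, $\int\phi_\varepsilon=1$, $\phi_\varepsilon\to\delta_e$), using that (by centrality) $Lu_\varepsilon=f*\phi_\varepsilon$ still satisfies $\Lambda_{\theta,2}^p(f*\phi_\varepsilon)\le\Lambda_{\theta,2}^p(f)$ and passing to the limit. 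The bound on $u$ is then immediate: $L^3H_tu=H_tL^2(Lu)=L^2H_tf$, so $\Lambda_{\theta+2,3}^p(u)=\sup_{t>0}t^{2-\theta/2}\|L^2H_tf\|_p=\Lambda_{\theta,2}^p(f)<\infty$.

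For the second derivatives I would use $X_iX_ju=X_iX_jL^{-1}f=R_iR_jf$. Lemma~\ref{lem:Lip-sg-p-alter-1infty}(ii) upgrades $\Lambda_{\theta,2}^p(f)<\infty$ to $\Lambda_{\theta,1}^p(f)<\infty$, and Proposition~\ref{prop:RTinLip-infty}(ii) with $n=1$ (admissible since $0<\theta-2\lambda<2$) then gives $\Lambda_{\theta-2\lambda,1}^p(X_iX_ju)=\Lambda_{\theta-2\lambda,1}^p(R_iR_jf)\le C\,\Lambda_{\theta,1}^p(f)<\infty$.

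The first derivatives are the delicate step. Since $L^2H_t(X_iu)=X_iL^2H_tu=X_i\,LH_tf=X_iH_{t/2}\bigl(LH_{t/2}f\bigr)$ and $|X_i\,\cdot\,|\le|D\,\cdot\,|_L$, Theorem~\ref{thm:L1-Linfty-gradient} yields $\|L^2H_t(X_iu)\|_p\le C(t/2)^{-1/2}\max\{(t/2)^{-3\lambda/2},1\}\,\|LH_{t/2}f\|_p$; inserting $\|LH_{t/2}f\|_p\le(t/2)^{-(1-\theta/2)}\Lambda_{\theta,1}^p(f)$ for $0<t<2$, the powers of $t$ combine to give exactly $\sup_{0<t<2}t^{2-(\theta+1-3\lambda)/2}\|L^2H_t(X_iu)\|_p<\infty$, while for $t\ge2$ one uses Corollary~\ref{cor:L1-diff-Lambda} together with the exponential $L^p_0$-decay of $H_s$ from the spectral gap (Remark~\ref{rem:spectral gap}); hence $\Lambda_{\theta+1-3\lambda,2}^p(X_iu)<\infty$. (Alternatively, write $X_iu=R_i(L^{-1/2}f)$ and bound $L^{-1/2}f$ in a Lipschitz space via the multiplicative inequality \eqref{eq:multip-ineq} --- $\|L^2H_t(L^{-1/2}f)\|_p=\|\sqrt L\,(LH_tf)\|_p\le K\|L^2H_tf\|_p^{1/2}\|LH_tf\|_p^{1/2}$, which gives $\Lambda_{\theta+1,2}^p(L^{-1/2}f)<\infty$ --- before applying Proposition~\ref{prop:RTinLip-infty}(i); this even yields the sharper index $\theta+1-2\lambda$, from which $\theta+1-3\lambda$ follows by Remark~\ref{rem:Lip-sg-p-large}(iii).)

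Finally, for the convergence of $-\sum_iX_i^2u$ to $Lu=f$ in the $\Lambda_{\theta-2\lambda,1}^p$-seminorm I would mimic the proof of Proposition~\ref{prop:RTinLip-infty} applied to the block operators $T_{MN}\colon g\mapsto\sum_{i=M}^NR_i^2g=\sum_{i=M}^NX_i^2L^{-1}g$: from $LH_t\bigl(\sum_{i=M}^NX_i^2u\bigr)=H_{t/2}\bigl(\sum_{i=M}^NR_i^2\,LH_{t/2}f\bigr)$, the ultracontractive bound \eqref{eq:ultra-Lp-Linfty}, the $L^q$ duality estimate \eqref{eq:convergenceLp} for finite $q$, and the usual choice $M_0(t/2)/q\simeq1$, $q^*\simeq t^{-\lambda}$, one obtains a bound on $\Lambda_{\theta-2\lambda,1}^p(T_{MN}g)$ uniform in $M,N$ in terms of $\|g\|_p+\Lambda_{\theta,2}^p(g)$; since $T_{MN}g$ vanishes once $M$ exceeds the finitely many indices a given $g\in\mathcal B(G)$ depends on, and $\mathcal B(G)$ is (by construction) dense in the Lipschitz space to which $f$ belongs, a three-$\varepsilon$ argument gives $\Lambda_{\theta-2\lambda,1}^p\bigl(\sum_{i=M}^NX_i^2u\bigr)\to0$. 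The crux throughout is the combined absence of $L^p$-analyticity and of $L^p$-boundedness of $L^{-1/2}$ at $p=1,\infty$: every estimate has to be routed through the semigroup (multiplicative inequality, gradient bounds, ultracontractivity), and the $\lambda$-losses must be tracked so that they land precisely on $\theta+2$, $\theta+1-3\lambda$, $\theta-2\lambda$.
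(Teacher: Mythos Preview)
Your proposal is correct and follows essentially the same strategy as the paper: replace $L^p$-analyticity by the ultracontractivity-based differentiability bounds and route the Riesz transforms through Proposition~\ref{prop:RTinLip-infty}, picking $q=q(t)$ with $M_0(t/2)/q\simeq 1$ to absorb the exponential factor at the cost of a $t^{-\lambda}$.

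The only differences are cosmetic. For $\Lambda_{\theta+2,3}^p(u)$ your direct identity $L^3H_tu=L^2H_tf$ is cleaner than the paper's detour through $\|H_{t/2}\|_{q\to\infty}$ (which is unnecessary here since no Riesz transform is involved). For $X_iu$ the paper takes exactly your \emph{alternative} route --- apply Proposition~\ref{prop:RTinLip-infty}(i) to $R_i(L^{-1/2}f)$ and then control $\Lambda_{\theta+1-\lambda,2}^\infty(L^{-1/2}f)$ via ultracontractivity plus the multiplicative inequality --- rather than your primary route through Theorem~\ref{thm:L1-Linfty-gradient}; both give the stated index and your primary route is arguably more transparent about where the $3\lambda$ loss comes from. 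Your reduction to $u\in\mathcal B(G)$ by mollification is an extra safeguard the paper omits (it works directly, using that $L^nH_t$ smooths), but it is harmless and the observation that $u*\phi_\varepsilon$ is again cylindric when $\phi_\varepsilon\in\mathcal B(G)$ is correct. For the convergence, your three-$\varepsilon$ argument and the paper's appeal to Proposition~\ref{prop:RTinLip}(iii) are two phrasings of the same density mechanism; note that for $g=\phi\circ\pi_\alpha\in\mathcal B(G)$ the function $LH_{t/2}g$ stays $\alpha$-cylindric, so $\overrightarrow R_{MN}(LH_{t/2}g)$ vanishes identically for $M$ outside $I_\alpha$, uniformly in $t$ --- this is what makes the argument go through despite $q$ depending on $t$.
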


\begin{proof}
We start with the proof for $p=\infty$.
Assume that $\Lambda_{\theta,2}^{\infty}(f)<\infty$. 
It follows from Lemma \ref{lem:Lip-sg-p-alter-1infty}  that $\Lambda_{\theta,1}^{\infty}(f)<\infty$. We first note that as a direct consequence of Proposition \ref{prop:RTinLip-infty} (ii),  there holds that $\Lambda_{\theta-2\lambda,1}^{\infty}(X_iX_ju)<\infty$. 

Next, we show $\Lambda_{\theta+2,3}^{\infty}(u)<\infty$ by applying \eqref{eq:ultra-Lp-Linfty}. Then for any $1< q<\infty$, one has
\begin{align*}
     \sup_{0<t<1}  t^{3-\frac{\theta+2}{2}}\left\|\frac{\partial^3}{\partial t^3} H_tu\right\|_{\infty}
    &
    \le  \sup_{0<t<1}  t^{3-\frac{\theta+2}{2}}\|H_{t/2}\|_{q\to \infty} \left\|L^3 H_{t/2} u\right\|_{q}
    \\&\le  \sup_{0<t<1}  t^{2-\frac{\theta}{2}}e^{M_0(t/2)/q} \left\|L^2 H_{t/2}f\right\|_{q}
    \\& \le C\sup_{0<t<1} t^{2-\frac\theta2}\left\|L^2 H_{t}f\right\|_{\infty}<\infty,
\end{align*}
where for the third inequality we pick $q>1$ as a function of $t\in(0,1)$ such that $M_0(t/2)/q\le1$. Since $\theta+2<3$, one obtains $\Lambda_{\theta+2,3}^{\infty}(u)<\infty$ from Remark \ref{rem:Lip-sg-p-large} (i).

Now we move  to the proof for  $\Lambda_{\theta+1-3\lambda,2}^{\infty}(X_iu)<\infty$. Proposition \ref{prop:RTinLip-infty} (i) gives that $\Lambda_{\theta+1-3\lambda,2}^{\infty}(X_iu)\le \Lambda_{\theta+1-\lambda,2}^{\infty}(L^{-\frac12}f)$. To estimate $\Lambda_{\theta+1-\lambda,2}^{\infty}(L^{-\frac12}f)$,
the method is similar as above. For any $1< q<\infty$, we use  again \eqref{eq:ultra-Lp-Linfty},  the multiplicative inequality \eqref{eq:multip-ineq} and the $L^q$ analyticity of $H_t$. Then
\begin{align*}
    \sup_{0<t<1} t^{2-\frac{\theta+1-\lambda}{2}}\left\|L^2 H_tL^{-\frac12}f\right\|_{\infty}
    & 
    \le  \sup_{0<t<1} t^{2-\frac{\theta+1-\lambda}{2}}e^{M_0(t/2)/q}  \left\|L^{\frac32} H_{t/2}f\right\|_{q}
    \\&\le C  \sup_{0<t<1} t^{2-\frac{\theta+1-\lambda}{2}}\sqrt{q^*} e^{M_0(t/2)/q} t^{-\frac12} \left\|L H_{t/2}f\right\|_{q}
    \\& \le C\sup_{0<t<1/2} t^{1-\frac\theta2}\left\|L H_{t}f\right\|_{\infty}<\infty, 
\end{align*} 
where for the third inequality we pick $q>2$ as a function of $t$ such that $M_0(t/2)/q \simeq 1$ and $\sqrt{q^*}\simeq t^{-\frac{\lambda}2}$.
By Remark \ref{rem:Lip-sg-p-large} (i) and Definition \ref{def:Lip-sg}, we conclude that $\Lambda_{\theta+1-2\lambda,2}^{\infty}(X_iu)<\infty$.

To finish the proof for $p=\infty$, it remains to justify the convergence of $-\sum_{i\in I}X_i^2 u$ to $Lu$ in Lipschitz norm $\Lambda_{\theta-2\lambda,1}^{\infty}$. This can be obtained in a similar manner as the proof of Theorem \ref{thm:Lip-Regularity-Lp}. More precisely, we pick $q>2$ as above so $M_0(t/2)/q \simeq 1$ , then for any $m,n\in \mathbb N$, $m>n$
\begin{align*}
\sup_{0<t<1} t^{1-\frac\theta2+\lambda} \left\|\frac{\partial}{\partial t} H_t\sum_{i=m}^nX_i^2 u\right\|_{\infty}
&\le 
\sup_{0<t<1} t^{1-\frac\theta2+\lambda} \|H_{t/2}\|_{q\to \infty} \left\|\sum_{i=m}^nX_i^2L H_{t/2} u\right\|_{q}
\\&\le2\sup_{0<t<1} (q^*-1) e^{M_0(t/2)/q}t^{1-\frac\theta2+\lambda}\left\|\overrightarrow{R}_{mn}L H_{t/2}f\right\|_{q}
\\ &\le C\sup_{0<t<1} t^{1-\frac\theta2}\left\|\overrightarrow{R}_{mn}L H_{t/2}f\right\|_{q}, 
\end{align*}
where the second inequality is owning to \eqref{eq:convergenceLp} and \eqref{eq:ultra-Lp-Linfty}. 
From Proposition \ref{prop:RTinLip} (iii), we have that  $\sup_{0<t<1} t^{1-\frac\theta2}\left\|\overrightarrow{R}_{mn}L H_{t/2}f\right\|_{q}\to 0$ as $m,n\to \infty$ and thus  $\Lambda_{\theta-2\lambda,1}^{\infty}(\sum_{i\in I}X_i^2 u)\to 0$ as $m,n\to \infty$. 

\medskip
When $p=1$, the argument is similar. Instead of applying \eqref{eq:ultra-Lp-Linfty}, we follow the idea in the proof of Theorem \ref{thm:L1-diff} (i) by using \eqref{eq:ultracontractive} (see also Theorem \ref{thm:L1-Linfty-gradient} and Lemma \ref{lem:Lip-sg-p-alter-1infty}). The detail is omitted here.
\end{proof}

\begin{cor}
Let $0<\theta<1$ and $p=1$ or $\infty$. Assume that $(\mu_t)_{t>0}$ satisfies $(\mathrm{CK}\lambda)$ such that $0<\lambda<\frac{\theta}{\theta+6}$. Let $u\in \mathcal L_p(G)$ and $f=Lu$. Assume further $\mathrm L_{\theta}^p(f)<\infty$. 
Then we have $\mathrm L_{\frac{\beta-4\lambda}{1+3\lambda}}^p(u),\mathrm L_{\frac{\beta-4\lambda}{1+3\lambda}}^p(X_iu), \mathrm L_{\frac{\beta-4\lambda}{1+3\lambda}}^p(X_iX_ju)<\infty$ for any $0<\beta\le (1-\lambda)\theta-2\lambda$.  Moreover,  $-\sum\limits_{i\in I}X_i^2 u$ converges to $Lu$ with respect to  $\mathrm L_{\frac{\beta-4\lambda}{1+3\lambda}}^p(\cdot)$.

\end{cor}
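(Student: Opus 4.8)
The plan is to run the $p\in\{1,\infty\}$ version of the argument behind Corollary~\ref{cor:Lip-Regularity-dist-Lp}: convert $\mathrm L_{\theta}^{p}(f)<\infty$ into a semigroup-Lipschitz bound on $f$ via Theorem~\ref{thm:LipSG<LipDist}, feed that into the a priori regularity result Theorem~\ref{thm:Lip-Regularity-Linfty} to obtain semigroup-Lipschitz regularity of $u$, $X_iu$, $X_iX_ju$, and finally translate back to the distance-Lipschitz scale with Theorem~\ref{thm:LipDist<LipSG-L1} (all three being available for $p=1,\infty$). Since the target exponent $\tfrac{\beta-4\lambda}{1+3\lambda}$ is only meaningful when positive, I would at the outset restrict attention to $\beta\in(4\lambda,(1-\lambda)\theta-2\lambda]$; this interval is nonempty precisely because $\lambda<\tfrac{\theta}{\theta+6}$, which is exactly what the standing hypothesis buys. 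Fix such a $\beta$. As $\lambda<\tfrac{\theta}{\theta+6}<\tfrac{\theta}{\theta+2}$, Theorem~\ref{thm:LipSG<LipDist}(i) gives $\Lambda_{\beta}^{p}(f)=\Lambda_{\beta,1}^{p}(f)<\infty$, and then Lemma~\ref{lem:Lip-sg-p-alter-1infty}(i), used with $n=1$ and exponent $\theta'\coloneqq\beta-2\lambda\in(2\lambda,1)$ (so that $\theta'+2\lambda=\beta$), upgrades this to $\Lambda_{\theta',2}^{p}(f)<\infty$.

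Next I would invoke Theorem~\ref{thm:Lip-Regularity-Linfty} with its ``$\theta$'' taken to be $\theta'$: the required hypotheses $0<\theta'<1$ and $0<\lambda<\theta'/2$ both hold, the latter being equivalent to $\beta>4\lambda$, and we have just established $\Lambda_{\theta',2}^{p}(f)<\infty$. The theorem therefore yields
\[
\Lambda_{\theta'+2,3}^{p}(u)<\infty,\quad \Lambda_{\theta'+1-3\lambda,2}^{p}(X_iu)<\infty,\quad \Lambda_{\theta'-2\lambda,1}^{p}(X_iX_ju)<\infty,
\]
together with convergence of $-\sum_{i\in I}X_i^2u$ to $Lu$ in the seminorm $\Lambda_{\theta'-2\lambda,1}^{p}(\cdot)$. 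Since $\theta'-2\lambda=\beta-4\lambda$, the last bound and the convergence are already phrased in terms of $\Lambda_{\beta-4\lambda,1}^{p}=\Lambda_{\beta-4\lambda}^{p}$. For $u$ and $X_iu$ I would first lower the exponent to $\beta-4\lambda$ by Remark~\ref{rem:Lip-sg-p-large}(iii) (legitimate since $\beta-4\lambda$ lies below both $\theta'+2$ and $\theta'+1-3\lambda$, each inequality reducing to $\lambda<1$), and then lower the index to $1$ by applying Lemma~\ref{lem:Lip-sg-p-alter-1infty}(ii) once or twice (the constraint $n>\tfrac12(\beta-4\lambda)$ is automatic for $n=1,2$ because $\beta-4\lambda<1$). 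This leaves $\Lambda_{\beta-4\lambda}^{p}(u),\ \Lambda_{\beta-4\lambda}^{p}(X_iu),\ \Lambda_{\beta-4\lambda}^{p}(X_iX_ju)<\infty$ and $-\sum_{i\in I}X_i^2u\to Lu$ in $\Lambda_{\beta-4\lambda}^{p}(\cdot)$.

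Finally I would apply Theorem~\ref{thm:LipDist<LipSG-L1} with its ``$\theta$'' equal to $\beta-4\lambda\in(0,1)$: it converts each of the three finiteness statements into $\mathrm L_{\frac{\beta-4\lambda}{1+3\lambda}}^{p}(\,\cdot\,)<\infty$, and the seminorm estimate $\mathrm L_{\frac{\beta-4\lambda}{1+3\lambda}}^{p}(g)\le C\,\Lambda_{\beta-4\lambda}^{p}(g)$ extracted from its proof upgrades the $\Lambda_{\beta-4\lambda}^{p}$-convergence of the partial sums $\sum_{i=m}^{n}X_i^2u$ to convergence in $\mathrm L_{\frac{\beta-4\lambda}{1+3\lambda}}^{p}(\cdot)$; this is exactly the assertion. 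The argument is a composition of already-proven results, so I expect no conceptual difficulty; the main thing requiring care — hence the ``hard part'' — is the exponent/index bookkeeping, i.e., checking that the single hypothesis $\lambda<\tfrac{\theta}{\theta+6}$ is strong enough to make every intermediate step legitimate: it must simultaneously ensure that the range $4\lambda<\beta\le(1-\lambda)\theta-2\lambda$ is nonempty, that $\theta'=\beta-2\lambda$ satisfies $0<\theta'<1$ and $\lambda<\theta'/2$ (so that Theorem~\ref{thm:Lip-Regularity-Linfty} applies), and that $\beta-4\lambda$ is an admissible exponent in $(0,1)$ for Theorem~\ref{thm:LipDist<LipSG-L1}. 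Beyond this, only the routine density/completion arguments (passing from $\mathcal B(G)$ to $\mathcal L_p(G)$ and to the Lipschitz-space completions) need to be made explicit, exactly as in the proof of Corollary~\ref{cor:Lip-Regularity-dist-Lp}; it is convenient to record the logic in the same diagrammatic form as there.
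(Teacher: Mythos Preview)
Your proposal is correct and follows essentially the same route as the paper's own proof: convert $\mathrm L_\theta^p(f)<\infty$ to $\Lambda_\beta^p(f)<\infty$ via Theorem~\ref{thm:LipSG<LipDist}, pass to $\Lambda_{\beta-2\lambda,2}^p(f)$ via Lemma~\ref{lem:Lip-sg-p-alter-1infty}(i), apply Theorem~\ref{thm:Lip-Regularity-Linfty}, reduce exponents and indices via Remark~\ref{rem:Lip-sg-p-large}(iii) and Lemma~\ref{lem:Lip-sg-p-alter-1infty}(ii), and close with Theorem~\ref{thm:LipDist<LipSG-L1}. Your explicit verification of the exponent constraints (in particular that $\lambda<\tfrac{\theta}{\theta+6}$ makes the interval $(4\lambda,(1-\lambda)\theta-2\lambda]$ nonempty and validates each intermediate hypothesis) is more detailed than what the paper records, but the logical skeleton is identical.
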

\begin{proof}
We only show the proof for $p=\infty$, which follows from the diagram below:
\[\begin{tikzcd}
  \mathrm L_{\theta}^{\infty}(f)<\infty  
  \arrow[r,"\text{Theorem \ref{thm:LipSG<LipDist}}"] & \Lambda_{\beta}^{\infty}(f)<\infty
 \arrow[r,"\text{Lemma \ref{lem:Lip-sg-p-alter-1infty}}"] & \Lambda_{\beta-2\lambda,2}^{\infty} (f)<\infty
\arrow[d,"\text{Theorem \ref{thm:Lip-Regularity-Linfty}}"] \\ -\sum\limits_{i}X_i^2 u \text{ converges in }\mathrm L_{\frac{\beta-4\lambda}{1+3\lambda}}^{\infty}(\cdot) & & -\sum\limits_{i}X_i^2 u \text{ converges in }\Lambda_{\beta-4\lambda}^{\infty}(\cdot)   \arrow[ll,"\text{Theorem \ref{thm:LipDist<LipSG-L1}}"] 
\end{tikzcd}
\]
Apart from Theorem \ref{thm:Lip-Regularity-Linfty}, the regularity of  $u, X_iu, X_iX_ju$ in $\Lambda_{\beta-4\lambda}^p(\cdot)$ is also due to  Remark \ref{rem:Lip-sg-p-large} (iii) and Lemma \ref{lem:Lip-sg-p-alter-1infty} (ii).
\end{proof}

\subsection{Regularity for global distribution solutions}
Now we interpret the Poisson equation in the distributional sense. 
Consider two distributions $U, F\in \mathcal T_L^{\,'}$ such that 
\begin{equation}\label{eq:Poisson-distribution}
LU=F \quad \text{ in } \mathcal T_L^{\,'}.
\end{equation}

We say that $F\in L_0^p(G)$ if the distribution $F$ can be represented by a function in $f\in L_0^p(G)$, that is, for all $\phi\in \mathcal B(G)$, $F(\phi)=\int_G f\phi d\nu$.

\begin{prop}\label{prop:distributionLp}
 Let $1<p<\infty$. Assume that $(\mu_t)_{t>0}$ satisfies the property $(\mathrm{CK})$. Consider the Poisson equation \eqref{eq:Poisson-distribution}. Assume that $F\in L_0^p(G)$, then we have $U, X_i U, X_iX_j U\in L^p(G)$, for any $i, j\in I$. 
Moreover, $-\sum_{i\in I}X_i^2U$ converges to $LU$ in $L^p(G)$. 
\end{prop}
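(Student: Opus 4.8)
The plan is to identify $U$, up to an additive constant, with the canonical $L^p$-solution produced by $L^{-1}$, and then to invoke Theorem~\ref{thm:StrongSol-Poisson}. Since $(\mu_t)_{t>0}$ satisfies $(\mathrm{CK})$, Remark~\ref{rem:spectral gap} provides a positive $L^2$-spectral gap $\lambda_1$, so by Lemma~\ref{lem:fracLaplacian-Lp}(ii) (with $\delta=1$) the integral $v:=\int_0^{\infty}H_tf\,dt$ converges in $L^p(G)$ and defines $L^{-1}f\in L_0^p(G)$, with $\|v\|_p\le(p^*/2\lambda_1)\|f\|_p$. Differentiating under the integral sign and using $H_\infty f=\int_G f\,d\nu=0$ gives $Lv=f$ in the $L^p$ sense, hence $v$ belongs to the domain of the $L^p$-generator, i.e. $v\in\mathcal L_p(G)$. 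Theorem~\ref{thm:StrongSol-Poisson} then yields $X_iv,X_iX_jv\in L^p(G)$ for all $i,j\in I$, and $-\sum_{i\in I}X_i^2v$ converges strongly in $L^p(G)$ to $Lv=f$.

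The remaining, and essential, ingredient is a Liouville-type statement: if $W\in\mathcal T_L^{\,'}$ satisfies $LW=0$ (that is, $W(L\phi)=0$ for all $\phi\in\mathcal T_L$), then $W$ is a constant. Granting this and applying it to $W:=U-v$, which satisfies $LW=F-Lv=f-f=0$ in $\mathcal T_L^{\,'}$, we obtain $U=v+c$ for some $c\in\mathbb R$. Consequently $U\in L^p(G)$, $X_iU=X_iv\in L^p(G)$, $X_iX_jU=X_iX_jv\in L^p(G)$, and $-\sum_{i\in I}X_i^2U=-\sum_{i\in I}X_i^2v$ converges in $L^p(G)$ to $f=F=LU$, which is exactly the assertion of the proposition.

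To prove the Liouville-type statement I would mollify. By \cite[Lemma 3.1]{BSC2006} and $(\mathrm{CK})$, the density $\mu_\epsilon$ (which, by symmetry, coincides with $\check\mu_\epsilon$) belongs to $\mathcal T_L$ for every $\epsilon>0$, and $\mu_\epsilon\to\delta_e$ as $\epsilon\to0$. By Proposition~\ref{prop:convolution}(ii), $W^\epsilon:=W*\mu_\epsilon$ is represented by a function in $\mathcal T_L$ and $W^\epsilon(\psi)\to W(\psi)$ for every $\psi\in\mathcal T_L$. By Proposition~\ref{prop:convolution}(i), $LW^\epsilon=W*(L\mu_\epsilon)$; writing this convolution out as $W*(L\mu_\epsilon)(x)=W(\mathcal L_{x^{-1}}g)$ with $g=L\mu_\epsilon$ (here one uses the bi-invariance of $L$, which makes $g$ invariant under inversion, together with the symmetry of $\mu_\epsilon$) and the left-invariance of $L$, we get $W*(L\mu_\epsilon)(x)=W\bigl(L(\mathcal L_{x^{-1}}\mu_\epsilon)\bigr)=0$ since $\mathcal L_{x^{-1}}\mu_\epsilon\in\mathcal T_L$ and $LW=0$. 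Thus $W^\epsilon\in\mathcal T_L\subset L^2(G)$ solves $LW^\epsilon=0$; since $\lambda_1>0$, the kernel of $L$ on $L^2(G)$ consists only of constants, so $W^\epsilon\equiv c_\epsilon$ for some $c_\epsilon\in\mathbb R$. Testing against the constant function $\mathbf 1\in\mathcal B(G)\subset\mathcal T_L$ gives $c_\epsilon=W^\epsilon(\mathbf 1)\to W(\mathbf 1)=:c$, and then $W(\psi)=\lim_{\epsilon\to0}c_\epsilon\int_G\psi\,d\nu=c\int_G\psi\,d\nu$ for all $\psi\in\mathcal T_L$, so $W$ is the constant $c$.

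The step I expect to cost the most effort is precisely this rigidity argument: one must make sure that $W^\epsilon$ is a bona fide function lying in $\mathcal T_L$ (hence in $L^2(G)$, so that the spectral gap applies) and that $L$ genuinely annihilates it, which hinges on the compatibility of $L$ with distributional convolution and on the bi-invariance of $L$. Everything else is a routine consequence of Theorem~\ref{thm:StrongSol-Poisson} and Lemma~\ref{lem:fracLaplacian-Lp}.
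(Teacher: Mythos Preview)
Your argument is correct and, in one respect, more complete than the paper's own proof. The paper argues by duality: it writes $\langle U,\varphi\rangle=\langle L^{-1}F,\varphi\rangle$ for $\varphi\in\mathcal T_L\cap L^q(G)$ and then estimates each of $U$, $X_iU$, $X_iX_jU$ by moving the operator to the $L^q$ side and invoking Lemma~\ref{lem:fracLaplacian-Lp}(ii) together with the Riesz transform bounds of Theorems~\ref{thm:LpRT}--\ref{thm:Lp2ndRT}. This is shorter, but the displayed equality $\langle U,\varphi\rangle=\langle L^{-1}F,\varphi\rangle$ is only literally true for mean-zero $\varphi$ and tacitly presupposes that any distributional solution of $LU=F$ agrees with $L^{-1}F$ up to an additive constant---precisely the Liouville statement you isolate and prove.

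Your route instead builds the canonical solution $v=L^{-1}f\in\mathcal L_p(G)$, feeds it into Theorem~\ref{thm:StrongSol-Poisson}, and then reduces to showing $U-v$ is constant via mollification by $\mu_\epsilon$ and the $L^2$-spectral gap. The mollification argument is sound: Proposition~\ref{prop:convolution} gives $W^\epsilon\in\mathcal T_L$ and $LW^\epsilon=W*(L\mu_\epsilon)$; the bi-invariance of $L$ (so $L$ commutes with inversion) and the symmetry of $\mu_\epsilon$ justify $(L\mu_\epsilon)^\vee=L\mu_\epsilon$; left-invariance of $L$ and left-translation invariance of $\mathcal T_L$ then yield $LW^\epsilon(x)=W\bigl(L(\mathcal L_{x^{-1}}\mu_\epsilon)\bigr)=0$. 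Since $\mathcal T_L\subset\mathrm{Dom}(L)\cap L^2$ and $\ker L|_{L^2}=\mathbb R\mathbf 1$ by the spectral gap, the conclusion follows. The trade-off: the paper's duality proof is more economical once one grants the identification, while yours is self-contained and makes the uniqueness step explicit.
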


\begin{proof}
Let $F,U\in \mathcal T_L^{\,'}$ satisfy the equation \eqref{eq:Poisson-distribution} and assume $F\in L_0^p(G)$. By Remark \ref{rem:spectral gap}, the property $(\mathrm{CK})$ implies that $L$ has positive $L^2$-spectral gap.
Then for any $\varphi\in \mathcal T_L\cap L^q(G)$, Lemma \ref{lem:fracLaplacian-Lp} (ii) gives that
\[
\langle U, \varphi\rangle=\langle L^{-1}F, \varphi\rangle
\le \|L^{-1}F\|_{p} \|\varphi\|_{q}
\le C \|F\|_{p} \|\varphi\|_{q},
\]
and hence $U\in L^p(G)$.

Next we turn to the proof that $X_iU\in L^p$, for $i\in I$. One also has $X_iU\in \mathcal T_L^{\,'}$ and the proof is analogue as above. Indeed, for any $\varphi\in \mathcal T_L\cap L^q(G)$, 
\begin{align*}
\langle X_iU, \varphi\rangle &=\langle L^{-1/2}F, X_iL^{-1/2}\varphi\rangle
\le \|L^{-1/2}F\|_{p} \|X_iL^{-1/2}\varphi\|_{q}
\\ &\le C \|L^{-1/2}F\|_{p} \|\varphi\|_{q}\le C \|F\|_{p} \|\varphi\|_{q},   \end{align*}
where we apply the $L^q$ boundedness of first order Riesz transforms in Theorem \ref{thm:LpRT} and Lemma \ref{lem:fracLaplacian-Lp} (ii). The proof is thus complete.

It remains to show that $X_iX_jU \in L^p(G)$, for $i,j\in I$. Note first that $X_iX_jU\in \mathcal T_L^{\,'}$.
Moreover, for any $\varphi\in \mathcal T_L \cap L^q(G)$, 
\[
\langle X_iX_jU, \varphi\rangle =\langle F, X_iX_jL^{-1}\varphi\rangle
\le \|F\|_{p} \|X_iX_jL^{-1}\varphi\|_{q}
\le C \|F\|_{p} \|\varphi\|_{q},
\]
where $q$ is the conjugate of $p$ and the last inequality follows from the $L^q$ boundedness of second order Riesz transforms in Theorem \ref{thm:Lp2ndRT}.  We thus obtain that $ X_iX_jU\in L^p(G)$.

Finally, the convergence follows from the same line as in the proof of Theorem \ref{thm:StrongSol-Poisson}.
\end{proof}

In view of the previous result, Theorem \ref{thm:Lip-Regularity-Lp} and Theorem \ref{thm:Lip-Regularity-Linfty}, we also conclude
\begin{prop}\label{prop:Lip-weak-glob}
Let $1\le p\le \infty$ and let $\theta>0$. Consider the Poisson equation \eqref{eq:Poisson-distribution}. 
  \begin{enumerate}
      \item If $1< p<\infty$, assume that $\Lambda_{\theta}^p(F)<\infty$ and $(\mu_t)_{t>0}$ satisfies the property $(\mathrm{CK})$. Then $\Lambda_{\theta+2}^p(U),\Lambda_{\theta+1}^p(X_iU), \Lambda_{\theta}^p(X_iX_jU)<\infty$ and   $-\sum_{i\in I}X_i^2 u$ converges to $Lu$ with respect to  $\Lambda_{\theta}^p(\cdot)$.
      \item If $p=1$ or $\infty$ and $0<\theta<1$, assume that $\Lambda_{\theta,2}^p(F)<\infty$ and $(\mu_t)_{t>0}$ satisfies $(\mathrm{CK}\lambda)$ such that $0<\lambda<\frac\theta2$. Then $\Lambda_{\theta+2,3}^{p}(U),\Lambda_{\theta+1-2\lambda,2}^{p}(X_iU),\Lambda_{\theta-2\lambda,1}^{p}(X_iX_jU)<\infty$ and $-\sum_{i\in I}X_i^2 u$ converges to $Lu$ with respect to  $\Lambda_{\theta-2\lambda,1}^{p}(\cdot)$.
  \end{enumerate}
\end{prop}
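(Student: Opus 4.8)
The plan is to reduce the statement to the a priori regularity theorems already proved for the Poisson equation with data in $\mathcal L_p(G)$ — namely Theorem \ref{thm:Lip-Regularity-Lp} for $1<p<\infty$ and Theorem \ref{thm:Lip-Regularity-Linfty} for $p=1,\infty$ — once one knows that the distributional solution $U$ is represented by a function lying in $\mathcal L_p(G)$ with $LU=F$. The first observation, valid in both cases, is that $F$ automatically has mean zero: testing $LU=F$ against the constant function $1\in\mathcal B(G)\subset\mathcal T_L$ gives $\int_G F\,d\nu=F(1)=U(L1)=0$, so $F\in L^p_0(G)$.

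For case (i), I would argue as follows. Since $(\mathrm{CK})$ holds, Proposition \ref{prop:distributionLp} already tells us that $U$ is represented by an $L^p$ function with $X_iU,X_iX_jU\in L^p(G)$ and $-\sum_{i\in I}X_i^2U\to LU$ in $L^p(G)$; in particular $U\in L^p(G)$ with $LU=F\in L^p(G)$, which places $U$ in $\mathcal L_p(G)$, the domain of the $L^p$-generator (concretely $U=(I+L)^{-1}(U+F)$). Theorem \ref{thm:Lip-Regularity-Lp} applied with $u=U$ and $f=F$ (for which $\Lambda^p_\theta(F)<\infty$ by hypothesis) then gives $\Lambda^p_{\theta+2}(U)$, $\Lambda^p_{\theta+1}(X_iU)$ and $\Lambda^p_\theta(X_iX_jU)$ finite, together with the asserted convergence of $-\sum_{i}X_i^2U$ to $LU$ in $\Lambda^p_\theta(\cdot)$.

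For case (ii), $p=1$ or $\infty$, Proposition \ref{prop:distributionLp} is not available, so the identification of $U$ must be done by hand. Now $(\mathrm{CK}\lambda)$ implies $(\mathrm{CK})$, hence (Remark \ref{rem:spectral gap}) $L$ has a positive $L^2$-spectral gap $\lambda_1$ and $H_t$ is ultracontractive. Writing $L^{-1}F=\int_0^1 H_tF\,dt+\int_1^\infty H_tF\,dt$, the $L^p$-contraction property of $H_t$ controls the first integral, while for the second one uses $\|H_tF\|_p\le C\,e^{-\lambda_1(t-1)}\|F\|_p$ for $t\ge 1$, obtained by interpolating the ultracontractivity bound (which under $(\mathrm{CK})$ gives $\|H_1\|_{2\to\infty}\le e^{M_0(1)/2}$ and $\|H_1\|_{1\to2}\le e^{M_0(1)/2}$) against the spectral-gap decay $\|H_sg\|_2\le e^{-\lambda_1 s}\|g\|_2$ on mean-zero functions; hence $L^{-1}$ is bounded from $L^p_0(G)$ to $L^p(G)$. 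To match the distribution $U$ with the function $L^{-1}F$, I would mollify: take an approximate identity $(\phi_\varepsilon)\subset\mathcal T_L$ with $\int_G\phi_\varepsilon\,d\nu=1$; by Proposition \ref{prop:convolution}, $U*\phi_\varepsilon\in\mathcal T_L$, $U*\phi_\varepsilon\to U$ in $\mathcal T_L^{\,'}$, and $L(U*\phi_\varepsilon)=F*\phi_\varepsilon$, hence $U*\phi_\varepsilon=L^{-1}(F*\phi_\varepsilon)+U(1)$; letting $\varepsilon\to0$ (using $F*\phi_\varepsilon\to F$ and the boundedness of $L^{-1}$) gives $U=L^{-1}F+U(1)$ as elements of $\mathcal T_L^{\,'}$, so $U$ is the stated $L^p$ function, which lies in $\mathcal L_p(G)$ since $L^{-1}F$ and constants do. Theorem \ref{thm:Lip-Regularity-Linfty} applied with $u=U$, $f=F$ (using $\Lambda^p_{\theta,2}(F)<\infty$ and $0<\lambda<\theta/2$) then yields $\Lambda^p_{\theta+2,3}(U)$, $\Lambda^p_{\theta+1-2\lambda,2}(X_iU)$ and $\Lambda^p_{\theta-2\lambda,1}(X_iX_jU)$ finite and the convergence of $-\sum_i X_i^2U$ to $LU$ in $\Lambda^p_{\theta-2\lambda,1}(\cdot)$; the additive constant $U(1)$ is immaterial because $\partial_t^n H_t(\mathrm{const})=0$, so all semigroup Lipschitz seminorms of a constant vanish.

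The main obstacle I anticipate is exactly this identification step for $p\in\{1,\infty\}$: one has to be careful with the boundedness of $L^{-1}$ on the mean-zero subspace (relying on interpolating the ultracontractivity bound against the spectral-gap decay) and with the limit $\varepsilon\to0$ in the mollification — for $p=\infty$ the convergence $F*\phi_\varepsilon\to F$ should be interpreted in the weak-$*$ sense, or one uses that in the $\mathcal L_\infty$ setting $F$ is continuous, consistent with the description of $\mathcal L_\infty(G)$ as continuous functions whose Laplacian is continuous. Everything else is a direct application of Theorems \ref{thm:Lip-Regularity-Lp} and \ref{thm:Lip-Regularity-Linfty}.
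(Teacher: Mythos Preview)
Your approach is correct and matches the paper's, whose proof is a single sentence citing Proposition \ref{prop:distributionLp} together with Theorems \ref{thm:Lip-Regularity-Lp} and \ref{thm:Lip-Regularity-Linfty}. For case (i) this is exactly what you do.

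For case (ii) you are in fact more careful than the paper: the one-line proof there leaves implicit the identification of the distribution $U$ with an element of $\mathcal L_p(G)$, even though Proposition \ref{prop:distributionLp} as stated covers only $1<p<\infty$. Your spectral-gap-plus-ultracontractivity bound showing $L^{-1}:L^p_0(G)\to L^p(G)$ is bounded for $p\in\{1,\infty\}$, followed by the mollification argument via Proposition \ref{prop:convolution}, is a valid way to fill this in. The weak-$*$ subtlety you flag for $p=\infty$ can be sidestepped by first using Proposition \ref{prop:distributionLp} with exponent $2$ (since $F\in L^\infty_0\subset L^2_0$ on a probability space) to identify $U$, up to an additive constant, with $L^{-1}F\in L^2$, and then invoking your $L^{-1}$ bound to upgrade to $L^\infty$; either route lands you in the hypotheses of Theorem \ref{thm:Lip-Regularity-Linfty}, and the constant is harmless for the reasons you give.
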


\subsection{Regularity for local distribution solutions}
Given an open set $\Omega$, let $\mathcal B_0(\Omega)$ be the set of all smooth cylindric functions with support in $\Omega$.
We consider the Poisson equation in the distribution sense, i.e., two distributions $U, F\in \mathcal T_L^{\,'}$ satisfy
\[
LU=F \quad \text{ in } \mathcal T_L^{\,'}.
\]

In this subsection, our main goal is to establish local  Lipschitz regularity results for the weak solution of the Poisson equation. As a non-technical  illustration, we obtain the following result (see Proposition \ref{prop:loc-weak-Lip} for a more concrete version). 
\begin{prop}
 Assume that $(\mu_t)_{t>0}$ is a symmetric central Gaussian semigroup satisfying condition $(\mathrm{CK0^+})$. Let $U,F\in \mathcal T_L^{\,'}$ satisfy the Poisson equation $LU=F$ in $\mathcal T_L^{\,'}$. Consider $\Omega\subset G$ such that $F\in L^{\infty}(\Omega)$ and $\mathrm L_{\theta}^{\infty}(F; \Omega)<\infty$  for $0<\theta\le1$. 
Let  $0<\beta<\theta$. Then for any open set $\Omega_1\Subset \Omega$ (i.e., $\overline{\Omega_1}\subset \Omega$), we have that for all $i,j\in I$ 
    \[
    \mathrm L_{\beta}^{\infty}(X_iU;\Omega_1), \,\mathrm L_{\beta}^{\infty}(X_iX_jU;\Omega_1)<\infty,
    \]
and $-\sum_{i\in I}X_i^2 U$ converges to $LU$ with respect to the seminorm $\mathrm L_{\beta}^{\infty}(\cdot;\Omega_1)$.
    
\end{prop}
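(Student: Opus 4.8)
\medskip
\noindent\textbf{Proof plan.}
The plan is to \emph{localize with a cutoff}, \emph{remove the non-smooth part of $F$ by solving a global Poisson equation}, and \emph{invoke the hypoellipticity of $L$ on $\mathcal T_L^{\,'}$} to handle the remainder, so that the quantitative regularity of $U$ near $\overline{\Omega_1}$ is inherited from the global results of Sections~5--6, using $(\mathrm{CK0^+})$ at every step so that the loss of H\"older exponent is arbitrarily small. Concretely, fix nested open sets $\Omega_1\Subset\Omega_1'\Subset\Omega_1''\Subset\Omega$; since the cylinder sets form a basis of the topology of $G$ and the quotients $G_\alpha$ are Lie groups, one can choose $\phi=\psi\circ\pi_\alpha\in\mathcal B(G)$ with $\phi\equiv1$ on $\Omega_1''$ and $\supp\phi\subset\Omega$. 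Set $g:=\phi F$, extended by $0$ off $\supp\phi$. Because $F\in L^\infty(\Omega)$ with $\mathrm L_\theta^\infty(F;\Omega)<\infty$ and $\phi$ is smooth with compact support in $\Omega$, a Leibniz-type estimate for the differences $\Delta_y^k$ shows $g\in L^\infty(G)$ and $\mathrm L_\theta^\infty(g;G)<\infty$; in particular $g$ is continuous. Put $c:=\int_G g\,d\nu$ and $\bar g:=g-c\in L_0^\infty(G)$, so $\mathrm L_\theta^\infty(\bar g;G)<\infty$. As $(\mathrm{CK0^+})$ implies $(\mathrm{CK})$ and hence a spectral gap, $W:=\int_0^\infty H_t\bar g\,dt$ is a well-defined continuous function with $LW=\bar g$ continuous, i.e.\ $W\in\mathcal L_\infty(G)$; subtracting the two Poisson equations gives $L(U-W)=(1-\phi)F+c$ in $\mathcal T_L^{\,'}$, which on $\Omega_1''$ (where $\phi\equiv1$) is the constant $c$.

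Next I would run the global theory on $W$, chasing indices. By Theorem~\ref{thm:LipSG<LipDist}(i) together with Lemma~\ref{lem:Lip-sg-p-alter-1infty}(i), both under $(\mathrm{CK0^+})$, one gets $\Lambda_{\beta_0,2}^\infty(\bar g)<\infty$ for every $\beta_0<\theta$. Applying Theorem~\ref{thm:Lip-Regularity-Linfty} to $W$ (with any $\lambda<\beta_0/2$, admissible by $(\mathrm{CK0^+})$) gives
\[
\Lambda_{\beta_0+2,3}^\infty(W),\quad \Lambda_{\beta_0+1-3\lambda,2}^\infty(X_iW),\quad \Lambda_{\beta_0-2\lambda,1}^\infty(X_iX_jW)<\infty
\]
together with $-\sum_iX_i^2W\to LW=\bar g$ in $\Lambda_{\beta_0-2\lambda,1}^\infty$. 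Lowering the order in $t$ (Lemma~\ref{lem:Lip-sg-p-alter-1infty}(ii)), lowering the exponent (Remark~\ref{rem:Lip-sg-p-large}(iii)), and passing back to distance Lipschitz norms via Theorem~\ref{thm:LipDist<LipSG-L1} (letting its $\lambda\to0$), I would obtain, for every $\beta<\theta$,
\[
\mathrm L_\beta^\infty(W;G),\quad \mathrm L_\beta^\infty(X_iW;G),\quad \mathrm L_\beta^\infty(X_iX_jW;G)<\infty,
\]
and, since the passage $\Lambda_\gamma^\infty\to\mathrm L_\beta^\infty$ of Theorem~\ref{thm:LipDist<LipSG-L1} is a bounded operation applied to the partial sums, also $-\sum_iX_i^2W\to\bar g$ in $\mathrm L_\beta^\infty(G)$.

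It then remains to treat $R:=U-W$ and assemble. Since $LR=c$ is smooth on $\Omega_1''\supset\overline{\Omega_1}$, the hypoellipticity of $L$ on $\mathcal T_L^{\,'}$ (see \cite{BSC2005,BSC2006}) makes $R$ a smooth, locally $\mathcal T_L$ function there; in particular $R,X_iR,X_iX_jR$ are Lipschitz, hence of finite local $\mathrm L_\beta^\infty$-seminorm, on $\overline{\Omega_1}$. Writing $U=W+R$ on $\Omega_1$ and combining with the previous step yields $\mathrm L_\beta^\infty(X_iU;\Omega_1),\,\mathrm L_\beta^\infty(X_iX_jU;\Omega_1)<\infty$ for all $\beta<\theta$ and $i,j\in I$. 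For the convergence, pick $\chi\in\mathcal B_0(\Omega_1')$ with $\chi\equiv1$ on $\Omega_1$; then $\chi R\in\mathcal T_L\subset\mathcal L_\infty(G)$ has smooth Laplacian, so Theorem~\ref{thm:Lip-Regularity-Linfty} and Theorem~\ref{thm:LipDist<LipSG-L1} give $-\sum_iX_i^2(\chi R)\to L(\chi R)$ in $\mathrm L_\beta^\infty(G)$, whence $-\sum_iX_i^2R\to LR=c$ in $\mathrm L_\beta^\infty(\Omega_1)$. Adding this to the convergence $-\sum_iX_i^2W\to\bar g$ and using $\bar g+c=g=F$ on $\Omega_1$ gives $-\sum_iX_i^2U\to F=LU$ in $\mathrm L_\beta^\infty(\Omega_1)$.

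The two substantive points are (a) the hypoellipticity input---$L(U-W)$ smooth on $\Omega_1''$ must force $U-W$ itself to be a smooth function there, which is precisely what permits localization and has to be quoted from \cite{BSC2005,BSC2006}; and (b) the bookkeeping of exponents along the chain Theorem~\ref{thm:LipSG<LipDist}$\to$Lemma~\ref{lem:Lip-sg-p-alter-1infty}$\to$Theorem~\ref{thm:Lip-Regularity-Linfty}$\to$Theorem~\ref{thm:LipDist<LipSG-L1}, each link costing a multiple of some $\lambda$, so that $(\mathrm{CK0^+})$ must be invoked repeatedly to absorb these losses into the arbitrarily small gap $\theta-\beta$. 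The remaining ingredients---existence and continuity of $W$, the Leibniz estimate for $\mathrm L_\theta^\infty(\phi F;G)$, and the local Lipschitz bounds on $R$---are routine.
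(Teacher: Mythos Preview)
Your overall strategy---localize with a cutoff, solve the global Poisson problem for the cut-off data, and treat the remainder separately---is exactly the paper's, and your bookkeeping of exponents along the chain Theorem~\ref{thm:LipSG<LipDist} $\to$ Lemma~\ref{lem:Lip-sg-p-alter-1infty} $\to$ Theorem~\ref{thm:Lip-Regularity-Linfty} $\to$ Theorem~\ref{thm:LipDist<LipSG-L1} under $(\mathrm{CK0^+})$ is right.

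The gap is in your treatment of the remainder $R=U-W$. You assert that hypoellipticity from \cite{BSC2005,BSC2006} makes $R$ ``locally $\mathcal T_L$'' on $\Omega_1''$, hence $X_iR,X_iX_jR$ Lipschitz there. But the hypoellipticity those references establish (and which the paper itself only uses in this weak form) says: $LR$ continuous $\Rightarrow$ $R$ continuous. Iterating via bi-invariance gives each $X_kX_iR$ continuous, but passing to ``$X_iR$ locally Lipschitz'' would require controlling the infinite sum $|DX_iR|_L^2=\sum_k|X_kX_iR|^2$, which pointwise continuity does not yield in infinite dimensions. The same issue reappears in your convergence step: to apply Theorem~\ref{thm:Lip-Regularity-Linfty} to $\chi R$ you need $\Lambda^\infty_{\theta_0,2}(L(\chi R))<\infty$, not merely continuity of $L(\chi R)$, and the cross term $\nabla\chi\cdot\nabla R$ again forces H\"older control on the $X_iR$. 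The paper closes precisely this gap by proving a separate local estimate (Proposition~\ref{prop:Local-infinity}): if $LV=H$ with $H$ locally bounded and $\mathrm L_\theta^\infty(H;\Omega)<\infty$, then $\mathrm L_\theta^\infty(V;\Omega_1)<\infty$, obtained directly by smoothing $V*\check\mu_t$ and using the off-diagonal decay of $M_L^{k+1}(\Theta,\mu_t)$ from \cite[Cor.~4.11]{BSC2002}. Applied to $X_iR$ and $X_iX_jR$ (whose $L$-images vanish on $\Omega_1''$), this supplies exactly the local $\mathrm L_\theta^\infty$ bounds you are missing; with that in hand, your assembly and convergence arguments go through.
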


We begin with the Sobolev regularity by assuming that $F\in L^p(\Omega)$, that is, there exists a fuction $f\in L^p(\Omega)$ such that $F(\phi)=\int_G f\phi d\nu$ for all $\phi\in \mathcal B_0(\Omega)$. 

\begin{prop}\label{prop:local-Lp}
Assume that $(\mu_t)_{t>0}$ is a symmetric central Gaussian semigroup satisfying condition $(\mathrm{CK*})$.
Let $U,F\in \mathcal T_L^{\,'}$ satisfy the Poisson equation $LU=F$ in $\mathcal T_L^{\,'}$. For $1<p<\infty$, let $ \Omega\subset G$ be such that $F\in L^p(\Omega)$. 
Then for any open set $\Omega_1\Subset \Omega$, we have 

\begin{enumerate}
\item $U\in L^p(\Omega_1)$;
\item $X_i U, X_iX_j  U \in L^p(\Omega_1)$, for any $i, j\in I$;
\item The series $-\sum_{i\in I}X_i^2 U$ converges to $LU$ in $L^p(\Omega_1)$.

\end{enumerate}
\end{prop}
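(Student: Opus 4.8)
The plan is to treat this purely as a cutoff-and-solve iteration, reducing everything to the global statement of Proposition~\ref{prop:distributionLp}. Fix $\Omega_1\Subset\Omega$ and a smooth cylindric function $\psi$ with $\psi\equiv 1$ on a neighbourhood of $\overline{\Omega_1}$ and with $\operatorname{supp}\psi$ a compact subset of $\Omega$. Since $\psi$ depends on finitely many coordinates, $\psi U\in\mathcal T_L^{\,'}$, and by approximating $U$ with the smooth functions $U^{\varepsilon}=U*\phi_{\varepsilon}$ of Proposition~\ref{prop:convolution} and passing to the limit one obtains the Leibniz identity in $\mathcal T_L^{\,'}$,
\[
L(\psi U)=\psi F+(L\psi)U-2\sum_{i}(X_i\psi)(X_iU),
\]
the sum being finite because $X_i\psi=0$ for all but finitely many $i$; pairing with $1\in\mathcal T_L$ shows that $L(\psi U)$ has zero mean. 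If one already knows that $U$ and all the $X_iU$ are represented by $L^p$ functions on $\operatorname{supp}\psi$, then the right-hand side $G$ lies in $L^p_0(G)$ (note $\psi F\in L^p(G)$ because $F\in L^p(\Omega)$ and $\psi$ is bounded with support in $\Omega$), so Proposition~\ref{prop:distributionLp} applied to $L(\psi U)=G$ gives $\psi U,\;X_i(\psi U),\;X_iX_j(\psi U)\in L^p(G)$ together with $-\sum_iX_i^2(\psi U)\to G$ in $L^p(G)$. On $\Omega_1$ one has $X_i\psi=X_i^2\psi=0$, hence $U=\psi U$, $X_iU=X_i(\psi U)$, $X_iX_jU=X_iX_j(\psi U)$ and $\sum_iX_i^2U=\sum_iX_i^2(\psi U)$ there, and $G=F$ on $\Omega_1$; this yields (i)--(iii).

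So the whole proof reduces to the a priori regularity $U,X_iU\in L^p(\Omega')$ for every $\Omega'\Subset\Omega$, which I would obtain by a finite bootstrap. As an element of $\mathcal T_L^{\,'}=\bigcup_k(\mathcal T_L^k)^{\,'}$ the distribution $U$ has some finite order $m_0$. Choose a chain $\Omega_1=V_0\Subset V_1\Subset\cdots\Subset V_N\Subset\Omega$ (with $N$ large enough in terms of $m_0$) and cutoffs $\psi_j\equiv 1$ near $\overline{V_j}$ supported in $V_{j+1}$. The Leibniz identity gives $L(\psi_jU)=G_j$ with $G_j$ of order at most (order of $U$ on $V_{j+1}$)$+1$, the term $(X_i\psi_j)(X_iU)$ being the worst; since $L(\psi_jU)$ has mean zero, $\psi_jU=L^{-1}G_j+c_j$ for a constant $c_j$, and $\psi_jU=U$ on $V_j$. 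The point is that $L^{-1}$, acting on mean-zero distributions, improves the order by two: dually to the $L^p$-boundedness of $L^{-1}$, of $R_i$ and of $R_iR_j$ (Lemma~\ref{lem:fracLaplacian-Lp}(ii), Theorems~\ref{thm:LpRT},~\ref{thm:Lp2ndRT}, using the spectral gap supplied by $(\mathrm{CK})$ via Remark~\ref{rem:spectral gap}), $L^{-1}$ maps $W^{-s-2,p}$ into $W^{-s,p}$. Iterating from $j=N-1$ down to $j=0$, the order of $U$ drops by one on each successively smaller set, so after finitely many steps $\psi_0U\in W^{2,p}(G)$, whence $U,X_iU,X_iX_jU\in L^p(\Omega_1)$; the convergence $-\sum_iX_i^2U\to F$ in $L^p(\Omega_1)$ then follows from the representation $\psi_0U=L^{-1}g+\text{const}$ with $g\in L^p_0(G)$ together with the duality estimate \eqref{eq:convergenceLp} and Remark~\ref{rem:Rmn-to-0}(ii).

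The main obstacle is making the notion of ``order'' rigorous in a way adapted to $L$: I would introduce a scale of local $L^p$-Sobolev spaces (completions with respect to $\|\cdot\|_p+\|L^{s/2}\cdot\|_p$, their negative-order duals, and their localisations to open subsets), and then verify that multiplication by a smooth cylindric cutoff, the vector fields $X_i$, and $L^{-1}$ on the mean-zero part all act between these spaces with the expected shift of order. All the quantitative content of these mapping properties comes from the dimension-free Riesz transform bounds of Section~4; the fact that the scale eventually captures genuine $L^p$ functions rather than only distributions is where the ultracontractive sub-Gaussian heat kernel and derivative estimates of Theorem~\ref{thm:HKbounds}(i) and Corollary~\ref{cor:knHKbounds}, valid under $(\mathrm{CK*})$, enter. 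A secondary point to be checked with care is the validity of the Leibniz identity for $L(\psi U)$ in $\mathcal T_L^{\,'}$, which I would justify via the convolution approximation $U^{\varepsilon}=U*\phi_{\varepsilon}\to U$ of Proposition~\ref{prop:convolution} and the fact that multiplication by a cylindric cutoff is continuous on $\mathcal T_L$.
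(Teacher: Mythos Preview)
Your cutoff--Leibniz--globalize framework is sound and matches the paper's treatment of (ii) and (iii). The gap is in (i), where you propose a bootstrap in the $L^p$-Sobolev scale $W^{s,p}$ defined via powers of $L$. The difficulty is the starting point: under $(\mathrm{CK*})$ alone the on-diagonal bound $\mu_t(e)\le e^{M(t)}$ with $tM(t)\to 0$ permits growth such as $M(t)=1/(t\log(1/t))$, for which $\int_0^1 t^{s/2-1}e^{M(t)/2}\,dt=\infty$ for every finite $s$. Hence there is no Sobolev embedding $W^{s,q}\hookrightarrow L^\infty$, and consequently no inclusion $(\mathcal T_L^k)'\hookrightarrow W^{-s,p}$ at any finite order. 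Your order-counting therefore cannot be launched in the $W^{s,p}$ scale; and if you bootstrap instead in the $\mathcal T_L'$ scale you terminate at Radon measures, not at $L^p$.

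The paper's proof of (i) avoids any bootstrap. It regularizes by the heat semigroup, $\widetilde U^t=\widetilde U*\check\mu_t$ with $\widetilde U=\eta_0 U$, and writes $\partial_t\widetilde U^t=-\widetilde F^t-\widetilde V^t$ where $\widetilde V=L\widetilde U-\eta_0 F$ is supported on the annulus $\Omega\setminus\overline{\Omega_0}$. On $\Omega_1$ one has $\|\widetilde F^t\|_{L^p}\le\|\widetilde F\|_{L^p}$, while $|\widetilde V^t(x)|\le C\,M_L^k(\Theta,\mu_t)$ for a set $\Theta$ with $e\notin\overline\Theta$. It is exactly here that $(\mathrm{CK*})$ enters, through the \emph{off-diagonal} heat kernel and derivative bounds of Theorem~\ref{thm:HKbounds}(i) and Corollary~\ref{cor:knHKbounds}, which make $M_L^k(\Theta,\mu_t)$ finite uniformly in $t\in(0,1]$ despite the absence of any on-diagonal Sobolev embedding. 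Integrating in $t$ then shows $\widetilde U^t$ is Cauchy in $L^p(\Omega_1)$. Your reference to these estimates is in the right direction, but they do not slot into a $W^{s,p}$ bootstrap; they must be used directly to handle the annulus-supported error, which is what the paper does.
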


\begin{proof}
We first introduce some notations. Let $\Omega_0$ be an open set such that $\Omega_0\Subset \Omega$. Fix a function $\eta_0\in \mathcal B_0(\Omega)$ such that $\eta_0\equiv 1$ on a neighborhood of $\Omega_0$, i.e., $\eta_0$ is a cutoff function of $\Omega_0$ in $\Omega$. Set
\begin{equation}\label{eq:def-tildeUFV}
\widetilde U=\eta_0 U, \quad, \widetilde F=\eta_0 F, \quad \widetilde V=L\widetilde U-\widetilde F.
\end{equation}
From this definition, it is enough to show (i), (ii) for  $\widetilde U$, $X_i\widetilde U, X_iX_j \widetilde U$. For the sake of clarity, we will distinguish the open set $\Omega_1$ in the proof of (i)--(iii) below.

\begin{enumerate}
\item  The proof is similar as in \cite[Section 3.1]{BSC2006} and we sketch the main steps here. 
First 
observe that $\widetilde U, \widetilde F, \widetilde V\in \mathcal T_L^{\,'}$. Moreover, $\widetilde V$ is supported in $\Omega\setminus \overline{\Omega_0}$.
We consider
\begin{equation}\label{eq:tilde-t}
\widetilde U^t=\widetilde U*\check{\mu}_t, \quad \widetilde F^t=\widetilde F*\check{\mu}_t, \quad \widetilde V^t=\widetilde V*\check{\mu}_t,
\end{equation}
where the convolution $*$ was defined in Section \ref{sec:distribution}.
It follows from  \cite[Lemma 3.3]{BSC2006} and Proposition \ref{prop:convolution} that  $\widetilde U^t,L\widetilde U^t, \widetilde F^t, \widetilde V^t$ are all in $\mathcal T_L$. Moreover
\begin{equation}\label{eq:deriv-U}
\partial_t \widetilde U^t=-L\widetilde U^t= -\widetilde F^t-\widetilde V^t.
\end{equation}

Fix $x_0\in \Omega_0$ and let $\Omega_1$ be a neighborhood of $x_0$ so $\Omega_1\Subset \Omega_0$. In order to prove $\widetilde U\in L^p(\Omega_1)$, it suffices to show that $\{\widetilde U^t\}_{t>0}$ is a Cauchy sequence in $L^p(\Omega_1)$ by bounding $\|\partial_t\widetilde U^t\|_{L^p(\Omega_1)}$. Indeed, we observe that
\[
\|\widetilde U\|_{L^p(\Omega_1)}\le \|\widetilde U^1\|_{L^p(\Omega_1)}+\sup_{0<t\le 1}\|\partial_t\widetilde U^t\|_{L^p(\Omega_1)}.
\]
Equivalently, by \eqref{eq:deriv-U} one needs to bound 
\[
\|\widetilde F^t\|_{L^p(\Omega_1)} \quad\text{and} \quad\|\widetilde V^t\|_{L^p(\Omega_1)}.
\]
The former is a consequence of Minkowski's inequality since 
\[
\|\widetilde F^t\|_{L^p(\Omega_1)} = \|\widetilde F*\check{\mu}_t\|_{L^p(\Omega_1)} \le \|\widetilde F\|_{L^p(\Omega_1)}<\infty.
\]
The latter can be bounded exactly the same way as \cite[Lemma 3.5]{BSC2006} by noting that 
\[
\|\widetilde V^t\|_{L^p(\Omega_1)}\le \|\widetilde V^t\|_{L^{\infty}(\Omega_1)}.
\]
For the sake of completeness, we write the detail here. Indeed, let $\Theta_0$ be an open neighborhood of $G\setminus \Omega_0$ such that $x_0\notin \overline{\Theta_0}$ and $e\notin \overline{\Theta_0^{-1}\Omega_1}$.
Since $ \widetilde V \in \mathcal T{\,'}(G)$ is supported in $\Omega\setminus\overline{\Omega_0}$, we can write
\begin{equation}\label{eq:Vx}
 \widetilde V^t(x)= \widetilde V *\check{\mu}_t(x)= \widetilde V(\mathcal L_{x^{-1}}\mu_t)=\widetilde V(\eta_1 \mathcal L_{x^{-1}}\mu_t)=\widetilde V(\check{\eta}_1 \mathcal R_{x}\check{\mu}_t),
\end{equation}
where $\eta_1\in \mathcal B_0(\Theta_0)$ and $\eta_1=1$ in a neighborhood of $\overline{\Omega\setminus \overline{\Omega_0}}$. Note that we use the notations $\mathcal L$ and $\mathcal R$ introduced in Section \ref{sec:distribution}. 

Recall the definition of $\widetilde V$ in \eqref{eq:def-tildeUFV}, i.e., $\widetilde V=L(\eta_0 U)+\eta_0 LU$. Then \cite[Lemma 3.2]{BSC2006} gives that 
\begin{equation}\label{eq:boundMk}
\widetilde V(\phi) \le C M_L^k(\phi), \quad \phi\in \mathcal T(G).
\end{equation}
Hence $\widetilde V(\check{\eta}_1 \mathcal R_{x}\check{\mu}_t) \le C M_L^k(\check{\eta}_1 \mathcal R_{x}\check{\mu}_t)$. Now if we assume $x\in \Omega_1$, then $y\in \Theta_0$ implies that $y^{-1}x\in \Theta_0^{-1}\Omega_1= \Theta$. 
In view of the definition in \eqref{eq:local-Mk} and the symmetry of $\mu_t$, we have 
\[
\widetilde V(\check{\eta}_1 \mathcal R_{x}\check{\mu}_t) \le C  M_L^k(\Theta, \mu_t), 
\]
which is finite under the condition $(\mathrm{CK}*)$, thanks to \cite[Corollary 4.11]{BSC2002}.

\item Next we turn to the proof for $X_i\widetilde U$. Denote $C=\int_G \widetilde F d\nu$, then one has $\widetilde F-C\in L_0^p(G)$. 
Letting 
\[
f=L^{-1}(\widetilde F-C),
\]
it follows from Proposition \ref{prop:distributionLp} that $f, X_i f, X_iX_j f\in L^p(G)$, for any $i, j\in I$.

On the other hand, we have
\[
L (\widetilde U-f)=\widetilde F+\widetilde V-\widetilde F+C=\widetilde V+C, 
\]
and therefore
\[
L(X_i\widetilde U-X_if)=X_i \widetilde V.
\]
Notice that $\widetilde V\equiv 0$ on $\Omega_0$. We deduce from part (i) that for any fixed open set $\Omega_2\Subset \Omega_0$ and $x_0\in \Omega_2$, $X_i\widetilde U-X_if\in L^p (\Omega_3)$,  for some neighborhood $\Omega_3\Subset \Omega_2$ of $x_0$.  Hence $X_i\widetilde U\in L^p(\Omega_3)$. Similarly, one can also obtain that $X_i X_j \widetilde U\in L^p(\Omega_3)$.

\item We take a cutoff function $\eta_2\in \mathcal B_0(\Omega_3)$ such that $\eta_2=1$ on $\Omega_4\Subset \Omega_3$. Denoting $\vardbtilde U=\eta_2 \widetilde U$ and $\vardbtilde F=\eta_2 \widetilde F$,  one has
\begin{equation}\label{eq:L-locU}
L\vardbtilde U=L(\eta_2 \widetilde U)=\eta_2 L \widetilde U+\widetilde U L\eta_2 -2\nabla \eta_2 \cdot \nabla \widetilde U.
\end{equation}
Recall that from the assumption and (i), the first two terms in the right hand side are in $L^p(G)$. It remains to show that $\nabla \eta_2 \cdot \nabla \widetilde U\in L^p(G)$. Indeed, since $\eta_2\in \mathcal B_0(\Omega_3)$, there exists $\alpha \in \aleph$ and $\varphi\in C^{\infty}(G_{\alpha})$ such that $\eta_2=\varphi\circ \pi_{\alpha}$. It follows from (ii) that 
\[
\|\nabla \eta_2 \cdot \nabla \widetilde U\|_{p}\le C_{\varphi}\|\Gamma_{\alpha}(\widetilde U,\widetilde U)^{1/2}\|_{L^p(\Omega_3)}<\infty.
\]

Hence the right hand side of \eqref{eq:L-locU} is in $L^p(G)$.
Now we can use the same strategy as in the proof of Proposition \ref{prop:distributionLp} (see also the proof of Theorem \ref{thm:StrongSol-Poisson}) to conclude that $-\sum_{i\in I}X_i^2 \vardbtilde U$ converges to $L\vardbtilde U$ in $L^p$. 
\end{enumerate}
\end{proof}
Next, we consider the regularity problem on the Lipschitz spaces $\mathrm L_{\theta}^{\infty}$. 

\begin{prop}\label{prop:Local-infinity}
Assume that $(\mu_t)_{t>0}$ is a symmetric central Gaussian semigroup satisfying condition $(\mathrm{CK}\lambda)$ for some $\lambda\in (0,1)$. Let $U,F\in \mathcal T_L^{\,'}$ satisfy the Poisson equation $LU=F$ in $\mathcal T_L^{\,'}$. Suppose that there exists $\Omega\in G$ such that $F\in L^{\infty}(\Omega)$, and for $0<\theta\le1$ there holds
\begin{equation}\label{eq:L-theta-infty}
\mathrm L_{\theta}^{\infty}(F; \Omega)\coloneqq\sup_{x,y\in \Omega}\frac{|F(x)-F(y)|}{d(x,y)^{\theta}}<\infty.   
\end{equation}
Then for any open set $\Omega_1\Subset \Omega$, we have
\[
\mathrm L_{\theta}^{\infty}( U; \Omega_1)\coloneqq\sup_{ x,y\in \Omega_1}\frac{|U(x)- U(y)|}{d(x,y)^{\theta}}<\infty.
\]
\end{prop}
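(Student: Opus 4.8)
The plan is to run the localization scheme of the proof of Proposition \ref{prop:local-Lp}, but tracking first differences $\Delta_h$ rather than bare $L^p$ norms; the point is that under $(\mathrm{CK}\lambda)$ every estimate for the localizing error term acquires an extra factor $d(e,h)$. Fix nested open sets $\Omega_1\Subset\Omega_0'\Subset\Omega_0\Subset\Omega$ and a cutoff $\eta_0\in\mathcal B_0(\Omega)$ with $\eta_0\equiv1$ near $\overline{\Omega_0}$, and put $\widetilde U=\eta_0U$, $\widetilde F=\eta_0F$, $\widetilde V=L\widetilde U-\widetilde F$ as in \eqref{eq:def-tildeUFV}, so $\widetilde V\in\mathcal T_L^{\,'}$ is supported in $\Omega\setminus\overline{\Omega_0}$. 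With $\widetilde U^t=\widetilde U*\check\mu_t$, $\widetilde F^t=\widetilde F*\check\mu_t$, $\widetilde V^t=\widetilde V*\check\mu_t$ as in \eqref{eq:tilde-t}, all three lie in $\mathcal T_L$ (as in the proof of Proposition \ref{prop:local-Lp}), $\widetilde U^t\to\widetilde U$ in $\mathcal T_L^{\,'}$ by Proposition \ref{prop:convolution}(ii), and $\partial_t\widetilde U^t=-L\widetilde U^t=-\widetilde F^t-\widetilde V^t$ by \eqref{eq:deriv-U}. Integrating from $t$ to $1$ and letting $t\to0$ gives, on $\Omega_0'$, the representation
\[
\widetilde U=\widetilde U^1+\int_0^1\bigl(\widetilde F^s+\widetilde V^s\bigr)\,ds ,
\]
the integral converging in $L^\infty(\Omega_0')$: indeed $\widetilde F^s=H_s\widetilde F$ is bounded by $\|\widetilde F\|_\infty$, while $\|\widetilde V^s\|_{L^\infty(\Omega_0')}\le C\,M_L^k(\Theta,\mu_s)$ for a fixed compact $\Theta$ bounded away from $e$ (as in \eqref{eq:Vx}); since $L$ is bi-invariant, $M_L^k$ only involves $|D^n L^m\mu_s|_L$ with $n+2m\le k$, so by Corollary \ref{cor:knHKbounds} and Theorem \ref{thm:HKbounds}(ii) it is $\le\exp\bigl(As^{-\lambda}-c/(Cs)\bigr)$, an integrable function of $s\in(0,1]$ tending to $0$ (here $\lambda<1$ is used). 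In particular $U$, which equals $\widetilde U$ on $\Omega_1$, is represented there by a continuous bounded function.

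It remains to bound $\Delta_h$ of the right-hand side, for $x\in\Omega_1$ and $d(e,h)\le\delta_0$ with $\delta_0$ fixed small enough that $xh\in\Omega_0'$ and the short paths used below stay in $\Omega_0'$. For the first term, $\widetilde U^1\in\mathcal T_L$ has bounded carré du champ, so the $p=\infty$ case of Lemma \ref{lem:p-Poincare} gives $\|\Delta_h\widetilde U^1\|_\infty\le d(e,h)\,\|\Gamma(\widetilde U^1,\widetilde U^1)^{1/2}\|_\infty\le C\,d(e,h)\le C\,d(e,h)^\theta$ (recall $d$ is a genuine bounded distance under $(\mathrm{CK}\lambda)$, $d(e,h)\le1$, $\theta\le1$). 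For the second term, $\widetilde F=\eta_0F$ is globally $\theta$-Hölder — because $F\in L^\infty(\Omega)$, $\mathrm L_\theta^\infty(F;\Omega)<\infty$, $\eta_0$ is globally Lipschitz (again Lemma \ref{lem:p-Poincare}), and $d$ is bounded on the compact $G$ — and writing $\Delta_h\widetilde F^s(x)=\int\bigl(\widetilde F(xhy)-\widetilde F(xy)\bigr)d\mu_s(y)$ with $xhy=xy\cdot(y^{-1}hy)$ and using the bi-invariance of $d$ yields $\|\Delta_h\widetilde F^s\|_\infty\le\mathrm L_\theta^\infty(\widetilde F)\,d(e,h)^\theta$ uniformly in $s$. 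For the third term — the crux — $\widetilde V^s(x)=\widetilde V(\check\eta_1\,\mathcal R_x\check\mu_s)$ as in \eqref{eq:Vx}, whence $\Delta_h\widetilde V^s(x)=\widetilde V(\check\eta_1\,\mathcal R_x\,\Delta_h\check\mu_s)$; using $\widetilde V(\phi)\le C\,M_L^k(\phi)$ and the Leibniz rule one reduces to increments $\Delta_h$ of the $P^{\ell,\lambda}$-derivatives of $\check\mu_s=\mu_s$ at points lying in a fixed region away from $e$. The path estimate (the increment path has length $\simeq d(e,h)\le\delta_0$, so stays away from $e$) bounds each such increment by $d(e,h)$ times a supremum of one further $\mu_s$-derivative on a region away from $e$, which by Corollary \ref{cor:knHKbounds} is $\le d(e,h)\exp\bigl(As^{-\lambda}-c/(Cs)\bigr)$. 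Hence $\|\Delta_h\widetilde V^s\|_{L^\infty(\Omega_1)}\le d(e,h)\,g(s)$ with $\int_0^1 g(s)\,ds<\infty$.

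Combining the three bounds, $\|\Delta_h U\|_{L^\infty(\Omega_1)}\le C\,d(e,h)^\theta$ for all $h$ with $d(e,h)\le\delta_0$. For $x,z\in\Omega_1$ with $h=x^{-1}z$ and $d(x,z)=d(e,h)\le\delta_0$ this gives $|U(z)-U(x)|\le C\,d(x,z)^\theta$, while for $d(x,z)>\delta_0$ one has $|U(z)-U(x)|\le2\|U\|_{L^\infty(\Omega_1)}\le\bigl(2\|U\|_{L^\infty(\Omega_1)}\delta_0^{-\theta}\bigr)d(x,z)^\theta$; hence $\mathrm L_\theta^\infty(U;\Omega_1)<\infty$. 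The main obstacle is the third term: making the localization rigorous — choosing the nested sets and $\delta_0$ so that every difference path remains in a region bounded away from $e$, commuting $X_i$ past the convolution via Proposition \ref{prop:convolution}(i), and keeping track of the $M_L^k$-bookkeeping of \eqref{eq:Vx} — is the technical heart, precisely as in Proposition \ref{prop:local-Lp}. The only genuinely new element is converting $(\mathrm{CK}\lambda)$ and the derivative bounds of Corollary \ref{cor:knHKbounds} into the gain of a factor $d(e,h)$ with $s$-integrable remainder; no optimization over $s$ is needed, since we only aim at $C^\theta$-regularity of $U$ rather than $C^{2,\theta}$.
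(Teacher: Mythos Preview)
Your proof is correct and follows essentially the same route as the paper: the same localization $\widetilde U,\widetilde F,\widetilde V$, the same three-term decomposition via $\partial_t\widetilde U^t=-\widetilde F^t-\widetilde V^t$, the same Leibniz-type bound for the $\widetilde F$-piece, and the same path-integral argument to extract a factor $d(e,h)$ from the $\widetilde V$-piece and then appeal to $M_L^{k+1}(\Theta,\mu_t)$. The only point where the paper is more explicit is the one you flag as the ``technical heart'': since $G$ may be genuinely infinite-dimensional, the path estimate on $\mu_s$ is carried out not directly on $G$ but on the Lie quotients $G_\alpha$ (using a curve $\gamma^\alpha_{xy}$ in $\pi_\alpha(\Omega_1)$ together with the Cauchy--Schwarz step that promotes $M_L^k$ to $M_L^{k+1}$), and one then passes to the projective limit via \eqref{eq:dist-sup}.
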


\begin{remark}
    The assumption $(\mathrm{CK\lambda})$ can be replaced by the property $(\mathrm{CD})$, that is,  the intrinsic distance $d$ associated to $(\mu_t)_{t>0}$ (see Definition \ref{def:distance}) is continuous. In fact, property $(\mathrm{CD})$ implies  $(\mathrm{CK*})$, see for instance \cite{BSC2000, BSC2001} for more details.
\end{remark}

\begin{proof}
Let $\Omega_0, \widetilde U, \widetilde F, \widetilde V$ be the same as in the proof of Proposition \ref{prop:local-Lp}. Fix $x_0\in \Omega_0$ and let $\Omega_1\Subset \Omega_0$ be an open neighborhood of $x_0$.6
We note that $\mathrm L_{\theta}^{\infty}(\widetilde U; \Omega_1)=\mathrm L_{\theta}^{\infty}( U; \Omega_1)$. Hence it is enough to work for $\widetilde U$.
We use similar strategy as in the proof of Proposition \ref{prop:local-Lp} (i). Recalling the definitions of $\widetilde U^t$, $\widetilde F^t$ and $\widetilde V^t$ in \eqref{eq:tilde-t}, we aim to show that $\widetilde U^t$ is a Cauchy sequence with respect to the Lipschitz norm on $\Omega_1$. It suffices to bound the Lipschitz norms of $\widetilde F^t$ and $\widetilde V^t$ separately.

Since $\mu_t$ is central and symmetric, we also write $\widetilde F^t(x)=\int_{z\in G} \widetilde F(xz)d\mu_t(z)$. Then 
\begin{align}
\sup_{x,y\in \Omega_1}\frac{|\widetilde F^t(x)-\widetilde F^t(y)|}{d(x,y)^{\theta}}
& \le \int_G \sup_{x,y\in \Omega_1}\frac{|\widetilde F(xz)-\widetilde F(yz)|}{d(xz,yz)^{\theta}}d\mu_t(z) 
\\& \le C(\mathrm L_{\theta}^{\infty}(F; \Omega) +\|F\|_{L^{\infty}(\Omega)})
<\infty.
\end{align}
Indeed, the first inequality above is due to the bi-invariance of the distance so $d(x,y)=d(xz,yz)$, and the second one follows by observing that
\begin{equation}\label{eq:tildeF-Lip}
\frac{|\widetilde F(x)-\widetilde F(y)|}{d(x,y)^{\theta}}
\le |\eta_0(x)|\frac{| F(x)- F(y)|}{d(x,y)^{\theta}}+|F(y)|\frac{|\eta_0(x)-\eta_0(y)|}{d(x,y)^{\theta}}.
\end{equation}

Next we turn to the estimate of $ \widetilde V^t$. From \eqref{eq:Vx}, one has
$\widetilde V^t(x) =\widetilde V(\eta_1 \mathcal L_{x^{-1}}\mu_t)$,
where $\eta_1\in \mathcal B_0(\Theta_0)$ and $\eta_1=1$ in a neighborhood of $\overline{\Omega\setminus \overline{\Omega_0}}$ (recall that $\Theta_0$ is chosen such that $x_0\notin \overline{\Theta_0}$ and $e\notin \overline{\Theta_0^{-1}\Omega_1}$). Then \eqref{eq:boundMk} yields
\begin{align*}
|\widetilde V^t(x)-\widetilde V^t(y)|
&=|\widetilde V(\eta_1(\mathcal L_{x^{-1}}\mu_t -\mathcal L_{y^{-1}}\mu_t))| \le CM_L^k(\eta_1(\mathcal L_{x^{-1}}\mu_t -\mathcal L_{y^{-1}}\mu_t))
\\ &
=CM_L^k(\check{\eta}_1(\mathcal R_{x}\check{\mu}_t -\mathcal R_{y}\check{\mu}_t)).
\end{align*}

Since $\eta_1\in \mathcal B_0(\Theta_0)$, there exist $\beta\in \aleph$ and $\phi\in \mathcal C_0^{\infty}(G_{\beta})$ such that $\supp \phi\subset \pi_{\beta}(\Theta_0)$ and $\eta_1=\phi\circ \pi_{\beta}$.
Consider now $\mu_t^{\alpha}$, where $\alpha\ge \beta$. Let $x,y\in \Omega_1$. There exists a curve $\gamma_{xy}^{\alpha}(s): [0,T]\to G_{\alpha}$ connecting $\pi_{\alpha}(x)$ and $\pi_{\alpha}(y)$ such that $T=d_{\alpha}(\pi_{\alpha}(x),\pi_{\alpha}(y))$, $\gamma_{xy}^{\alpha}(0)=\pi_{\alpha}(x)$, $\gamma_{xy}^{\alpha}(T)=\pi_{\alpha}(y)$ and $\gamma_{xy}^{\alpha}(s)\in \pi_{\alpha}(\Omega_1)$. Moreover, for $\mu_t^{\alpha}$ one has
\[
\frac{d}{ds} \mu_t^{\alpha}(\gamma_{xy}^{\alpha}(s))=\sum_i a_iX_{\alpha,i}\mu_t^{\alpha}(\gamma_{xy}^{\alpha}(s)),
\]
where $\{a_i\}$ is a sequence of normalized coefficients so $\sum_i a_i^2\le 1$. We thus have for any $x\in \Omega_1$ and $z\in \Theta_0$ 
\begin{align*}
\mu_t^{\alpha}(z^{-1}x)-\mu_t^{\alpha}(z^{-1}y)
&=\mu_t^{\alpha}(\pi_{\alpha}(z^{-1})\pi_{\alpha}(x))-\mu_t^{\alpha}(\pi_{\alpha}(z^{-1})\pi_{\alpha}(y))
\\ &=-\int_0^T \frac{d}{ds}\mu_t^{\alpha}(\pi_{\alpha}(z^{-1})\gamma_{xy}^{\alpha}(s))ds 
\\ &=-\int_0^{T}\sum_ia_iX_{\alpha,i}\mu_t^{\alpha}(\pi_{\alpha}(z^{-1})\gamma_{xy}^{\alpha}(s))ds.
\end{align*}
It follows from Minkowski's inequality that 
\[
M_L^k\left((\check\phi\circ\pi_{\alpha})(\mathcal R_{\pi_{\alpha}(x)}\check\mu_t^{\alpha} -\mathcal R_{\pi_{\alpha}(y)}\check\mu_t^{\alpha})\right) 
\le \int_0^T M_L^k\left((\check\phi\circ\pi_{\alpha})\sum_i a_iX_{\alpha,i}\mathcal R_{\gamma_{xy}^{\alpha}(s)}\check\mu_t^{\alpha}\right) ds.
\]
Observing that $\gamma_{xy}^{\alpha}(s) \in \pi_{\alpha}(\Omega_1)$ and denoting $\Theta_{\alpha}:=\pi_{\alpha}(\Theta_0^{-1})\pi_{\alpha}(\Omega_1)$,  we have 
\[
M_L^k\left((\check\phi\circ\pi_{\alpha})\sum_i a_iX_{\alpha,i}\mathcal R_{\gamma_{xy}^{\alpha}(s)}\check\mu_t^{\alpha}\right)
\le M_L^k\left(\Theta_{\alpha},\sum_i a_i X_{\alpha,i} \mathcal R_{\gamma_{xy}^{\alpha}(s)}\check\mu_t^{\alpha}\right).
\]
Notice that the operator $P^{\ell, \lambda}$ defined in \eqref{eq:P-ell-lambda} is linear and recall the definition of $M_L^k(\Omega,f )$ in \eqref{eq:local-Mk}. It then follows from the Cauchy-Schwarz inequality that 
\begin{align*}
    M_L^k\left(\Theta_{\alpha},\sum_i a_i X_{\alpha,i} \mathcal R_{\gamma_{xy}^{\alpha}(s)}\check\mu_t^{\alpha}\right)
    &=
    \sup_{\substack{m,n\in \mathbb N\\n+2m\le k}} \sup_{\lambda\in \Lambda(n,m)}\sup_{x\in \Theta_{\alpha}} \left(\sum_{\ell\in I^n}\Big|P_{\alpha}^{\ell,\lambda}\sum_ia_iX_{\alpha,i}\mu_t^{\alpha}(x)\Big|^2\right)^{\frac12}
    \\&\le 
    \sup_{\substack{m,n\in \mathbb N\\n+2m\le k}} \sup_{\lambda\in \Lambda(n,m)} \sup_{x\in \Theta_{\alpha}} \left(\sum_{\ell\in I^{n}} \sum_i |P_{\alpha}^{\ell,\lambda}X_{\alpha,i}\mu_t^{\alpha}(x)|^2\right)^{\frac12}
    \\ &\le
    \sup_{\substack{m,n\in \mathbb N\\n+1+2m\le k+1}} \sup_{\lambda\in \Lambda(n+1,m)} \sup_{x\in \Theta_{\alpha}} \left(\sum_{\ell\in I^{n+1}}|P_{\alpha}^{\ell,\lambda}\mu_t^{\alpha}(x)|^2\right)^{\frac12}
    \\ &\le M_L^{k+1}\left(\Theta_{\alpha},\mu_t^{\alpha}\right),
\end{align*}
where $P_{\alpha}^{\ell,\lambda}$ is the projection of $P^{\ell,\lambda}$ onto $G_\alpha$.
Denote $\widetilde V_{\alpha}^t(x) =\widetilde V\left((\phi\circ \pi_{\alpha}) \mathcal L_{\pi_{\alpha}(x^{-1})}\mu_t^{\alpha}\right)$.
Collecting the above bounds, one obtains
\begin{align*}
|\widetilde V_{\alpha}^t(x)-\widetilde V_{\alpha}^t(y)|
    &\le C d_{\alpha}(\pi_{\alpha}(x),\pi_{\alpha}(y)) M_L^{k+1}\left(\Theta_{\alpha},\mu_t^{\alpha}\right).
\end{align*}

In order to conclude the proof, it remains to show the following claims:
\begin{enumerate}[(a)]
    \item $\widetilde V_{\alpha}^t(x)$ converges to $\widetilde V^t(x)$;
    \item For any $k\ge 0$, $M_L^{k}\left(\Theta_{\alpha},\mu_t^{\alpha}\right)$ converges to $M_L^{k}\left(\Theta,\mu_t\right)$, where $\Theta=\Theta_0^{-1}\Omega_1$.
\end{enumerate}
The first claim follows by recalling \eqref{eq:Vx} and noticing that $(\phi\circ \pi_{\alpha}) \mathcal L_{\pi_{\alpha}(x^{-1})}\mu_t^{\alpha}\in \mathcal B(G)$  converges to $\eta_1 \mathcal L_{x^{-1}}\mu_t$. For the latter, observe that as $e\notin \overline{\Theta}$, \cite[Corollary 4.11]{BSC2002} (see also Theorem \ref{thm:HKbounds}) implies that 
\[
M_L^{k}\left(\Theta,\mu_t\right)<\infty.
\]
Since $\mu_t^{\alpha}(x)=\mu_t^{\alpha}(\pi_{\alpha}(x))$ for any $x\in \Theta$, we have $M_L^{k}\left(\Theta_{\alpha},\mu_t^{\alpha}\right)=M_L^{k}\left(\Theta,\mu_t^{\alpha}\circ \pi_{\alpha}\right)$ which converges to $M_L^{k}\left(\Theta,\mu_t\right)$.

Thanks to the fact that $d(x,y)=\sup_{\alpha}d_{\alpha}(\pi_{\alpha}(x), \pi_{\alpha}(y))$, we then have
\begin{align*}
    \sup_{x,y\in \Omega_1} \frac{|\widetilde V^t(x)-\widetilde V^t(y)|}{d(x,y)^{\theta}} 
    &\le C \sup_{x,y\in \Omega_1}d(x,y)^{1-\theta} M_L^{k+1}\left(\Theta,\mu_t\right)<\infty
\end{align*}
provided that $0<\theta\le 1$ and the proof is completed. 
\end{proof}

\begin{prop}\label{prop:loc-weak-Lip}
Assume that $(\mu_t)_{t>0}$ is a symmetric central Gaussian semigroup satisfying condition $(\mathrm{CK\lambda})$ for some $\lambda\in (0,1)$. Let $U,F\in \mathcal T_L^{\,'}$ satisfy the Poisson equation $LU=F$ in $\mathcal T_L^{\,'}$. Consider $\Omega\in G$ such that  $F\in L^{\infty}(\Omega)$ and $\mathrm L_{\theta}^{\infty}(F; \Omega)<\infty$  for $0<\theta\le1$. Then for any open set $\Omega_1\Subset \Omega_0$,
\begin{enumerate}
    \item If $0<\lambda<\frac{\theta}{6+\theta}$, we have for all $i,j\in I$ 
    \[
    \mathrm L_{\frac{\beta-4\lambda}{1+3\lambda}}^{\infty}(X_iU;\Omega_1), \,\mathrm L_{\frac{\beta-4\lambda}{1+3\lambda}}^{\infty}(X_iX_jU;\Omega_1)<\infty,
    \]
    where $4\lambda<\beta<(1-\lambda)\theta-2\lambda$.
    \item If $0<\lambda<\frac{\theta}{24+\theta}$, we have $-\sum_{i\in I}X_i^2 U$ converges to $LU$ with respect to the seminorm $\mathrm L_{\frac{\beta'-4\lambda}{1+3\lambda}}^{\infty}(\cdot;\Omega_1)$, where  $4\lambda<\beta'<\frac{(1-\lambda)^2\theta-8\lambda}{1+3\lambda}$.
\end{enumerate}
\end{prop}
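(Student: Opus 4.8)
The plan is to localize the global $\mathrm L^\infty$ regularity statement (the Corollary following Theorem~\ref{thm:Lip-Regularity-Linfty}, i.e.\ Proposition~\ref{prop:Lip-weak-glob}(ii) composed with the comparison Theorems~\ref{thm:LipSG<LipDist} and~\ref{thm:LipDist<LipSG-L1}), by the cutoff technique of Propositions~\ref{prop:local-Lp} and~\ref{prop:Local-infinity}. Fix open sets $\Omega_1\Subset\Omega_0\Subset\Omega$ and a cutoff $\eta_0\in\mathcal B_0(\Omega)$ with $\eta_0\equiv 1$ near $\overline{\Omega_0}$; set $\widetilde U=\eta_0 U$, $\widetilde F=\eta_0 F$, $\widetilde V=L\widetilde U-\widetilde F$. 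Then $\widetilde U,\widetilde F,\widetilde V\in\mathcal T_L^{\,'}$, the distribution $\widetilde V$ is supported in $\Omega\setminus\overline{\Omega_0}$, and, with $C=\int_G\widetilde F\,d\nu$, the Leibniz-type estimate \eqref{eq:tildeF-Lip} gives $\widetilde F-C\in L_0^\infty(G)$ with $\mathrm L_\theta^\infty(\widetilde F-C;G)<\infty$. Put $f=L^{-1}(\widetilde F-C)$, which is well defined since $(\mathrm{CK}\lambda)$ forces a positive spectral gap (Remark~\ref{rem:spectral gap}). Applying the global result to $f$ (permissible because $\lambda<\frac{\theta}{6+\theta}$) yields, for every $4\lambda<\beta\le(1-\lambda)\theta-2\lambda$,
\[
\mathrm L_{\frac{\beta-4\lambda}{1+3\lambda}}^\infty(f;G),\ \mathrm L_{\frac{\beta-4\lambda}{1+3\lambda}}^\infty(X_if;G),\ \mathrm L_{\frac{\beta-4\lambda}{1+3\lambda}}^\infty(X_iX_jf;G)<\infty,
\]
together with convergence of $-\sum_i X_i^2 f$ to $Lf=\widetilde F-C$ in $\mathrm L_{\frac{\beta-4\lambda}{1+3\lambda}}^\infty(\cdot)$.

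For assertion~(i), subtract the equations: $L(\widetilde U-f)=\widetilde V+C$ in $\mathcal T_L^{\,'}$. Since $L$ commutes with each $X_i$ (bi-invariance) and $X_iC=X_iX_jC=0$,
\[
L\big(X_i(\widetilde U-f)\big)=X_i\widetilde V,\qquad L\big(X_iX_j(\widetilde U-f)\big)=X_iX_j\widetilde V \quad\text{in }\mathcal T_L^{\,'}.
\]
The key point is that $\widetilde V\equiv 0$ on $\Omega_0$, so $X_i\widetilde V$ and $X_iX_j\widetilde V$ vanish on $\Omega_0$ and are in particular in $L^\infty(\Omega_0)$ with vanishing H\"older seminorm of any exponent $\le 1$. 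Proposition~\ref{prop:Local-infinity}, applied to these two Poisson equations with input exponent $\frac{\beta-4\lambda}{1+3\lambda}\in(0,1)$, therefore gives $\mathrm L_{\frac{\beta-4\lambda}{1+3\lambda}}^\infty\big(X_i(\widetilde U-f);\Omega_1\big)<\infty$ and likewise for $X_iX_j(\widetilde U-f)$. Adding the global bounds for $X_if$, $X_iX_jf$ and using that $\eta_0\equiv 1$ near $\Omega_1$ (so $X_i\widetilde U=X_iU$ and $X_iX_j\widetilde U=X_iX_jU$ on $\Omega_1$) proves~(i).

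For the convergence in~(ii), iterate the cutoff once more: choose $\Omega_1\Subset\Omega_4\Subset\Omega_3\Subset\Omega_0$ and $\eta_2\in\mathcal B_0(\Omega_3)$ with $\eta_2\equiv 1$ near $\overline{\Omega_4}$, and set $\vardbtilde U=\eta_2\widetilde U=\eta_2 U$. By the product rule for $L=-\sum_i X_i^2$ (cf.\ \eqref{eq:L-locU}) and $\eta_2\widetilde V=0$,
\[
L\vardbtilde U=\eta_2 F+(L\eta_2)U-2\sum_i (X_i\eta_2)(X_iU),
\]
a finite sum since $\eta_2$ factors through some $G_\alpha$. Each summand is globally H\"older: $\eta_2 F$ with exponent $\theta$ (again by \eqref{eq:tildeF-Lip}); $(L\eta_2)U$ with exponent $\theta$, because $\supp(L\eta_2)\subset\Omega_3\Subset\Omega_0$ and $U$ is H\"older of exponent $\theta$ there by Proposition~\ref{prop:Local-infinity}; and $\sum_i(X_i\eta_2)(X_iU)$ with exponent $\gamma_1:=\frac{\beta_1-4\lambda}{1+3\lambda}<\theta$ for any admissible $\beta_1$, by part~(i) applied on $\Omega_3$. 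Hence $\mathrm L_{\gamma_1}^\infty(L\vardbtilde U;G)<\infty$, and the global convergence result applied to $\vardbtilde U$ with datum of H\"older exponent $\gamma_1$ shows $-\sum_i X_i^2\vardbtilde U\to L\vardbtilde U$ in $\mathrm L_{\frac{\beta'-4\lambda}{1+3\lambda}}^\infty(\cdot)$ for $4\lambda<\beta'\le(1-\lambda)\gamma_1-2\lambda$. Letting $\beta_1\uparrow(1-\lambda)\theta-2\lambda$ pushes $\gamma_1\uparrow\frac{(1-\lambda)\theta-6\lambda}{1+3\lambda}$ and $(1-\lambda)\gamma_1-2\lambda\uparrow\frac{(1-\lambda)^2\theta-8\lambda}{1+3\lambda}$, giving exactly the stated range for $\beta'$; nonemptiness of all intermediate ranges, equivalently $(1-\lambda)^2\theta>12\lambda(1+\lambda)$, is guaranteed by $\lambda<\frac{\theta}{24+\theta}$. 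Finally, on $\Omega_1\subset\{\eta_2\equiv 1\}$ one has $\vardbtilde U=U$, $L\vardbtilde U=F=LU$, and $X_i^2\vardbtilde U=X_i^2 U$; since restriction to $\Omega_1$ only decreases the seminorm, $-\sum_i X_i^2 U$ converges to $LU$ in $\mathrm L_{\frac{\beta'-4\lambda}{1+3\lambda}}^\infty(\cdot;\Omega_1)$.

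The difficulty is organizational rather than a single hard estimate: one must check that applying Proposition~\ref{prop:Local-infinity} to the \emph{differentiated} equations is legitimate (this uses bi-invariance and $X_i(\widetilde U-f)\in\mathcal T_L^{\,'}$), that $f=L^{-1}(\widetilde F-C)$ and, likewise, $\vardbtilde U$ lie in $\mathcal L_\infty(G)$ so that the global results apply (which again follows from the elliptic and distributional estimates above), and, most delicately, keep track of the loss $\theta\mapsto\frac{\beta-4\lambda}{1+3\lambda}$ of H\"older exponent incurred at each elliptic step so that the final exponent in~(ii) stays positive; it is precisely this bookkeeping that forces the quantitative conditions $\lambda<\frac{\theta}{6+\theta}$ and $\lambda<\frac{\theta}{24+\theta}$.
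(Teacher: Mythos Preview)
Your proposal is correct and follows essentially the same route as the paper: localize with a cutoff to $\widetilde U,\widetilde F,\widetilde V$; handle the global piece $f=L^{-1}(\widetilde F-C)$ via the global $\mathrm L^\infty$ corollary (the chain Theorem~\ref{thm:LipSG<LipDist} $\to$ Proposition~\ref{prop:Lip-weak-glob}(ii) $\to$ Theorem~\ref{thm:LipDist<LipSG-L1}); treat the residual via Proposition~\ref{prop:Local-infinity} applied to the differentiated equations $L(X_i(\widetilde U-f))=X_i\widetilde V$; and iterate the cutoff for part~(ii). The one point the paper makes explicit that you leave as a parenthetical is the $L^\infty$-boundedness of $U$ and $X_iU$ on the support of the inner cutoff $\eta_2$, which is needed for the Leibniz-type estimate \eqref{eq:tildeF-Lip} to yield \emph{global} H\"older control of $(L\eta_2)U$ and $(X_i\eta_2)(X_iU)$: the paper obtains this by invoking the hypoellipticity of $L$ established in \cite[Section~3]{BSC2006}, rather than from the seminorm statements alone.
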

\begin{proof}
Similarly as in the proof of Proposition \ref{prop:local-Lp}, we will distinguish the open set $\Omega_1$ in the argument for (i) and (ii) below.
\begin{enumerate}
    \item 
 We use similar idea as in the proof of Proposition \ref{prop:local-Lp} (ii) and adopt the same notation there. Note that $\mathrm L_{\theta}^{\infty}(\widetilde F)<\infty$ (we need $\supp \eta_0\Subset \Omega$)  and consider $f=L^{-1}\widetilde F$. Then the following diagram indicates that $\mathrm L_{\frac{\beta-4\lambda}{1+3\lambda}}^{\infty}(X_if)<\infty$, where $0<\beta\le (1-\lambda)\theta-2\lambda$ is as in Theorem \ref{thm:LipSG<LipDist} (i).
\[
\begin{tikzcd}
 \mathrm L_{\theta}^{\infty}(\widetilde F)<\infty  
  \arrow[r,"\text{Thm. \ref{thm:LipSG<LipDist} (i)}","\text{Lem. \ref{lem:Lip-sg-p-alter-1infty} (i)}"'] & \Lambda_{\beta-2\lambda, 2}^{\infty}(\widetilde F)<\infty
 \arrow[d,"\text{Prop. \ref{prop:Lip-weak-glob} (ii)}"] \\
\Lambda_{\beta-4\lambda}^{\infty}(X_if) \arrow[d,"\text{Thm. \ref{thm:LipDist<LipSG-L1}}"] & \Lambda_{\beta+1-4\lambda,2}^{\infty}(X_if)<\infty \arrow[l,"\text{Lem. \ref{lem:Lip-sg-p-alter-1infty} (ii)}","\text{Rem. \ref{rem:Lip-sg-p-large} (iii)}"'] \\
\mathrm L_{\frac{\beta-4\lambda}{1+3\lambda}}^{\infty}(X_if)<\infty \arrow[r] & \mathrm L_{\frac{\beta-4\lambda}{1+3\lambda}}^{\infty}(X_if, \Omega_2)<\infty     \end{tikzcd}
\]
Next we observe that 
\begin{equation}\label{eq:XiUV}
L(X_i\widetilde U-X_if)=X_i \widetilde V.
\end{equation}
Since $X_i\widetilde V\equiv 0$ on $\Omega_1$, one has $\mathrm L_{\theta}^{\infty}(X_i\widetilde V,\Omega_1)<\infty$. In view of Proposition \ref{prop:Local-infinity}, it follows that there exists an open set $\Omega_2\Subset \Omega_1$ such that  $\mathrm L_{\theta}^{\infty}(X_i\widetilde U-X_if,\Omega_2)<\infty$. We conclude that $\mathrm L_{\frac{\beta-4\lambda}{1+3\lambda}}^{\infty}(X_i\widetilde U,\Omega_2)<\infty$ by noticing $\frac{\beta-4\lambda}{1+3\lambda}<\theta$.
The same proof with slight changes also deduces that $\mathrm L_{\frac{\beta-4\lambda}{1+3\lambda}}^{\infty}(X_iX_j\widetilde U,\Omega_2)<\infty$ for any $i,j\in I$.

\item Now we  show the convergence following the idea in  the proof of Proposition \ref{prop:local-Lp} (iii). Recall the equation \eqref{eq:L-locU}:
\[
L\vardbtilde U=L(\eta_2 \widetilde U)=\eta_2 L \widetilde U+\widetilde U L\eta_2 -2\nabla \eta_2 \cdot \nabla \widetilde U\coloneqq g,
\]
where we take a cutoff function $\eta_2\in \mathcal B_0(\Omega_3)$ such that $\eta_2=1$ on $\Omega_4\Subset \Omega_3$ for  $\Omega_3\Subset \Omega_2$, and $\vardbtilde U=\eta_2 \widetilde U$.

We first check that the three terms of $g$ are bounded with respect to $\mathrm L_{\frac{\beta-4\lambda}{1+3\lambda}}^{\infty}(\cdot)$. The first term is simple as we observe that $\eta_2 L \widetilde U=\eta_2\widetilde F=\eta_2 F$. Similarly as the estimate in \eqref{eq:tildeF-Lip}, one has
\begin{align*}
\sup_{x,y\in G}\frac{|(\eta_2F)(x)-(\eta_2\widetilde F)(y)|}{d(x,y)^{\theta}}
&\le \sup_{x,y\in G}|\eta_2(x)|\frac{| F(x)- F(y)|}{d(x,y)^{\theta}}+\sup_{x,y\in G}|F(y)|\frac{|\eta_2(x)-\eta_2(y)|}{d(x,y)^{\theta}}
\\&\le \mathrm L_{\theta}^{\infty}(F,\Omega)+\sup_{x\in \Omega_3,y\in G\setminus \Omega}\frac{| F(x)- F(y)|}{d(x,y)^{\theta}}+C\|F\|_{L^{\infty}(\Omega)}
\\&\le \mathrm L_{\theta}^{\infty}(F,\Omega)+C\|F\|_{L^{\infty}(\Omega)}.    
\end{align*}
Since we assume  $F\in L^{\infty}(\Omega)$ and $\mathrm L_{\theta}^{\infty}(F; \Omega)<\infty$, it is immediate to conclude that $\mathrm L_{\theta}^{\infty}(\eta_2 F)<\infty$ and hence $\mathrm L_{\frac{\beta-4\lambda}{1+3\lambda}}^{\infty}(\eta_2 L \widetilde U)<\infty$ from Remark \ref{rem:Lip-dis}.

For the second term, on the one hand the proof of Proposition \ref{prop:Local-infinity} gives that $\mathrm L_{\theta}^{\infty}(\widetilde U, \Omega_1)<\infty$. We also noticed that  $\widetilde U\in L^{\infty}(\Omega_1)$ comes from  the hypoellipticity of $L$ established in \cite[Section 3]{BSC2006}. Then the same proof as above gives that $\mathrm L_{\frac{\beta-4\lambda}{1+3\lambda}}^{\infty}(\widetilde U  L\eta_2)<\infty$.

Now we move to the last terms in $g$. Since $\eta_2\in \mathcal B_0(\Omega_3)$, there exists $\alpha \in \aleph$ and $\varphi\in C^{\infty}(G_{\alpha})$ such that $\eta_2=\varphi\circ \pi_{\alpha}$ and $\nabla \eta_2 \cdot \nabla \widetilde U=\sum_{i\in I_{\alpha}}X_i\eta_2X_i\widetilde U$. It is enough to prove that $\mathrm L_{\frac{\beta-4\lambda}{1+3\lambda}}^{\infty}(X_i\eta_2X_i\widetilde U)<\infty$ for each $i\in I_{\alpha}$. We have already shown in (i) that $\mathrm L_{\frac{\beta-4\lambda}{1+3\lambda}}^{\infty}(X_i\widetilde U,\Omega_2)<\infty$ for each $i\in I_{\alpha}$. In order to apply the same argument as above, one needs to verify that $X_i\widetilde U\in L^{\infty}(\Omega_2)$. Indeed, 
in view of \eqref{eq:XiUV} and the hypoellipticity of $L$ (\cite[Section 3]{BSC2006}), we get $X_i\widetilde U-X_if\in L^{\infty}(\Omega_2)$. Moreover, since $f=L^{-1}\widetilde F$, the hypoellipticity also yields $X_if\in L^{\infty}(\Omega_2)$. We conclude $X_i\widetilde U\in L^{\infty}(\Omega_2)$ and thus the proof.


Finally, one can repeat the first part of the proof and apply Proposition \ref{prop:Lip-weak-glob} to obtain the convergence result. Namely, denote $\theta'=\frac{\beta-4\lambda}{1+3\lambda}$, then  $-\sum_{i\in I}X_i^2 \vardbtilde U$ converges to $L\vardbtilde U$ with respect to the seminorm $\mathrm L_{\frac{\beta'-4\lambda}{1+3\lambda}}^{\infty}(\cdot)$ for $4\lambda<\beta'<(1-\lambda)\theta'-2\lambda$ and the assumption that $\lambda<\frac{\theta}{24+\theta}$ guarantees the existence of positive $\beta'$. The proof is completed by noticing that $\vardbtilde U=U$ on $\Omega_4$.
\end{enumerate}

\end{proof}

\bibliographystyle{plain}

\noindent \textbf{Alexander Bendikov}: \url{bendikov@math.uni.wroc.pl}\\
Institut of mathematics, Wroc\l{}aw University, Poland
\smallskip

\noindent \textbf{Li Chen}: \url{lchen@math.au.dk}\\
Department of Mathematics,
Aarhus University, Denmark
\smallskip

\noindent \textbf{Laurent Saloff-Coste}: \url{lsc@math.cornell.edu}\\
Department of mathematics
Cornell University, USA
\end{document}